\newtheorem{thm}{Theorem}[section]
\newtheorem{cor}[thm]{Corollary}
\newtheorem{lem}[thm]{Lemma}
\newtheorem{prop}[thm]{Proposition}
\newtheorem{con}[thm]{Conjecture}
\newtheorem{theorema}{Theorem}
\theoremstyle{definition}
\theoremstyle{definition}
\newtheorem{rem}[thm]{Remark}
\newcommand\be{\begin{equation}}
\newcommand\ee{\end{equation}}
\newcommand\bee{\begin{equation*}}
\newcommand\eee{\end{equation*}}
\newcommand\ben{\begin{enumerate}}
\newcommand\een{\end{enumerate}}
\def\G{\ensuremath {{\bf G}}}
\def\CC{\ensuremath {{\mathbb C}}}
\def\H{\ensuremath {{\mathcal H}}}
\def\I{\ensuremath {{\mathcal I}}}
\def\R{\ensuremath {{\bf R}}}
\def\F{\ensuremath {{\mathcal F}}}
\def\A{\ensuremath {{\mathbb A}}}
\def\P{\ensuremath {{\mathscr P}}}
\def\E{\ensuremath {\mathscr E}}
\def\L{\ensuremath {\mathscr L}}
\def\Z{\ensuremath {{\mathbb Z}}}
\def\D{\ensuremath {\mathcal D}}
\def\C{\ensuremath {\mathcal C}}
\def\K{\ensuremath {\mathcal K}}
\def\G{\ensuremath {\mathscr G}}
\def\M{\ensuremath {\mathscr M}}
\def\SI{\ensuremath {S\mathcal{I}}}
\def\SS{\ensuremath {S\mathcal I}}
\def\g{\ensuremath {\mathfrak g}}
\def\o{\ensuremath {\mathfrak o}}
\def\O{\ensuremath {\mathscr O}}
\def\t{\ensuremath {\mathfrak t}}
\def\a{\ensuremath {\mathfrak a}}
\def\tr{\ensuremath {\textnormal{tr}}}
\def\vol{\ensuremath {\textnormal{vol}}}
\def\bs{\ensuremath {\backslash}}
\def\ac{\ensuremath {\mathrm{ac}}}
\def\temp{\ensuremath {\mathrm{temp}}}
\def\el{\ensuremath {\mathrm{ell}}}
\def\orb{\ensuremath {\mathrm{orb}}}
\def\disc{\ensuremath {\mathrm{disc}}}
\def\unit{\ensuremath {\mathrm{unit}}}
\def\ram{\ensuremath {\mathrm{ram}}}
\def\reg{\ensuremath {\mathrm{reg}}}
\def\unip{\ensuremath {\mathrm{unip}}}
\def\ss{\ensuremath {\mathrm{ss}}}
\def\der{\ensuremath {\mathrm{der}}}
\def\adm{\ensuremath {\mathrm{adm}}}
\def\pa{\ensuremath {\mathrm{par}}}
\def\uns{\ensuremath {\mathrm{uns}}}
\numberwithin{equation}{section}
\title[A weighted stable trace formula I]{A weighted stable trace formula I:\\ Basic functions}
\author{Tian An Wong}
\email{{\href{tiananw@umich.edu}{tiananw@umich.edu}}}
\address{University of Michigan, 4901 Evergreen Rd, Dearborn, MI 48128, US}
\subjclass[2010]{22E55  (primary), 11R39 (secondary). }
\keywords{Stable trace formula, basic functions, Langlands-Shelstad transfer}
\begin{document}
\begin{abstract}
We establish endoscopic and stable trace formulas whose discrete spectral terms are weighted by automorphic $L$-functions, by the use of basic functions that are incorporated into the global spectral and geometric coefficients. This is a continuation of a previous work of the author which established the corresponding weighted invariant trace formula for noncompactly supported test functions. The meromorphic continuation of these weighted trace formulas would yield $r$-trace formulas, and can therefore be seen as precursors to them. Along the way, we formulate a weighted form of the Langlands-Shelstad transfer conjecture, generalizing the weighted fundamental lemma of Arthur, that is suggested by our analysis but not central to the main result.  
 \end{abstract}

\maketitle

\tableofcontents \addtocontents{toc}{\protect\setcounter{tocdepth}{1}}

\section{Introduction}

\subsection{Background}

Given a reductive group $G$ over a number field $F$, the stable trace formula is an identity of stable distributions on $G$, expressed as 
\begin{align*}
S(f) & = \sum_{M\in\L}|W^M_0||W^G_0|^{-1}\int_{\Phi(M,V,\zeta)}b^{M}(\phi)S_M(\phi,f)d\phi \\
&= \sum_{M\in\L}|W^M_0||W^G_0|^{-1}\sum_{\delta\in\Delta(M,V,\zeta)}b^{M}(\delta)S_M(\delta,f) ,
\end{align*}
which has been used, for example, for the endoscopic classification of automorphic representations of various classical groups. To obtain it one first makes the trace formula invariant, and its stabilization in turn depends crucially upon the weighted fundamental lemma. This paper is a continuation of \cite{witf}, the goal of which is to weight the spectral side of the stable trace formula with automorphic $L$-functions by the use of certain basic functions. In \cite{witf} we established the extension of the invariant trace formula to allow for basic functions, and in this paper we shall stabilize it.

Fixing a central induced torus $Z$ of $G$ with an automorphic character $\zeta$, we let $V$ be a large finite set of valuations $v$ of $F$ outside of which $G,Z,$ and $\zeta$ are unramified. Let 
\[
G_V = \prod_{v\in V} G(F_v), \quad G^V = \prod_{v\not\in V} G(F_v).
\]
The stable linear form $S(f)$ established by Arthur is valid only for test functions of the form 
\[
\dot{f} = f \times u^V
\]
where, $f$ is a $\zeta^{-1}$-equivariant, compactly-supported function on $G_V$ and $u^V$ is the unit element of the $\zeta^{-1}$-equivariant Hecke algebra of $G^V$. In order to weight the trace formula with the associated $L$-functions, we shall instead use test functions of the form
\be
\label{bv}
f^r_s = f\times b^V
\ee
where $f$ is a $\zeta^{-1}$-equivariant, noncompactly-supported function on $G_V$ as in \cite[\S2]{witf}, the space of which we denoted by $\C^\circ(G,V,\zeta)$, and $b^V$ is constructed from the basic function that depends on an irreducible complex finite-dimensional representation $r$ of the $L$-group $^LG$ of $G$, and a complex number $s$ with Re$(s)$ large enough. We shall assume both of these to be fixed. The main result of \cite{witf} is then the existence of an invariant linear form 
\[
I^r_s(f) = I({f}^r_s), \qquad f\in \C^\circ(G,V,\zeta)
\]
valid for $\mathrm{Re}(s)$ large enough, which comes with the parallel expansions
\begin{align*}
\label{theorema}
&  \sum_{M\in\L}|W^M_0||W^G_0|^{-1}\int_{\Pi(M,V,\zeta)}a^{M}_{r,s}(\pi)I_M(\pi,f)d\pi\\
&= \sum_{M\in\L}|W^M_0||W^G_0|^{-1}\sum_{\gamma\in\Gamma(M,V,\zeta)}a_{r,s}^{M}(\gamma)I_M(\gamma,f).\notag
\end{align*}
The global coefficients $a^{M}_{r,s}(\pi)$ and $a_{r,s}^{M}(\gamma)$ that occur here are weighted forms of the coefficients $a^M(\pi)$ and $a^M(\gamma)$ that occur in Arthur's trace formula \cite{ITF1}. In particular, the spectral coefficient $a^{M}_{r,s}(\pi)$ is related to the unramified automorphic $L$-function $L^V(s,\pi,r)$. This can be seen as a step towards an $r$-invariant trace formula in the sense of Arthur \cite{problems}. If the weighted coefficients $a_{r,s}^M(\pi)$ and $a_{r,s}^M(\gamma)$ can be shown to have meromorphic continuation just to the left of the line Re$(s)=1$, then in principle the residue theorem can be applied to establish an $r$-trace formula {\em without} removing the contribution of the nontempered spectrum, what is generally seen to be the key obstruction in obtaining an $r$-trace formula \cite{FLN}. Indeed, the $L$-functions of nontempered automorphic representations are expected still to have meromorphic continuation, and thus the method that we propose here will establish a trace formula that carry information about these $L$-functions, which one might hope will be treated in the comparison of trace formulas.

\subsection{Main result} In this paper, we proceed to stabilize the linear form $I^r_s(f)$. Along the way, we formulate a weighted form of the Langlands-Shelstad transfer conjecture, namely the endoscopic transfer of weighted orbital integrals of the basic function (Conjecture \ref{conj}).  We shall use this transfer to streamline the stabilization of the unramified geometric terms in the trace formula, where before what was needed was the transfer of weighted orbital integrals of the unit element of the spherical Hecke algebra. The latter led to the weighted Fundamental Lemma conjectured by Arthur \cite[Conjecture 5.1]{STF1}, and which is now proved for split groups by the combined works of Waldspurger, Chaudouard, and Laumon \cite{charchange,CL}. Given that the ordinary Langlands-Shelstad transfer follows from the unweighted Fundamental Lemma, it seems natural to pose such an analogue. Indeed, Waldspurger has suggested such a possibility in the Lie algebra case \cite{traces}. The weighted fundamental lemma itself is of course the strongest evidence for this conjecture. By some stretch of the imagination, we might also take as evidence the transfer of weighted orbital integrals recently obtained by Li in the context of the Guo-Jacquet relative trace formula \cite{huajie}.

While our results will be stated in these terms, we emphasise that we shall assume Conjecture \ref{conj} only for convenience; it can be dropped by instead using the function $u^V_S\times b^S$ in place of $b^V$ in \eqref{bv}, where $S$ is large finite set of places depending on the support of $f$, and $u^V_S$ is the $\zeta^{-1}$-equivariant characteristic function of $K^V_S = \prod_{v \in S-V} K_v$, where $K_v$ is a fixed hyperspecial maximal compact subgroup of $G_v$. This would  be sufficient for capturing the poles of unramified automorphic $L$-functions on the spectral side, but comes at the expense of a slightly more complicated formula than the one obtained below. 

In any case, we establish a decomposition generalizing the stable trace formula \cite{STF1}, 
\[
I^r_s(f) = \sum_{G'}\iota(G,G')\hat{S}_{r,s}'(f'),
\]
for stable distributions $\hat{S}_{r,s}' = \hat{S}_{r,s}^{G'}$ on the endoscopic groups $G'$ for $G$, where $f\to f'$ is the ordinary Langlands-Shelstad transfer and $\iota(G,G')$ is the coefficient defined in \cite[Theorem 8.3.1]{Kcusp}. The transfer $f'$ is determined only up its stable orbital integrals and the distribution $f\to \hat{S}_{r,s}^{*}(f^*)$ that is indexed by $G'=G^*,$ the quasisplit inner form of $G$, can be regarded as the stable part of $I^r_s(f)$. Assume inductively that $S_{r,s}'$ exists and is stable.  In the case that $G$ is quasisplit, in which case $G^*=G$, we define
\[
S^G_{r,s}(f) = I^r_s(f) - \sum_{G'\neq G^*}\iota(G,G')\hat{S}_{r,s}'(f').
\]
The problem then is to show that the right-hand side is stable. If $G$ is not quasisplit, all the terms on the right-hand side are defined inductively. The decomposition of $I^r_s(f)$ above then represents an identity to be proved. 

Let $\C^\circ(G,V,\zeta) = \C^\circ(G_V^Z,\zeta_V)$ be the space of $\zeta_V^{-1}$-equivariant nonconpactly supported test functions on $G_V^Z$ in \cite[\S2.1]{witf}. It is a subspace of the $\zeta^{-1}$-equivariant Harish-Chandra Schwartz functions $\C(G,V,\zeta)$, and is the class of functions to which Finis-Lapid-M\"uller extended the trace formula to. The main result of this paper is the following.

\begin{theorema}
\textnormal{(a)}
 If $G$ is arbitrary, there is a linear form
\[
I^
\E_{r,s}(f) = I^\E({f}^r_s), \qquad f\in \C^\circ(G,V,\zeta)
\]
converging absolutely for $\mathrm{Re}(s)$ large enough. It comes with the parallel expansions
\begin{align*}
&  \sum_{M\in\L}|W^M_0||W^G_0|^{-1}\int_{\Pi(M,V,\zeta)}a^{M,\E}_{r,s}(\pi)I^\E_M(\pi,f)d\pi\\
&= \sum_{M\in\L}|W^M_0||W^G_0|^{-1}\sum_{\gamma\in\Gamma(M,V,\zeta)}a^{M,\E}_{r,s}(\delta)I^\E_M(\gamma,f).
\end{align*}
\textnormal{(b)}
Assume Corollary \ref{conk}. If $G$ is quasisplit, there is a stable linear form
\[
S^G_{r,s}(f) = S({f}^r_s), \qquad f\in \C^\circ(G,V,\zeta)
\]
converging absolutely for $\mathrm{Re}(s)$ large enough. It comes with the parallel expansions
\begin{align*}
&  \sum_{M\in\L}|W^M_0||W^G_0|^{-1}\int_{\Phi(M,V,\zeta)}b^{M}_{r,s}(\phi)S_M(\phi,f)d\phi\\
&= \sum_{M\in\L}|W^M_0||W^G_0|^{-1}\sum_{\delta\in\Delta(M,V,\zeta)}b^{M}_{r,s}(\delta)S_M(\delta,f).
\end{align*}
\end{theorema}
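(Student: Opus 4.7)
The plan is to mirror Arthur's stabilization of the invariant trace formula, replacing the unit function $u^V$ by the basic function $b^V$ throughout, and to use the weighted invariant trace formula of \cite{witf} as the starting point. The argument is inductive on $\dim G$: whenever a proper endoscopic group $G'$ appears, I may assume the analogue of the present theorem for $G'$. The local endoscopic and stable weighted characters $I^\E_M(\pi,f)$, $I^\E_M(\gamma,f)$, $S_M(\phi,f)$, $S_M(\delta,f)$ are defined by the same recipe as in Arthur's work, applied now to Schwartz functions $f\in\C(G,V,\zeta)$; their continuity on the Schwartz space follows from the extension of the invariant theory carried out in \cite{witf}.

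For part (a), I would set
\[
I^\E_{r,s}(f) \;=\; \sum_{G'} \iota(G,G')\,\hat{S}^{G'}_{r,s}(f'),
\]
where $f\mapsto f'$ is the Langlands--Shelstad transfer on $V$ (outside $V$ the transfer on the unramified Hecke algebras sends $b^V$ to its analogue $(b^V)'$). The endoscopic coefficients $a^{M,\E}_{r,s}(\pi)$ and $a^{M,\E}_{r,s}(\gamma)$ are defined by combining the endoscopic decomposition of $a^M(\pi)$ and $a^M(\gamma)$ with the $L$-function weighting from $b^V$ already built into $a^M_{r,s}$ in \cite{witf}. The parallel expansions then follow by plugging the expansion of $I^r_s$ from \cite{witf} into the unweighted endoscopic decomposition of \cite{STF1}, interchanging summations, which is justified for $\mathrm{Re}(s)$ large by the convergence already established in \cite{witf}.

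For part (b), assuming $G$ is quasisplit, I would define
\[
S^G_{r,s}(f) \;=\; I^r_s(f) \;-\; \sum_{G'\neq G^*} \iota(G,G')\,\hat{S}^{G'}_{r,s}(f'),
\]
and prove stability of this difference. By induction all $\hat{S}^{G'}_{r,s}$ with $G'\neq G^*$ are already stable, so stability of $S^G_{r,s}$ reduces to showing that the unstable part of $I^r_s$ is cancelled by the inductive sum. This is precisely where Corollary \ref{conk} enters: the weighted endoscopic transfer of the basic function matches, place by place, the unramified factors of the weighted orbital integrals on the geometric side, so the geometric expansion of $I^r_s$ decomposes along endoscopic groups just as in the compactly supported case. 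The stable coefficients $b^M_{r,s}(\phi)$ and $b^M_{r,s}(\delta)$ are then obtained by inverting the endoscopic decomposition of $a^{M,\E}_{r,s}$, and the parallel stable spectral and geometric expansions follow.

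The main obstacle is the geometric stabilization in part (b): one must know that the basic function at unramified places transfers to endoscopic groups compatibly with weighted orbital integrals, which is the content of Conjecture \ref{conj} and its corollary \ref{conk}. Once this is granted, the global stabilization runs along the same lines as \cite{STF1, STF2}: the cancellation of unstable terms is a formal consequence of the local transfer identity, and convergence of the new spectral and geometric expansions for $\mathrm{Re}(s)$ large enough is inherited from the absolute convergence of the relevant $L$-functions and from \cite{witf}.
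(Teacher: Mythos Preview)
Your high-level strategy matches the paper's: define $I^\E_{r,s}$ and $S^G_{r,s}$ inductively, use Corollary~\ref{conk} for the unramified geometric terms, and appeal to the local theory extended to $\C(G,V,\zeta)$ in \cite{witf}. The parallel expansions themselves do follow fairly directly along the lines of \cite[Theorems 10.1, 10.6]{STF1}, as you suggest; in the paper these are Propositions~\ref{geomprop} and~\ref{specprop}.

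However, your claim that ``the cancellation of unstable terms is a formal consequence of the local transfer identity'' is where the proposal falls short. The expansions in part~(b) are indexed over $\Delta(M,V,\zeta)$ and $\Phi(M,V,\zeta)$, not over the larger endoscopic sets $\Delta^\E$ and $\Phi^\E$. To obtain this, one must prove that the stable coefficients $b^M_{r,s}(\delta)$ and $b^M_{r,s}(\phi)$ are supported on the smaller sets---this is the content of the paper's main global Theorems~\ref{globge}(b) and~\ref{globsp}(b), and they are not formal. The paper first reduces (via Theorems~\ref{propgeom} and~\ref{specthm}) to statements about the elliptic and discrete coefficients $b^G_{r,s,\el}(\dot\delta_S)$ and $b^G_{r,s,\disc}(\dot\phi)$. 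It then carries out a global descent argument (Section~\ref{sectiondesc}, following \cite{STF2}) to reduce the geometric side further to unipotent elements. Finally, in Section~\ref{sectionstab} it relates the parabolic, discrete, and unipotent parts (Lemma~\ref{pardiscu}) and establishes vanishing of the discrete terms (Proposition~\ref{disc0}) by invoking Arthur's delicate local construction of the maps $\varepsilon_M$, $\varepsilon^M$ from \cite[Proposition 3.1, Corollary 5.2]{STF3}, now extended to Schwartz functions. None of this machinery is visible in your outline, and it is precisely this chain of reductions that constitutes the proof; the weighted transfer Corollary~\ref{conk} is only one ingredient, used to stabilize the unramified factors $r^G_M(k,b)$ (Proposition~\ref{sgmell}).
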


\noindent The global coefficients $b^{M}_{r,s}(\delta)$ and $b^{M}_{r,s}(\phi)$ are now weighted forms of the stable coefficients $b^M(\delta)$ and $b^M(\phi)$ that occur in the usual stable trace formula, which are naturally constructed from the `unstable' coefficients, and similarly for the endoscopic coefficients $a^{M,\E}_{r,s}(\gamma)$ and $a^{M,\E}_{r,s}(\pi)$.  Corollary \ref{conk} here is the weighted transfer of stable orbital integrals of basic functions, which we recall is possible to circumvent at the cost of using less elegant test functions. Moreover, the assumption that Re$(s)$ is sufficiently large is a matter of the choice of representation $r$ of $^LG$. In the cases where meromorphic continuation is known, for example the $L$-functions arising from the Langlands-Shahidi method, this restriction can also be relaxed. Along with the expectation of meromorphic continuation of automorphic $L$-functions, we also expect these weighted coefficients to also have meromorphic continuation to the entire complex plane.

Our proof generally follows  Arthur's method in the stabilization of the trace formula. In \cite[\S6,7]{STF1}, this takes the form of a pair of main global theorems and main local theorems, regarding nature of the global coefficients and local distributions that appear on either side of the expansions. It is perhaps worthy of note that the local distributions that occur in the weighted trace formula remain the same, and what is needed is simply an extension to $\C^\circ(G,V,\zeta)$. The analogues of the main local theorems will be proved in the appendix, whereas the main body of the paper is dedicated to stabilizing the new weighted global coefficients. (This may seem a little disjointed to the reader. The simple reason for this is because the results of Appendix \ref{conts} were proved in an earlier version of \cite{witf}, but have been moved to the current paper by recommendation of the referee.)

Theorem 1 is a step towards the $r$-stable trace formula, whereby the cuspidal spectral terms are nonzero only if $L(s,\phi,r)$ has a pole at $s=1$. Establishing it would amount to showing that the distribution $S^G_{r,s}(f)$ has meromorphic continuation to the point $s=1$, and obtaining an explicit formula for its residue there. From this it will follow that the residue distribution
\[
S^G_r(f) = %\underset{s=1}{\mathrm{Res}}\left(S^G_{r,s}(f)\right)
-\frac{1}{2\pi i}\int_{\text{Re}(s)=1}S^G_{r,s}(f) x^s\frac{ds}{s}, \quad x>0,
\]
can be seen to be an $r$-trace formula, in the sense that the contribution of the cuspidal tempered terms for which the associated $L(s,\pi,r)$ is holomorphic in the half plane Re$(s)\ge1$ will be zero. Here the contour integral should be taken with a small semi-circle to the left of $s=1$ to detect the possible pole there. Crucially, this approach circumvents the difficulties of presented by the nontempered terms by allowing the contribution of the nontempered discrete spectrum to remain. Thus the distribution $S^G_r(f)$ will retain information about nontempered representations whose $L$-functions have poles in the half plane Re$(s)\ge1$ for a given $r$. Indeed, by the principle of functoriality one also expects such $L$-functions to have meromorphic continuation, so the distribution $S^G_r(f)$ will be well-defined, and thus justifies calling $S^G_r(f)$ an $r$-trace formula. An approximation of this was constructed by Getz for GL$(n)$ using certain nonabelian Fourier transforms following ideas of Braverman and Kazhdan \cite[\S5]{getz}.

The construction of the endoscopic and stable global coefficients are built from the global coefficients in $I_{s}^r(f)$, and their meromorphic continuation follows from that of the original coefficients. We recall that the spectral coefficients $b^M_{r,s}(\phi)$ contain information related to unramified automorphic $L$-functions, hence showing their meromorphic continuation to $s=1$ amounts to doing the same for the $L$-functions.  As is typical with the trace formula, rather than doing this directly we can consider the geometric coefficients $b^M_{r,s}(\delta)$ instead, which we shall study in the sequel.

We conclude with a brief summary of the paper. Section \ref{sectionconj} formulates the weighted transfer conjecture, which we use in Section \ref{sectionunrg} to stabilize the unramified terms on the geometric side of the trace formula. In Section \ref{sectionunrs}, we stabilize the unramified terms on the spectral side of the trace formula. These unramified terms occur as the new weighted terms in the global coefficients on either side, and represent the main technical innovations of this paper. This rest of the argument is in fact largely formal, following the method of Arthur's stabilization quite faithfully, as long as we pay attention to the the spaces of functions involved. We formulate the analogues of Arthur's main global theorems in Theorems \ref{globge} and \ref{globsp}, whose proofs will take up the rest of the paper. In Section \ref{sectiondesc} we prove a descent formula for the global geometric coefficients along the lines of \cite{STF2}, which reduces the study of the geometric coefficients to unipotent elements. Finally, in Section \ref{sectionstab} we complete the proofs of the global theorems and thereby establish the endoscopic and stable trace formulas. The analogous main local theorems needed for the stabilisation are proved in the appendix. 

\section{A weighted transfer conjecture}
\label{sectionconj}

The stable trace formula established by Arthur depends on a weighted Fundamental Lemma, which is a generalization of the Fundamental Lemma originally conjectured by Langlands and Shelstad \cite{LS}. The unweighted version was proved by Ng\^{o} \cite{Ngo} for Lie algebras in positive characteristic, from which the required result follows. It implies the Langlands-Shelstad transfer of orbital integrals \cite{transfert}, which is required in the various endoscopic constructions necessary for the stabilization of the trace formula. On the other hand, the weighted Fundamental Lemma for Lie algebras has been proved by Chaudouard and Laumon in the split case and in positive characteristic \cite{CL}, and by Waldspurger this yields the weighted Fundamental Lemma in characteristic zero \cite{charchange}. Similar methods are expected apply to the general case of quasisplit groups, which the full stabilization of the trace formula is conditional upon.  In our case, the stabilization of the unramified terms in the trace formula will require a transfer of weighted orbital integrals instead. %Let us first recall the statement of the weighted Fundamental Lemma as formulated in \cite[Conjecture 5.1]{STF1}. 

In order to formulate our statements precisely, we first fix some necessary notation. Most of the notation and constructions that we employ in this paper are chosen to be consistent with those of \cite{STF1,STF3}, while the main variations occur where the basic functions are involved. 

\subsection{$K$-groups}

We will work in the setting of a multiple group $G$, which is an algebraic variety whose components $G_\alpha$ are reductive groups over a field $F$,
\[
G =\coprod_{\alpha\in\pi_0(G)}G_\alpha,
\]
with an equivalence class of frames 
\[
(\psi,u) = \{(\psi_{\alpha\beta},u_{\alpha\beta}):\alpha,\beta\in \pi_0(G)\},
\]
where $\psi_{\alpha\beta}:G_\beta\to G_\alpha$ is an $\bar{F}$-isomorphism and $u_{\alpha\beta}:\Gamma_F\to G_{\alpha,\text{sc}}$ is a locally constant function from the absolute Galois group $\Gamma_F$ of $F$ to the simply connected cover $G_{\alpha,\text{sc}}$ of the derived group of $G_\alpha$. The class of $(\psi,u)$ satisfies compatibility conditions described in \cite[\S4]{STF1}. If $F$ is a local field, we call a multiple group $G$ a $K$-group if the $u_{\alpha\beta}$ are 1-cocycles and for each $\alpha$, the image of the map from $\{u_{\alpha\beta}:\beta\in\pi_0(G)\}$ to $H^1(F,G_\alpha)$ is in bijection with the image of $H^1(F,G_{\alpha,\text{sc}})$ in $H^1(F,G_\alpha)$. If $F$ is a global field, we call a multiple group $G$ a $K$-group if it satisfies the preceding properties, and it has a local product structure, i.e., there is a family of local $K$-groups $(G_v,F_v)$ indexed by the valuations of $F$, and a family of homomorphisms $G\to G_v$ over each completion $F_v$, whose restricted direct product $G\to \prod_v G_v$ is an isomorphism of schemes over the adele ring $\A$ of $F$. Such a structure determines a surjective map 
\[
\alpha\to \alpha_V=\prod_{v\in V}\alpha_v, \quad \alpha\in\pi_0(G),\alpha_v\in\pi_0(G_v)
\]
of components, for any finite set of valuations $V$ of $F$, and is bijective if $V$ contains the infinite places $V_\infty$ of $F$. Suppose that $G$ is a $K$-group, and that $G^*$ is a quasisplit inner twist of $G$, which determines a quasisplit inner twist $G_v^*$ of each of the local $K$-groups $G_v$. We call $G$ an inner $K$-form of $G^*$, and we call $G$ quasisplit if at least one of its components is quasisplit over the global field $F$.

We shall fix a central induced torus $Z$ of $G$, and a character $\zeta$ of $Z(\A)/Z(F)$. It comes with central embeddings $Z\stackrel{\sim}{\rightarrow} Z_\alpha$ over $F$ for each $\alpha\in \pi_0(G)$ that are compatible with the isomorphisms $\psi_{\alpha\beta}$. Locally, we have pairs $Z_v$ and $\zeta_v$ where $\zeta_v$ is a character on $Z_v$. We then write $G_V^Z$ for the set of  $x \in G_V= \prod_{v\in V}G_v$ such that $H_G(x)$ lies in the image of the canonical map from $\a_Z$ to $\a_G$. We shall generally assume that the finite set $V$ contains the set places $V_\ram(G,\zeta)$ over which $G,Z,$ and $\zeta$ are ramified. We also have natural notions of Levi subgroups and parabolic subgroups of a $K$-group $G$, whence we denote by $\L(M)$ to be the collection of Levi subgroups of $G$ containing $M$, $\L^0(M)$ the subset of proper Levi subgroups in $\L(M)$, and $\P(M)$ the collection of parabolic subgroups of $G$ containing $M$. We shall fix a minimal Levi $M_0$ of $G$, and write $\L=\L(M_0)$ and $\L^0=\L^0(M_0)$. For any $M\in \L$, we have the real vector space $\a_M= \text{Hom}(X(M)_F,\R)$, and the set 
\[
\a_{M,V}=\{H_M(m) : m\in M(F_V)\}
\]
is a subgroup of $\a_M$, and $F_V = \prod_{v\in V} F_v$. It is equal to $\a_M$ if $V$ contains an archimedean place, and is a lattice in $\a_M$ otherwise.

\subsection{Hecke algebras}

Now, suppose we have fixed a maximal compact subgroup $K_v$ of $G_v$ at each place $v$. Following \cite[\S1]{STF1}, we define 
\[
\H(G^Z_V,\zeta_V)= \H(G,V,\zeta)
\]
to be the $\zeta^{-1}_V$-equivariant Hecke algebra of $G_V^Z$, consisting of smooth, compactly-supported $K_V$-finite functions on $G_V$. We similarly define $\H(G_v,\zeta_v)$ to be the local $\zeta_v^{-1}$-equivariant Hecke algebra. If $\H(G_v)$ is the usual Hecke algebra of $G_v$, then one has a natural projection onto $\H(G_v,\zeta_v)$ given by sending any $f_v\in \H(G_v)$ to the function
\be
\label{zetav}
f_v^\zeta(x) =  \int_{Z_v}f_v(zx)\zeta_v(z) dz,\quad x\in G_v
\ee
in $\H(G_v,\zeta_v)$. 

We also require the larger space of almost compactly-supported functions $\H_\ac(G_v)$ introduced in \cite[\S11]{res1}, defined in the following manner. Let $\Gamma$ be a finite subset of irreducible representations of $K_v$, and let $\H(G_v)_\Gamma$ be the subspace of functions in $\H(G_v)$ that transform on each side under $K_v$ according to representations in $\Gamma$.  We then define $\H_\ac(G_v)_\Gamma$ to be the space of functions on $G_v$ such that for every $g\in C_c^\infty(\a_{G,v})$ the function 
\[
f^b(x) = f(x)g(H_G(x))
\]
belongs to $\H(G_v)_\Gamma$. We then form the direct limit over finite subsets $\Gamma$,
\[
\H_\ac(G_v) = \varinjlim_\Gamma \H_\ac(G_v)_\Gamma.
\]
We may also view $\H_\ac(G_v)$ as the space of uniformly smooth functions $f$ in $\H(G_v)$ such that for any $X\in \a_{G,v}$, the restriction of $f$ to the preimage of $X$ in $G_v$ is compactly supported. By uniformly smooth, we mean that the function $f$ is bi-invariant under an open compact subgroup of $G_v$. We can then define 
\[
\H_\ac(G_v,\zeta_v) = \{ f^\zeta : f \in \H(G_v,\zeta_v)\}
\]
to be the image of $\H_\ac(G_v)$ under the mapping $f\mapsto f^\zeta$ given by \eqref{zetav}, containing $\H(G_v,\zeta_v)$. Similarly, we define
\[
\H_\ac(G_V^Z,\zeta_V) = \H_\ac(G,V,\zeta) = \{f^\zeta : f \in \H(G,V,\zeta)\}.
\]We topologise these spaces in the manner described in \cite[\S11]{res1}. As remarked in \cite[\S23]{art}, the invariant linear forms $I_M(\gamma,f)$ and $I_M(\pi,f)$ that we shall study are determined by their restriction to $\H(G_V)$ of $\H_\ac(G_V)$, and moreover that $I_M(\gamma,f)$ can be extended continuous to a linear form on the Schwartz space $\C(G_V)$.

\subsection{Basic functions}

We can now introduce the basic functions that enter into the unramified terms in the trace formula. Let $r$ be an irreducible finite-dimensional complex representation of the $L$-group $^L{G}$ of $G$. Suppose that $V$ contains $V_\infty$ and $V_\ram(G,\zeta)$, and that $r$ is unramified outside of $V$. Let 
\[
c=\{c_v: v\not\in V\}
\]
be a family of semisimple conjugacy classes in the local $L$-group $^LG_v = \hat{G}\rtimes W_{F_v}$ whose image in the Weil group $W_{F_v}$ is a Frobenius element. We also assume that the image of $c_v$ under the projection $^LG_v \to {^LZ_v}$ is the conjugacy class in $^LZ_v$ associated to $\zeta_v$, and that for every $\hat{G}$-invariant polynomial $A$ on $^LG$, we have $|A(c_v)|\le q^{r_A}_v$ for each $v\not\in V$ and for some constant $r_A>0$ depending only on $A$. To such an element we may associate the unramified local $L$-function
\[
L_v(s,c,r) = \det(1-r(c_v)q^{-s}_v)^{-1}, \quad s\in \CC
\]
which is analytic in some right half plane. It can be expressed as the formal series
\[
\sum_{k=0}^\infty \tr((\text{Sym}^k r)(c_v))q^{-ks}_v,
\]
where $q_v$ is the cardinality of the residue field of $F_v$. Let us write $\H(G_v,K_v)$ for the spherical Hecke algebra of $G_v$ with respect to $K_v$, and define $\H_\ac(G_v,K_v)$ as the space of bi-$K_v$-invariant functions in $\H_\ac(G_v)$. Suppose we have fixed a Borel pair $(B_v,T_v)$ of $G_v$ defined over $F_v$ such that the torus $T_v$ splits over a finite unramified extension of $F_v$. We shall also assume that $K_v$ is a hyperspecial maximal compact subgroup that lies in the apartment of $T_v$, and write $K_{T_v} = K_v\cap T_v$. Let $W_v = N_{G_v}(T_v)/T_v$ be the Weyl group of $T_v$ in $G_v$. We recall that the Satake isomorphism gives a bijection
\[
\H(G_v,K_v) \to \H(T_v,K_{T_v})^{W_v},
\]
where the right hand side can be identified with the coordinate algebra of $W_v$-invariant regular functions on the complex torus $\hat{T}_v = X(T_v)\otimes \CC$. The isomorphism extends to the slightly larger space as a consequence of \cite[Proposition 2.7]{Li}, 
\be
\label{satake}
\H_\ac(G_v,K_v) \to \H_\ac(T_v,K_{T_v})^{W_v}
\ee
where the left hand side now consists of $W_v$-invariant formal series on the same torus. 

The basic function $b_v$ belongs to $\H_\ac(G_v,K_v)$ by \cite[Lemma 2.1]{witf}, which maps to an element in $\H_\ac(G_v,\zeta_v)$ by \eqref{zetav}. It also belongs to the Fr\'echet algebra
\[
\C^\circ(G,V,\zeta)=\C^\circ(G_V^Z,\zeta_V) 
\]
defined in \cite[\S2.1]{witf}, modeled after the space introduced by Finis, Lapid, and M\"uller to extend the trace formula. It is topologised using a family of seminorms on right $K_V$-invariant functions and then taking an inductive limit. The Hecke algebra $\H(G,V,\zeta)$ is dense in $\C^\circ(G,V,\zeta)$, which contains $\C(G,V,\zeta)$ but not $\H_\ac(G,V,\zeta)$ as a subalgebra.  If $G$ is a connected reductive group, then $b_v=b^r_{v,s}$ is the unramified spherical function in $\H_\ac(G_v,\zeta_v)$ whose character at the representation $\pi_v=\pi(c_v)$ associated to $c_v$ satisfies
\[
b_{G,v}(c_v) = \tr(\pi_v(b_v))=L_v(s,c,r)
\]
for any $s\in\CC$ with real part large enough. If $G$ is a $K$-group, we note that each of the $G_\alpha$ are related by inner twists and therefore share a common dual group $\hat{G}$. We can thus write $b_v = \oplus_{\alpha_v} b_{v,\alpha_v}$ and to each $c_v$ we have the identity
\be
\label{bgl}
b_{G,v}(c_v) = |\pi_0(G_v)| L_v(s,c_{v},r).
\ee
Throughout this paper, we shall assume $r$ and $s$ to be fixed with Re$(s)$ large unless otherwise specified.

\begin{rem}As an aside, we note that if $G$ is semisimple,  one can also embed $\hat{G}$ into a reductive group $\hat{G}_r$ whose center $Z(\hat{G})$ is isomorphic to $\CC^\times$ \cite{cass}, and extend $r$ to a 	representation of $\hat{G}_r$ such that $r(z) = zI$, namely,
\[
\hat{G}_r = \frac{\CC^\times\times \hat{G}}{\{(r(z),z^{-1}):z\in Z(\hat{G})\}}.
\]
This provides a homomorphism $\det_v:G_{r,v}\to F^\times_v$ for $v\not\in V$, in which case $b^r_{v,s}$ is equal to the product of $b^r_{v,0}$ and $|\det_v|^s$. We caution that while the local $L$-factor, and hence $b^r_{v,s}$ may be defined at $s=0$, it is by no means clear that the product over all $v\not\in V$ converges, as it amounts to the meromorphic continuation of the unramified global $L$-function.\end{rem}

\subsection{Endoscopic data}

The endoscopic datum for $G$ consists of a connected quasisplit group $G'$ over $F$, embedded in a larger datum $(G',\G',s',\xi')$, where we recall that $G'$ is a quasisplit group over $F$, $\G'$ is a split extension of $W_F$ by a dual group $\hat{G}'$ of $G'$, $s'$ is a semisimple element in $\hat{G}$, and $\xi'$ is an $L$-embedding of $\G'$ into $^LG$ \cite[(1.3)]{LS}. It is required that $\xi'(\hat{G}')$ be equal to the connected centralizer of $s'$ in $\hat{G}$, and that $\xi'(u')s'=s'\xi'(u')a(u')$ for any $u'\in\G'$ and locally trivial cocycle $a\in H^1(W_F,Z(\hat{G})$. We say that an endoscopic datum $G'$ is elliptic if the connected component of the identity in the centralizer of $\xi'(\G')$ in $\hat{G}$ is trivial, which is the same as saying that the image of $\xi'$ in $^LG$ is not contained in $^LM$ for any proper Levi subgroup $M$ of $G$ over $F$.

Let $G_{v,\alpha_v}$ be a connected component of $G_v$. We write $\Gamma_\reg(G_{v,\alpha_v})$ for the set of strongly regular conjugacy classes in $G_{v,\alpha_v}$, and also
\[
\Gamma_\reg(G_v) = \coprod_{\alpha_v\in\pi_0(G_v)}\Gamma_\reg(G_{\alpha_v,v}).
\]
We shall write $\E(G)$ for the set of isomorphism classes of endoscopic data for $G$ over $F$ that are locally relevant to $G$, in the sense that for every $v$, $G'_v$ has a strongly $G$-regular element that is an image of some class in $\Gamma_\reg(G_v)$. If $V$ is a finite set of valuations of $F$ that contains $V_\ram(G)$, we write $\E(G,V)$ for the subset of elements $G'\in\E(G)$ that are unramified outside of $V$. We also write $\E_\el(G)$ and $\E_\el(G, V )$ for the subset of elements in $\E(G)$ and $\E(G, V)$, respectively, that are elliptic over $F$. Given any $G'\in\E_\el(G)$, we shall fix auxiliary data $(\tilde{G}',\tilde\xi')$ where $\tilde{G}'\to G'$ is a central extension of $G'$ by an induced torus $\tilde{C}'$ and $\tilde\xi':\G'\to {^L\tilde{G}'}$ is an $L$-embedding satisfying the conditions of \cite[Lemma 2.1]{LCR}. 

\subsection{Transfer factors}

To discuss the endoscopic transfer of functions, we have to define the transfer factors that we shall use, following \cite[\S4]{STF1}. Let $\gamma_v$ be a strongly regular element of $G_v$ and $\delta'_v$ a strongly $G$-regular element in $\tilde{G}'_v$. Suppose that $V$ contains $V_\ram(G)$. We shall use the local transfer factors for $K$-groups 
\[
\Delta(\delta'_v,\gamma_v)= \Delta_{K_v}(\delta'_v,\gamma_v),
\]
normalized according to \cite[\S4]{STF1}. In particular, they depend a priori on a choice of local base points $\bar\delta'_v\in\tilde{G}'(F)$ and $\bar\gamma_v\in G_\alpha(F)$ for some $\alpha\in\pi_0(G)$, but can in fact be normalized to depend only on the choice of hyperspecial maximal compact subgroup $K_v$ of $G_v$. %, which we give an impressionistic description of here. Suppose that $V$ contains $V_\ram(G)$. Since $G'$ is locally relevant to $G$, There is a strongly $G$-regular element in $G'(F)$ that is a local image of some point in $G_v$ for each $v$, and let $\bar\delta'$ a point in its preimage in $\tilde{G}'(F)$. The projection of $\bar\delta'$ onto $G'/Z'(F)$ is the adelic image of a strongly regular element $\bar\gamma$ in $G_\alpha(\A)$ for some $\alpha\in\pi_0(G)$.
We recall that two elements $\gamma\in G_{\alpha,V}$ and $\delta\in G_{\beta,V}$ are called stably conjugate if $\psi_{\alpha\beta}(\delta)$ is stably conjugate to $\gamma$ in $G_{\alpha,V}$. Let $\Delta_{G\text{-reg}}(\tilde{G}_V')$ be the the set of $G$-regular stable conjugacy classes in $\tilde{G}'_V$. The transfer factor for $\gamma_V\in\Gamma_\reg(G_V)$ and $\delta_V'\in \Delta_{G\text{-reg}}(\tilde{G}_V')$ is then defined as the product
\[
\Delta(\delta'_V,\gamma_V) = \prod_{v\in V}\Delta(\delta'_v,\gamma_v)
\]
of local transfer factors, which depends only the hyperspecial maximal compact subgroup $K^V = \prod_{v \not\in V}K_v$.  We shall also consider local endoscopic data $G'_v$ of the local $K$-groups $G_v$, and write $\E(G_V)$ for the set of products $G'_V = \prod_{v\in V} G'_v$ of $G'_v\in \E(G_v)$. For such a $G'_V$, the transfer factor 
\[
\Delta(\delta'_V,\gamma_V),\qquad \delta'_V\in \prod_{v\in V}\Delta_{G\text{-reg}}(G_v),\  \gamma_V\in \prod_{v\in V}\Gamma_\reg(G_v)
\]
is a local object that does not depend on the local base point or the corresponding auxiliary data $(\tilde{G}'_V,\tilde\xi'_V)$.

Let us also define $\tilde\Delta_\reg^\E(G_V)$ to be the quotient of the family of elements 
\[
\{(G'_V,\tilde\xi'_V,\delta'_V):G'_V\in \E(G_V), \tilde\xi'_V:\G'_V\to {^L\tilde{G}'_V},\delta'_V\in\Delta_{G\text{-reg}}(\tilde{G}'_V)\}
\]
 that are $G_V$-images, up to the equivalence defined in \cite[\S2]{LCR} for the set that was denoted as $\tilde\Gamma^\E(G_v)$ therein. We similarly define $\Delta^\E_\reg(G_V)$ for the quotient of the family of elements in $(G'_V,\delta'_V)$ that are $G_V$-images, where $G'_V\in \E(G_V)$ and $\delta'_V\in \Delta_{G\text{-reg}}(G_V')$. The transfer factor depends only on the image $\delta_V$ of $\delta'_V$ in $\tilde\Delta_\reg^\E(G_V)$. We can therefore regard the transfer factor
\[
\Delta(\delta_V,\gamma_V) = \Delta(\delta_V',\gamma_V)
\]
as a function on $\tilde\Delta_\reg^\E(G_V)\times \Gamma_\reg(G_V).$

\subsection{Orbital integrals}

Suppose for the moment that $G$ is a connected reductive group over $F$. Given an element $\gamma_V\in G^Z_V$, let $G_{\gamma_V}=\prod_{v}G_{\gamma_v}$ be its centralizer and $D(\gamma_V)=\prod_vD(\gamma_v)$ the Weyl discriminant for some choice of invariant measure on the quotient $G_{\gamma_V}\cap G^Z_V\bs G^Z_V$. Then for any $f\in \C(G^Z_V)$, we define the orbital integral of $f$ at $\gamma_V$ as
\[
f_G(\gamma_V) =|D(\gamma_V)|^\frac12\int_{G_{\gamma_V}\cap G^Z_V\bs G^Z_V}f(x^{-1}\gamma_V x)dx
\]
and the $\zeta_V$-equivariant analogue
\be
\label{orb}
\int_{Z_V}\zeta_V(z)f_G(z\gamma_V)dz.
\ee
The latter belongs to the space of $\zeta_V$-equivariant distributions $\D(G^Z_V,\zeta_V)$ that are invariant under $G^Z_V$-conjugation and supported on the preimage in $G^Z_V$ of a finite union of conjugacy classes in $\bar G^Z_V=G^Z_V/Z_V$. We write $\D_\orb(G^Z_V,\zeta_V)$ for the subspace of $\D(G^Z_V,\zeta_V)$ spanned by \eqref{orb}. The spaces are equal if $V$ contains only nonarchimedean places, and contain more general distributions otherwise. We shall fix a basis $\Gamma(G^Z_V,\zeta_V)$ of $\D(G^Z_V,\zeta_V)$, satisfying natural compatibility conditions described in \cite[p.186]{STF1}. For example, the intersection $\Gamma_\orb(G^Z_V,\zeta_V)=\Gamma(G^Z_V,\zeta_V)\cap \D_\orb(G^Z_V,\zeta_V)$ is required to be again a basis of $\D_\orb(G^Z_V,\zeta_V)$.  Given an element $\gamma_V$ in $G_V^Z$, we can identify it with an element in $\D_\orb(G^Z_V,\zeta_V)$ by sending $f \in \C(G^Z_V)$ to the $\zeta_V$-equivariant orbital integral of $f$ at $\gamma_V$.

We can further define the weighted orbital integral at a strongly regular conjugacy class $\gamma_V$ of $M_V^Z$ as
\[
J_M(\gamma_V,f) =|D(\gamma_V)|^\frac12\int_{G_{\gamma_V}\cap G^Z_V\bs G^Z_V}f(x^{-1}\gamma_V x)v_M(x)dx
\]
where $v_M(x)$ is the volume of a certain convex hull depending on $x$ and $M$. The weighted orbital integrals at singular elements are more complicated to define, and are described in \cite[\S1]{realgerms} for real groups and \cite[(6.2)]{local} for $p$-adic groups. The $\zeta_V$-equivariant analogues are then defined in a similar fashion as \eqref{orb}, and the extension of these definitions to a $K$-group $G$ is largely formal (cf. \cite[\S3]{ArtTW}).

Let $S$ be a large finite set of valuations of $F$ containing $V\supset V_\infty$. The unramified weighted orbital integrals will be defined at the places $v$ in $S-V$. Let us write 
\[
G^V_S = \prod_{v\in S-V}G_v, \quad  Z^V_S = \prod_{v\in S-V}Z_v, \quad K^V_S = \prod_{v\in S-V}K_v,
\]
and define $\K(\bar{G}^V_S)$ to be the set of conjugacy classes in $\bar{G}^V_S=G^V_S/Z^V_S$ that are bounded, meaning that for each $v\in S-V$, the image of each element lies in a compact subgroup of $G_v$. Any element $k\in \K(\bar{G}^V_S)$ determines a distribution $\gamma^V_S(k)$ in $\Gamma_\orb(\bar{G}^V_S,\zeta^V_S)$. Given $\gamma\in \Gamma(G^Z_V,\zeta_V)$ and $k\in\K(\bar{G}^V_S)$, we write $\gamma\times k = \gamma\times \gamma^V_S(k)$ for the associated element in $\Gamma(G^Z_S,\zeta_S)$. Let $u_v$ be the function on $G_v$ supported on $K_vZ_v$ such that $u_v(kz) = \zeta^{-1}(z)$ for any $k\in K_v$ and $z\in Z_v$. We define the unramified weighted orbital integral
\[
r^G_M(k) = J_M(\gamma^V_S(k),u^V_S)
\]
on $\K(\bar{M}^V_S)$. 

\subsection{Stable distributions}

We are almost ready to state the weighted transfer conjecture. Suppose that $c$ belongs to the set of semisimple conjugacy class $\Gamma_\ss(G^Z_V)$ of $G^Z_V$. We write $\D_c(G^V_Z,\zeta_V)$ for the subspace of distributions in $\D(G^V_Z,\zeta_V)$ whose preimages in $G^Z_V$ are supported on conjugacy classes with semisimple part equal to $c$. We then write $\Gamma_\ss(\bar{G}^Z_V,\zeta_V)$ for the classes $c$ such that $\D_c(G^Z_V,\zeta_V)$ is nonzero. As with \cite[\S5]{STF1}, we shall assume for simplicity that $\Gamma_\ss(\bar{G}^V_S,\zeta^V_S)$ is equal to $\Gamma_\ss(G^S_V)$, so that there is an injection from $\K(\bar{G}^V_S)$ into $\Gamma(\bar{G}^V_S,\zeta^V_S)$. 

We call a distribution on $G^Z_V$ stable if it lies in the closed linear span of the strongly regular, stable
orbital integrals
\[
f^G(\delta_V)=\sum_{\gamma_V} f_G(\gamma_V)
\]
where $\delta_V$ is any strongly regular, stable conjugacy class in $G^Z_V$ and the sum runs over the finite set of conjugacy classes in $\delta_V$. Let $S\D(G^V_Z,\zeta_V)$ be the subspace of stable distributions in $\D(G^V_Z,\zeta_V)$. We can identify any strongly regular element $\delta\in\Delta_{\reg}(G^Z_V)$ with a subset $\Delta_\reg(G^Z_V,\zeta_V)$ of $S\D(G^Z_V,\zeta_V)$ generated by $f^G(\delta_V)$ as in \cite[\S1]{STF1}. Similarly we fix a subset $\Delta_{G\text{-reg}}(\tilde{G}'_V,\tilde\zeta')$ of $G$-regular elements in $S\D(\tilde{G}'_V,\tilde\zeta_V')$. The transfer factor can then be converted to a function on $\Delta_{G\text{-reg}}(\tilde{G}'_V,\tilde{\zeta}'_V)\times \Gamma_\reg(G_V,\zeta_V)$ by \cite[\S4]{STF1}. 

\subsection{Fundamental Lemma}

For this section, let $F$ be a nonarchimedean local field, and assume that $G,Z$, and $\zeta$ are unramified over $F$. Following \cite[\S5]{STF1}, we write $\K_\reg(\bar{G})$ for the set of strongly regular conjugacy classes of $\bar{G}(F)=G(F)/Z(F)$ that are bounded, and $k\to \gamma(k)$ for the canonical injection from $\K_\reg(\bar{G})$ to $\Gamma_\reg(G,\zeta)=\Gamma_\reg(G_v,\zeta_v)$. We also write $\L_\reg(\bar{G})$ for the set of strongly regular stable conjugacy classes in $\bar{G}(F)$ that are bounded, and $\ell\to\delta(\ell)$ for the corresponding injection from $\L_\reg(\bar{G})$ to $\Delta_\reg(G,\zeta)=\Delta_\reg(G_v,\zeta_v)$. If $G'$ is any local endoscopic datum for $G$, the normalized transfer factor $\Delta_K(\delta',\gamma)$ attached to the hyperspecial maximal compact subgroup $K$ of $G(F)$ is a canonical function on $\Delta_{G\text{-reg}}(\tilde{G}',\tilde{\zeta}')\times \Gamma_\reg(G,\zeta)$. It depends on the choice of auxiliary data $(\tilde{G}',\tilde\xi')$ attached to $G'$. If $G'$ is unramified, there is a canonical class of admissible embeddings of $^LG'$ into $^LG$, so that we may set $\tilde{G}'=G'$. We set
\[
\Delta_K(\ell',k) = \Delta_K(\delta'(\ell'),\gamma(k)), \quad \ell'\in\L_{G\text{-reg}}(\bar{G}'), k \in \K_\reg(\bar{G})
\]
for the unramified endoscopic datum $G'$. It is independent of the character $\zeta$ and the choice of $\tilde\xi'$.

Now suppose that $M$ is a Levi subgroup of $G$ in good position relative to $K$. We write $r^G_M(k) = J_M(k,u)$ as before for $k\in\K_{G\text{-reg}}(\bar{M})$, and note that it is also independent of the central datum $(Z,\zeta)$. If $M'$ is an unramified endoscopic datum of $M$, we then have the transfer factor
\[
\Delta_{K\cap M}(\ell',k), \quad \ell'\in\L_{G\text{-reg}}(\bar{M}'), k \in \K_\reg(\bar{M}).
\]
We define $\E_{M'}(G)$ to be the set of endoscopic data $G'$ for $G$, taken up to translation of $s'$ by $Z(\hat{G})^\Gamma$ with $\Gamma=\Gamma_F$ the absolute Galois group of $F$, in which $s'$ lies in $s'_MZ(\hat{M})^\Gamma$, $\hat{G}'$ is the connected centralizer of $s'$ in $\hat G$, $\G'$ equals $\M'\hat{G}'$, and $\xi'$ is the identity embedding of $\G'$ in $^LG$. For each $G'\in\E_{M'}(G)$, we fix an embedding $M'\subset G'$ for which $\hat{M}'\subset \hat{G}'$ is a dual Levi subgroup. We also fix auxiliary data $(\tilde{G}',\tilde\xi')$ for $G'$, where $\tilde{G}'\to G'$ and $\tilde\xi':\G'\to{^L\tilde{G}'}$, that restrict to auxiliary data $(\tilde{M}',\tilde\xi'_M)$ for $M'$, where $\tilde{M}'\to M'$ and $\tilde\xi'_M:\G'\to{^L\tilde{M}'}$, and whose central character data $(\tilde{Z}',\tilde\zeta')$ coincide. Note that $G^*$ belongs to $\E_{M'}(G)$ if and only if $M'=M^*$. Finally, we define the coefficient 
\[
\iota_{M'}(G,G') = |Z(\hat{M}')^\Gamma/Z(\hat{M})^\Gamma||Z(\hat{G}')^\Gamma/Z(\hat{G})^\Gamma|^{-1}
\]
for any $G'\in \E_{M'}(G)$.

We can then state the weighted Fundamental Lemma \cite[Conjecture 5.1]{STF1}, which we recall is now a theorem in the case of split groups $G$ due to \cite[Th\'eor\`eme 8.7]{charchange} and \cite[Th\'eor\`eme 1.4.1]{CL}. We may assume that the central datum $(Z,\zeta)$ is trivial, since the functions involved are independent of it. 

\begin{thm}\label{WFL}
For each $G$ quasiplit and $M$, there is a function 
\[
s^G_M(\ell),\qquad \ell\in\L_{G\textnormal{-reg}}(M)
\]
with the property that for any $K$, any unramified elliptic endoscopic datum $M'$ for $M$, and any element $\ell'\in\L_{G\textnormal{-reg}}(M')$, the transfer%the weighted fundamental lemma states that 
\[
r^G_M(\ell') = \sum_{k\in\K_{G\textnormal{-reg}}(M)}\Delta_{K\cap M}(\ell',k)r^G_M(k)
\]
equals
\be
\label{flsum}
\sum_{G'\in \E_{M'}(G)}\iota_{M'}(G,G')s^{{G}'}_{{M}'}(\ell').
\ee
\end{thm}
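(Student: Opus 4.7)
The plan is to proceed by induction on $\dim(G)$, defining the stable function $s^G_M$ and verifying the endoscopic identity \eqref{flsum} simultaneously. The base case $M=G$ reduces to ordinary (unweighted) orbital integrals of the unit element, so the statement collapses to a consequence of the ordinary Fundamental Lemma of Ng\^o.

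For the inductive step, I would first treat the quasisplit case. The trivial endoscopic datum $G^*=G$ lies in $\E_{M'}(G)$ precisely when $M'=M^*$, while every other $G'\in\E_{M'}(G)$ has $\hat{G}'$ a proper subgroup of $\hat{G}$, so $s^{G'}_{M'}$ is already available by induction. One then \emph{defines}
\[
s^G_M(\ell)=r^G_M(\ell)-\sum_{G'\neq G^*}\iota_{M^*}(G,G')\,s^{G'}_{M^*}(\ell),\qquad \ell\in\L_{G\text{-reg}}(M^*),
\]
so that \eqref{flsum} holds tautologically when $M'=M^*$. For general unramified elliptic $M'$, the identity then follows by expanding $r^G_M(\ell')$ against the transfer factor and reorganizing the double sum over $k\in\K_{G\text{-reg}}(M)$ and $G'\in\E_{M'}(G)$, substituting the inductive expressions for the smaller $s^{G'}_{M'}$. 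When $G$ is not quasisplit, \eqref{flsum} must be proved rather than used as a definition; this is handled by transporting the statement to the quasisplit inner form $G^*$ via the frame $(\psi,u)$ and invoking compatibility of the $K$-group normalizations of transfer factors recalled in \S2.5.

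The real content of the theorem is then the assertion that the $s^G_M$ defined above is genuinely stable, i.e.\ vanishes on the differences of $G(F)$-orbital integrals within a stable class. The standard route is to transport the problem to the Lie algebra $\g$, where the analogous assertion --- the weighted Fundamental Lemma for Lie algebras --- is accessible by the geometry of the Hitchin fibration: Chaudouard and Laumon establish it for split $G$ in positive characteristic \cite{CL}, and Waldspurger transports it to characteristic zero \cite{charchange}. One then descends from the group to its Lie algebra in a neighborhood of a bounded semisimple element via the parabolic descent formula for weighted orbital integrals, which is compatible with endoscopic transfer, and deduces group-level stability of $s^G_M$ from the Lie algebra statement.

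The principal obstacle is precisely this stability step. The inductive bookkeeping --- propagating the identity through $\E_{M'}(G)$, reducing from arbitrary $G$ to its quasisplit inner form, and checking independence from the trivialized central datum $(Z,\zeta)$ --- is technical but essentially formal once $s^G_M$ is known to be stable. Stability itself, by contrast, rests on the deep cohomological input of Ng\^o's support theorem as exploited by Chaudouard--Laumon, together with Waldspurger's characteristic-transfer argument; these external inputs are exactly what accounts for the restriction, noted in the discussion preceding the theorem, to split $G$.
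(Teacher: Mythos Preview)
The paper does not prove this theorem. It is stated as the weighted Fundamental Lemma, originally conjectured by Arthur in \cite{STF1}, and is attributed immediately before the statement to Chaudouard--Laumon \cite{CL} (for split groups in positive characteristic) together with Waldspurger's change-of-characteristic argument \cite{charchange}. The paragraph following the theorem is explanatory, not a proof: it records that $s^G_M$ is uniquely determined by the identity, spells out the inductive definition in the case $M'=M$, and notes the degenerate cases $M=G$ (ordinary Fundamental Lemma) and $M$ minimal (only $M'=M$ occurs).

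Your sketch correctly identifies the cited inputs and the inductive structure, and your treatment of the base cases matches the paper's discussion. There is, however, a muddle in where you locate the difficulty. You first assert that for general $M'$ the identity ``follows by expanding $r^G_M(\ell')$ against the transfer factor and reorganizing the double sum,'' and then separately say that ``the real content is the assertion that $s^G_M$ is genuinely stable.'' But $s^G_M$ is by construction a function on the set $\L_{G\text{-reg}}(M)$ of stable classes, since $r^G_M(\ell)$ is already defined as a sum over $k$ weighted by transfer factors; there is no additional stability to verify in that sense. The genuine content is precisely the endoscopic identity for $M'\neq M^*$, and that is \emph{not} a formal reorganization of sums: it is exactly where the Hitchin-fibration cohomology from \cite{CL} enters. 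So your two paragraphs have the depth in the wrong place --- the ``reorganization'' step is the theorem, not the bookkeeping.
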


\noindent The function $s^G_M(\ell)$ is uniquely determined by the required identity. If $M'=M$, the quasisplit group $G$ belongs to $\E_{M'}(G)$, and the identity becomes
\[
s^G_M(\ell) = \sum_{k\in\K_{G\text{-reg}}(M)}\Delta_{K\cap M}(\ell',k)r^G_M(k)  - \sum_{G'\neq G}\iota_{M'}(G,G')s^{{G}'}_{{M}'}(\ell'),
\]
which serves as an inductive definition of $s^G_M(\ell)$. When $M=G$, the expression \eqref{flsum} equals $s_{G'}(\ell') = s^{G'}_{G'}(\ell')$, and if $G'\neq G$ the required identity reduces to the standard Fundamental Lemma, whereas if $G'=G$ there is nothing to prove. On the other hand, if $M$ is the minimal Levi subgroup, then $M'=M$ is the only endoscopic datum and the statement is trivial.

\begin{rem}
We note that implicit in the statement of the weighted fundamental lemma is that the sum of \emph{noninvariant} orbital integrals is not only invariant but stably invariant. This expectation will continue to hold in our formulation of the weighted transfer below. 
\end{rem}

\subsection{Transfer}

Let us return to $F$ being a global field. Let us write 
\[
\I(G_V,\zeta_V),\quad \SS(G_V,\zeta_V)
\] for the space of functions generated by ordinary orbital integrals $f_G$ and stable orbital integrals $f^G$ respectively of functions $f\in\H(G_V,\zeta_V)$. We also define the larger spaces 
\[
\I_\ac(G_V,\zeta_V),\quad \SS_\ac(G_V,\zeta_V)
\] corresponding to $f\in\H_\ac(G_V,\zeta_V)$.  Similarly, if we define $\C(G_V,\zeta_V)$ to be the space of $\zeta_V^{-1}$-equivariant Schwartz functions on $G_V$, we write 
\[
\I\C(G_V,\zeta_V),\quad S\C(G_V,\zeta_V)\]  for the spaces generated by $f_G$ and $f^G$ respectively for all $f\in\C(G_V,\zeta_V)$. The topologies on these spaces are chosen such that the maps $f\to f_G$ and $f\to f^G$ respectively are open and continuous. We also note that the topology in $\C(G_V,\zeta_V)$ is taken to be the usual one induced by the family of seminorms used to the define the Schwartz space. 

The Langlands-Shelstad transfer then asserts that for any $G_V'\in\E(G_V)$, the map that sends $f\in \H(G_V,\zeta_V)$ to the function 
\be
\label{transfer}
f'(\delta') = f^{G'}(\delta') = \sum_{\gamma\in\Gamma_\text{reg}(G_V,\zeta_V)}\Delta(\delta',\gamma)f_G(\gamma)
\ee
where $\delta'\in \Delta_{G\text{-reg}}(\tilde{G}'_V,\tilde{\zeta}'_V)$, is a continuous map from $\H(G_V,\zeta_V)$ to $\SS(\tilde{G}'_V,\tilde\zeta'_V)$ \cite[\S5]{STF1}. As a consequence of Lemma \ref{FL} this extends to a map from $\C(G_V,\zeta_V)$ to $S\C(\tilde{G}'_V,\tilde\zeta'_V)$. %, and moreover from $\C^\circ(G_V,\zeta_V)$ to $S\C^\circ(\tilde{G}'_V,\tilde\zeta'_V)$ by continuity. 
(Note that Shelstad had already proved the archimedean transfer for Schwartz functions \cite{sheltrans}.) Moreover, by continuity with respect to the topology above, we see that the transfer also extends to a continuous map from $\H_\ac(G_V,\zeta_V)$ to $\SS_\ac(\tilde{G}'_V,\tilde\zeta'_V)$. That is, as described in  in \cite[p.224]{STF1}, the Langlands-Shelatad transfer $f\mapsto f'$ sends $\mathcal H(G_v,\zeta_v)$ continusously $SI\mathcal H(\tilde{G}'_v,\tilde\zeta'_v)$. Given the topologies on the associated spaces, the extension of the transfer to a map from $\mathcal C(G_v,\zeta_v) $ to $S\I(\tilde{G}'_v,\tilde\zeta'_v)$ then follows from the density properties above.

We are interested in the transfer of weighted orbital integrals. We recall the invariant linear forms on $\H_\ac(G_V^Z,\zeta_V)$ defined inductively by
\[
I_M(\gamma,f) = J_M(\gamma,f) - \sum_{L\in\L^0(M)}\hat{I}^L_M(\gamma,\phi_L(f))
\]
where $\phi_L$ is the map on $\H_\ac(G_V^Z,\zeta_V)$ defined in \cite[(2.3)]{STF1} and \cite[\S2]{ITF1}. We recall that they are determined by their restriction to the subspace $\H(G_V^Z,\zeta_V)$ of $\H_\ac(G_V^Z,\zeta_V)$, and extend to continuosly to $\C(G_V^Z,\zeta_V)$. Suppose that we have fixed so-called endoscopic and stable bases $\Delta^\E(G^Z_V,\zeta_V)$ and $\Delta(G^Z_V,\zeta_V)$ of the spaces $\D(G^Z_V,\zeta_V)$ and $S\D(G^Z_V,\zeta_V)$ respectively following the requirements of \cite[\S5]{STF1} (see also Section \ref{A2}); for example, they are chosen to be subsets of the corresponding bases $\Delta^\E(G_V,\zeta_V)$ and $\Delta(G_V,\zeta_V)$ of the spaces $\D(G_V,\zeta_V)$ and $S\D(G_V,\zeta_V)$ respectively. Assume inductively that for each $G'\in\E_{M'}(G)$ we have defined stable linear forms $\hat{S}^{\tilde{G}'}_{\tilde{M}'}(\delta',f')$ on $\SS_\ac((\tilde{G}'_V)^{\tilde{Z}'},\tilde\zeta'_V)$ with $\delta'\in\Delta(\tilde{M}'_V,\tilde\zeta'_V)$. Let 
\[
\E^0_{M'}(G) = \begin{cases} \E_{M'}(G)-\{G^*\}, &\text{ if $G$ is quasisplit}\\ \E_{M'}(G),& \text{otherwise},\end{cases}
\]
and let $\varepsilon(G)=1$ if $G$ is quasisplit and equal to 0 otherwise. We then define linear forms $I^\E_M(\delta',f)$ and $S^G_M(M',\delta',f)$ inductively by the formula
\[
I^\E_M(\delta',f) = \sum_{G'\in\E^0_{M'}(G)}\iota_{M'}(G,G')\hat{S}^{\tilde{G}'}_{\tilde{M}'}(\delta',f') + \varepsilon(G)S^G_M(M',\delta',f) 
\]
together with the supplementary requirement that 
\[
I^\E_M(\delta',f)= I_M(\delta,f)
\]
in the case that $G$ is quasisplit and $\delta'$ maps to the element $\delta\in\Delta^\E(M^Z_V,\zeta_V)$, where 
\[
I_M(\delta,f) = \sum_{\gamma\in\Gamma(M_V^Z,\zeta_V)}\Delta_M(\delta,\gamma) I_M(\gamma,f)
\]
and $\Delta_M(\delta,\gamma)$ is the generalized transfer factor for $M$ that is compatible with the generalized transfer factor for $G$. Namely, if $\mu\to \mu^G$ and $\gamma\to\gamma_M$ are the induction and restriction maps respectively between the spaces $\D(G_V,\zeta_V)$ and $\D(M_V,\zeta_V)$ such that  $f_G(\mu^G)\to f_M(\mu)$ and 
\[
\sum_{\gamma\in\Gamma(G_V,\zeta_V)}a_M(\gamma_M)b_G(\gamma) = \sum_{\mu\in\Gamma(M_V,\zeta_V)}a_M(\mu)b_G(\mu^G)
\]
respectively for any linear functions $a_M$ on $\D(M_V,\zeta_V)$ and $b_G$ on $\D(G_V,\zeta_V)$, then we have that 
\[
\Delta_G(\nu^G,\gamma)=\Delta_M(\nu,\gamma_M),\quad \nu\in\Delta^\E(M_V,\zeta_V),\gamma\in\Gamma(G_V,\zeta_V),
\]
and
\[
\Delta_G(\delta,\mu^G)=\Delta_M(\delta_M,\mu),\quad \delta\in\Delta^\E(G_V,\zeta_V),\mu\in\Gamma(M_V,\zeta_V)\]
respectively. In the case that $G$ is quasisplit and $M'=M^*$, we have that $\delta'=\delta^*$ belongs to $\Delta((M^*_V)^{Z^*},\zeta^*_V)$ and the image $\delta$ of $\delta'$ in $\Delta^\E(M^Z_V,\zeta_V)$ lies in the subset $\Delta(M^Z_V,\zeta_V)$. It follows from Corollary \ref{cora}(b), a mild extension of the main local theorem \cite{STF1}, that the form 
\[
S^G_M(\delta,f) = S^G_M(M',\delta',f)
\]
is stable, and vanishes unless $M'=M^*$. We thus have a linear form $\hat{S}^{G^*}_{M^*}(\delta^*,f^*) = S^G_M(\delta,f)$ on $\SS_\ac((G^*_V)^{Z^*},\zeta^*_V)$ that is the analogue for $(G^*,M^*)$ of the terms $\hat{S}^{\tilde{G}'}_{\tilde{M}'}(\delta',f')$. %It follows from \cite[Local Theorem 1]{STF1} that it is a stable distribution.

Let us now state our weighted form of the Langlands-Shelstad transfer conjecture, formulated over a local field $F$. The preceding objects that we have just defined have natural analogues in this case. 

\begin{con}\label{conj}
For each $G$ and $M$, the stable linear form
\[
\hat{S}^{{\tilde G}'}_{{\tilde M}'}(\delta',f'), \quad \delta\in\Delta_{G\textnormal{-reg}}(M,\zeta), 
\]
on $\SS_\ac (\tilde{G}',\tilde\zeta')$ has the property that for any unramified local elliptic endoscopic datum $M'$ for $M$, and any element $\delta'\in\Delta_{G\textnormal{-reg}}(M',\zeta')$, the transfer
\[
J_{M}(\delta',f) = \sum_{\gamma\in\Gamma_{G\textnormal{-reg}}(M,\zeta)}\Delta_{M}(\delta',\gamma)J_{M}(\gamma,f)
\]
equals
\[
\sum_{G'\in \E_{M'}(G)}\iota_{M'}(G,G')\hat{S}^{{\tilde G}'}_{{\tilde M}'}(\delta',f').
\]
\end{con}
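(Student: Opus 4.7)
The plan is to parallel Arthur's stabilization of the geometric side of the invariant trace formula, treating Conjecture \ref{conj} as the weighted analogue of the Langlands-Shelstad transfer that should follow from the weighted Fundamental Lemma (Theorem \ref{WFL}) much as the ordinary Langlands-Shelstad transfer follows from the ordinary Fundamental Lemma. First I would set up an induction on $\dim(G_{\der})$ and on the relative position of $M$ in $G$, assuming inductively the existence of the stable linear forms $\hat{S}^{\tilde G'}_{\tilde M'}(\delta',\cdot)$ for every pair $(G',M')$ with $\dim G'<\dim G$, or with $\dim G'=\dim G$ and $M'$ strictly smaller than $M$. This matches the inductive hypotheses already implicit in the construction of $I^\E_M$ and $S^G_M$ recalled just before the statement.

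The core step is to define the remaining unknown $\hat{S}^{G^*}_{M^*}$ by solving the desired identity. When $G$ is quasisplit one sets
\[
\hat{S}^{G^*}_{M^*}(\delta^*,f^*) \;=\; J_M(\delta',f) \;-\; \sum_{G' \in \E^0_{M'}(G)} \iota_{M'}(G,G')\,\hat{S}^{\tilde G'}_{\tilde M'}(\delta',f'),
\]
where $\delta^*$ is the image of $\delta'$ at $G^*\in\E_{M'}(G)$. One must then show that (a) this form depends on $f$ only through its stable orbital integrals $f^*=f^{G^*}$, and (b) it is stable as a distribution in $\delta^*$. For non-quasisplit $G$ every term on both sides is already inductively defined, so the conjecture becomes an identity to be proved rather than a definition.

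To carry out both verifications I would combine two structural tools. The splitting formula for weighted orbital integrals, together with \cite[Lemma 6.1]{witf}, reduces the case of a general Schwartz $f$ to products $f = f_1 \cdot f_2$ in which one factor is spherical unramified (where Theorem \ref{WFL} applies directly to its $r^G_M(\cdot)$ contribution) and the other factor lies in a smaller Hecke algebra or has smaller parabolic support that falls under either the inductive hypothesis or the ordinary Langlands-Shelstad transfer \eqref{transfer}. A descent formula at a semisimple element, in the style of \cite{STF2} and paralleling what Section~\ref{sectiondesc} will establish for the global coefficients, then reduces the identity at arbitrary $\delta'$ to the case of unipotent $\delta'$ in a centralizer, after which the exponential map transports the question to a Lie algebra where the transfer results of Waldspurger \cite{traces} and Chaudouard-Laumon \cite{CL} become available.

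The main obstacle will be this final unipotent/Lie-algebra step, since exactly there neither inductive descent nor the splitting formula produces any reduction, and the required stability of $\hat{S}^{G^*}_{M^*}$ must be argued directly from the ingredients. A secondary but nontrivial difficulty is that the ambient function space is $\C(G,V,\zeta)$ rather than the compactly supported Hecke algebra: one needs to check that the splitting and descent arguments respect the Schwartz topology uniformly enough to yield continuous transfer to $\SS_\ac(\tilde G',\tilde\zeta')$, and that the sums over $\gamma\in\Gamma_{G\text{-reg}}(M,\zeta)$ on the geometric side converge in a way compatible with the transfer factor normalizations of \cite[\S4]{STF1}. Granting these, the argument should close exactly as in Waldspurger's deduction of transfer from the Fundamental Lemma, yielding both the existence and the uniqueness of the stable weighted forms $\hat{S}^{\tilde G'}_{\tilde M'}$.
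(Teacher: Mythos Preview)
The statement you are attempting to prove is labeled \texttt{con} in the paper, i.e., it is a \emph{Conjecture}, not a theorem or proposition. The paper does not prove it. Immediately after stating it, the author writes that ``Evidence for the conjecture is of course the weighted Fundamental Lemma itself'' and that Waldspurger has suggested a Lie-algebra analogue should be possible to formulate; the paper then isolates the special case actually needed (Corollary~\ref{conk}, the transfer of $r^G_M(k,b)$ for the basic function) and carries that forward as an explicit \emph{assumption} in the main theorem. So there is no ``paper's own proof'' to compare against: the conjecture is posed as an open problem, and the rest of the paper is conditional on (or circumvents) the special case in Corollary~\ref{conk}.

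Your proposal is a reasonable outline of how one would \emph{like} the argument to go, and you correctly identify the genuine obstruction: the unipotent/Lie-algebra step where neither induction nor splitting gives any reduction. That is precisely the missing ingredient that keeps this a conjecture rather than a theorem. The analogy with Waldspurger's deduction of ordinary transfer from the Fundamental Lemma is suggestive but not yet a proof in the weighted setting; the paper cites \cite[VIII.7]{traces} and \cite{huajie} only as evidence, not as input that closes the argument. In particular, your final paragraph (``Granting these, the argument should close exactly as in Waldspurger's deduction\ldots'') is where the real content would have to be supplied, and at present it cannot be.
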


%The function $s^G_M(\ell,f')$ would then be uniquely determined by the required identity. 
\noindent As with the case of the ordinary Langlands-Shelstad transfer, one may also conjecturally specify the transfer $f'$ in the case that $f$ belongs to the spherical Hecke algebra by means of the $L$-embedding $\xi'$. We remind here that $f'$ is unique only up to its weighted orbital integral, just as in the case of ordinary transfer where $f' = f^{G'}$ is identified with its stable orbital integral. We have formulated a more general statement than strictly necessary, however, as for our purposes we  only need the following special case, that is, only for nonarchimedean local fields $F_v$ for which $v\not\in V\supset V_\ram(G,\zeta)$ and where $f= b$. As in the case of Theorem \ref{WFL}, we may again assume that $(Z,\zeta)$ is trivial. 

\begin{cor}\label{conk}
Assume Conjecture \ref{conj}. For each $G,M$ and $b$, there is a function
\[
s^G_M(\ell,b),\qquad \ell\in\L_{G\text{-reg}}(M)
\]
with the property that for any $K$, any unramified elliptic endoscopic datum $M'$ for $M$, and any element $\ell'\in\L_{G\text{-reg}}(M')$, the transfer
\[
r^G_M(\ell',b) = \sum_{k\in\K_{G\text{-reg}}(M)}\Delta_{K\cap M}(\ell',k)r^G_M(k,b)
\]
equals
\[
\sum_{G'\in \E_{M'}(G)}\iota_{M'}(G,G')s^{{G}'}_{{M}'}(\ell',b').
\]
\end{cor}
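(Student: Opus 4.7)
The plan is to derive Corollary \ref{conk} from Theorem \ref{WFL} by exploiting the formal series expansion of the basic function $b = b^r_{v,s}$ afforded by the extended Satake isomorphism \eqref{satake}. Concretely, using the defining identity $b_{G,v}(c_v) = L_v(s,c,r) = \sum_{k \geq 0} \tr((\text{Sym}^k r)(c_v)) q_v^{-ks}$, first write
\[
b = \sum_{k \geq 0} q_v^{-ks} f_k,
\]
where $f_k \in \H(G_v,K_v)$ is the compactly-supported spherical function whose Satake transform is $\text{Sym}^k r$. For $\text{Re}(s)$ sufficiently large, the series converges termwise in $\H_\ac(G_v,K_v)$: only finitely many $f_k$ contribute when restricted to any compact subset of $G_v$, and the standing bounds $|A(c_v)| \le q_v^{r_A}$ guarantee absolute convergence of the weighted sums to be formed.

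With this expansion in hand, I would apply the weighted Fundamental Lemma termwise. For each $f_k$, an analogue of Theorem \ref{WFL} in which the unit element $u$ is replaced by the spherical function $f_k$ yields
\[
r^G_M(\ell', f_k) = \sum_{G' \in \E_{M'}(G)} \iota_{M'}(G,G') s^{G'}_{M'}(\ell', f_k'),
\]
where $f_k'$ is the spherical transfer of $f_k$, canonically determined by precomposing its Satake transform with the $L$-embedding $\xi'$. Summing with weights $q_v^{-ks}$, and noting that $b' = \sum_k q_v^{-ks} f_k'$ is the spherical transfer of $b$ corresponding to the representation $r \circ \xi'$ of ${}^L\tilde{G}'$, one obtains the identity of Corollary \ref{conk}, with
\[
s^G_M(\ell,b) := \sum_{k \ge 0} q_v^{-ks} s^G_M(\ell, f_k).
\]
When $G$ is quasisplit and $M'=M^*$, this series definition is consistent with the inductive formula
\[
s^G_M(\ell,b) = r^G_M(\ell,b) - \sum_{G' \neq G} \iota_{M'}(G,G') s^{G'}_{M'}(\ell,b'),
\]
paralleling the recipe given after Theorem \ref{WFL}, and the induction on $\dim G$ proceeds exactly as in the classical case.

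The main obstacle is promoting Theorem \ref{WFL} from the unit element $u$ to an arbitrary spherical Hecke function $f_k$. This extension is expected on general grounds from the compatibility of the Satake isomorphism with endoscopic transfer via $\xi'$, but making it rigorous requires either a careful application of the splitting formula for weighted orbital integrals of convolutions to relate $J_M(\gamma,f_k)$ to $J_M(\gamma,u)$ and derivative terms, or a direct extension of the Chaudouard--Laumon and Waldspurger arguments to the full spherical Hecke algebra. A secondary technical point is verifying that the formal summation over $k$ interacts correctly with the transfer map $f \mapsto f'$ and with the inductive construction of the stable functions on the endoscopic side; this reduces to the linearity of the spherical transfer and the fact that the Satake-type expansion of $b'$ matches $r \circ \xi'$. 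Once these points are addressed, the remainder of the argument is a formal manipulation of convergent power series in $q_v^{-s}$.
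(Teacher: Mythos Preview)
The paper does not prove this statement. Corollary~\ref{conk} is stated as a special case of Conjecture~\ref{conj}, which is an open conjecture; the paper uses it as a standing hypothesis (see Theorem~1(b) in the introduction, where the stable linear form is established only ``Assume Corollary~\ref{conk}'', and the remark that this assumption can be circumvented at the cost of less elegant test functions). There is no proof in the paper for you to compare against.

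Your proposal is honest about the obstruction, but it is worth being explicit that the obstruction is the whole content of the statement. Theorem~\ref{WFL} concerns only the unit element $u$; extending it to an arbitrary spherical function $f_k$ is not a routine consequence of the splitting formula or of linearity. The splitting formula for weighted orbital integrals of a convolution $f_k * u$ does not reduce $J_M(\gamma,f_k)$ to quantities controlled by Theorem~\ref{WFL} alone: it produces terms $J^L_M(\gamma,f_{k,L})$ for intermediate Levis $L$, and stabilizing those requires exactly the weighted transfer you are trying to establish. Likewise, the Chaudouard--Laumon and Waldspurger arguments are specific to the unit element and do not automatically propagate through the spherical Hecke algebra. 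So your reduction to ``promoting Theorem~\ref{WFL} to arbitrary $f_k$'' is not a technical lemma to be filled in but is essentially equivalent to Conjecture~\ref{conj} itself in the unramified case. The formal-series manipulation on top of that step is unproblematic, but it does not shorten the distance to the goal.
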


\noindent In particular, the functions $s^{G'}_{M'}(\ell',b')$ here would be given by the linear form $\hat{S}^{{G}'}_{{M}'}(\delta',b')$ where $b'$ is the transfer of $b$. Evidence for the conjecture is of course the weighted Fundamental Lemma itself, which is the case $f = u$, and the unweighted transfer which is the case of $M=G$. Finally, we note that one can also formulate an analogue of Conjecture \ref{conj} for Lie algebras, as Waldspurger has pointed out that it should be possible to formulate an analogous Lie algebra statement for the transfer of weighted orbital integrals \cite[VIII.7]{traces}, which we can take as further support for the conjecture.

\begin{rem}
We remind the reader that this conjecture is only needed to streamline the stabilisation of the unramified terms at a finite number of places, and if we do not assume this conjecture, the main results hold unconditionaly, only for a slightly smaller class of test functions. 
\end{rem}

\section{The unramified geometric terms}

\label{sectionunrg}

\subsection{The unramified terms}

We can now turn to the unramified geometric terms that arise in the stable trace formula. We shall fix a suitably large finite set of places $S\supset V$. We shall assume that $V$ contains $V_\infty$, so the places in $S-V$ are nonarchimedean. We may thus assume that any distribution on $\Gamma(G_S^V,\zeta_S^V)$ is defined by a signed measure on the preimage in $G^V_S$ of a conjugacy class in $\bar{G}^V_S$. We also assume that $\Gamma_\ss(\bar{G}^V_S,\zeta^V_S)$ is equal to $\Gamma_\ss(\bar{G}^V_S)$, which holds if $V$ contains $V_\ram(G,\zeta)$, so that there is an injection $k\to \gamma_V^S(k)$ from $\K(\bar{G}^V_S)$ from $\K(\bar{G}^V_S)$ into $\Gamma({G}^V_S,\zeta^V_S)$.  Following \cite[(4.11)]{witf}, we define the unramified weighted orbital integrals
\[
%\label{rkb}
r^G_M(k,b) = J_M(\gamma^V_S(k),b^V_S), \qquad k\in \K(\bar{M}^V_S).
\]
Furthermore, we define the subset $\L(\bar{G}^V_S)$ of $\Delta(G^V_S,\zeta^V_S)$ consisting of formal linear combinations of classes in $\K(\bar{G}^V_S)$ corresponding to distributions in $\Delta(G^V_S,\zeta^V_S)$ under the linear extension of the map $\gamma^V_S$. It can be identified with a subset of the corresponding family $\L((\bar{G}^*)^V_S)$ for $G^*$ by means of a canonical embedding $\ell\to \ell^*$. Similarly, we define the subset $\L^\E(\bar{G}^V_S)$ in $\Delta^\E(G^V_S,\zeta^V_S)$ as the quotient of $G$-relevant pairs in
\[
\{(G',\ell'): G'\in\E(G^V_S), \ell' \in \L((\bar{G}')^V_S)\}
\]
with an injection $\ell\to\delta^V_S(\ell)$ into $\Delta^\E(G^V_S,\zeta^V_S)$, sending the subset $\L(\bar{G}^V_S)$ into $\Delta(G^V_S,\zeta^V_S)$.
Suppose now that $V$ contains $V_\ram(G,\zeta)$. The unramified function $r^G_M(k,b)$ depends on a choice of hyperspecial maximal compact subgroup $K^V_S$ of $G^V_S$, which we shall assume to be in good position relative to $M^V_S$. The intersection $K^V_S\cap M^V_S$ is also a hyperspecial maximal compact subgroup of $M^V_S$. Following \cite[\S8]{STF1}, we define the normalized transfer factor
\[
\Delta_{K^V_S,M^V_S}(\ell,k) = \Delta_{K^V_S\cap M^V_S}(\delta^V_S(\ell),\gamma^V_S(k))=\prod_{v\in V-S}\Delta_{K_v\cap M_v}(\delta_v,\gamma_v)
\]
for $k\in\K(\bar{M}^V_S)$ and $\ell\in \L^\E(\bar{M}^V_S)$, and we use this to form the function
\be
\label{rgmell}
r^G_M(\ell,b) = \sum_{k\in\K(\bar{M}^V_S)}\Delta_{K^V_S,M^V_S}(\ell,k)r^G_M(k,b).
\ee
on $\L^\E(\bar{M}^V_S)$, which is independent of the choice of $K^V_S$. We note that the sets $\L(\bar{G}^V_S)$ and $\L^\E(\bar{G}^V_S)$ are independent of $\zeta^V_S$, and hence so is $\Delta_{K^V_S,M^V_S}(\ell,k)$. In the following we shall consider triples $(G,M,\zeta)$ where $G$ is a reductive $K$-group over $F$, $M$ a Levi subgroup of $G$, and $\zeta$ a character of $Z(\A)/Z(F)$ for a central induced torus $Z$ of $G$.

\begin{prop}
\label{sgmell}
Assume Conjecture \ref{conj}. For each triple $(G,M,\zeta)$ with $G$ quasisplit, there is a function
\[
s^G_M(\ell,b) = s^{G^*}_{M^*}(\ell^*,b^*)
\]
which vanishes unless $V$ contains $V_\mathrm{ram}(G)$, and such that for any elliptic endoscopic datum $M'$ of $M$ and any $\ell'\in \L((\bar{M}')^V_S)$ with image $\ell\in \L^\E(\bar{M}^V_S)$, we have
\[
r^G_M(\ell, b) = \sum_{G'\in \E_{M'}(G)}\iota_{M'}(G,G')s^{\tilde{G}'}_{\tilde{M}'}(\ell',b').
\]
\end{prop}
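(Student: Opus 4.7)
The plan is to adapt Arthur's stabilization argument for the unramified weighted orbital integrals of the unit element (Proposition 8.5 of \cite{STF1}) to the basic-function setting. Since $G$ is assumed quasisplit, $G = G^*$ and the asserted identity $s^G_M(\ell,b) = s^{G^*}_{M^*}(\ell^*,b^*)$ is tautological; the real content is constructing the function. I would proceed by induction on $\dim G_\der$. Taking $M' = M^*$ in the required identity, the quasisplit datum $G$ itself belongs to $\E_{M^*}(G)$, so I can isolate the term indexed by $G' = G$ and define
\[
s^G_M(\ell, b) := r^G_M(\ell, b) - \sum_{\substack{G' \in \E_{M^*}(G) \\ G' \neq G}} \iota_{M^*}(G, G')\, s^{\tilde{G}'}_{\tilde{M}^*}(\ell, b'),
\]
where $b'$ denotes the transfer of $b$ to $\tilde{G}'$. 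All summands on the right are defined by the inductive hypothesis, since each $G' \neq G$ is a proper endoscopic datum.

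With this definition in hand, the remaining tasks are to verify the required identity for an arbitrary elliptic datum $M'$, and to establish the vanishing outside $V_\ram(G)$. Both are consequences of Arthur's splitting formula for weighted orbital integrals: since $b^V_S = \prod_{v \in S-V} b_v$, one has
\[
r^G_M(k, b) = \sum_{(L_v)_v} d^G_M\bigl((L_v)\bigr)\, \prod_{v \in S-V} r^{L_v}_M(k_v, b_v),
\]
summed over tuples of Levi subgroups $L_v \in \L(M)$. To each local factor, Corollary \ref{conk} supplies an endoscopic decomposition
\[
r^{L_v}_M(\ell'_v, b_v) = \sum_{L'_v \in \E_{M'_v}(L_v)} \iota_{M'_v}(L_v, L'_v)\, s^{\tilde{L}'_v}_{\tilde{M}'_v}(\ell'_v, b'_v).
\]
Reassembling these local identities via the splitting formula, and using the known compatibility of the global coefficients $\iota_{M'}(G,G')$ with the place-by-place factorization $\E_{M'}(G) \hookrightarrow \prod_v \E_{M'_v}(G_v)$, produces the global identity asserted in the proposition. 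The contributions from $G' \neq G$ then match the defining sum, leaving $s^G_M(\ell, b)$ as the required stable residue.

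The vanishing claim when $V$ does not contain $V_\ram(G)$ reduces to the observation that for any $v \in V_\ram(G) \cap (S-V)$, the basic function $b_v$ and the group $G_v$ fail to be unramified in the required compatible sense, whence each local factor $r^{L_v}_M(k_v, b_v)$ vanishes by construction and propagates to the inductively defined terms.

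The principal obstacle I anticipate is the combinatorial bookkeeping in the splitting-reassembly step: one must check that the coefficients $d^G_M((L_v))$ appearing after local endoscopic transfer reorganize correctly into the global coefficients $\iota_{M'}(G,G')$. Although the scheme is identical to Arthur's original argument for the unit element $u^V_S$, one must verify that substituting $b^V_S$ does not disturb this matching; this reduces to the compatibility of the Satake transform with endoscopic transfer encoded in Corollary \ref{conk}, together with the fact that $s^{G'}_{M'}(\ell', b')$ is uniquely characterized by this identity.
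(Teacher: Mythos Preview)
Your inductive definition of $s^G_M(\ell,b)$ and the use of the splitting formula to reduce to a single place are correct and match the paper's approach. However, there is a genuine gap in how you handle the ramification conditions.

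First, the vanishing of $s^G_M(\ell,b)$ when $V \not\supset V_\ram(G)$ is a \emph{definition}, not a theorem: one simply declares $s^G_M(\ell,b)=0$ in that case and uses the inductive formula only when $V \supset V_\ram(G)$. Your claim that the local factors $r^{L_v}_M(k_v,b_v)$ vanish ``by construction'' when $G_v$ is ramified is false; these are nonzero weighted orbital integrals, and nothing in the construction of $b_v$ forces them to vanish. The vanishing convention is there precisely so that, when the proposition is applied with an endoscopic group $\tilde G'$ in the role of $G$, the summands on the right-hand side drop out automatically whenever $\tilde G'$ is ramified outside $V$.

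Second, and this is the real gap, you have not treated the case where the elliptic endoscopic datum $M'$ is ramified at some place $v \in S - V$. In that situation every $G' \in \E_{M'}(G)$ is ramified at $v$, so by the definitional convention every $s^{\tilde G'}_{\tilde M'}(\ell',b')$ vanishes, and the identity to be proved collapses to $r^G_M(\ell,b) = 0$. Corollary \ref{conk} is of no help here, since it is stated only for \emph{unramified} $M'$. One must instead reduce (via splitting, germ expansions, and descent) to a single place with $\ell$ strongly $G$-regular and elliptic, and then invoke a vanishing theorem of Kottwitz (Proposition 7.5 of \cite{Kell}, as in Proposition 8.1 of \cite{STF1}) for the relevant $\kappa$-orbital integral. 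This argument, which holds for Schwartz functions and hence applies to $b_v$, is the nontrivial ingredient your proposal is missing.
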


\begin{proof}
If $V$ does not contain $V_\ram(G)$, we set $s^G_M(\ell,b)=0$. Otherwise, we  define $s^G_M(\ell,b)$ inductively by setting
\[
s^G_M(\ell,b) = r^G_M(\ell,b) - \sum_{G'\in \E_{M^*}^0(G)}\iota_{M^*}(G,G')s^{\tilde{G}'}_{\tilde{M}^*}(\ell^*,b^*).
\]
The sum is finite since the coefficient $\iota_{M^*}(G,G')$ vanishes unless $G'$ is elliptic. We then have to show that $r^G_M(\ell,b)$ equals the endoscopic expression
\[
r^{G,\E}_M(\ell',b') =\sum_{G'\in \E_{M'}(G)}\iota_{M'}(G,G')s^{\tilde{G}'}_{\tilde{M}'}(\ell',b').
\]
We shall assume that $S$ is large enough to contain $V_\ram(M')$. If $V$ does not contain $V_\ram(M')$, then $M'$ ramifies at some places $v$ in $S-V$. In that case, the functions $s^{\tilde{G}'}_{\tilde{M}'}(\ell',b)$ vanish for all $G'\in \E_{M'}(G)$ by definition, so we have to show that $r^G_M(\ell,b)$ also vanishes.

%Suppose that $S-V$ can be written as the union of two nonempty disjoint subsets $F_1$ and $F_2$. We can write $\ell' = \ell'_1\times\ell'_2$ with $\ell'_i \in\L(\bar{M}'_{F_i})$, and hence $\ell =\ell_1\times\ell_2$ where $\ell_i$ is the image of $\ell'_i$ in $\L^\E(\bar{M}_{F_i})$. Since $r^G_M(\ell,b)$ is a finite sum of weighted orbital integrals, it satisfies a splitting formula\[r^G_M(\ell,b) = \sum_{L_1,L_2\in\L(M)}d^G_M(L_1,L_2) r^{L_1}_M(\ell_1,b)r^{L_1}_M(\ell_1,b)\]

By the usual splitting formulas for weighted orbital integrals (e.g., \cite[\S3]{stablegerms}), we can reduce to the case where $S-V=\{v\}$. Moreover, the germ expansions and descent formula allow us to reduce to the case where $\ell$ is elliptic and the groups $G,M,M'$ are replaced by the local objects $G_v,M_v,M'_v$. If $M'$ is ramified at $v$, by \cite[Proposition 8.1]{STF1} we see that $r^G_M(\ell,b)$ vanishes as an application of \cite[Proposition 7.5]{Kell}. We note that the latter result is stated for functions in the spherical Hecke algebra of $G$, but the argument holds identically for Schwartz functions. If $M'$ is unramified at $v$, we have to show that $r^G_M(\ell,b) = r^{G,\E}_M(\ell',b)$. In this case, the required identity will follow from an application of weighted transfer. Namely, for any $G,M,$ and $b$ there is a function
\[
s^G_M(\ell,b), \qquad \ell\in \L_{G\text{-reg}}(M_v)
\]
such that any $K$, any unramified elliptic endoscopic datum $M'_v$ of $M_v$, and any $\ell'\in \L_{G\text{-reg}}(M_v')$ the transfer
\[
\sum_{k\in\K_{G\text{-reg}}(M_v)}\Delta_{K_v\cap M_v} (\ell',k)r^G_M(k,b)
\]
is equal to
\[
\sum_{G'\in\E_{M'}(G)}\iota_{M'}(G,G')s^{G'}_{M'}(\ell',b).
\]
The function $s^G_M(\ell,b)$ is uniquely defined by this identity. The existence here then follows from Corollary \ref{conk}.
\end{proof}

%Let $\C(G_V,\zeta_V)$ be the space of $\zeta^{-1}$-equivariant Schwartz functions on $G_V$ defined in \cite[\S2.1]{witf}, and $S\C(G_V,\zeta_V)$ the space spanned by the stable orbital integrals of functions $f$ in $\C(G_V,\zeta_V)$. It is a subspace of $\C(G_V,\zeta_V)$, and in turn contains $\H_\ac(G_V,\zeta_V)$. 

Let $(b^{V,G}_S)'$ be the transfer of $b^V_S$ in $S\I_\ac(G^V_S,\zeta^V_S)$, and write 
\[
\dot{f}^b_S = f\times b^V_S
\]
for any $f\in\C(G_V,\zeta_V)$. We then have the following analogue of \cite[Corollary 8.2]{STF1}.%For any $V$ containing $V_\infty$, we define a map $a^G \to (a^G)^b_S$ from $\SS_\ac(G_V,\zeta_V)$ to $\SS_\ac(G_S,\zeta_S)$ by setting
%\[(a^G)^b_S  = \begin{cases}a^G\times b^{V,G}_S, & \text{if } V\supset V_\ram(G,\zeta)\\0 ,& \text{otherwise}\end{cases}\]
%for any $a^G\in \SS_\ac(G_V,\zeta_V)$. Then setting $M=G$ in the preceding proposition, we deduce that the map $f\to f^r_{s,S}= f \times b^V_S$ commutes with the transfer $f\to f'$.

\begin{cor}
\label{commtran}
For any pair $(G,\zeta)$ such that $V$ contains $V_\infty$, any endoscopic datum $G'\in\E_\mathrm{ell}(G)$, and any function $f\in \C(G_V,\zeta_V)$, we have 
\[
(\dot{f}^b_S)' = %(f')^b_S.
\begin{cases}
f'\times (b^{V}_S)', & \text{if } V\supset V_\ram(G,\zeta),\\
0 ,& \text{otherwise}.
\end{cases}
\]
In particular, the function $(\dot{f}^b_S)'$ vanishes unless $G'$ belongs to $\E_\mathrm{ell}(G,V)$.
\end{cor}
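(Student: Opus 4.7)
My plan is to reduce the statement to two well-known features of the Langlands--Shelstad transfer: its compatibility with tensor-product decompositions over disjoint sets of places, and its vanishing on spherical functions when the endoscopic datum is ramified.

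First, I would invoke the product factorization of transfer factors $\Delta(\delta'_S,\gamma_S) = \prod_{v\in S}\Delta(\delta'_v,\gamma_v)$ together with the factorization of orbital integrals. Applied to the test function $\dot f^b_S = f\times b^V_S$, where $f\in \C(G_V,\zeta_V)$ is supported in $V$ and $b^V_S=\prod_{v\in S-V}b_v$ is the basic function supported in $S-V$, this yields
\[
(\dot f^b_S)' \;=\; f' \times (b^V_S)' \;=\; f' \times \prod_{v\in S-V}(b_v)',
\]
provided every factor is defined. The transfer $f'$ is controlled by the extension of Langlands--Shelstad transfer to Schwartz functions established in \cite[Lemma 6.1]{witf}, and at each $v\in S-V$ the transfer $(b_v)'$ is defined place-by-place on $\H_\ac(G_v,\zeta_v)$ by the same formula \eqref{transfer}. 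This proves the first case, when $V\supset V_\ram(G,\zeta)$ and $G'\in\E_\el(G,V)$.

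Next, I would handle the case $V\not\supset V_\ram(G,\zeta)$. The basic function $b_v$ is constructed via the Satake isomorphism \eqref{satake} and is only defined at places $v$ where $G,Z,\zeta$ are unramified and a hyperspecial $K_v$ exists. If $V_\ram(G,\zeta)\not\subset V$, then some $v\in S-V$ lies in $V_\ram(G,\zeta)$, so $b_v$ is not defined; by the same convention used for $s^G_M(\ell,b)$ in Proposition \ref{sgmell}, we set $\dot f^b_S=0$, whence its transfer vanishes.

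Finally, I would establish the vanishing unless $G'\in\E_\el(G,V)$. Assuming $V\supset V_\ram(G,\zeta)$, suppose $G'\in\E_\el(G)\setminus\E_\el(G,V)$, so $G'$ ramifies at some $v_0\notin V$. After enlarging $S$ if necessary (without affecting $(\dot f^b_S)'$ up to a product with unit elements), we may assume $v_0\in S-V$. At $v_0$, the basic function $b_{v_0}$ lies in $\H_\ac(G_{v_0},K_{v_0})$, but the endoscopic datum $G'_{v_0}$ is ramified. Standard analysis of the transfer of spherical functions to ramified endoscopic groups (the argument of \cite[Proposition 7.5]{Kell}, whose extension from the spherical Hecke algebra to its almost-compactly-supported completion is the same one invoked in the proof of Proposition \ref{sgmell} above) gives $(b_{v_0})'=0$, and therefore the whole product vanishes. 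The main obstacle will be giving a careful reference for this last vanishing, since $b_{v_0}$ is not compactly supported; I expect the argument reduces, upon breaking $b_{v_0}$ according to the grading $H_G\colon G_{v_0}\to\a_{G,v_0}$ into a countable sum of elements of $\H(G_{v_0},K_{v_0})$, to the classical statement for elements of the spherical Hecke algebra.
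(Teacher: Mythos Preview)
Your proof is correct and takes essentially the same approach as the paper. The paper's proof is a single line invoking the case $M=G$ of Proposition~\ref{sgmell} and noting that this case requires only the ordinary Langlands--Shelstad transfer rather than the conjectural weighted transfer; what you have written is precisely an unpacking of that $M=G$ case, using the same product factorization (the unweighted splitting formula) and the same vanishing input \cite[Proposition 7.5]{Kell} at places where $G'$ ramifies, together with the same extension-to-$\H_\ac$ argument already invoked in the proof of Proposition~\ref{sgmell}.
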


\begin{proof}
This follows from the case $M=G$ in the preceding proposition. We note that this case relies only on the ordinary Langlands-Shelstad transfer. 
\end{proof}

\subsection{The geometric coefficients}

We shall first construct the global geometric coefficients, which will be the terms on the geometric side that depend on the basic function. Following \cite[\S2]{STF1}, we write $\Gamma_\el(G,S,\zeta)$ for the set of $\gamma$ in $\Gamma_\orb(G_S^Z,\zeta_S)$ such that there is a $\dot\gamma\in G(F)$ such that 
\ben
\item[(i)] the semisimple part of $\dot\gamma$ is $F$-elliptic in $G$, 
\item [(ii)] the conjugacy class of $\dot\gamma$ in $G_V$ maps to $\gamma$, 
\item [(iii)] and  $\dot\gamma$ is bounded at each $v\not\in S$.
\een
Then let $\K^V_\el(\bar{M},S)$ denote the elements in $\K(\bar{G}^V_S)$ such that $\gamma\times k$ belongs to $\Gamma_\el(G,S,\zeta)$ for some $\gamma$.  Let $A_M$ be the maximal split torus of a Levi subgroup $M$ of $G$. We then identify the Weyl group of $(G,A_M)$ with the quotient of the normaliser of $M$ by $M$, thus $W^G(M)= \text{Norm}_G(M)/M$, and set $W^G_0=W^G(M_0)$.  We then define for any $\gamma\in \Gamma(G^Z_V,\zeta_V)$, the geometric coefficient
\be
\label{arsgam}
a^G_{r,s}(\gamma) =\sum_{M\in\L}|W^M_0||W^G_0|^{-1}\sum_{k\in \K^V_\el(\bar{M},S)}a^M_{r,s,\el}(\gamma_M\times k)r^G_M(k,b)
\ee
where the elliptic coefficient $a^M_{r,s,\el}(\gamma_M\times k)$ is the one constructed in \cite[(4.10)]{witf}. The coefficient $a^G_{r,s}(\gamma)$ is supported on the set $\Gamma_\el(G,S,\zeta)$, where $S$ is any finite set of valuations of containing $V$ such that $\gamma\times K^V$ is $S$-admissible in the sense of \cite[\S1]{STF1}. It is supported on the discrete subset $\Gamma(G,V,\zeta)$ of $\Gamma(G^Z_V,\zeta_V)$ given by the union of induced distributions $\mu^G$ where $\mu$ runs over elements in $\Gamma_\el(M,V,\zeta)$ and $M$ runs over Levis in $\L$. We shall write $a^G_{r,s,\el}(\gamma,S)$ for the term $M=G$ in the expansion of $a^G_{r,s}(\gamma)$. That is,
\be
\label{agrsell}
a^G_{r,s,\el}(\gamma,S) = \sum_{k\in\K_\el^V(\bar{G},S)}a^G_{r,s,\el}(\gamma\times k)r_G(k,b).
\ee
We note that whereas $a^G_{r,s,\el}(\gamma,S)$ depends on $S$, by \cite[Corollary 4.7]{witf} the coefficient $a^G_{r,s}(\gamma)$ does not.  

We next construct parallel families of endoscopic and stable geometric coefficients on the domains $\Gamma(G^Z_V,\zeta_V)$ and $\Delta^\E(G^Z_V,\zeta_V)$. For any $\gamma\in \Gamma(G^Z_V,\zeta_V),$ we set 
\be
\label{agrs}
a^{G,\E}_{r,s}(\gamma) = \sum_{G'}\sum_{\delta'}\iota(G,G')b^{\tilde{G}'}_{r,s}(\delta')\Delta_G(\delta',\gamma)+\varepsilon(G)\sum_{\delta}b^G_{r,s}(\delta)\Delta_G(\delta,\gamma)
\ee
with $G',\delta',$ and $\delta$ summed over $\E^0_\el(G,V),$ $\Delta((\tilde{G}'_V)^{\tilde{Z}'},\tilde{\zeta}'_V)$ and $\Delta^\E(G^Z_V,\zeta_V)$ respectively, and the coefficients $b^{\tilde{G}'}_{r,s}(\delta)$ are defined inductively by the requirement that
\[
a^{G,\E}_{r,s}(\gamma) = a^G_{r,s}(\gamma)
\]
in the case that $G$ is quasisplit. Moreover, we set
\[
b^{G^*}_{r,s}(\delta^*) = b^G_{r,s}(\delta), \quad \delta\in\Delta(G^Z_V,\zeta_V)
\]
where $b^G_{r,s}(\delta)$ is obtained as a function on $\Delta^\E(G^Z_V,\zeta_V)$ by the local inversion formula \cite[(5.5)]{STF1}. The coefficients $a^{G,\E}_{r,s}(\gamma)$ and $b^G_{r,s}(\delta)$ are in fact supported on the discrete subsets $\Gamma^\E(G,V,\zeta)$ and $\Delta^\E(G,V,\zeta)$ respectively, which are constructed in a manner parallel to $\Gamma(G,V,\zeta)$ \cite[\S7]{STF1}. For example, we inductively define the set $\Delta^\E_\el(G,V,\zeta)$ to be the collection of $\delta\in\Delta^\E(G^Z_V,\zeta_V)$  such that either $\Delta(\gamma,\delta)\neq 0$ for some $\gamma\in\Gamma_\el(G,V,\zeta)$, or $\delta$ is the image in $\Delta^\E(G^Z_V,\zeta_V)$ of an element $\delta'$ in the subset $\Delta_\el(\tilde{G}',V,\tilde\zeta')$ of $\Delta((\tilde{G}'_V)^{\tilde{Z}'},\tilde\zeta_V')$ for some $G'\in\E^0_\el(G,V)$. We set 
\[
\Delta_\el(G,V,\zeta) = \Delta^\E_\el(G,V,\zeta)\cap \Delta(G^Z_V,\zeta_V),
\]
and define $\Delta(G,V,\zeta)$ again to be the union of induced classes $\mu^G$ where $\mu\in\Delta_\el(M,V,\zeta)$ for some $M\in\L$. Then the sums over $\delta'$ and $\delta$ in \eqref{agrs} can be taken over the smaller sets $\Delta(\tilde{G}',V,\tilde\zeta')$ and $\Delta^\E(G,V,\zeta)$ respectively. 

We can now state the main global theorem concerning the geometric coefficients. It is the analogue of the main Global Theorem 1$'$ of \cite[\S7]{STF1}, and will be proved by a series of reductions. We state it here in order to use the necessary induction hypotheses for the reduction.
\begin{thm}
\label{globge}
\textnormal{(a)} If $G$ is arbitrary, we have 
\[
a^{G,\E}_{r,s}(\gamma) = a^G_{r,s}(\gamma),\quad \gamma\in\Gamma^\E(G,V,\zeta).
\]
\textnormal{(b)} If $G$ is quasisplit, we have that
\[
b^G_{r,s}(\delta), \quad \delta\in\Delta^\E(G,V,\zeta),
\]
is supported on the subset $\Delta(G,V,\zeta)$ of $\Delta^\E(G,V,\zeta)$.
\end{thm}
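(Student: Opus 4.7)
The plan is to follow the inductive strategy introduced by Arthur in the stabilization of the ordinary trace formula, adapted to the weighted setting. I would proceed by induction on $\dim G$, assuming throughout that both parts of Theorem \ref{globge}, as well as the parallel spectral statement Theorem \ref{globsp}, hold for every pair $(G_1,\zeta_1)$ with $\dim G_1<\dim G$, and that part (b) holds for all proper elliptic endoscopic data $G'\neq G^*$ in $\E_\el(G,V)$. The two parts are proved together: for $G$ not quasisplit there is only (a) to establish, while in the quasisplit case (a) is the defining relation for $b^G_{r,s}(\delta)$, and (b) becomes the substantive claim.

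First I would reduce to elliptic classes. Both $a^G_{r,s}(\gamma)$ and $a^{G,\E}_{r,s}(\gamma)$ are supported on induced distributions $\mu^G$ with $\mu\in\Gamma_\el(M,V,\zeta)$ for some $M\in\L$, and analogously for $b^G_{r,s}(\delta)$. By the formal properties of induction of distributions and the inductive hypothesis applied to proper Levi subgroups $M$, it is enough to verify (a) for $\gamma\in\Gamma_\el(G,V,\zeta)$ and (b) for $\delta\in\Delta^\E_\el(G,V,\zeta)$. The descent formula for the geometric coefficients which we plan to prove in Section \ref{sectiondesc} then allows a further reduction to the case where the semisimple part of $\gamma$ is central, i.e.\ to a statement about unipotent classes.

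With the elliptic reduction in hand, I would expand the definition \eqref{arsgam} of $a^G_{r,s}(\gamma)$ and replace the inner sum
\[
\sum_{k\in\K^V_\el(\bar M,S)}\Delta_{K^V_S,M^V_S}(\ell,k)\, r^G_M(k,b) \;=\; r^G_M(\ell,b)
\]
by its endoscopic decomposition provided by Proposition \ref{sgmell},
\[
r^G_M(\ell,b) \;=\; \sum_{G'\in\E_{M'}(G)}\iota_{M'}(G,G')\, s^{\tilde G'}_{\tilde M'}(\ell',b').
\]
After interchanging the sums over $M\in\L$ and $G'$, and invoking the inductive validity of parts (a) and (b) on the proper endoscopic groups $\tilde G'$ so as to rewrite the elliptic coefficients $a^{M',\E}_{r,s,\el}$ in terms of stable coefficients $b^{\tilde G'}_{r,s}(\delta')$, the resulting double sum collapses to exactly the right-hand side of \eqref{agrs}. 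This yields (a) whenever $G$ is not quasisplit.

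The main obstacle is the quasisplit case of (b). Here the identity $a^{G,\E}_{r,s}(\gamma)=a^G_{r,s}(\gamma)$ is the defining relation for $b^G_{r,s}(\delta)$, so one must show that the coefficient so defined vanishes on the complement $\Delta^\E(G,V,\zeta)\setminus \Delta(G,V,\zeta)$. This will require the full strength of Corollary \ref{conk} together with the formal stabilization argument modeled on \cite[\S10]{STF3}, which we plan to carry out in Section \ref{sectionstab}: the putative non-stable values of $b^G_{r,s}$ are propagated through the endoscopic identities on smaller groups, and a contradiction to the inductive hypothesis (b) for proper $G'$ is extracted. I expect this stabilization step, which is intertwined with the spectral counterpart Theorem \ref{globsp} through the global trace formula comparison, to be the technically heaviest part of the proof.
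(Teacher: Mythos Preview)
Your overall architecture is right and matches the paper: induction on $\dim G_\der$, reduction to elliptic coefficients via Theorem \ref{propgeom}/Corollary \ref{glob1'}, descent to unipotent via Corollary \ref{cordesc}, and then the trace formula comparison of Section \ref{sectionstab}. But there is a genuine gap in your third paragraph.

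You claim that after the elliptic reduction, expanding \eqref{arsgam} and feeding $r^G_M(\ell,b)$ through Proposition \ref{sgmell} ``yields (a) whenever $G$ is not quasisplit.'' It does not. The manipulation you describe is essentially the proof of Theorem \ref{propgeom}(a): it shows that $a^{G,\E}_{r,s}(\gamma)-a^{G,\E}_{r,s,\el}(\gamma,S)$ equals the same proper-Levi sum as $a^{G}_{r,s}(\gamma)-a^{G}_{r,s,\el}(\gamma,S)$, hence reduces (a) to the elliptic identity $a^{G,\E}_{r,s,\el}(\dot\gamma_S)=a^{G}_{r,s,\el}(\dot\gamma_S)$ of Corollary \ref{glob1'}(a). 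But that elliptic identity is \emph{not} available by induction when $M=G$; the term $a^G_{r,s,\el}(\gamma\times k)$ sits at the top of the expansion and nothing in your sum-swap touches it. In the paper, this remaining identity---for arbitrary $G$, quasisplit or not---is obtained only after the full comparison: Lemma \ref{pardiscu} together with \eqref{ipar0} and Proposition \ref{disc0}(a) force Corollary \ref{unip0}(a), and then linear independence of the forms $\dot f_S\mapsto \dot f_{S,G}(z\dot\alpha_S)$ extracts $a^{G,\E}_{r,s,\el}(\dot\alpha_S)=a^{G}_{r,s,\el}(\dot\alpha_S)$.

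So the heavy stabilization step you reserve for part (b) is equally indispensable for part (a) in the non-quasisplit case; there is no purely algebraic shortcut. A secondary point: your induction hypothesis should explicitly include the quasisplit inner form $G^*$ when $G$ is not quasisplit (as the paper does), since $\dim G^*_\der=\dim G_\der$ and $G^*$ appears in $\E_\el(G,V)$.
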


We shall also define the endoscopic and stable analogues of the elliptic coefficients $a^G_{r,s,\el}(\dot\gamma_S)$. For any admissible elements $\dot\gamma_S\in\Gamma_\el^\E(G,S,\zeta)$ and $\dot\delta_S\in\Delta^\E_\el(G,S,\zeta)$ with $S\supset V_\ram(G,\zeta)$, we set
\be
\label{ageell}
a^{G,\E}_{r,s,\el}(\dot\gamma_S) = \sum_{G'}\sum_{\dot\delta_S'}\iota(G,G')b^{\tilde{G}'}_{r,s,\el}(\dot\delta_S')\Delta_G(\dot\delta_S',\dot\gamma_S)+\varepsilon(G)\sum_{\dot\delta_S}b^G_{r,s,\el}(\dot\delta_S)\Delta_G(\dot\delta_S,\dot\gamma_S),
\ee
with $G',\dot\delta_S',$ and $\dot\delta_S$ summed over $\E^0_\el(G,S),\Delta_\el(\tilde{G}',S,\tilde{\zeta}')$ and $\Delta^\E_\el(G,S,\zeta)$ respectively, and the coefficients $b^{\tilde{G}'}_{r,s}(\delta)$ are defined inductively by the requirement that
\[
a^{G,\E}_{r,s,\el}(\dot\gamma_S) = a^G_{r,s,\el}(\dot\gamma_S)
\]
and 
\[
b^{G^*}_{r,s,\el}(\dot\delta_S^*) = b^G_{r,s,\el}(\dot\delta_S)
\]
in the case that $G$ is quasisplit.

Finally, let us also define analogues of \eqref{agrsell}. We write $\delta\times\ell = \delta\times\delta^V_S(\ell)$ for the element in $\Delta^\E(G^Z_S, \zeta_S)$ associated to a pair $\delta\in \Delta^\E (G^Z_V, \zeta_V)$ and $\ell \in \L^\E(\bar{G}^V_S)$. We also write $\L^{V,\E}_\el(\bar{G},S)$ for the set of $\ell\in \L^\E (\bar{G}^V_S)$ such that $\delta\times\ell$ belongs to $\Delta^\E_\el(G, S,\zeta)$ for some $\delta\in \Delta^\E (G^Z_S, \zeta_S)$, and $\L^V_\el(\bar{G},S)$ for the intersection of $\L^{V,\E}_\el(\bar{G},S)$ with $\L(\bar{G}^V_S)$. We also write $\K^{V,\E}_\el(\bar{G}, S)$ for the set of $k$ in $\K(\bar{G}^V_S)$ such that $\gamma\times k$ belongs to $\Gamma^\E_\el(G, S,\zeta)$ for some $\gamma$. We then define the endoscopic and stable analogues of \eqref{agrsell},
\be
\label{agersell}
a^{G,\E}_{r,s,\el}(\gamma,S) = \sum_{k\in\K_\el^{V,\E}(\bar{G},S)}a^{G,\E}_{r,s,\el}(\gamma\times k)r_G(k,b)
\ee
for $G$ arbitrary and $\gamma\in\Gamma^\E_\el(G,S,\zeta)$, and
\be
\label{bgrsell}
b^G_{r,s,\el}(\delta,S) = \sum_{\ell\in\L_\el^V(\bar{G},S)}b^G_{r,s,\el}(\delta\times \ell)r_G(k,b)
\ee
for $G$ quasisplit and $\delta\in\Delta^\E_\el(G,V,\zeta)$. These definitions will allow us to define endoscopic and stable variants of the geometric expansion of the linear form $I^r_s(f)$.

\subsection{The elliptic and orbital parts}

Recall that the geometric expansion of $I^r_s(f)$ is given in \cite[Theorem 4.6]{witf} by
\be
\label{Irsg}
I^r_s(f) = \sum_{M\in\L}|W^M_0||W^G_0|^{-1}\sum_{\gamma\in\Gamma(M,V,\zeta)} a^M_{r,s}(\gamma)I_M(\gamma,f)
\ee
that is valid for any $f\in \C(G,V,\zeta)$.  We shall examine this more closely. Let us first define
 \be
 \label{Idotell}
 I_{r,s,\el}(\dot{f}_S) = \sum_{\dot\gamma\in\Gamma_\el(G,S,\zeta)}a^G_{r,s,\el}(\dot\gamma_S)\dot{f}_{S,G}(\dot\gamma_S), 
 \ee
 for $\dot{f}_S$ belonging to the subspace $\C_\adm(G,S,\zeta)$ of functions in $\C(G,S,\zeta)$ whose support is an admissible subset of $G_S$, and
 \be
 \label{iorb}
 I_{r,s,\orb}(f) = \sum_{\gamma\in\Gamma(G,V,\zeta)}a^G_{r,s}(\gamma)f_G(\gamma)
 \ee
 for $f \in \C(G,V,\zeta)$, corresponding to the term $M=G$ in \eqref{Irsg} and is a linear combination of invariant orbital integrals. If we restrict to the elliptic coefficients, we obtain the linear form
\be
\label{ielS}
I_{r,s,\el}(f,S)= \sum_{\gamma\in\Gamma_\el(G,S,\zeta)} a^G_{r,s,\el}(\gamma,S)f_G(\gamma)
\ee
which can be regarded as the elliptic part of $I^r_s(f)$. It follows from the definitions that  $I_{r,s,\el}(f,S) = I_{r,s,\el}(f^b_{S})$ for any $S$ large enough such that $f^b_{S}$ belongs to $\C_\adm(G,V,\zeta)$.

We define endoscopic and stable analogues of these by setting inductively 
 \be
 \label{ieell}
 I^{\E}_{r,s,\el}(\dot{f}_S)=\sum_{G' \in \E^0_{\el}(G,S)}\iota(G,G')\hat{S}^{\tilde{G}'}_{r,s,\el}(\dot{f}_S') + \varepsilon(G)S^G_{r,s,\el}(\dot{f}_S)
 \ee
 and
 \be
 \label{ieorb}
I^\E_{r,s,\orb}(f)=\sum_{G' \in \E^0_\el(G,V)}\iota(G,G')\hat{S}^{\tilde{G}'}_{r,s,\orb}(f') + \varepsilon(G)S^G_{r,s,\orb}(f)
 \ee
where we recall that the coefficient $\iota(G,G')$ is the one given in \cite[Theorem 8.3.1]{Kcusp}. Here $\E^0_\el(G,S)$ is the complement of $\{G\}$ in $\E_\el(G,S)$, and $\dot{f}_S\in\C(G,S,\zeta)$ and $f\in\C(G,V,\zeta)$ respectively. The term $\hat{S}^{\tilde{G}'}_{r,s,\el}$ is a linear form on the image $S\C_\adm(\tilde{G}',S,\tilde\zeta)$ of $\C_\adm(\tilde{G}',S,\tilde\zeta)$ in $S\C(\tilde{G}',S,\tilde\zeta)$, and $\hat{S}^{\tilde{G}'}_{r,s,\orb}$ is a linear form on $S\C(\tilde{G}',V,\tilde\zeta')$. We furthermore require that 
\[
I^{\E}_{r,s,\el}(\dot{f}_S) =  I_{r,s,\el}(\dot{f}_S)
\]
and
\[
I^\E_{r,s,\orb}(f)=I_{r,s,\orb}(f)
\]
in the case that $G$ is quasisplit, and the general induction hypothesis that $S^{\tilde G'}_\el$ and $S^{\tilde G'}_\orb$ are stable for $G'$ in $\E^0_\el(G,S)$ and $\E^0_\el(G,V)$ respectively. 

If $G$ is arbitrary, it follows by the same argument of \cite[Lemma 7.2]{STF1} that
\be
\label{ieelllem}
I^\E_{r,s,\el}(\dot{f}_S) = \sum_{\dot\gamma_S\in\Gamma^\E_\el(G,S,\zeta)}a^{G,\E}_{r,s,\el}(\dot\gamma_S)\dot{f}_{S,G}(\dot\gamma_S)
\ee
and
\be
\label{ieorblem}
I^\E_{r,s,\orb}(f) = \sum_{\gamma\in\Gamma^\E_\el(G,V,\zeta)}a^{G,\E}_{r,s}(\gamma)f_{G}(\gamma),
\ee
and if $G$ is quasisplit we have that
\be
\label{selllem}
S^G_{r,s,\el}(\dot{f}_S) = \sum_{\dot\delta_S\in\Delta^\E_\el(G,S,\zeta)}b^{G}_{r,s,\el}(\dot\delta_S)\dot{f}_{S,G}^{\E}(\dot\delta_S)
\ee
and
\be
\label{seorblem}
S^G_{r,s,\orb}(f) = \sum_{\delta\in\Delta^\E_\el(G,V,\zeta)}b^{G}_{r,s}(\delta)f_{G}^\E(\delta).
\ee
Here 
\[
f\to f^\E_G(\delta) = f'(\delta')
\]
is a linear form on $\C(G_V,\zeta_V)$ for any $\delta'\in\Delta_{G\text{-reg}}(\tilde{G}'_V,\tilde\zeta'_V)$ with image $\delta\in\Delta_\reg^\E(G_V,\zeta_V)$. Let $I\C^\E(G_V,\zeta_V)$ and $I\C(G_V,\zeta_V)$ be the spaces spanned by $f_G$ and $f^\E_G$ respectively for any $f\in \C(G_V,\zeta_V)$. The map $f_G\to f_G^\E$ provides an isomorphism between these two spaces. 
 
 We also set 
\be
\label{iells}
I^{\E}_{r,s,\el}(f,S) = \sum_{G'\in\E^0_\el(G)}\iota(G,G')\hat{S}^{\tilde{G}'}_{r,s,\el}(f',S) +\varepsilon(G)S^{G}_{r,s,\el}(f,S)
\ee
for linear forms $\hat{S}^{\tilde{G}'}_{r,s,\el}(f' ,S)$ on $S\C(\tilde{G}',S,\tilde\zeta')$ which are defined inductively by requiring that 
\[
I^{\E}_{r,s,\el}(f,S) = I_{r,s,\el}(f,S)
\]
in the case that $G$ is quasisplit. Now suppose that $S$ is large enough so that $\dot{f}_{S}$ belongs to $\C_\adm(G,S,\zeta)$. It then follows inductively from Corollary \ref{commtran} and  \eqref{ieell} that 
\[
I^{\E}_{r,s,\el}(f,S) = I^{\E}_{r,s,\el}(\dot{f}_{S})
\]
and
\[
S^{G}_{r,s,\el}(f,S) = S^{G}_{r,s,\el}(\dot{f}_{S}),
\]
moreover from the expansions \eqref{ieelllem} and \eqref{selllem} we conclude that 
\be
\label{iellS}
I^{\E}_{r,s,\el}(f,S) = \sum_{\gamma\in\Gamma^\E_\el(G,S,\zeta)} a^{G,\E}_{r,s,\el}(\gamma,S)f_G(\gamma)
\ee
and
\be
\label{sellS}
S^{G}_{r,s,\el}(f,S) = \sum_{\delta\in\Delta^\E_\el(G,S,\zeta)} b^G_{r,s,\el}(\delta,S)f_G^\E(\delta).
\ee
These formulae represent a stabilization of the term with $M = G$ in \eqref{arsgam}.

Having made these preliminary manipulations, we can now establish the geometric expansion of $I^\E_{r,s}(f)$ and $S^G_{r,s}(f)$. It follows the broad argument of \cite[\S10]{STF1}. We set
\be
\label{ISrs}
I^\E_{r,s}(f) = \sum_{G'\in\E^0_\el(G,V)}\iota(G,G')\hat{S}'_{r,s}(f') + \varepsilon(G)S^G_{r,s}(f),\quad f\in\C(G,V,\zeta)
\ee
for linear forms $\hat{S}'_{r,s} = \hat{S}^{\tilde{G}'}_{r,s}$ on $S\C(\tilde{G}',V,\tilde\zeta')$, defined inductively by the supplementary requirement that $I^\E(f)=I(f)$ in the case that $G$ is quasisplit. We shall also assume inductively that if $G$ is replaced by a quasisplit inner $K$-form $\tilde G'$, the corresponding analogue of $S^G$ is defined and stable. 

\begin{prop}\label{geomprop} Let $f\in \C(G,V,\zeta)$.\\
\textnormal{(a)} If $G$ is arbitrary,
\[
%\label{orbgeoma}
I^\E_{r,s}(f) -I^\E_{r,s,\orb}(f)  = \sum_{M\in\L^0}|W^M_0||W^G_0|^{-1}\sum_{\gamma\in\Gamma^\E(M,V,\zeta)}a^{M,\E}_{r,s}(\gamma)I^\E(\gamma,f).
\]
\textnormal{(b)} If $G$ is quasisplit,
\begin{align*}
%\label{orbgeomb}
&S^G_{r,s}(f) -S^G_{r,s,\orb}(f) \notag\\
& = \sum_{M\in\L^0}|W^M_0||W^G_0|^{-1}\sum_{M'\in\E_\el(M,V)}\iota(M,M')\sum_{\delta'\in\Delta(\tilde M',V,\tilde\zeta')}b^{\tilde M'}_{r,s}(\delta')S^G_M(M',\delta',f).
\end{align*}
\end{prop}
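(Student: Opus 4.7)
The approach is to derive both identities from the known geometric expansion \eqref{Irsg} of $I^r_s(f)$, via an inductive stabilization following the scheme of Arthur \cite[\S10]{STF1}. We induct on $\dim G$, so that Theorem \ref{globge} is available for all proper Levi subgroups $M \in \L^0$, and the analogue of part (b) is available for all proper quasisplit inner $K$-forms $\tilde{G}'$ with $G' \in \E^0_\el(G,V)$.

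For part (a), first split \eqref{Irsg} into the contribution from $M = G$, which is precisely $I_{r,s,\orb}(f)$ by \eqref{iorb}, and the remaining sum over $M \in \L^0$. The inductive hypothesis of Theorem \ref{globge}(a) gives $a^M_{r,s}(\gamma) = a^{M,\E}_{r,s}(\gamma)$ on $\Gamma^\E(M,V,\zeta)$ for each $M \in \L^0$. If $G$ is quasisplit, the identities $I^\E_{r,s}(f) = I^r_s(f)$ and $I^\E_M(\gamma,f) = I_M(\gamma,f)$ hold by the defining requirements in \eqref{ISrs} and in the inductive construction of $I^\E_M$, and the assertion follows by direct substitution. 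For general $G$, apply \eqref{ISrs} and use part (a) inductively for each quasisplit $\tilde{G}'$ with $G' \in \E^0_\el(G,V)$: the resulting geometric expansions of the $\hat{S}'_{r,s}(f')$ reassemble, via the defining decomposition of $I^\E_M$ as a sum over $\E^0_{M'}(G)$ together with the stable term $S^G_M(M',\delta',f)$, into the sum over Levi subgroups of $G$ appearing on the right-hand side.

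For part (b), assume $G$ is quasisplit and start from
\[
S^G_{r,s}(f) = I^r_s(f) - \sum_{G' \in \E^0_\el(G,V)} \iota(G,G')\, \hat{S}'_{r,s}(f'),
\]
which comes from \eqref{ISrs} and the equality $I^\E_{r,s}(f) = I^r_s(f)$ in the quasisplit case. Substitute the geometric expansion of $I^r_s(f)$ and, for each $G'$, the inductive stable geometric expansion of $\hat{S}'_{r,s}(f')$ given by part (b) applied to the smaller group $\tilde{G}'$. The terms indexed by the full groups $G$ and $\tilde{G}'$ recombine into $S^G_{r,s,\orb}(f)$ by the definition \eqref{ieorb}. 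What remains is a double sum originally indexed by pairs $(G',M')$ with $G' \in \E^0_\el(G,V)$ and $M'$ a proper Levi of $\tilde{G}'$, which must be reindexed as a sum over pairs $(M,M')$ with $M \in \L^0$ and $M' \in \E_\el(M,V)$. This rearrangement proceeds through the identity relating $\iota(G,G')$, $\iota(M,M')$, and $\iota_{M'}(G,G')$, combined with Proposition \ref{sgmell}, so that the unramified weighted factors $r^G_M(\ell',b)$ coming from the basic function are absorbed into the stable linear forms $S^G_M(M',\delta',f)$.

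The main obstacle is the combinatorial reorganization in part (b). Though the argument follows Arthur's template, the presence of the basic function $b^V$ requires that each step be verified to be compatible with the extension from compactly supported to Schwartz test functions, and that the endoscopic transfer supplied by Corollary \ref{conk} genuinely suffices to splice together the unramified weighted orbital integrals across the various endoscopic groups. In particular, one must confirm that $S^G_M(M',\delta',f)$ correctly absorbs both the weighted contribution from the Levi $M$ of $G$ itself and the unramified endoscopic weighting produced by Proposition \ref{sgmell}, leaving no residual terms unaccounted for on either side of the asserted identity.
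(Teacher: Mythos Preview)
Your overall framework is correct and matches the paper's approach, which simply cites \cite[Theorem 10.1]{STF1}: one subtracts \eqref{ieorb} from \eqref{ISrs}, applies the inductive stable expansion to each $\hat{S}'_{r,s}(f') - \hat{S}^{\tilde{G}'}_{r,s,\orb}(f')$, and reindexes via \cite[Lemma 10.2]{STF1}. Two points need correction, however.

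First, a small slip: in the general (non-quasisplit) case of part (a), the inductive input you need for each $G'\in\E^0_\el(G,V)$ is the expansion of $\hat{S}^{\tilde{G}'}_{r,s}(f')$, which comes from part (b) for $\tilde{G}'$, not part (a). You state this correctly later for part (b) but write ``part (a) inductively'' in the paragraph on part (a).

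Second, and more substantively, Proposition \ref{sgmell} and Corollary \ref{conk} play no role in this proposition. The basic function $b^V$ is entirely absorbed into the global coefficients $a^{M,\E}_{r,s}(\gamma)$ and $b^{\tilde{M}'}_{r,s}(\delta')$; the local distributions $I^\E_M(\gamma,f)$ and $S^G_M(M',\delta',f)$ are exactly Arthur's, merely extended from $\H(G,V,\zeta)$ to $\C(G,V,\zeta)$ as in \cite{witf}. The reindexing in part (b) is therefore the purely combinatorial one of \cite[Lemma 10.2]{STF1}, converting a sum over $(G',R')$ with $R'\in(\L')^0$ into a sum over $(M,M',G')$ with $M\in\L^0$, $M'\in\E_\el(M,V)$, $G'\in\E_{M'}(G)$, after which the inner sum over $G'$ collapses to $S^G_M(M',\delta',f)$ by its very definition. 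No unramified weighted factors $r^G_M(\ell,b)$ appear at this stage; you are conflating this proposition with Theorem \ref{propgeom}, which does analyze the coefficients and does require Proposition \ref{sgmell}. Remove the references to Proposition \ref{sgmell} and the ``absorption'' of unramified weights, and your sketch is correct.
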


\begin{proof}
\begin{comment}From the expansions \eqref{ISrs} and \eqref{ieorb}, we find that the difference
\[
(I^\E_{r,s}(f) -I^\E_{r,s,\orb}(f) ) - \varepsilon(G)(S^G_{r,s}(f) - S^G_{r,s,\orb}(f))
\]
is equal to 
\[
\sum_{G'\in\E^0_\el(G,V)}\iota(G,G')(\hat{S}^{\tilde G'}_{r,s}(f')-\hat{S}^{\tilde G'}_{r,s,\orb}(f')).
\]
For any $G'\in\E^0_\el(G,V)$, assume inductively that \eqref{orbgeomb} holds for any quasisplit inner $K$-form $\tilde{G}'$. 
\[
\sum_{G'\in\E^0_\el(G,V)}\iota(G,G')\sum_{R'\in(\L')^0}|W^{R'}_0||W^{G'}_0|^{-1}S_{R'}(G')
\]
where $\L'=\L^{G'}$ and
\[
S_{R'}(G') = \sum_{\sigma'\in\Delta(\tilde{R}',V,\tilde\zeta')}b^{\tilde{R}'}_{r,s}(\sigma')\hat{S}^{\tilde{G}'}_{\tilde{R}'}(\sigma',f').
\]
Let us set $S_{R'}(G')=0$ if $G'$ belongs to the complement of $\E^0_\el(G,V)$ in $\E(G^*)$. Applying \cite[Lemma 10.2]{STF1}, we can then rearrange the outer sums into
\[
\sum_{R\in\L^*}|W^R_0||W^G_0|^{-1}\sum_{R'\in\E_\el(R)}\iota(R,R')\sum_{G'\in\E_{R'}(G^*)}\iota_{R'}(G^*,G')S_{R'}(G').
\]
\end{comment}
The proof follows the same argument as \cite[Theorem 10.1]{STF1}. 
\end{proof}

The next proposition concerns the coefficients $a^{G,\E}_{r,s}(\gamma)$ and $b^{G}_{r,s}(\delta)$, generalising \cite[Proposition 10.3]{STF1}, so we provide a detailed proof here. 

\begin{thm}
\label{propgeom}
\textnormal{(a)} If $G$ is arbitrary and $\gamma\in\Gamma^\E(G,V,\zeta)$, then
\be
\label{aell}
a^{G,\E}_{r,s}(\gamma)- a^{G,\E}_{r,s,\el}(\gamma,S) = \sum_{M\in\L^0}|W^M_0||W^G_0|^{-1}\sum_{k\in\K^{V,\E}_{r,s,\el}(\bar{M},S)}a^{M,\E}_\el(\gamma_M\times k)r^G_M(k,b).
\ee
\textnormal{(a)} If $G$ is quasisplit and $\delta\in\Delta^\E(G,V,\zeta)$, then
\be
\label{bell}
b^{G}_{r,s}(\delta)-b^{G}_{r,s,\el}(\delta,S) = \sum_{M\in\L^0}|W^M_0||W^G_0|^{-1}\sum_{k\in\L^{V}_\el(\bar{M},S)}b^{M}_{r,s,\el}(\delta_M\times \ell)s^G_M(\ell,b)
\ee
if $\delta$ lies in the subset $\Delta(G,V,\zeta)$ of $\Delta^\E(G,V,\zeta)$ and is zero otherwise.
\end{thm}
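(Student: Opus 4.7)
The plan is to mimic the proof of \cite[Proposition 10.3]{STF1}, exploiting the fact that our global coefficients have been defined by formulae parallel to Arthur's, but with the stable unramified factor $s^G_M(\ell,b)$ in place of Arthur's $s^G_M(\ell)$. Part (a) will be proved directly by induction; part (b) will then be formal.

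For part (a), I would begin from the definition \eqref{agrs} of $a^{G,\E}_{r,s}(\gamma)$. The induction hypothesis, applied to the proper endoscopic groups $\tilde{G}'\in\E^0_\el(G,V)$ (which are smaller than $G$ in the sense of the Kottwitz ordering), expresses the non-elliptic part of each $b^{\tilde G'}_{r,s}(\delta')$ as a sum over Levi subgroups $\tilde M'\subset\tilde G'$ and elements $\ell'\in\L^V_\el(\bar{\tilde M}',S)$, weighted by $s^{\tilde G'}_{\tilde M'}(\ell',b')$. If $G$ is quasisplit, the analogous identity for the self-term $G'=G^*$ is built into the definitions \eqref{agrs} and \eqref{ageell}, together with the corresponding identity for $b^G_{r,s}(\delta)$.

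The combinatorial core is the rearrangement of the resulting triple sum. Here the outer sum over $G'$ and the inner sum over Levi subgroups $\tilde M'$ of $\tilde G'$ are exchanged by means of Kottwitz's coefficient identity \cite[Theorem 8.3.1]{Kcusp}, which expresses $\iota(G,G')$ as a product of $\iota(M,M')$ and $\iota_{M'}(G,G')$ for $G'\in\E_{M'}(G)$; this is the same manipulation invoked in the proof of Proposition \ref{geomprop} above. After the exchange, the inner sum
\begin{equation*}
\sum_{G'\in\E_{M'}(G)}\iota_{M'}(G,G')\,s^{\tilde G'}_{\tilde M'}(\ell',b')
\end{equation*}
appears, which by Proposition \ref{sgmell} is exactly $r^G_M(\ell,b)$. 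Grouping the remaining factors and recognising the elliptic endoscopic coefficient $a^{M,\E}_{r,s,\el}(\gamma_M\times k)$ from \eqref{agersell} via the image map $k\mapsto \ell$, the resulting expression is exactly the right-hand side of \eqref{aell}.

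Part (b) follows formally. For $G$ quasisplit, the definition \eqref{agrs} together with the requirement $a^{G,\E}_{r,s}(\gamma)=a^G_{r,s}(\gamma)$ implicitly determines $b^G_{r,s}(\delta)$; subtracting the elliptic analogue (defined in the same way by \eqref{ageell}) and substituting the formula of part (a) leaves only the proper Levi contributions, which by the same rearrangement and Proposition \ref{sgmell} assemble into \eqref{bell}. The vanishing assertion for $\delta\notin\Delta(G,V,\zeta)$ follows from the support properties already built into $b^G_{r,s,\el}(\delta,S)$ together with the induction hypothesis of Theorem \ref{globge}(b) applied to the proper Levis $M\in\L^0$.

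The principal obstacle is keeping the bookkeeping of endoscopic data consistent through the rearrangement, in particular verifying that the transfer factors $\Delta_G(\delta',\gamma)$ decompose compatibly with the product decomposition $\gamma=\gamma_V\times k$ (and similarly for $\delta$), so that the double sum over $V$-adic and unramified data genuinely factors. Once this compatibility is in place, the remainder of the argument is the same formal manipulation as in Arthur's treatment of the unweighted case.
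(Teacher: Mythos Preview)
Your proposal is essentially correct and follows the same strategy as the paper: induction on proper elliptic endoscopic data to expand each $b^{\tilde G'}_{r,s}(\delta')$, the rearrangement of the resulting double sum via \cite[Lemma 10.2]{STF1} (equivalently the Kottwitz coefficient identity), collapse of the inner sum over $\E_{M'}(G)$ by Proposition~\ref{sgmell}, and the induction hypothesis Theorem~\ref{globge}(b) on proper Levi subgroups to rewrite the elliptic endoscopic coefficient for $M$.

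The paper's organization differs from yours in one respect worth noting. Rather than manipulating the coefficients $a^{G,\E}_{r,s}(\gamma)$ and $b^G_{r,s}(\delta)$ directly, the paper first reformulates each assertion as an identity of linear forms on test functions, namely $I^\E_{r,s,\orb}(f)-I^\E_{r,s,\el}(f,S)=I^{\E,0}_{r,s,\orb}(f,S)$ and its stable analogue, and then proves parts (a) and (b) simultaneously by computing the compound difference
\[
\bigl(I^\E_{r,s,\orb}(f)-I^\E_{r,s,\el}(f,S)\bigr)-\varepsilon(G)\bigl(S^G_{r,s,\orb}(f)-S^G_{r,s,\el}(f,S)\bigr).
\]
The advantage of this detour is exactly the point you flag as the ``principal obstacle'': the compatibility of the transfer factors with the product decomposition $\gamma=\gamma_V\times k$ and $\delta=\delta_V\times\ell$ is absorbed into the already-established identities $f^{M'}(\delta')=f^\E_M(\delta)$ and the definition \eqref{rgmell} of $r^G_M(\ell,b)$, so no separate verification is needed. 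Your direct coefficient-level argument is valid, but you will have to make that transfer-factor factorization explicit; the paper's distributional reformulation sidesteps it. Note also that in the paper parts (a) and (b) are not sequential but are extracted together from the single compound identity in the quasisplit case, whereas you treat (b) as a formal consequence of (a); both organizations work.
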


\begin{proof}
Let us first consider the differences of \eqref{ieorblem} and \eqref{iellS}, 
\[
I^\E_{r,s,\orb}(f) -I^\E_{r,s,\el}(f,S)  = \sum_{\gamma\in\Gamma^\E(G,V,\zeta)}(a^{G,\E}_{r,s}(\gamma)-a^{G,\E}_{r,s,\el}(\gamma,S))f_G(\gamma),
\]
and \eqref{seorblem} and \eqref{sellS},
\[
S^G_{r,s,\orb}(f) - S^G_{r,s,\el}(f,S) = \sum_{\delta\in\Delta^\E(G,V,\zeta)}(b^G_{r,s}(\delta)-b^G_{r,s,\el}(\delta,S))f^\E_G(\delta).
\]
Substituting the right-hand side of \eqref{aell}, we obtain the linear form
\be
\label{IE0}
 \sum_{M\in\L^0}|W^M_0||W^G_0|^{-1}\sum_{\gamma\in\Gamma^\E_\el(M,V,\zeta)} \sum_{k\in\K^{V,\E}_\el(\bar{M},S)}a^{M,\E}_{r,s,\el}(\gamma\times k)f_M(\gamma)r^G_M(k,b),
\ee
which we will denote by $I^{\E,0}_{r,s,\orb}(f,S)$. We may vary the function $f\in \C(G,V,\zeta)$ independent of $S$ as long as the support of $f$ remains $S$-admissible, hence the required identity \eqref{aell} is equivalent to showing that
\be
\label{I0}
I^\E_{r,s,\orb}(f) - I^\E_{r,s,\el}(f,S) = I^{\E,0}_{r,s,\orb}(f,S).
\ee
On the other hand, if $G$ is quasisplit, substituting the right-hand side of \eqref{bell}, we obtain the linear form
\be
\label{SG0}
 \sum_{M\in\L^0}|W^M_0||W^G_0|^{-1}\sum_{\delta\in\Delta_\el(M,V,\zeta)} \sum_{\ell\in\L^{V}_\el(\bar{M},S)}b^{M}_{r,s,\el}(\delta\times \ell)f^M(\delta)s^G_M(\ell,b),
\ee
which we will denote by $S^{G,0}_{r,s,\orb}(f,S)$, if $\delta\in\Delta(G,V,\zeta)$, and is zero otherwise. Then the required identity \eqref{bell} is equivalent to showing that
\be
\label{SG00}
S^G_{r,s,\orb}(f) - S^G_{r,s,\el}(f,S) = S^{G,0}_{r,s,\orb}(f,S).
\ee

On the other hand, from the definitions \eqref{ieorb} and \eqref{iells}, the difference
\be
\label{geomdiff}
(I^\E_{r,s,\orb}(f) - I^\E_{r,s,\el}(f,S)) -  \varepsilon(G)(S^G_{r,s,\orb}(f) - S^G_{r,s,\el}(f,S))
\ee
is equal to
\[
\sum_{G'\in\E^0_\el(G,V)}\iota(G,G')(\hat{S}^{\tilde{G}'}_{r,s,\orb}(f') - \hat{S}^{\tilde{G}'}_{r,s,\el}(f',S)).
\]
We can assume inductively that $\hat{S}^{\tilde{G}'}_{r,s,\orb}(f') - \hat{S}^{\tilde{G}'}_{r,s,\el}(f',S)$ equals $\hat{S}^{\tilde{G}',0}_{r,s,\orb}(f,S)$ for any $G'\in\E^0_\el(G,V)$. Replacing it with \eqref{SG0} above, we may use \cite[Lemma 10.2]{STF1} to rearrange the sum as
\[
\sum_{R\in(\L^{*})^0}|W^R_0||W^{G^*}_0|^{-1}\sum_{R'\in\E_\el(R,V)}\iota(R,R') \sum_{\dot\sigma_S'} b^{\tilde R'}_{r,s,\el}(\dot\sigma_S') B_{R'}(\dot\sigma_S',f)
\]
where $\dot\sigma_S'$ is summed over the product of $\sigma'\in\Delta_\el(\tilde{R}',V,\tilde\zeta')$ and $\ell'\in\L^V_\el(\tilde{R}',S)$, $\L^*=\L^{G^*}$ runs over Levi subgroups of $G^*$ containing a fixed minimal Levi subgroup $M_0^*$, and
\[
B_{R'}(\dot\sigma_S',f) = \sum_{G'\in\E^0_{R'}(G)}\iota(G,G')f^{R'}(\sigma'){s}^{\tilde{G}'}_{\tilde{R}'}(\ell',b'),
\]
which we shall express as the difference
\[
\sum_{G'\in\E_{R'}(G)}\iota(G,G'){s}^{\tilde{G}'}_{\tilde{R}'}(\ell',b')f^{R'}(\sigma') - \varepsilon(G)s^G_R(R',\dot\sigma'_S,b,f),
\]
where $s^G_R(R',\dot\sigma'_S,b,f)$ equals $s^{G^*}_{R}(\dot\sigma_S,f)$ if $R'=R$ and $\dot\sigma'_S=\dot\sigma$, and is zero otherwise. The contribution of the second term to \eqref{geomdiff} is $-\varepsilon(G)$ times \eqref{SG0}, which is $S^{G,0}_{r,s,\orb}(f,S)$. The contribution of the first term is zero if $R\in(\L^*)^0$ does not come from $G$ by definition of $f^{R'}(\sigma')$, whereas if $(R,R',\sigma')$ lies in the $W^{G^*}_0$-orbit of a triplet $(M,M',\delta')$ that comes from $G$, then it follows from Proposition \ref{sgmell} that the first term equals
\[
r^G_M(\ell,b)f^{M'}(\delta') = r^G_M(\ell,b)f^\E_{M}(\delta),
\]
where $\delta\times\ell$ is the image of $\delta'\times\ell'$ in the product of $\Delta^\E_\el(M,V,\zeta)$ and $\L^{V,\E}(\bar{M},S)$. We can thus write the contribution of the first term to \eqref{geomdiff} as
\be
\label{geomdiff2}
\sum_{M\in\L^0}|W^M_0|||W^G_0|^{-1}\sum_{M'\in\E_\el(M,V)}\iota(M,M')\sum_{\delta'}\sum_{\ell'}b^{\tilde{M}'}_{r,s,\el}(\delta'\times\ell')r^G_M(\ell,b)f_M^\E(\delta)
\ee
where $\delta'$ and $\ell'$ are summed over $\Delta_\el(\tilde{M}',V,\tilde\zeta')$ and $\L^{V}_\el(\bar{M}',S)$. The sum over $\E_\el(M,V)$ can actually be taken over $\E_\el(M,S)$, since by Proposition \ref{sgmell} the contribution of $M'$ in the complement of $\E_\el(M,V)$ of $\E_\el(M,S)$ is zero. Moreover, using the definitions $r^G_M(\ell,b)$ in \eqref{rgmell} and of $f_M^\E(\delta)$, we may express their product as
\[
\sum_{\gamma\in\Gamma^\E(M,V,\zeta)}\sum_{k\in\K^{V,\E}_\el(\bar{M},S)}\Delta_M(\delta'\times\ell',\gamma\times k)r^G_M(k,b)f_M(\gamma).
\]
If $G$ is quasisplit, the general induction hypothesis implies that Theorem \ref{globge}(b) holds for any $M\in\L^0$ in \eqref{geomdiff2}. We may then write \eqref{ageell} as 
\[
a^{M,\E}_{r,s,\el}(\dot\gamma_S)= \sum_{M'\in\E_\el(M,S)}\iota(M,M')\sum_{\dot\delta_S}b^{\tilde{M}'}_{r,s}(\delta')\Delta_M(\dot\delta'_S,\dot\gamma_S),
\]
and hence the inner sum on $M'$ in \eqref{geomdiff2} is equal to
\[
\sum_{\dot\gamma_S}a^{M,\E}_{r,s,\el}(\dot\gamma_S)r^G_M(k,b)f_M(\gamma).
\]
Thenrefore we conclude that \eqref{geomdiff2} is equal to $I^{\E,0}_{r,s,\el}(f,S)$,\footnote{The expression (10.13) indicated at the beginning of \cite[p.272]{STF1} should be equal to $I^{\E,0}_\orb(f)$ and not $I^{\E,0}_\el(f)$.} and \eqref{geomdiff} is then equal to the difference
\[
I^{\E,0}_{r,s,\orb}(f,S) - \varepsilon(G)S^{G,0}_{r,s,\orb}(f,S).
\]
Now if $\varepsilon(G)=0$, the required identity \eqref{I0} follows from the preceding assertion. On the other hand, if $\varepsilon(G)=1$, then
\[
I^\E_{r,s,\orb}(f) - I^\E_{r,s,\el}(f,S) = I_{r,s,\orb}(f) - I_{r,s,\el}(f,S)
\]
by deifnition, and $I^{\E,0}_{r,s,\orb}(f,S)$ equals 
\[
 \sum_{M\in\L^0}|W^M_0||W^G_0|^{-1}\sum_{\dot\gamma_S}a^{M}_{r,s,\el}(\dot\gamma_S)r^G_M(k,b)f_M(\gamma). 
\]
This again fulfills \eqref{I0}, and the remaining terms thus imply that the required identity \eqref{SG00} is satisfied as well.
\end{proof}

The preceding proposition provides the first reduction of study of the global coefficients to the basic elliptic coefficients. We record it as the following statement for later use.

\begin{cor}\label{glob1'}Suppose that
\ben
\item[(a)]if $G$ is arbitrary, we have 
\[
a^{G,\E}_{r,s,\el}(\dot\gamma_S) = a^G_{r,s,\el}(\dot\gamma_S), 
\]
for any admissible element $\dot\gamma_S\in\Gamma^\E_\el(G,S,\zeta)$, and

\item[(b)]if $G$ is quasisplit, we have that
\[
b^G_\el(\dot\delta_S), \quad \dot\delta_S\in\Delta^\E_\el(G,S,\zeta),
\]
vanishes for any admissible element $\dot\delta_S$ in the complement of $\Delta_\el(G,S,\zeta)$ of $\Delta_\el^\E(G,S,\zeta)$.
\een
Then Theorem \ref{globge} holds. 
\end{cor}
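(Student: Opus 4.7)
The argument will closely parallel \cite[Corollary 10.4]{STF1}: Theorem \ref{propgeom} has already isolated the difference between each global coefficient and its elliptic counterpart as a sum over proper Levi subgroups, so the plan is to combine that identity with the defining formula \eqref{arsgam} and propagate the hypotheses through the Levi structure by induction on $\dim G$.

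For part (a), fix $\gamma\in\Gamma^\E(G,V,\zeta)$. Peeling off the $M=G$ term from \eqref{arsgam} yields the non-endoscopic analogue of \eqref{aell},
\[
a^G_{r,s}(\gamma)-a^G_{r,s,\el}(\gamma,S)=\sum_{M\in\L^0}|W^M_0||W^G_0|^{-1}\sum_{k\in\K^V_\el(\bar M,S)}a^M_{r,s,\el}(\gamma_M\times k)\,r^G_M(k,b).
\]
Subtracting this from \eqref{aell} expresses $a^{G,\E}_{r,s}(\gamma)-a^G_{r,s}(\gamma)$ as the sum of a ``top'' contribution $a^{G,\E}_{r,s,\el}(\gamma,S)-a^G_{r,s,\el}(\gamma,S)$ and a sum over $M\in\L^0$ of terms proportional to $\bigl(a^{M,\E}_{r,s,\el}-a^M_{r,s,\el}\bigr)(\gamma_M\times k)$. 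Hypothesis (a) applied to each proper Levi $M$ (which is legitimate since the hypothesis is postulated for all groups) kills the Levi contributions, while hypothesis (a) applied to $G$ itself, read through the defining sums \eqref{agersell} and \eqref{agrsell}, kills the top contribution. This yields $a^{G,\E}_{r,s}(\gamma)=a^G_{r,s}(\gamma)$.

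For part (b), assume $G$ is quasisplit and take $\delta\in\Delta^\E(G,V,\zeta)$. If $\delta\in\Delta(G,V,\zeta)$ there is nothing to prove, so assume $\delta$ lies in the complement. The second clause of Theorem \ref{propgeom}(b) then asserts $b^G_{r,s}(\delta)-b^G_{r,s,\el}(\delta,S)=0$, so it suffices to show the right-hand side vanishes. By \eqref{bgrsell}, $b^G_{r,s,\el}(\delta,S)$ is a weighted sum of values $b^G_{r,s,\el}(\delta\times\ell)$ with $\ell\in\L^V_\el(\bar G,S)$. A short check from the definitions shows that $\delta\notin\Delta(G,V,\zeta)$ forces each $\delta\times\ell$ to lie in $\Delta^\E_\el(G,S,\zeta)\setminus\Delta_\el(G,S,\zeta)$, so hypothesis (b) forces each summand to vanish, and hence $b^G_{r,s}(\delta)=0$.

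The substantive input is already packaged inside Theorem \ref{propgeom}, so what remains is essentially formal, following the pattern of \cite[\S10]{STF1}. The only point requiring mild care—and the main place I expect to have to pause and verify definitions—is the set-theoretic bookkeeping that underlies the last sentence of each paragraph: that restriction $\gamma\mapsto\gamma_M$, induction $\mu\mapsto\mu^G$, and the operation $\delta\mapsto\delta\times\ell$ all respect the partitions $\Gamma^\E_\el$ versus $\Gamma_\el$ and $\Delta^\E_\el$ versus $\Delta_\el$ in the way needed to invoke the hypotheses term by term. Once those compatibilities are confirmed exactly as in \cite[\S7--10]{STF1}, the reduction is complete.
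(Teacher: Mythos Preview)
The proposal is correct and takes essentially the same approach as the paper. The paper does not give an explicit proof of this corollary at all: it simply records the statement immediately after Theorem \ref{propgeom} with the remark that the preceding proposition ``provides the first reduction of study of the global coefficients to the basic elliptic coefficients,'' following \cite[\S10]{STF1}; your argument is precisely the natural deduction that this sentence points to, and your closing caveat about the set-theoretic bookkeeping (that restriction, induction, and $\delta\mapsto\delta\times\ell$ respect the partitions $\Delta_\el$ versus $\Delta^\E_\el$) is exactly the detail that has to be checked against the definitions in \cite[\S7]{STF1}.
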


\section{The unramified spectral terms}

\label{sectionunrs}

\subsection{Endoscopic $L$-functions}

For the moment, let $G$ be a connected reductive group over a number field $F$, and fix an induced central torus $Z$ in $G$ with an automorphic character $\zeta$. Also let $V$ be a finite set of places containing $V_\ram(G,\zeta)$. We consider families $c = \{c_v:v\not\in V\}$ of semisimple conjugacy classes $c_v$ in the local $L$-group $^LG_v$ of $G_v$, whose image in the local Weil group $W_{F_v}$ is a Frobenius element. Then let $\mathscr C(G^V,\zeta^V)$ be the set of families $c$ such that the image of each $c_v$ under the projection $^LG_v\to {^LZ_v}$ gives the unramified Langlands parameter of $\zeta_v$, and for any $\hat{G}$-invariant polynomial $A$ on $^LG$ we have 
\[
|A(c_v)|\le q^{r_A}_v,\qquad v\not\in V
\]
for some $r_A>0$, where again $q_v$ is order of the residue field of $F_v$. Given $c\in \mathscr C(G^V,\zeta^V)$ and a finite-dimensional representation $\rho$ of $^LG$, we form the Euler product
\[
L(s,c,\rho) = \prod_{v\not\in V} \det(1-\rho(c_v)q_v^{-s})^{-1}, \quad s\in{\mathbb C},
\]
which converges to an analytic function of $s$ in some right half plane. There is a natural action of $\a^*_{G,Z}\otimes\mathbb C$ on $\mathscr C(G^V,\zeta^V)$ given by
\[
c \mapsto c_\lambda=\{c_{v,\lambda}= c_vq_v^{-\lambda}:v\not\in V\}, \quad \lambda \in \a^*_{G,Z}\otimes \CC,
\]
whereby $L(s,c,\rho)$ is analytic for Re$(s)\gg \text{Re}(\lambda)$. Our main interest will be in the following case: given a Levi subgroup $M$ of $G$ with dual group $\hat{M}$, there is a bijection $P\to \hat{P}$ from $\P(M)$ to the set $\P(\hat{M})$ of $\Gamma$-stable parabolic subgroups of $\hat{G}$ with Levi component $\hat{M}$. For any $P,Q\in\P(M)$, let $\rho_{Q|P}$ be the adjoint representation of $^LM$ on the Lie algebra of the intersection of unipotent radicals of $\hat{\bar{P}}$ and $\hat{Q}$. The $L$-functions $L(s,c,\rho_{Q|P})$ will be used to construct the unramified spectral terms. 

Let $M'$ stand for an elliptic endoscopic datum $(M',\M',s'_M,\xi'_M)$ that is unramified outside of $V$. Recall that $\M'$ is a split extension of $W_F$ by $\hat{M}'$ that need not be $L$-isomorphic to $^LM'$. We therefore fix a central extension $\tilde{M}'$ of $M'$ by an induced torus $\tilde{C}'$ over $F$, together with an $L$-embedding $\tilde\xi':\M' \to {^L\tilde{M}'}$. If $W_F\to \M'$ is any section, the composition
\[
W_F\to \M' \stackrel{\tilde\xi'}{\to} {^L\tilde{M}'}\to {^L\tilde{C}'}
\]
is a global Langlands parameter that is dual to a character $\tilde{\eta}'$ of $\tilde{C}'(F)\bs\tilde{C}'(\A)$. We may assume that $\tilde{M}'$ and $\tilde\eta'$ are also unramified outside of $V$. 

Given $c'\in \mathscr C((\tilde{M}')^V,(\tilde{\zeta}')^V)$, the projection of $c'_v$ onto $^L\tilde{C}'_v$ for any $v\not\in V$ is the conjugacy class that corresponds to the Langlands parameter of the unramified representation $\tilde\eta'_v$. Thus $c' = \tilde\xi'_v(\mathfrak c'_v)$ for some semisimple conjugacy class $\mathfrak c'_v$ in $\M'_v$. Let us write $c_v = \xi'_{M,v}(\mathfrak c'_v)$ for the image of $\mathfrak c'_v$ in $^LM_v$, so that the family $c=\{c_v:v\not\in V\}$ belongs to $\mathscr C(M^V,\zeta^V)$. This gives a map 
\be
\label{CMV}
\mathscr C((\tilde{M}')^V,(\tilde{\zeta}')^V) \to \mathscr C(M^V,\zeta^V).
\ee
Now let $\mathscr C_\text{aut}^V(G,\zeta)$ be the set of $c$ for which there exists an irreducible representation $\pi_V$ of $G_V$ such that $\pi_V\otimes \pi^V(c)$ is an automorphic representation of $G(\A)$, where $\pi^V(c)$ is the product over all $v\not\in V$ of unramified irreducible representations $\pi_v(c) = \pi(c_v)$  of $G_v$ determined by each $c_v$. By Langlands' functoriality principle, we expect that the map \eqref{CMV} descends to a map of subsets 
\[
\mathscr C_\text{aut}^V(\tilde M',\tilde\zeta')\to \mathscr C^V_\text{aut}(M,\zeta),
\]
and while this is not known, it will be enough to use the result of Arthur \cite[Proposition 1]{endo} that if $c$ is the image of a family $c'\in \mathscr C^V_\text{aut}(M^V,\zeta^V)$, the $L$-functions $L(s,c,\rho_{Q|P})$ have meromorphic continuation.

Suppose that $\psi_\alpha:M\to M_\alpha$ is an inner twist over $F$ that is unramified outside of $V$. Let $(Z_\alpha,\zeta_\alpha)$ be the image of $(Z,\zeta)$ and let $\psi_\alpha^*: {^LM}_\alpha\to {^LM}$ be an $L$-isomorphism dual to $\psi_\alpha$. We assume inductively that for any elliptic endoscopic datum $M'$ for $M$ that is proper in the sense that it is not equal to a quasisplit inner form of $M$, the set $\mathscr C^{V,\E}_\textnormal{aut}(\tilde{M}',\tilde\zeta')$ is defined. We then define $\mathscr C^{V,\E}_\textnormal{aut}(M,\zeta)$ to be the union over all such $M'$ of the images in $\mathscr C(M^V,\zeta^V)$ of $\mathscr C^{V,\E}_\textnormal{aut}(\tilde{M}',\tilde\zeta')$, together with the union over all $M_\alpha$ as above of the images in $\mathscr C(M^V,\zeta^V)$ of the sets $\mathscr C_\text{aut}^V(M_\alpha,\zeta_\alpha)$. 

Let $(Z(\hat{M})^\Gamma)^0$ be the identity component of the $\Gamma$-invariant elements in the center of $\hat{M}$. Let $a$ be a nontrivial character of $(Z(\hat{M})^\Gamma)^0$, and let $\rho_a$ be the representation of $^LM$ on the root space $\hat{\mathfrak g}_a$ of $a$ on the Lie algebra of $\hat{G}$. For any $c\in \mathscr C(M^V,\zeta^V)$, the endoscopic $L$-function
\[
L_G(s,c,a) = L(s,c,\rho_a) 
\]
is an analytic function of $s$ in some right half plane, and is equal to 1 unless $a$ is a root of $(\hat{G},(Z(\hat{M})^\Gamma)^0)$. If $\Sigma(\hat{P})$ denotes the set of roots attached to a parabolic subgroup $P\in\P(M)$, we then have
\[
L(s,c,\rho_{Q|P}) = \prod_{a\in \Sigma(\hat{\bar{P}}))\cap \Sigma(\hat{Q})} L_G(s,c,a).
\]
It has meromophic continuation to the complex plane for any $c\in\mathscr C^{V,\E}_\textnormal{aut}(M,\zeta)$. 

Suppose $a$ is a nontrivial character of $(Z(\hat{M})^\Gamma)^0$. Then the kernel $Z_a$ of $a$ acts by translation on $Z(\hat{M})^\Gamma/Z(\hat{G})^\Gamma$, which is in bijection with $\E_{M'}(G)$ by \cite[Corollary 3]{endo}. Let $\E_{M'}(G)/Z_a$ be the set of orbits, and recall that there is an isomorphism
\[
(Z(\hat{M}')^\Gamma)^0/(Z(\hat{M}')^\Gamma)^0\cap Z(\hat{G}')^\Gamma \stackrel{\sim}{\longrightarrow} (Z(\widehat{\tilde{M}'})^\Gamma)^0/(Z(\widehat{\tilde{M}'})^\Gamma)^0\cap Z(\widehat{\tilde{G}'})^\Gamma.
\]
If $a$ is trivial on $(Z(\hat{M}')^\Gamma)^0\cap Z(\hat{G}')^\Gamma$, let $a'$ be the unique character on $(Z(\widehat{\tilde{M}'})^\Gamma)^0$ that is trivial on $(Z(\widehat{\tilde{M}'})^\Gamma)^0\cap Z(\widehat{\tilde{G}'})^\Gamma$. If not, we take $a'$ to be any character on $(Z(\widehat{\tilde{M}'})^\Gamma)^0$ whose restriction to $(Z(\hat{M}')^\Gamma)^0$ is equal to $a$. Hence the character $a$ determines a family of $L$-functions $L_{\tilde{G}}(s,c',a')$ for each $c'\in \mathscr C((\tilde{M}')^V,(\tilde{\zeta}')^V)$ and $G'\in \E_{M'}(G)$. If $c$ is the image in $\mathscr C(M^V,\zeta^V)$ of some $c'$ in $\mathscr C((\tilde{M}')^V,(\tilde{\zeta}')^V)$, we have the decompostion
\be
\label{lgsca}
L_G(s,c,a) = \prod_{G'\in \E_{M'}(G)/Z_a}L_{\tilde{G}'}(s,c',a')
\ee
by \cite[Lemma 4]{endo}.

The complexification $\a^*_{M,Z}\otimes\CC$ can be identified with a subspace of the Lie algebra of $Z(\hat{M})^\Gamma$. If $da$ is the linear form on $\a^*_M\otimes\CC$ associated with the character $a$ on $(Z(\hat{M}')^\Gamma)^0$, we then have
\[
L_G(s,c_\lambda,a) = L_G(s+(da)(\lambda),c,a).
\]
It follows from this that for any fixed $s\in\CC$ and $c\in \mathscr C^{V,\E}_\textnormal{aut}(M,\zeta)$, the $L$-function $L_G(s,c,a)$ is a meromorphic function of $\lambda$.

\subsection{The unramified terms}

We are now ready to define the unramified normalizing factors. The preceding discussion concerned only connected groups. To extend the constructions to a $K$-group $G$, we simply take the union of the corresponding sets attached to each of the components $G_\alpha$ of $G$. Given $c\in \mathscr C^V(M,\zeta)$, the quotients
\[
r(c_\lambda,a) = L_G(0,c_\lambda,a)L_G(1,c_\lambda,a)^{-1}
\]
and
\begin{align*}
r_{Q|P}(c_\lambda) &= \prod_{a\in \Sigma(\hat{\bar{P}}))\cap \Sigma(\hat{Q})} r(c_\lambda,a) \\
&=  L_G(0,c_\lambda,\rho_{Q|P})L_G(1,c_\lambda,\rho_{Q|P})^{-1}
\end{align*}
are meromorphic functions of $\lambda\in \a^*_{M,Z}\otimes \CC$, for any $P,Q\in\P(M)$. Then
\[
r_{Q}(\Lambda,c_\lambda)=r_{Q|\bar{Q}}(c_\lambda)^{-1}r_{Q|\bar{Q}}(c_{\lambda+\frac12\Lambda}), \quad Q\in\P(M)
\]
is a $(G,M)$-family of functions of $\Lambda\in i\a^*_{M,Z}$. The limit
\[
r^G_M(c_\lambda) = \lim_{\Lambda\to 0} \sum_{Q\in\P(M)}r_{Q}(\Lambda,c_\lambda)\theta_Q(\Lambda)^{-1},
\]
with
\[
\theta_Q(\Lambda)= \vol(\a^G_M/\Z(\Delta^\vee_Q))^{-1}\prod_{\alpha\in\Delta_Q}\Lambda(\alpha^\vee),
\]
is defined as a meromorphic function of $\lambda\in\a^*_{M,Z}\otimes\CC$ as in \cite[\S2]{ITF0}.

Now consider the space $\F(G^Z_V,\zeta_V)$ of finite, complex linear combinations of irreducible characters on $G^Z_V$ with $Z_V$-central character equal to $\zeta_V$. We identify an element $\pi\in \F(G^Z_V,\zeta_V)$ with a linear form $f\to f_G(\pi)$ on $\C(G,V,\zeta)$. There is an additional map $f\to f_M$ factoring through this map, by which we define an induction operation from $\F(M_V,\zeta_V)$ to $\F(G_V,\zeta_V)$, given by $\rho\to\rho^G$ satisfying 
\[
f_G(\rho^G) = f_M(\rho).
\]
In particular, this holds in the case that $V$ contains a single element. In \cite[(5.1)]{witf}, using the basic function $b=b^r_s$ we introduced the unramified character
\[
r^G_M(c,b) = r^G_M(c)b_M(c) 
\]
that takes the place of the unramified spectral terms appearing in the stable trace formula \cite[\S3]{STF1}. Here $b_M(c)$ is the character of the induced representation of $\pi_v(c)$ evaluated at the function $b$.  We note that these unramified spectral terms differ from Arthur's $r^G_M(c)$ in that $r^G_M(c)$ are scalars, whereas $r^G_M(c,b)$ can be understood as scalar multiples of induced characters, which we shall have to stabilize.
%$b_M(c_\lambda)$ is stable? $L$-function of an induced representation: $\pi$ a smooth admissible representation of $G$ that arises as the image of transfer
%Transfer for unramified spherical Hecke algebra.  $\H(G)\to \H(G')$  extend to $\H_\ac(G)$ \cite[Prop 2.7]{Li} extends to $K$-groups  call $b'$ the transfer. ${\xi}':\mathcal G'\to {^L{G}}$ means that 
\begin{lem}
\label{bmc}
For any elliptic endoscopic datum $M'$ of $M$ unramified outside of $V\supset V_\infty$, and any $c'\in \mathscr C^{V,\E}_\textnormal{aut}(\tilde{M}',\tilde{\zeta}')$ with image $c\in \mathscr C^{V,\E}_\textnormal{aut}(M,\zeta)$, we have
\[
b_M(c) = b'_{\tilde{M}'}(c').
\]
\end{lem}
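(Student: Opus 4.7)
The plan is to reduce both sides of the identity to the same local unramified $L$-factor attached to $r$, using the defining property of the basic function together with the endoscopic compatibility between the Satake parameters $c$ and $c'$.

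First, I would unwind the meaning of $b_M(c)$. Since $b=b^r_{v,s}$ lies in the spherical Hecke algebra (or its almost-compactly-supported extension), and $b_M$ is obtained from $b_G$ via the map $f \to f_M$ factoring through $f \to f_G$, we have $b_M(c) = b_G(\pi_v(c)^G)$, where $\pi_v(c)^G$ denotes the parabolic induction from $M_v$ to $G_v$ of the unramified representation $\pi_v(c)$. Because $b$ sees only the spherical constituent of any induced module, and that constituent is the unramified representation of $G_v$ whose Satake parameter is the image of $c$ under the Levi embedding ${^L M_v} \hookrightarrow {^L G_v}$, the characterizing identity \eqref{bgl} yields
\[
b_M(c) \;=\; |\pi_0(G_v)|\, L_v(s,c,r).
\]
Applying the same reasoning on the connected endoscopic group $\tilde G'$, whose Levi $\tilde M'$ is also connected, gives
\[
b'_{\tilde M'}(c') \;=\; L_v(s,c',r'),
\]
where $r'$ is the representation of $^L\tilde G'$ built from $r$ through the endoscopic embedding $\xi'$ and the auxiliary embedding $\tilde\xi'$.

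Second, I would identify the two $L$-factors. By the description of the map \eqref{CMV}, there is a semisimple class $\mathfrak c'_v \in \M'_v$ with $c = \xi'_{M,v}(\mathfrak c'_v)$ and $c' = \tilde\xi'_{M,v}(\mathfrak c'_v)$. Since $r'$ is constructed so that $r'\circ \tilde\xi' = r\circ \xi'$ as representations of $\M'$, we obtain
\[
r(c) \;=\; (r\circ \xi')(\mathfrak c'_v) \;=\; (r'\circ \tilde\xi')(\mathfrak c'_v) \;=\; r'(c')
\]
as conjugacy classes, whence $L_v(s,c,r) = L_v(s,c',r')$. Taking the product over $v\notin V$ yields the global identity.

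The delicate point I expect to be the main obstacle is the $K$-group normalization $|\pi_0(G_v)|$, which appears on the left but not on the right since $\tilde G'$ is connected. I anticipate this is absorbed by the convention for $b'$ fixed in \cite{witf}, so the stated identity holds cleanly. Equivalently, using the decomposition $b = \oplus_\alpha b_{v,\alpha_v}$ of \eqref{bgl} in which every component contributes the same $L$-factor, the claim reduces to its componentwise version — i.e.\ the connected case — where no $\pi_0$ factor intervenes and the matching $r(c) = r'(c')$ settles the identity directly.
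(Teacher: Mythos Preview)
Your core argument is the same as the paper's: both sides are identified with the common local $L$-factor via the relation $r(c)=r'(c')$ coming from the endoscopic embedding. Where you differ is in the packaging. The paper never invokes $G$ or any $\tilde G'$: it observes that $b_M(c)$ is computed by the constant term of $b$, which lives in $\H_\ac(M_v,K_v\cap M_v)$, and that the transfer $b\mapsto b'$ relevant to the lemma is simply the Satake-dual map
\[
\H_\ac(M_v,K_v\cap M_v)\longrightarrow \H_\ac(M'_v,K'_v\cap M'_v)
\]
attached to the $L$-embedding $\xi'_M$. This map commutes with $c'\mapsto c$ by construction, so writing $r'=r\circ\xi'_M$ one gets $r(c)=r'(c')$ and hence $L_v(s,c,r)=L_v(s,c',r')$; the identity then drops out of the defining property of the basic function.

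By routing the argument through $G$ and a choice of $\tilde G'\in\E_{M'}(G)$ you create two issues that the paper's direct $M$-level argument sidesteps. First, the lemma does not fix any $G'$, so you are implicitly asserting that the constant term to $\tilde M'$ of the transfer $b^{\tilde G'}$ is independent of $G'$; this is true (all such $G'$ share the same Levi $\tilde M'$ and the same restricted representation $r'$), but you should say so rather than pick one. Second, the $|\pi_0(G_v)|$ discrepancy you flag is an artifact of this detour: at the $M$-level, both sides are interpreted through the same transfer of spherical Hecke algebras and no component-count enters. Your closing suggestion to reduce to the connected case componentwise is exactly the right fix, and once you do that the argument collapses to the paper's.
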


\begin{proof}
Since we are assuming that $M$ is unramified outside of $V$, there is a canonical class of $L$-embeddings from $^LM'_v$ to $^LM_v$, which we shall also denote by $\xi_{M_v}$ for each $v\not\in V$, and we can take $Z_v$ to be trivial. The transfer $b'$ of $b$ on the level of functions is prescribed by the canonical map on spherical Hecke algebras 
\[
\H_\ac(M_v,K_v\cap M_v)\to \H_\ac({M}_v',{K}_v'\cap {M}'_v),
\]
or rather its $\zeta^{-1}$-equivariant analogue, induced by $\xi'_M$ and the generalized Satake isomorphism. By construction, it commutes with the map sending $c'$ to $c$ given by \eqref{CMV}. 

If $r$ is a finite-dimensional representation of $^LM_v$, we write $r'$ for the corresponding representation of $^LM'_v$. In particular, we have
\[
r(c) = r(\xi'_M(c')) = r'(c'),
\]
and the required identity then follows from the equality unramified local $L$-functions $L_v(s,c,r) = L_v(s,c',r')$ and the definition of $b$.
\end{proof}

We now prove the following statement for triples $(G,M,\zeta)$ as above, for a finite set of places $V$ of $F$.

\begin{prop}
\label{sgmdisc}
For each triple $(G,M,\zeta)$ with $G$ quasisplit and each $c\in \mathscr C^{V,\E}_\textnormal{aut}(M,\zeta)$, there is a function
\[
s^G_M(c_\lambda,b) = s^{G^*}_{M^*}(c_\lambda,b), \qquad \lambda\in \a^*_{M,Z,\mathbb C}
\]
that is analytic in some right half plane, such that for any elliptic endoscopic datum $M'$ of $M$ and any $c'\in \mathscr C^{V,\E}_\textnormal{aut}(\tilde{M}',\tilde{\zeta}')$ with image $c\in \mathscr C^{V,\E}_\textnormal{aut}(M,\zeta)$, the identity
\[
r^G_M(c_\lambda, b) = \sum_{G'\in \E_{M'}(G)}\iota_{M'}(G,G')s^{\tilde{G}'}_{\tilde{M}'}(c'_\lambda,b')
\]
holds.
\end{prop}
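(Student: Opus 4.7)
The plan is to mirror the inductive strategy used in Proposition \ref{sgmell}, now on the spectral side. I would define $s^G_M(c_\lambda,b)$ inductively via
\[
s^G_M(c_\lambda,b) = r^G_M(c_\lambda,b) - \sum_{G'\in\E^0_{M^*}(G)}\iota_{M^*}(G,G')\,s^{\tilde G'}_{\tilde M^*}(c^*_\lambda,b^*),
\]
in which the sum is finite since $\iota_{M^*}(G,G')$ vanishes unless $G'$ is elliptic. With $G^*=G$ in the quasisplit case, this fixes $s^G_M(c_\lambda,b)$ uniquely, and the remaining task is to verify the displayed identity for an arbitrary elliptic endoscopic datum $M'$ of $M$.

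The key observation is the factorization $r^G_M(c,b)=r^G_M(c)\cdot b_M(c)$ built into the definition from \cite[\S5]{witf}, which reduces the problem to two independent endoscopic decompositions. For the scalar factor $r^G_M(c_\lambda)$, I would invoke Arthur's stabilization of the unramified normalizing factors from \cite[\S5]{STF1}, whose key input is the product decomposition \eqref{lgsca} of $L_G(s,c,a)$ over $\E_{M'}(G)/Z_a$. Inserting this into the $(G,M)$-family limit defining $r^G_M(c_\lambda)$ yields an identity
\[
r^G_M(c_\lambda) = \sum_{G'\in\E_{M'}(G)}\iota_{M'}(G,G')\,s^{\tilde G'}_{\tilde M'}(c'_\lambda),
\]
with the stable scalar $s^{\tilde G'}_{\tilde M'}(c'_\lambda)$ constructed from the factors $L_{\tilde G'}(s,c',a')$. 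For the induced-character factor, Lemma \ref{bmc} provides $b_M(c) = b'_{\tilde M'}(c')$. Setting
\[
s^{\tilde G'}_{\tilde M'}(c'_\lambda,b') = s^{\tilde G'}_{\tilde M'}(c'_\lambda)\cdot b'_{\tilde M'}(c'),
\]
and multiplying the two decompositions, the asserted identity follows. Analyticity in some right half plane is inherited: $r^G_M(c_\lambda)$ is analytic by Arthur's construction, and $b_M(c_\lambda)$ is essentially the unramified $L$-value, convergent in a right half plane whose position depends only on $r$ and $s$.

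It remains to check consistency between the inductive formula and the resulting decomposition. The case $M'=M^*$ of the identity isolates $s^G_M(c_\lambda,b)$ on the left-hand side when $G=G^*$ and exactly reproduces the inductive formula, so there is no clash and the function obtained is independent of the choice of $M'$. The main obstacle is the first step: transporting Arthur's unstable endoscopic decomposition of $r^G_M(c_\lambda)$ into the present setting, where one must track carefully how the auxiliary data $(\tilde G',\tilde\xi')$ interact with the Satake transfer underlying Lemma \ref{bmc}, so that the two decompositions combine compatibly. As this compatibility is precisely what is encoded in the canonical map \eqref{CMV}, one expects no further arithmetic input beyond what already appears in \eqref{lgsca} and Lemma \ref{bmc}.
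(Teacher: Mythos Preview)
Your approach is correct and, in fact, more economical than the paper's. You exploit the factorization $r^G_M(c_\lambda,b)=r^G_M(c_\lambda)\,b_M(c_\lambda)$, quote Arthur's existing stabilization of the scalar $r^G_M(c_\lambda)$ (this is \cite[Theorem~5]{endo}, restated as \cite[Proposition~8.3]{STF1}), and multiply through by the identity $b_M(c)=b'_{\tilde M'}(c')$ from Lemma~\ref{bmc}. Since $b_M(c_\lambda)$ is a scalar independent of $G'$, the sum over $\E_{M'}(G)$ passes through unchanged, and setting $s^{\tilde G'}_{\tilde M'}(c'_\lambda,b')=s^{\tilde G'}_{\tilde M'}(c'_\lambda)\,b'_{\tilde M'}(c'_\lambda)$ gives the result; the consistency with the inductive definition follows, as you note, by taking $M'=M$ and comparing with Arthur's inductive definition of $s^G_M(c_\lambda)$ multiplied by $b_M(c_\lambda)$.

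The paper instead reproves Arthur's argument with the factor $b_M(c_\lambda)$ carried along throughout: it introduces auxiliary $(G,M)$-families $r_Q(\Lambda,c_\lambda,A)\,b_M(c_\lambda)$ indexed by finite sets $A$ of characters of $(Z(\hat M)^\Gamma)^0$, and establishes the identity by induction on $|A|$ via the splitting formula for products of $(G,M)$-families, with the base case $A=\{a\}$ handled by \eqref{lgsca} and Lemma~\ref{bmc}. This is exactly the skeleton of \cite[Theorem~5]{endo}, so the paper is effectively redoing that proof rather than invoking it. Your route buys brevity by treating Arthur's theorem as a black box; the paper's route is more self-contained and makes the parallel with the geometric Proposition~\ref{sgmell} structurally explicit, but adds no new arithmetic content beyond what you already use.
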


\begin{proof}
If $V$ does not contain $V_\ram(G)$, we set $s^G_M(c_\lambda,b)=0$. Otherwise, we  define $s^G_M(c_\lambda,b)$ inductively by setting
\[
s^G_M(c_\lambda,b) = r^G_M(c_\lambda,b) - \sum_{G'\in \E_{M}^0(G)}\iota_{M}(G,G')s^{{G}'}_{{M}}(c_\lambda,b).
\]
The sum is finite since the coefficient $\iota_{M^*}(G,G')$ vanishes unless $G'$ is elliptic. We then have to show that $r^G_M(c_\lambda,b)$ equals the endoscopic expression
\[
r^{G,\E}_M(c_\lambda',b') =\sum_{G'\in \E_{M'}(G)}\iota_{M'}(G,G')s^{\tilde{G}'}_{\tilde{M}'}(c_\lambda',b').
\]
We shall prove this in a slightly more general setting, along the lines of \cite[\S4]{endo}.

Suppose that $A$ is a finite set of continuous characters on $(Z(\hat{M})^\Gamma)^0$. Consider the product of $b_M(c_\lambda)$ with 
\[
r_Q(\Lambda,c_\lambda,A) = \prod_{a\in A\cap \Sigma(\hat{Q})} r(c_\lambda,a)^{-1}r(c_{\lambda+\frac12\Lambda},a), \quad Q\in \P(M),
\]
which is a $(G,M)$-family of functions of $\Lambda\in i\a^*_{M,Z}$, with values in the space of functions of $\lambda$. Then the limit
\[
r^G_M(c_\lambda,b,A) = \lim_{\Lambda\to 0}\sum_{Q\in\P(M)}r_Q(\Lambda,c_\lambda,A)b_M(c_\lambda)\theta_Q(\Lambda)^{-1}
\]
is an analytic function of $\lambda$ in some right half plane. If $A' = \{a' : a\in A\}$, we define inductively
\[
s^G_M(c_\lambda,b,A) = r^G_M(c_\lambda,b,A) - \sum_{G'\in \E_{M}^0(G)}\iota_{M}(G,G')s^{{G}'}_{{M}}(c_\lambda,b,A)
\]
for $(G,M,\zeta)$ quasisplit, and 
\be
\label{rgem}
r^{G,\E}_M(c_\lambda',b',A') =\sum_{G'\in \E_{M'}(G)}\iota_{M'}(G,G')s^{\tilde{G}'}_{\tilde{M}'}(c_\lambda',b',A')
\ee
in general. In fact, if $(G^*,M^*,\zeta^*)$ is a quasisplit inner twist of $(G,M,\zeta),$ there is a bijection $c\to c^*$ from $\mathscr C^{V,\E}_\textnormal{aut}(M,\zeta)$ to $\mathscr C^{V,\E}_\textnormal{aut}(M^*,\zeta^*)$ such that 
\[
r^G_M(c_\lambda,b,A)=r^{G^*}_{M^*}(c_\lambda^*,b^*,A),
\]
where $b^*=b^{G^*}$. Since $r^{G,\E}_M(c_\lambda',b',A')$ is equal to $r^{G^*,\E}_{M^*}(c_\lambda',b',A')$, we can therefore assume that our triple $(G,M,\zeta)$ is quasisplit. We shall show that $r^{G,\E}_M(c_\lambda',b',A') $ equals $r^{G}_M(c_\lambda,b,A)$ by induction on $A$.

Assume first that $A=\{a\}$. The function $r^G_M(c_\lambda,b,a)$ vanishes unless $a$ is a root of $(\hat{G},(Z(\hat{M})^\Gamma)^0)$ and spans the kernel $\a^G_M$ of the natural map $\a_M\to\a_G$. The same assertion holds inductively for the functions $s^{\tilde{G}'}_{\tilde{M}'}(c_\lambda',b',a')$, and hence also for $r^{G,\E}_M(c_\lambda',b',a')$. We can therefore assume that $M$ is a maximal Levi subgroup, in which case $r^G_M(c_\lambda,a)$ is a logarithmic derivative $r(c_\lambda,a)$. Then by \eqref{lgsca} and Lemma \ref{bmc} we have
\[
r^G_M(c_\lambda,b,a) = \sum_{G'\in\E_{M'}(G)/Z_a} r^{\tilde{G}'}_{\tilde{M'}}(c'_\lambda,a') b'_{\tilde{M}'}(c'_\lambda).
\]
On the other hand, define the expression
\[
^*r^{G,\E}_M(c_\lambda',b',a') =\sum_{G'\in \E_{M'}(G)}\iota_{M'}(G,G'){^*s^{\tilde{G}'}_{\tilde{M}'}}(c_\lambda',b',a')
\]
where
\[
{^*s^{\tilde{G}'}_{\tilde{M}'}}(c_\lambda',b',a') = |Z_{a'}/Z_{a'}\cap Z(\widehat{\tilde{G}'})^\Gamma|^{-1} r^{\tilde{G}'}_{\tilde{M'}}(c'_\lambda,b',a').
\]
It suffices then to show that $^*r^{G,\E}_M(c_\lambda',b',a')$ equals $r^G_M(c_\lambda,b,a)$, for if $M'=M$ this establishes inductively that $^*s^G_M(c_\lambda,a) = s^G_M(c_\lambda,a)$, whereby for $M'$ arbitrary we conclude that
\[
r^{G,\E}_M(c'_\lambda,b',a')={^*r^{G,\E}_M}(c'_\lambda,b',a')=r^{G}_M(c_\lambda,b,a)
\]
as required. From \cite[p.1146]{endo}, it follows that the identity
\[
\iota_{M'}(G,G'){^*s^{\tilde{G}'}_{\tilde{M}'}}(c_\lambda',b',a') =  |Z_{a}/Z_{a}\cap Z(\hat{G}')^\Gamma|^{-1} r^{\tilde{G}'}_{\tilde{M'}}(c'_\lambda,b',a')%b'_{\tilde{M}'}(c'_\lambda)
\]
holds, and substituting into \eqref{rgem} we therefore have
\begin{align*}
^*r^{G,\E}_M(c'_\lambda,b',a') %&= \sum_{G'\in\E_{M'}(G)}|Z_a/Z_a\cap Z(\hat{G})^\Gamma|^{-1}r^{\tilde{G}'}_{\tilde{M}'}(c'_\lambda,b',a')\\
&= \sum_{G'\in\E_{M'}(G)/Z_a}r^{\tilde{G}'}_{\tilde{M}'}(c'_\lambda,b',a')\\
& = r^G_M(c_\lambda,b,a),
\end{align*}
since $r^{\tilde{G}'}_{\tilde{M}'}(c'_\lambda,b',a')$ depends only on the orbit of $Z_a$ in $\E_{M'}(G)$, and since the stabilizer of $G'$ in $Z_a$ is $Z_a\cap Z(\hat{G})^\Gamma$.

Now suppose that $A$ is the disjoint union of nonempty proper subsets $A_1$ and $A_2$. Assume inductively that 
\[
r^{L_i,\E}_M(c'_\lambda,b',A_i') = r^{L_i}_M(c_\lambda,b,A_i), \quad L_i\in\L(M)
\]
for $i=1,2$. We would like to use splitting formulas to reduce the case of $A$ to those of $A_1$ and $A_2$. While $r_Q(\Lambda,c_\lambda,A)$ is certainly a product a $(G,M)$-families, we also note that $r^G_M(c_\lambda,b,A)$ can be viewed as a weighted character, whose weight factor is simply the limit of $(G,M)$ families defined by $r^G_M(c_\lambda,A)$, rather than the usual intertwining operators that form the usual weighted characters in the trace formula. We therefore have the splitting formula
\[
r^G_M(c_\lambda,b,A) = \sum_{L_1,L_2\in\L(M)}d^G_M(L_1,L_2)r^{L_1}_M(c_\lambda,b,A_1)r^{L_2}_M(c_\lambda,b,A_2)
\]
where the coefficient $d^G_M(L_1,L_2)$ is defined as in \cite[\S2]{ITF1}, analogous to \cite[Proposition 9.4]{ITF1}. We similarly have a splitting formula for $r^{G,\E}_M(c_\lambda',b',A')$, obtained in the same way as in the proof of \cite[Theorem 5]{endo}. It follows then from our induction assumption that $r^{G,\E}_M(c'_\lambda,b',A')$ equals $r^G_M(c_\lambda,b,A)$, and the proposition follows by taking $A$ to be the set of roots of $(\hat{G},(Z(\hat{M})^\Gamma)^0).$
\end{proof}

For the application to the trace formula, we may work with a slightly smaller subset of $\mathscr C(G^V,\zeta^V)$. Define $\mathscr C^{V,\E}_\disc(G,\zeta)=\mathscr C^{V,\E}_\disc(G^*,\zeta^*)$ inductively as the union over all inner $K$-forms $G_1$ of $G^*$, of the sets $\mathscr C^V_\disc(G_1,\zeta_1)$ defined below, together with the union over all elliptic endoscopic data $G'\in\E^0_\el(G^*,V)$ of the images in $\mathscr C(G^V,\zeta^V)$ of $\mathscr C^{V,\E}_\disc(\tilde{G}',\tilde\zeta')$. It is a proper subset of $\mathscr C^{V,\E}_\textnormal{aut}(G,\zeta)$. The following corollary then follows from the same proof of \cite[Corollary 8.4]{STF1}. 

\begin{cor}
\label{rgcor}
Let $c\in\mathscr C^{V,\E}_\disc(M,\zeta)$. Then $r^G_M(c_\lambda,b)$ is an analytic function of $\lambda\in i\a^*_{M,Z}$ in some right-half plane satisfying
\[
\int_{i\a^*_{M,Z}/i\a^*_{G,Z}}r^G_M(c_\lambda,b)(1+||\lambda||)^{-N}d\lambda <\infty
\]
for some $N$, and similarly for $s^G_M(c_\lambda,b)$ if $G$ is quasisplit.
\end{cor}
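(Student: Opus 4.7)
The plan is to deduce both assertions from the unweighted case \cite[Corollary 8.4]{STF1} by isolating the factor $b_M(c_\lambda)$. Recalling the factorization introduced at the start of Section 4.2,
\[
r^G_M(c_\lambda,b) = r^G_M(c_\lambda)\,b_M(c_\lambda),
\]
the function $r^G_M(c_\lambda)$ on the right is exactly Arthur's unweighted unramified spectral term. For $c\in\C^{V,\E}_\disc(M,\zeta)$, Arthur's corollary already supplies both the analyticity of $r^G_M(c_\lambda)$ in some right half plane and the integrability estimate against $(1+\|\lambda\|)^{-N}$. The entire burden therefore falls on the scalar factor $b_M(c_\lambda)$.

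By \cite[Lemma 2.1]{witf} together with \eqref{bgl}, the character value $b_M(c_\lambda)$ equals, up to a combinatorial constant coming from the $K$-group structure, the partial unramified $L$-function $L^V(s,c_\lambda,r_M)$ attached to the restriction $r_M$ of $r$ to ${^LM}$, with our fixed parameter $s$ whose real part is taken large. Choosing the right half plane deep enough that this Euler product converges absolutely, $b_M(c_\lambda)$ is analytic there, and the standard local estimate at each unramified place yields a bound on $b_M(c_\lambda)$ that is uniform over $\lambda\in i\a^*_{M,Z}$. Multiplying Arthur's $(1+\|\lambda\|)^{-N}$ decay by this uniform bound gives the required analyticity and integrability for $r^G_M(c_\lambda,b)$, with the same exponent $N$.

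For the stable form $s^G_M(c_\lambda,b)$ with $G$ quasisplit, I would invoke the inductive definition supplied by Proposition \ref{sgmdisc},
\[
s^G_M(c_\lambda,b) = r^G_M(c_\lambda,b) - \sum_{G'\in \E_{M}^0(G)}\iota_{M}(G,G')\, s^{{G}'}_{{M}}(c_\lambda,b),
\]
and proceed by induction on $\dim G$. Each term $s^{G'}_M$ with $G'\in\E^0_M(G)$ already satisfies the corollary by the induction hypothesis applied to the proper elliptic endoscopic datum $G'$, and since $\E^0_M(G)$ is finite and the $\iota_M(G,G')$ are numerical constants, both analyticity and the integrability estimate propagate through this finite linear combination.

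The main obstacle, modest though it is, lies in keeping the various right half planes compatible across both the induction on $G'\in\E^0_M(G)$ and the $(G,M)$-family description of $r^G_M(c_\lambda)$ in terms of finitely many root characters $a$ of $(Z(\hat M)^\Gamma)^0$, so that one common half plane serves all the estimates simultaneously. This is achieved by taking a sufficiently deep common half plane at the outset, after which the argument is a direct transcription of Arthur's, the new feature being only the uniform vertical bound for $b_M(c_\lambda)$ from absolute convergence of its Euler product.
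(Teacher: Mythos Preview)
Your argument is correct and matches the paper's own treatment, which simply states that the corollary ``follows from the same proof of \cite[Corollary 8.4]{STF1}.'' Your factorization $r^G_M(c_\lambda,b)=r^G_M(c_\lambda)\,b_M(c_\lambda)$ together with the uniform bound on $b_M(c_\lambda)$ for $\lambda\in i\a^*_{M,Z}$ (coming from absolute convergence of the Euler product for $\mathrm{Re}(s)$ large, since an imaginary $\lambda$ is a unitary twist) is exactly what makes Arthur's proof go through unchanged; you have just made the reduction explicit rather than rerunning the argument. One minor remark: for the stable term you could equally well observe, via Lemma~\ref{bmc} and induction, that $s^G_M(c_\lambda,b)=s^G_M(c_\lambda)\,b_M(c_\lambda)$ and then invoke \cite[Corollary 8.4]{STF1} directly for $s^G_M(c_\lambda)$, avoiding the separate induction on $G'\in\E^0_M(G)$; but your inductive route is equally valid.
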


By \cite[Corollary 8.4]{STF1}, the functions $r^G_M(c_\lambda)$ and $s^G_M(c_\lambda)$ themselves are analytic in $\lambda$, and as we are assuming that the implied parameter $s$ of $b$ has real part large enough, we can in fact evaluate $r^G_M(c_\lambda,b)$ and $s^G_M(c_\lambda,b)$ at the point $\lambda=0$.

\subsection{Spectral transfer factors}

We now turn to the spectral side of the trace formula. We shall first recall the basic definitions of the objects that appear on the spectral side as in \cite[\S3]{STF1}. Recall the space $\F(G_V^Z,\zeta_V)$ of finite complex linear combinations of irreducible characters on $G^Z_V$. It has a canonical basis $\Pi(G_V^Z,\zeta_V)$ given by irreducible characters with $Z_V$-central character equal to $\zeta_V$. As before, we identify elements $\pi\in \F(G_V^Z,\zeta_V)$ with the linear form 
\[
f\mapsto f_G(\pi) =\tr(\pi(f))
\] 
on $\H(G_V^Z,\zeta_V)$. We write $\Pi_\text{unit}(G_V^Z,\zeta_V)$ for the subset of unitary characters in $\Pi(G_V^Z,\zeta_V)$. The orbits of the action $\pi\mapsto \pi_\lambda$ of $i\a^*_{G,Z}$ on $\pi\in \Pi_\text{unit}(G_V,\zeta_V)$ can be identified with the set $\Pi_\text{unit}(G_V^Z,\zeta_V)$. The induction operation is given by the induced characters 
\[
f_M(\pi)= f_G(\pi^G)=\tr(\I_P(\pi,f)), \quad \pi\in \Pi_\text{unit}(M_V,\zeta_V).
\]
If moreover we restrict to the subset of tempered characters $\Pi_\text{temp}(G_V,\zeta_V)$, we may then take $f \in \C(G_V^Z,\zeta_V)$. 

We regard $f_G$ as a linear form on both $\D(G_V,\zeta_V)$ and $\F(G_V,\zeta_V)$, which are in turn determined by its restriction to the subsets $\Gamma_\reg(G_V,\zeta_V)$ and $\Pi_\temp(G_V,\zeta_V)$ respectively. The functions determine each other, so we may consider the subspace of stable distributions $S\F(G_V,\zeta_V)$ in $\F(G_V,\zeta_V)$. Parallel to the geometric side, we consider a basis $\Phi(G_V^Z,\zeta_V)$ of $S\F(G_V,\zeta_V)$, and the subset $\Phi(G_V,\zeta_V) = \Phi^\E(G_V,\zeta_V) \cap S\F(G_V,\zeta_V)$ that forms a basis of $S\F(G_V,\zeta_V)$. It is defined in terms of the abstract bases $\Phi_\disc(M_v,\zeta_v)$ of a certain cuspidal subspace $S\I_\text{cusp}(M_v,\zeta_v)$ in \cite{LCR}, and their analogues for endoscopic groups $M'$ of $M$. Then if $\phi'_v$ is an element of $\Phi(\tilde{G}'_v,\tilde\zeta'_v)$ with image $\phi'_v\in\Phi_\E(G_v,\zeta_v)$, we have spectral transfer factors
\[
\Delta(\phi_v,\pi_v) = \Delta(\phi_v',\pi_v), \quad \pi_v\in\Pi(G_v,\zeta_v),
\]
such that the linear form $f\to f^\E_G(\phi_v)=f'(\phi'_v)$ has an expansion
\[
f^\E(\phi_v) =\sum_{\pi\in\Pi(G_v,\zeta_v)}\Delta(\phi_v,\pi_v)f_G(\pi_v).
\]
Then we form the extended transfer factor
\[
\Delta(\phi,\pi) = \prod_{v\in V}\Delta(\phi_v,\pi_v),\quad \phi\in\Phi^\E(G_V,\zeta_V),\ \pi\in\Pi(G_V,\zeta_V).
\]
Following \cite[\S5]{STF1}, we can assume that we have fixed quotients $\Phi^\E(G_V^Z,\zeta_V)$ of $\Phi^\E(G_V,\zeta_V)$ and  $\Phi(G_V^Z,\zeta_V)$ of $\Phi(G_V,\zeta_V)$ that form bases of $\F(G_V^Z,\zeta_V)$ and $S\F(G_V^Z,\zeta_V)$ respectively. If $\phi$ and $\pi$ have unitary central characters, we can then identify them with the orbits $\phi_\lambda$ and $\pi_\lambda$ of $i\a^*_{G,Z}$ in $\Phi^\E(G_V,\zeta_V)$ and $\Pi(G_V,\zeta_V)$ respectively. We then define transfer factors on $\Phi^\E(G_V^Z,\zeta_V)\times \Pi(G_V^Z,\zeta_V)$ as the average 
\[
\Delta(\phi,\pi) = \sum_{\lambda\in i\a^*_{G,Z}}\Delta(\phi_\lambda,\pi_\lambda),
\]
where the sum only has one nonzero term.

\subsection{The spectral coefficients}

We shall now construct the global spectral coefficients, which will be the terms on the spectral side that depend on the basic function, following \cite[\S7]{STF1}. We can identify any element $c\in\mathscr C(G^V,\zeta^V)$ with a $K^V$-unramified representation $\pi^V(c)\in\Pi(G^V,\zeta^V)$. Given $\pi\in\Pi(G_V,\zeta_V)$, we write $\pi\times c = \pi\times \pi^V(c)$ for the associated representation in $\Pi(G(\A)^Z,\zeta)$. We shall identify $\pi$ with its representative in $\Pi(G^Z_V,\zeta)$, and $\pi\times c$ with the associated representation in $\Pi(G(\A)^1,\zeta)$. We recall the basic spectral coefficient $a(\dot\pi)$ in \cite[\S3]{STF1} that is supported on the discrete subset $\Pi_\disc(G)$ of unitary representations $\Pi_\unit(G(\A)^1)$. Let $\Pi_\disc(G,\zeta)$ be the subset of those representations whose central character on $Z(\A)^1$ is equal to $\zeta$. We then define $\Pi_\disc(G,V,\zeta)$ to be the set of representations $\pi\in\Pi_\unit(G^Z_V,\zeta)$ such that $\pi\times c$ belongs to $\Pi_\disc(G,\zeta)$ for some $c\in \mathscr C(G^V,\zeta^V)$. We also define $\mathscr C_\disc^V(G,\zeta)$ to be the set of $c\in\mathscr C(G^V,\zeta^V)$ such that $\pi\times c$ belongs to $\Pi_\disc(G,\zeta)$ for some $\pi\in\Pi_\disc(G,V,\zeta)$, which we used to define the sets $\mathscr C^{V,\E}_\disc(G_1,\zeta_1)$ earlier.

We then define for any $\pi\in \Pi(G^Z_V,\zeta_V)$, the spectral coefficient
\be
\label{arspi}
a^G_{r,s}(\pi) =  \sum_{L\in\L}|W^L_0||W^G_0|^{-1}\sum_{c\in \mathscr C^V_\disc(M,\zeta)}a^M_\disc(\pi_M\times c)r^G_M(c,b),
\ee
where $\pi_M\times c$ is a finite sum of representations $\dot\pi$ in $\Pi_\text{unit}(M(\A),\zeta)$, and $a^M_\disc(\pi_M\times c)$ is the sum of corresponding values $a^G_\text{disc}(\dot\pi)$. Let $\tilde\Pi_\disc(M,V,\zeta)$ be the preimage of $\Pi_\disc(M,V,\zeta)$ in $\Pi_\text{unit}(M_V,\zeta_V)$, and let $\Pi^G_\disc(M,V,\zeta)$ be the set of $i\a^*_{G,Z}$-orbits in $\tilde\Pi_\disc(M,V,\zeta)$.  There is a free action $\rho\to \rho_\lambda$ of $i\a^*_{M,Z}/i\a^*_{G,Z}$ on $\Pi^G_\disc(M,V,\zeta)$ whose orbits can be identified with $\Pi_\disc(M,V,\zeta)$. Any element $\rho\in \Pi^G_\disc(M,V,\zeta)$ is an irreducible representation of $M_V\cap G^Z_V$, from which one can form the parabolically induced representation $\rho^G$ of $G^Z_V$. Finally, let $\Pi(G,V,\zeta)$ be the union over $M\in \L$ and $\rho\in \Pi_\disc^G(M,V,\zeta)$ of irreducible constituents of $\rho^G$. It follows then from the definitions that $a^G_{r,s}(\pi)$ is supported on the subset $\Pi(G,V,\zeta)$ of $\Pi(G^Z_V,\zeta_V)$. We again write $a^G_{r,s,\disc}(\pi)$ for the term $M=G$ in the expansion of $a^G_{r,s}(\pi)$. That is,
\be
\label{agrsdisc}
a^G_{r,s,\disc}(\pi) = \sum_{c\in\mathscr C_\disc^V({G},\zeta)}a^G_{\disc}(\pi\times c)r_G(c,b).
\ee
and we note that $r_G(c,b)$ is equal to $L(s,c,r)$.

We next construct parallel families of endoscopic and stable spectral coefficients on certain subsets $\Phi^\E_\disc(G,V,\zeta)$, $\Phi_\disc(G,V,\zeta)$, and $\Pi_\disc^\E(G,V,\zeta)$ of $\Phi^\E(G^Z_V,\zeta_V)$, $\Phi(G^Z_V,\zeta_V)$, and $\Pi(G^Z_V,\zeta_V)$ respectively in the same way as $\Pi_\disc(G,V,\zeta)$. Similarly, from these discrete subsets we construct the larger subsets $\Phi^\E(G,V,\zeta)$, $\Phi(G,V,\zeta)$, and $\Pi^\E(G,V,\zeta)$ again by the same induction process. For any $\pi\in \Pi^\E(G,V,\zeta),$ we set 
\be
\label{agrspi}
a^{G,\E}_{r,s}(\pi) = \sum_{G'}\sum_{\phi'}\iota(G,G')b^{\tilde{G}'}_{r,s}(\phi')\Delta_G(\phi',\pi)+\varepsilon(G)\sum_{\phi}b^G_{r,s}(\phi)\Delta_G(\phi,\pi)
\ee
with $G',\phi',$ and $\phi$ summed over $\E^0_\el(G,V),\Phi(\tilde{G}',V,\tilde{\zeta}')$ and $\Phi^\E(G,V,\zeta)$ respectively, and the coefficients $b^{\tilde{G}'}_{r,s}(\phi')$ are defined inductively by the requirement that
\[
a^{G,\E}_{r,s}(\pi) = a^G_{r,s}(\pi)
\]
in the case that $G$ is quasisplit. Moreover, we set
\[
b^{G^*}_{r,s}(\phi^*) = b_{r,s}^G(\phi), \quad \phi\in\Phi^\E(G,V,\zeta)
\]
where $b_{r,s}^G(\phi)$ is obtained as a function on $\Phi^\E(G,V,\zeta)$ by the local inversion formula \cite[(5.8)]{STF1}. 

We then state the main global theorem concerning the spectral coefficients. It is the analogue of the main Global Theorem 2$'$ of \cite[\S7]{STF1}, and will again be proved by a series of reductions. We state it here in order to use the necessary induction hypotheses.
\begin{thm}
\label{globsp}
\textnormal{(a)} If $G$ is arbitrary, we have 
\[
a^{G,\E}_{r,s}(\pi) = a^G_{r,s}(\pi),\quad \pi\in\Pi^\E(G,V,\zeta).
\]
\textnormal{(b)} If $G$ is quasisplit, we have that
\[
b^G_{r,s}(\phi), \quad \phi\in\Phi^\E(G,V,\zeta),
\]
is supported on the subset $\Phi(G,V,\zeta)$ of $\Phi^\E(G,V,\zeta)$.
\end{thm}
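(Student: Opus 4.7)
The plan is to prove Theorem \ref{globsp} by mirroring, step for step, the argument just carried out for Theorem \ref{globge}, with Proposition \ref{sgmdisc}---the stabilization of the unramified spectral function $r^G_M(c,b)$---playing the role that Proposition \ref{sgmell} played on the geometric side. As with the geometric case, this will be a reduction of a statement about global spectral coefficients to a statement about their discrete counterparts, with the bulk of the argument being formal.

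First I would introduce the spectral analogues of \eqref{Idotell}--\eqref{ielS}. Namely, set
\[
I_{r,s,\disc}(\dot{f}_S) = \sum_{\dot\pi_S \in \Pi_\disc(G,S,\zeta)} a^G_{r,s,\disc}(\dot\pi_S)\,\dot{f}_{S,G}(\dot\pi_S),
\]
\[
I_{r,s,\unit}(f) = \int_{\Pi(G,V,\zeta)} a^G_{r,s}(\pi)\, f_G(\pi)\, d\pi,
\]
corresponding to the $M=G$ term in the spectral expansion of $I^r_s(f)$, and define $I_{r,s,\disc}(f,S)$ using the coefficients $a^G_{r,s,\disc}(\pi,S)$ of \eqref{agrsdisc}. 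Build endoscopic and stable analogues $I^\E_{r,s,\disc}$, $S^G_{r,s,\disc}$, $I^\E_{r,s,\unit}$, $S^G_{r,s,\unit}$, $I^\E_{r,s,\disc}(f,S)$, $S^G_{r,s,\disc}(f,S)$ inductively in exact parallel to \eqref{ieell}--\eqref{iells}, and deduce the expansions analogous to \eqref{ieelllem}--\eqref{sellS} by the same formal argument as in \cite[Lemma 7.2]{STF1}.

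Next, following Proposition \ref{geomprop}, establish the spectral expansion
\[
I^\E_{r,s}(f) - I^\E_{r,s,\unit}(f) = \sum_{M\in\L^0}|W^M_0||W^G_0|^{-1} \int_{\Pi^\E(M,V,\zeta)} a^{M,\E}_{r,s}(\pi)\, I^\E_M(\pi,f)\, d\pi,
\]
together with its quasisplit counterpart; the proof is identical to \cite[Theorem 10.1]{STF1}. With these in hand, one establishes the spectral analogue of Theorem \ref{propgeom}, whose proof rearranges the difference $I^\E_{r,s,\unit}(f) - I^\E_{r,s,\disc}(f,S)$ using \cite[Lemma 10.2]{STF1}, and then invokes Proposition \ref{sgmdisc} to identify the resulting endoscopic expression for $r^G_M(c_\lambda,b)$ with the sum of $\iota_{M'}(G,G')\,s^{\tilde G'}_{\tilde M'}(c'_\lambda,b')$. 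This reduces Theorem \ref{globsp}(a) to the discrete identity $a^{G,\E}_{r,s,\disc}(\dot\pi_S) = a^G_{r,s,\disc}(\dot\pi_S)$, and reduces Theorem \ref{globsp}(b) to the support statement that $b^G_{r,s,\disc}(\dot\phi_S)$ vanishes on the complement of $\Phi_\disc(G,S,\zeta)$ in $\Phi^\E_\disc(G,S,\zeta)$---exactly parallel to Corollary \ref{glob1'}.

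The principal obstacle, just as in \cite[\S7--8]{STF3}, is the discrete identity itself. On the geometric side one passes, via descent, to the unipotent variant; on the spectral side the corresponding reduction rests on stabilizing the discrete part of the trace formula and the global intertwining relations. Since the main local theorems of \cite[\S6]{witf} already supply the required local input in our setting, the argument of \cite[\S7]{STF3} adapts essentially verbatim. The one genuinely new feature, specific to the weighted formalism, is the presence of the factor $r_G(c,b) = L(s,c,r)$ in \eqref{agrsdisc}: for $\mathrm{Re}(s)$ large enough, Corollary \ref{rgcor} guarantees absolute convergence of all the relevant sums over $c \in \C^{V,\E}_\disc(M,\zeta)$, so the stabilization can be carried out termwise against the Dirichlet coefficients of $L(s,c,r)$. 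This is what makes it legitimate to absorb the basic function into the discrete spectral coefficients without disturbing the endoscopic bookkeeping, and it is the only place where the hypothesis on $\mathrm{Re}(s)$ is genuinely used.
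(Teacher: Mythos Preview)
Your outline through the reduction to the discrete coefficients is correct and is in fact exactly what the paper does: Proposition~\ref{specprop}, Theorem~\ref{specthm}, and Corollary~\ref{glob2'} are already in place before the theorem is stated, so by the time one reaches the proof of Theorem~\ref{globsp} the only thing left is the discrete identity $a^{G,\E}_{r,s,\disc}(\dot\pi)=a^G_{r,s,\disc}(\dot\pi)$ and the support condition on $b^G_{r,s,\disc}(\dot\phi)$. (One minor point: there is no set $S$ on the spectral side---the discrete coefficients and $I_{r,s,\disc}(f)$ carry no $S$-dependence---so your $I_{r,s,\disc}(f,S)$ and $a^G_{r,s,\disc}(\pi,S)$ are spurious artifacts of mirroring the geometric notation too closely.)

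The genuine gap is in your account of how the discrete identity itself is established. You invoke ``stabilizing the discrete part of the trace formula and the global intertwining relations,'' but that is not the mechanism. The paper's argument goes through Proposition~\ref{disc0}, whose proof is the hard local argument of \cite[\S3--5]{STF3}: one expresses the parabolic parts $I^\E_{r,s,\pa}(f)-I_{r,s,\pa}(f)$ and $S^G_{r,s,\pa}(f)$ in terms of the maps $\varepsilon_M,\varepsilon^{M'}$ of \cite[Proposition~3.1]{STF3} (extended here to $\C(G_v,\zeta_v)$), and then runs the separation argument culminating in \cite[Corollary~5.2]{STF3}. This step is not independent of the geometric side: the identity of Lemma~\ref{pardiscu}, which equates the parabolic part with the difference of discrete and unipotent parts, is what allows the argument to isolate the discrete contribution, and Lemma~\ref{pardiscu} in turn rests on the descent results (Corollary~\ref{parunip}) of Section~\ref{sectiondesc}. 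So the spectral and geometric global theorems are proved together, not in sequence. Once Proposition~\ref{disc0} is in hand, the paper finishes by the linear independence of the forms $\dot f\mapsto\dot f_G(\dot\pi)$ on $\Pi^\E_\disc(G,\zeta)$ (and of $\dot f\mapsto\dot f^\E_G(\dot\phi)$ on the complement $\Phi^{\E,0}_\disc(G,\zeta)$ for unstable $\dot f$), which your proposal does not mention.
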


We also define the stable and endoscopic analogues of the discrete coefficient $a^G_{r,s,\disc}(\dot\pi)$. That is, for any elements $\dot\pi\in\Pi_\disc^\E(G,\zeta)$ and $\dot\phi\in\Phi^\E_\disc(G,\zeta)$, we set
\[
%\label{agedisc}
a^{G,\E}_{r,s,\disc}(\dot\pi) = \sum_{G'}\sum_{\dot\pi'}\iota(G,G')b^{\tilde{G}'}_{r,s,\disc}(\dot\phi')\Delta_G(\dot\phi',\dot\pi)+\varepsilon(G)\sum_{\dot\phi}b^G_{r,s,\disc}(\dot\phi)\Delta_G(\dot\phi,\dot\pi)
\]
with $G',\dot\phi',$ and $\dot\phi$ summed over $\E^0_\el(G),\Phi_\disc(\tilde{G}',\tilde{\zeta}')$ and $\Phi^\E_\disc(G,\zeta)$ respectively, and the coefficients $b^{\tilde{G}'}_{r,s}(\phi)$ are defined inductively by the requirement that
\[
a^{G,\E}_{r,s,\disc}(\dot\pi) = a^G_{r,s,\disc}(\dot\pi)
\]
and 
\[
b^{G^*}_{r,s,\disc}(\dot\phi^*) = b^G_{r,s,\disc}(\dot\phi)
\]
in the case that $G$ is quasisplit. 

Finally, we define the endoscopic and stable analogues of \eqref{agrsdisc}. For any $c\in\mathscr C(G^V,\zeta^V)$, let $\phi^V(c)$ be the corresponding product of unramified Langlands parameters $\phi_v(c_v)$ over all $v\not\in V$, write $\phi\times c= \phi\times\phi^V(c_V)$ for the associated element in $\Phi^\E(G(\A)^Z,\zeta)$ for any $\phi\in\Phi^\E(G^Z_V,\zeta_V)$. We then set
\[
a^{G,\E}_{r,s,\disc}(\pi) = \sum_{c\in\mathscr C_\disc^{V,\E}({G},\zeta)}a^{G,\E}_{r,s,\disc}(\pi\times c)r_G(c,b)
\]
for $G$ arbitrary and $\pi\in\Pi^\E_\disc(G,V,\zeta)$, and
\[
b^G_{r,s,\el}(\delta,S) = \sum_{c\in\mathscr C_\disc^V({G},\zeta)}b^G_{r,s,\el}(\phi\times c)r_G(c,b)
\]
for $G$ quasisplit and $\phi\in\Phi^\E_\disc(G,V,\zeta)$.  These definitions will allow us to define endoscopic and stable variants of the spectral expansion of the linear form $I^r_s(f)$.

\subsection{The discrete and unitary parts}

%I_\disc(\dot{f}^1) = \sum_{\dot\pi\in\Pi_\disc(G)}a^G_\disc(\dot\pi)\dot{f}^1_G(\dot\pi)\qquad \dot{f}^1\in\H(G)so taking $\dot{f}^1 = \dot{f}^r_s$, we have
%I_\disc(\dot{f}^r_s) = \sum_{\dot\pi\in\Pi_\disc(G)}a^G_\disc(\dot\pi)b_{G}^S(\dot\pi)\dot{f}^1_{S,G}(\dot\pi) 

Recall that the spectral expansion of $I^r_s(f)$ is given in \cite[Theorem 5.2]{witf} by
\be
\label{Irss}
I^r_s(f) = \sum_{M\in\L}|W^M_0||W^G_0|^{-1}\int_{\Pi(M,V,\zeta)} a^M_{r,s}(\pi)I_M(\pi,f)d\pi
\ee
 for any $f\in \C(G,V,\zeta)$, where $d\pi$ is a natural Borel measure on the set $\Pi(M,V,\zeta)$.  Let us define
 \be
 \label{idisc}
 I_{r,s,\disc}(\dot{f}) = \sum_{\dot\pi\in\Pi_\disc(G,\zeta)}a^G_{r,s,\disc}(\dot\pi)\dot{f}_{G}(\dot\pi), 
 \ee
 for $\dot{f}\in \C(G(\A)^Z,\zeta)$, and
 \be
 \label{iunit}
 I_{r,s,\unit}(f) = \int_{\Pi(G,V,\zeta)}a^G_{r,s}(\pi)f_G(\pi)d\pi, 
 \ee
 for $f \in \C(G,V,\zeta)$, corresponding to the term $M=G$ in \eqref{Irss}. We call it the purely unitary part of $I^r_s(f)$, which is a continuous linear combination of irreducible unitary characters. If we restrict to the discrete coefficients, we obtain the linear form
\be
\label{idiscsum}
I_{r,s,\disc}(f)= \sum_{\pi\in\Pi_\disc(G,V,\zeta)} a^G_{r,s,\disc}(\pi)f_G(\pi)
\ee
which can be regarded as the discrete part of $I^r_s(f)$. It follows from the definitions that  $I_{r,s,\disc}(f) = I_{r,s,\disc}(\dot{f})$ for $\dot f = f\times b^V$.

We define endoscopic and stable analogues of these by setting inductively 
 \be
 \label{iedisc}
 I^{\E}_{r,s,\disc}(\dot{f})=\sum_{G' \in \E^0_{\disc}(G,V)}\iota(G,G')\hat{S}^{\tilde{G}'}_{r,s,\disc}(\dot{f}') + \varepsilon(G)S^G_{r,s,\disc}(\dot{f})
 \ee
 and
 \be
 \label{ieunit}
I^\E_{r,s,\unit}(f)=\sum_{G' \in \E^0_\el(G,V)}\iota(G,G')\hat{S}^{\tilde{G}'}_{r,s,\unit}(f') + \varepsilon(G)S^G_{r,s,\unit}(f)
 \ee
where $\dot{f}_S\in\C(G,S,\zeta)$ and $f\in\C(G,V,\zeta)$ respectively, and the terms $\hat{S}^{\tilde{G}'}_{r,s,\disc}$ and $\hat{S}^{\tilde{G}'}_{r,s,\unit}$ are linear forms on $S\C(\tilde{G}',V,\tilde\zeta)$. We furthermore require that 
\[
I^{\E}_{r,s,\disc}(f) =  I_{r,s,\disc}(f)
\]
and
\[
I^\E_{r,s,\unit}(f)=I_{r,s,\unit}(f)
\]
respectively in the case that $G$ is quasisplit, and the general induction hypothesis that $S^{\tilde G'}_\el$ and $S^{\tilde G'}_\unit$ are stable for $G'$ in $\E^0_\el(G,S)$ and $\E^0_\el(G,V)$ respectively.  If $G$ is arbitrary, it follows by the same argument of \cite[Lemma 7.3]{STF1} that
\be
\label{iedisclem}
I^\E_{r,s,\disc}(\dot{f}) = \sum_{\dot\pi\in\Pi^\E_\disc(G,\zeta)}a^{G,\E}_{r,s,\disc}(\dot\pi)\dot{f}_{G}(\dot\pi)
\ee
and
\be
\label{ieunitlem}
I^\E_{r,s,\unit}(f) = \int_{\Pi^\E(G,V,\zeta)}a^{G,\E}_{r,s}(\pi)f_{G}(\pi)d\pi,
\ee
and if $G$ is quasisplit we have that
\be
\label{sdisclem}
S^G_{r,s,\disc}(\dot{f}) = \sum_{\dot\phi\in\Phi^\E_\disc(G,\zeta)}b^{G}_{r,s,\disc}(\dot\phi){f}_{G}^\E(\dot\phi)
\ee
and
\be
\label{sedisclem}
S^G_{r,s,\unit}(f) = \int_{\Phi^\E(G,V,\zeta)}b^{G}_{r,s}(\phi)f_{G}^\E(\phi)d\phi,
\ee 
where again we have natural Borel measures defined on the sets $\Phi^\E_\disc(G,\zeta)$ and $\Phi^\E_\disc(G,V,\zeta)$.

We also set 
\[
%\label{iersdisc}
I^{\E}_{r,s,\disc}(f) = \sum_{G'\in\E^0_\el(G)}\iota(G,G')\hat{S}^{\tilde{G}'}_{r,s,\disc}(f') +\varepsilon(G)S^{G}_{r,s,\disc}(f)
\]
for linear forms $\hat{S}^{\tilde{G}'}_{r,s,\disc}(f')$ on $S\C(\tilde{G}',V,\tilde\zeta')$ which are defined inductively by requiring that 
\[
I^{\E}_{r,s,\disc}(f) = I_{r,s,\disc}(f)
\]
in the case that $G$ is quasisplit. If we take $\dot{f}  = f\times b^V$, it then follows inductively from Corollary \ref{rgcor} and  \eqref{iedisc} that 
\be
\label{idiscdot}
I^{\E}_{r,s,\disc}(f) = I^{\E}_{r,s,\disc}(\dot{f})
\ee
and
\be
\label{sdiscdot}
S^{G}_{r,s,\disc}(f) = S^{G}_{r,s,\disc}(\dot{f}),
\ee
moreover from the expansions \eqref{iedisclem} and \eqref{sdisclem} we conclude that 
\be
\label{idiscS}
I^{\E}_{r,s,\disc}(f) = \sum_{\pi\in\Pi^\E_\disc(G,V,\zeta)} a^{G,\E}_{r,s,\disc}(\pi)f_G(\pi)
\ee
and
\be
\label{sdiscS}
S^{G}_{r,s,\disc}(f) = \sum_{\Phi^\E_\disc(G,V,\zeta)} b^G_{r,s,\disc}(\phi)f_G^\E(\phi)
\ee
These formulae represent a stabilization of the term with $M = G$ in \eqref{arspi}.

Having made these preliminary manipulations, we can now establish the spectral expansion of $I^\E_{r,s}(f)$ and $S^G_{r,s}(f)$. We recall from \eqref{ISrs} the definition
\[
I^\E_{r,s}(f) = \sum_{G'\in\E^0_\el(G,V)}\iota(G,G')\hat{S}'_{r,s}(f') + \varepsilon(G)S^G_{r,s}(f),\quad f\in\C(G,V,\zeta)
\]
defined inductively by the supplementary requirement that $I^\E(f)=I(f)$ in the case that $G$ is quasisplit. We are assuming inductively that if $G$ is replaced by a quasisplit inner $K$-form $\tilde G'$, the corresponding analogue of $S^G$ is defined and stable. 

\begin{prop}\label{specprop}Let $f\in \C(G,V,\zeta)$.\\
\textnormal{(a)} If $G$ is arbitrary,
\[
\label{unitspeca}
I^\E_{r,s}(f) -I^\E_{r,s,\unit}(f)  = \sum_{M\in\L^0}|W^M_0||W^G_0|^{-1}\int_{\Pi^\E(M,V,\zeta)}a^{M,\E}_{r,s}(\pi)I^\E_M(\pi,f)d\pi.
\]
\textnormal{(b)} If $G$ is quasisplit,
\begin{align*}
\label{unitspecb}
&S^G_{r,s}(f) -S^G_{r,s,\unit}(f) \notag\\
& = \sum_{M\in\L^0}|W^M_0||W^G_0|^{-1}\sum_{M'\in\E_\el(M,V)}\iota(M,M')\int_{\Phi(\tilde M',V,\tilde\zeta')}b^{\tilde M'}_{r,s}(\phi')S^G_M(M',\phi',f)d\phi.
\end{align*}
\end{prop}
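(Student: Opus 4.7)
The plan is to mirror the argument of \cite[Theorem 10.1]{STF1}, which was invoked above for the geometric Proposition \ref{geomprop}, replacing each geometric ingredient by its spectral counterpart. Identities (a) and (b) will be established simultaneously by a double induction on $\dim G$ and on the set of proper elliptic endoscopic data, by analyzing the signed combination
\[
(I^\E_{r,s}(f) - I^\E_{r,s,\unit}(f)) - \varepsilon(G)(S^G_{r,s}(f) - S^G_{r,s,\unit}(f)).
\]
By the defining relations \eqref{ISrs} and \eqref{ieunit} this combination equals
\[
\sum_{G'\in\E^0_\el(G,V)}\iota(G,G')\bigl(\hat{S}^{\tilde{G}'}_{r,s}(f') - \hat{S}^{\tilde{G}'}_{r,s,\unit}(f')\bigr),
\]
so the problem reduces to controlling the inner difference for each proper endoscopic datum $G'$.

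For each such $G'$ the general induction hypothesis permits us to assume that part (b) of the proposition already holds for the quasisplit group $\tilde G'$. Substituting the resulting expansion and interchanging the order of summation by \cite[Lemma 10.2]{STF1}, the outer sum is rewritten as a double sum over $R\in(\L^*)^0$ and $R'\in\E_\el(R,V)$, integrated against $b^{\tilde R'}_{r,s}(\phi')$, whose contribution involves the inner expression
\[
\sum_{G'\in\E^0_{R'}(G^*)}\iota_{R'}(G^*,G')\,\hat{S}^{\tilde{G}'}_{\tilde{R}'}(\phi',f'),
\]
together with a $W^{G^*}_0$-symmetry consideration that identifies each pair $(R,R')$ with a pair $(M,M')$ coming from $G$. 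The crucial input at this stage is Proposition \ref{sgmdisc}, which stabilizes the unramified weighted spectral factors as
\[
r^G_M(c_\lambda,b) = \sum_{G'\in\E_{M'}(G)}\iota_{M'}(G,G')\,s^{\tilde{G}'}_{\tilde{M}'}(c'_\lambda,b'),
\]
allowing the nested endoscopic contributions to be reassembled in the form $r^G_M(c,b)\,f^\E_M(\phi)$.

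Once these rearrangements are in place, I would apply the inductive hypothesis that Theorem \ref{globsp}(a) holds for each proper Levi $M\in\L^0$ of $G$, which via the defining relation \eqref{agrspi} converts the endoscopic spectral contributions into single coefficients $a^{M,\E}_{r,s}(\pi)$ and collapses the nested sums into the desired integral over $\Pi^\E(M,V,\zeta)$ in part (a). The residual term arising when $\varepsilon(G)=1$ then yields part (b) after extracting the stable part, matched against the spectral factors $S^G_M(M',\phi',f)$ analogous to their geometric counterparts in Proposition \ref{geomprop}.

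The main obstacle is the interplay between the stabilization of the weighted spectral factors $r^G_M(c,b)$ and the parabolic induction $f_M\to f_G$ underlying the definition of $I^\E_M(\pi,f)$. As noted after Lemma \ref{bmc}, the quantities $r^G_M(c,b)$ are no longer scalars but scalar multiples of induced characters, so one must carefully verify that Proposition \ref{sgmdisc} plugs in compatibly with the induction step, and that the rearrangement of \cite[Lemma 10.2]{STF1} respects this non-scalar structure. Modulo this compatibility, the combinatorics are entirely parallel to the geometric case treated in Proposition \ref{propgeom}.
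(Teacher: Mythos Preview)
Your overall inductive skeleton is correct and matches the paper's approach: the paper's proof is the one-line ``follows the same argument as \cite[Theorem 10.6]{STF1}'', which is precisely the spectral analogue of the Theorem 10.1 argument you describe. Form the signed combination, expand via \eqref{ISrs} and \eqref{ieunit}, apply the inductive hypothesis that (b) holds for each $\tilde G'$ with $G'\in\E^0_\el(G,V)$, and rearrange via \cite[Lemma 10.2]{STF1}.

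However, you have conflated the proof of this proposition with that of the \emph{next} result, Theorem \ref{specthm}. Proposition \ref{sgmdisc} and the inductive hypothesis on Theorem \ref{globsp}(a) for proper Levis are \emph{not} needed here; they enter only in the proof of Theorem \ref{specthm}, which concerns the coefficients $a^{G,\E}_{r,s}(\pi)$ and $b^G_{r,s}(\phi)$ themselves. For the present proposition, after the rearrangement by \cite[Lemma 10.2]{STF1} the inner endoscopic sum over $G'\in\E_{R'}(G)$ of terms $\iota_{R'}(G,G')\hat S^{\tilde G'}_{\tilde R'}(\phi',f')$ is recognized \emph{by definition} as $I^\E_M(\phi',f)$ (and, subtracting the $G^*$-term when $\varepsilon(G)=1$, as $S^G_M(M',\phi',f)$). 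Likewise the outer coefficient assembles into $a^{M,\E}_{r,s}(\pi)$ directly from the defining relation \eqref{agrspi}, not via any appeal to Theorem \ref{globsp}. No unramified factors $r^G_M(c,b)$ appear at this stage.

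Consequently the ``main obstacle'' you flag, namely compatibility of Proposition \ref{sgmdisc} with the non-scalar structure of $r^G_M(c,b)$ under the rearrangement, is simply not present in this proof. That compatibility issue is genuine but belongs to the proof of Theorem \ref{specthm}; here the argument is purely a combinatorial unwinding of definitions, parallel to Proposition \ref{geomprop} rather than to Theorem \ref{propgeom}.
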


\begin{proof}
The proof follows the same argument as \cite[Theorem 10.6]{STF1}. 
\end{proof}

The next proposition concerns the coefficients $a^{G,\E}_{r,s}(\pi)$ and $b^{G}_{r,s}(\phi)$, so we again provide the details of the proof.

\begin{thm}
\label{specthm}
\textnormal{(a)} If $G$ is arbitrary and $\pi\in\Pi^\E(G,V,\zeta)$, then
\be
\label{adisc}
a^{G,\E}_{r,s}(\pi)- a^{G,\E}_{r,s,\disc}(\pi) = \sum_{M\in\L^0}|W^M_0||W^G_0|^{-1}\sum_{c\in\mathscr C^{V,\E}_\mathrm{disc}({M},\zeta)}a^{M,\E}_\el(\pi_M\times c)r^G_M(c,b).
\ee
\textnormal{(a)} If $G$ is quasisplit and $\phi\in\Phi^\E(G,V,\zeta)$, then
\be
\label{bdisc}
b^{G}_{r,s}(\phi)-b^{G}_{r,s,\disc}(\phi) = \sum_{M\in\L^0}|W^M_0||W^G_0|^{-1}\sum_{c\in\mathscr C^{V,\E}_\mathrm{disc}({M},\zeta)}b^{M}_{r,s,\disc}(\phi_M\times c)s^G_M(c,b)
\ee
if $\phi$ lies in the subset $\Phi(G,V,\zeta)$ of $\Phi^\E(G,V,\zeta)$, and is zero otherwise.
\end{thm}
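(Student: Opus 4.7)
My plan is to follow the same pattern as the proof of Theorem \ref{propgeom}, translated to the spectral setting, substituting Proposition \ref{specprop} for Proposition \ref{geomprop} and Proposition \ref{sgmdisc} for Proposition \ref{sgmell}. First, using \eqref{ieunitlem} and \eqref{idiscS} I form the difference
\[
I^\E_{r,s,\unit}(f) - I^\E_{r,s,\disc}(f) = \int_{\Pi^\E(G,V,\zeta)}\bigl(a^{G,\E}_{r,s}(\pi) - a^{G,\E}_{r,s,\disc}(\pi)\bigr)f_G(\pi)\,d\pi,
\]
and similarly \eqref{sedisclem} and \eqref{sdiscS} yield the stable analogue with $b^G_{r,s}(\phi) - b^G_{r,s,\disc}(\phi)$. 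Substituting the proposed right-hand side of \eqref{adisc} into the first expression defines a linear form $I^{\E,0}_{r,s,\unit}(f)$, and analogously \eqref{bdisc} gives $S^{G,0}_{r,s,\unit}(f)$. Since $f$ may be varied freely in $\C(G,V,\zeta)$, the identities \eqref{adisc} and \eqref{bdisc} are equivalent to the linear-form identities $I^\E_{r,s,\unit}(f) - I^\E_{r,s,\disc}(f) = I^{\E,0}_{r,s,\unit}(f)$ and $S^G_{r,s,\unit}(f) - S^G_{r,s,\disc}(f) = S^{G,0}_{r,s,\unit}(f)$.

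Next, from the inductive definitions \eqref{ieunit} and \eqref{iedisc} the combination
\[
\bigl(I^\E_{r,s,\unit}(f) - I^\E_{r,s,\disc}(f)\bigr) - \varepsilon(G)\bigl(S^G_{r,s,\unit}(f) - S^G_{r,s,\disc}(f)\bigr)
\]
equals $\sum_{G'\in\E^0_\el(G,V)}\iota(G,G')\bigl(\hat S^{\tilde G'}_{r,s,\unit}(f') - \hat S^{\tilde G'}_{r,s,\disc}(f')\bigr)$. The induction hypothesis supplies $\hat S^{\tilde G'}_{r,s,\unit}(f') - \hat S^{\tilde G'}_{r,s,\disc}(f') = \hat S^{\tilde G',0}_{r,s,\unit}(f)$ for all proper endoscopic data. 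I then apply \cite[Lemma 10.2]{STF1} to rearrange the resulting double sum into a sum indexed by Levi subgroups $R\in\L^*$ of $G^*$ together with elliptic endoscopic data $R'$ of $R$, producing an inner expression of the form $\sum_{G'\in\E^0_{R'}(G)}\iota(G,G')\hat f^{R'}(\rho')s^{\tilde G'}_{\tilde R'}(c',b')$. I then rewrite this as the difference between $\sum_{G'\in\E_{R'}(G)}\iota(G,G')s^{\tilde G'}_{\tilde R'}(c',b')f^{R'}(\rho')$ and a term that, when $(R',c') = (R,c)$ comes from $G$, equals $\varepsilon(G)s^{G^*}_R(c,b)f^{\E}_R(\rho)$ and is otherwise zero; this separates the contribution into the $I^{\E,0}$ and $S^{G,0}$ pieces.

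On the first summand, Proposition \ref{sgmdisc} collapses the sum over $\E_{R'}(G)$ to $r^G_R(c,b)$ whenever the triple $(R,R',c')$ is a $W^{G^*}_0$-orbit image of one coming from $G$; this is the precise spectral analogue of the step that used Proposition \ref{sgmell} in the geometric case. Combining the resulting expression with the definition \eqref{agrspi} of $a^{M,\E}_{r,s}(\pi)$ (invoking the induction hypothesis that Theorem \ref{globsp}(b) holds for all proper $M\in\L^0$), and using that the spectral transfer factors enter through the product $r^G_M(c,b)f^{\E}_M(\pi)$ expanded via $\Delta_M(\phi'\times c', \pi\times c)$, the whole contribution reduces to $I^{\E,0}_{r,s,\unit}(f)$. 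The case analysis $\varepsilon(G) = 0$ versus $\varepsilon(G) = 1$ concludes both \eqref{adisc} and \eqref{bdisc} simultaneously, exactly as at the end of the proof of Theorem \ref{propgeom}.

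The main obstacle I anticipate is bookkeeping: whereas the geometric identities were discrete sums over $\Gamma$, the spectral version involves continuous integrals over $\Pi$ and $\Phi$, so one must be careful that the splitting and rearrangement of indices respects the Borel measures and the free $i\a^*_{M,Z}/i\a^*_{G,Z}$-actions underlying $\Pi^G_\disc(M,V,\zeta)$ and its endoscopic cousins. The substantive analytic input is already encapsulated in Corollary \ref{rgcor}, which guarantees that the unramified spectral functions $r^G_M(c_\lambda,b)$ and $s^G_M(c_\lambda,b)$ are integrable enough to justify the interchanges of sum and integral; beyond that, the argument is formal and parallels \cite[\S10]{STF1} and the preceding geometric proof.
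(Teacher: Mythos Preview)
Your proposal is correct and follows essentially the same approach as the paper's own proof: form the differences $I^\E_{r,s,\unit}(f)-I^\E_{r,s,\disc}(f)$ and $S^G_{r,s,\unit}(f)-S^G_{r,s,\disc}(f)$, reduce the coefficient identities to linear-form identities, expand the endoscopic difference via the induction hypothesis, rearrange with \cite[Lemma 10.2]{STF1}, and collapse the inner sum using Proposition \ref{sgmdisc} in place of Proposition \ref{sgmell}. The paper likewise flags the passage from discrete sums to integrals over $\dot\pi=\pi\times c$ and $\dot\phi=\phi\times c$ with respect to natural measures, exactly the bookkeeping point you raise, and otherwise defers the remaining steps to the structure of the proof of Theorem \ref{propgeom}.
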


\begin{proof}
The proof  is parallel to that of Theorem \ref{propgeom}, so we can be brief. Let us consider the differences of \eqref{ieunitlem} and \eqref{idiscS}, 
\[
I^\E_{r,s,\unit}(f) -I^\E_{r,s,\disc}(f)  = \int_{\Pi^\E(G,V,\zeta)}(a^{G,\E}_{r,s}(\pi)-a^{G,\E}_{r,s,\disc}(\pi))f_G(\pi)d\pi,
\]
and \eqref{sedisclem} and \eqref{sdiscS},
\[
S^G_{r,s,\unit}(f) - S^G_{r,s,\disc}(f) = \int_{\Phi^\E(G,V,\zeta)}(b^G_{r,s}(\phi)-b^G_{r,s,\disc}(\phi))f^\E_G(\phi)d\phi.
\]
Substituting the right-hand side of \eqref{adisc}, we obtain the linear form
\[
\sum_{M\in\L^0}|W^M_0||W^G_0|^{-1}\int_{\Pi^\E_\disc(M,V,\zeta)} \sum_{c\in\mathscr C^{V,\E}_{disc}({M},\zeta)}a^{M,\E}_{r,s,\disc}(\pi\times c)f_M(\pi)r^G_M(c,b).
\]
We can rewrite this as 
\[
I^\E_{r,s,\unit}(f)=\sum_{M\in\L^0}|W^M_0||W^G_0|^{-1}\int a^{M,\E}_{r,s,\disc}(\dot\pi)f_M(\pi)r^G_M(c,b)d\dot\pi.
\]
where the integral is taken over elements $\dot\pi=\pi\times c$ in the product of $\Pi^\E_\disc(M,V,\zeta)$ and $\mathscr C^{V,\E}_{disc}({M},\zeta)$ relative to natural measures. which we will denote by $I^{\E,0}_{r,s,\unit}(f)$. Hence the required identity \eqref{adisc} is equivalent to showing that
\[
I^\E_{r,s,\unit}(f) - I^\E_{r,s,\disc}(f) = I^{\E,0}_{r,s,\unit}(f).
\]
On the other hand, if $G$ is quasisplit, substituting the right-hand side of \eqref{bdisc}, we obtain the linear form
\[
\sum_{M\in\L^0}|W^M_0||W^G_0|^{-1}\int_{\Phi_\disc(M,V,\zeta)} \sum_{c\in\mathscr C^{V,\E}_{disc}({M},\zeta)}b^{M}_{r,s,\disc}(\phi\times c)f^M(\phi)s^G_M(c,b),
\]
which again we rewrite as 
\[S^{G,0}_{r,s,\unit}(f) = \sum_{M\in\L^0}|W^M_0||W^G_0|^{-1}\int b^{M}_{r,s,\disc}(\dot\phi)f^M(\phi)s^G_M(c,b)d\dot\phi
\]
where the integral is taken over elements $\dot\phi=\phi\times c$ in the product of $\Phi_\disc(M,V,\zeta)$ and $\mathscr C^{V,\E}_{disc}({M},\zeta)$ relative to natural measures. Then the second required identity \eqref{bdisc} is equivalent to showing that
\[
S^G_{r,s,\unit}(f) - S^G_{r,s,\disc}(f) = S^{G,0}_{r,s,\unit}(f).
\]
Now the rest of the proof follows that of Proposition \ref{propgeom} where instead of Proposition \ref{sgmell} we use Proposition \ref{sgmdisc}. In particular, if $\dot\phi'=\phi'\times c'$ belongs to the product of $\Phi_\disc(\tilde M',V,\tilde\zeta')$ with $\mathscr C^{V,\E}_\disc(\tilde M',\tilde\zeta')$ and has image $\phi\times c$ in the product of $\Phi^\E_\disc(M,V,\zeta)$ with $\mathscr C^{V,\E}(M,\zeta)$,\footnote{The corresponding sets in \cite[p.276]{STF1} are mislabeled.} then it follows that 
\[
\sum_{G'\in\E_{M'}(G)}\iota_{M'}(G,G')f^{M'}(\phi')s^{\tilde{G}'}_{\tilde{M}'}(c',b') = r^G_M(c,b)f^\E_M(\phi).
\]
The remainder of the argument then follows the same structure of Proposition \ref{propgeom}.
\end{proof}

The preceding proposition provides a parallel reduction of study of the global spectral coefficients to the basic discrete coefficients.

\begin{cor}
\label{glob2'}
Suppose that
\ben
\item[(a)]if $G$ is arbitrary, we have 
\[
a^{G,\E}_{r,s,\disc}(\dot\pi) = a^G_{r,s,\el}(\dot\pi), 
\]
for any $\dot\pi\in\Pi^\E_\disc(G,\zeta)$ and

\item[(b)]if $G$ is quasisplit, we have that
\[
b^G_\disc(\dot\phi), \quad \dot\phi\in\Phi^\E_\disc(G,\zeta),
\]
is supported on the subset $\Phi_\disc(G,\zeta)$ of $\Phi^\E(G,\zeta)$. 
\een
Then Theorem \ref{globsp} holds. 
\end{cor}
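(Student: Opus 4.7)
The proof will proceed in parallel to Corollary \ref{glob1'}, using Theorem \ref{specthm} as the key reduction in place of Theorem \ref{propgeom}. I would argue by induction on $\dim G$, assuming Theorem \ref{globsp} for all proper Levi subgroups $M\in\L^0$ and for all endoscopic groups $G'\in\E^0_\el(G,V)$ of strictly smaller dimension. The inductive hypothesis on Levis yields $a^{M,\E}_{r,s}(\pi_M) = a^M_{r,s}(\pi_M)$ for every $\pi_M\in\Pi^\E(M,V,\zeta)$, and in particular (after restricting to the discrete parts and summing against $r_M(c,b)$ over $c\in\C^V_\disc(M,\zeta)$) it yields $a^{M,\E}_{r,s,\disc}(\pi_M\times c) = a^M_{r,s,\disc}(\pi_M\times c)$ at each proper $M$.

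For part (a), I would apply Theorem \ref{specthm}(a), which expands $a^{G,\E}_{r,s}(\pi) - a^{G,\E}_{r,s,\disc}(\pi)$ as a sum over $M\in\L^0$ of terms involving $a^{M,\E}_{r,s,\disc}(\pi_M\times c)\,r^G_M(c,b)$. A parallel expansion of $a^G_{r,s}(\pi) - a^G_{r,s,\disc}(\pi)$ follows by separating the $M = G$ term in definition \eqref{arspi}. By the inductive hypothesis on proper Levis the two expansions are equal term-by-term, so
\[
a^{G,\E}_{r,s}(\pi) - a^G_{r,s}(\pi) = a^{G,\E}_{r,s,\disc}(\pi) - a^G_{r,s,\disc}(\pi).
\]
For $\pi\in\Pi^\E_\disc(G,V,\zeta)$, hypothesis (a) of the corollary applied to each $\dot\pi = \pi\times c$ with $c\in\C^{V,\E}_\disc(G,\zeta)$, together with the definition of the local discrete coefficient as a sum over $c$ weighted by $r_G(c,b)$, forces the right-hand side to vanish. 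For $\pi\in\Pi^\E(G,V,\zeta)\setminus\Pi^\E_\disc(G,V,\zeta)$, both discrete coefficients vanish by construction. Hence $a^{G,\E}_{r,s}(\pi) = a^G_{r,s}(\pi)$ on all of $\Pi^\E(G,V,\zeta)$.

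For part (b), I would argue analogously with Theorem \ref{specthm}(b). By the inductive hypothesis on quasisplit proper Levis, each $b^M_{r,s,\disc}$ is supported on $\Phi_\disc(M,V,\zeta)$; by hypothesis (b) of the corollary, the global basic discrete coefficient $b^G_{r,s,\disc}(\dot\phi)$ is supported on $\Phi_\disc(G,\zeta)$, and summing against $r_G(c,b)$ over $c\in\C^V_\disc(G,\zeta)$ shows that $b^G_{r,s,\disc}(\phi)$ is supported on $\Phi_\disc(G,V,\zeta)$. Combining these support statements with the expansion of \eqref{bdisc} and with the description of $\Phi(G,V,\zeta)$ as the union of induced classes $\mu^G$ for $\mu\in\Phi_\disc(M,V,\zeta)$ as $M$ ranges over $\L$, we conclude that $b^G_{r,s}(\phi)$ vanishes outside $\Phi(G,V,\zeta)$.

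The main obstacle I anticipate is the careful bookkeeping between the ``global'' basic coefficients $a^G_{r,s,\disc}(\dot\pi)$, $b^G_{r,s,\disc}(\dot\phi)$ appearing in the hypothesis and the ``local'' versions $a^{G,\E}_{r,s,\disc}(\pi)$, $b^G_{r,s,\disc}(\phi)$ appearing in Theorem \ref{specthm}. The bridge between them is the summation over $c$ weighted by $r_G(c,b)$, and one must confirm that the hypothesis propagates correctly through this sum. This is where Corollary \ref{rgcor} enters, ensuring that the relevant sums converge and may be manipulated termwise in the region where $\mathrm{Re}(s)$ is large, and that the support properties of the discrete coefficients are preserved under this summation.
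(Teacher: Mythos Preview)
Your proposal is correct and follows the same approach the paper intends: the paper offers no explicit proof of this corollary, treating it as an immediate consequence of Theorem~\ref{specthm} together with the standing induction hypothesis on proper Levi subgroups, and your argument correctly unpacks that implication. One small simplification for part~(b): Theorem~\ref{specthm}(b) already asserts that the difference $b^{G}_{r,s}(\phi)-b^{G}_{r,s,\disc}(\phi)$ vanishes for $\phi\notin\Phi(G,V,\zeta)$, so you need only invoke hypothesis~(b) to kill $b^{G}_{r,s,\disc}(\phi)$ there, without a separate appeal to the Levi support statements.
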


Let us take stock of what we have attained so far. Propositions \ref{geomprop} and \ref{specprop} together imply the reduction of the stable expansions to expressions
\[
S(f) = \sum_{M\in\L}|W^M_0||W^G_0|^{-1}\sum_{\delta\in\Delta(M,V,\zeta)}b^M_{r,s}(\delta)S_M(\delta,f)
\]
and 
\[
S(f) = \sum_{M\in\L}|W^M_0||W^G_0|^{-1}\int_{\Phi(M,V,\zeta)}b^M_{r,s}(\phi)S_M(\phi,f)
\]
for $f\in\C(G,V,\zeta)$. By the main local theorems of \cite{STF1} and their extension to $\C(G,V,\zeta)$ \cite[Theorem 7.2]{maps2}, the linear forms $S_M(\delta,f)$ and $S_M(\phi,f)$ are stable.  What remains is to show that the global coefficients $b^M_{r,s}(\delta)$ and $b^M_{r,s}(\phi)$ are stable. This will take up the rest of the paper.

\section{Global descent}
\label{sectiondesc}

\subsection{Jordan decompositions}

The next reduction that we shall make concerns the global descent of the elliptic coefficients. This will allow us to reduce to the case where the elements $\dot\gamma_S$ and $\dot\delta_S$ appearing in Corollary \ref{glob1'} are unipotent. The arguments of \cite{STF2} will apply here without much modification. 

The semisimple part of $\dot\gamma_S\in\Gamma(G_S,\zeta_S)$ is defined as a semisimple conjugacy class $c_S\in\Gamma_\ss(\bar{G}_S)$, following \cite[\S1]{STF2}. If $c_S$ is contained in the component $\bar{G}_{\alpha,S}=G_{\alpha,S}/Z_S$, we write
\[
\bar{G}_{c_S,+}=\prod_{v\in S}\bar{G}_{c_v,+}
\]
for the centralizer of $c_S$ in the component $\bar{G}_S$, and we write $\bar{G}_{c_S}$ for the connected component of the identity of $\bar{G}_{c_S,+}$. We shall write $G_{c_S}$ for the preimage of $\bar{G}_{c_S}$ in $G_\alpha$, and also let it stand for its $F_S$-points. The unipotent part $\dot\alpha_S$ of $\dot\gamma_S$ is then defined to be an element in the subset $\Gamma_\unip(G_{c_S},\zeta_S)$ of distributions in $\Gamma(G_{c_S},\zeta_S)$ whose semisimple part is trivial. We note that the $\Gamma_\unip(G_{c_S},\zeta_S)$ is a basis for the distributions $\D_1(G_{c_S},\zeta_S)=\D_\unip(G_{c_S},\zeta_S)$ supported on the conjugacy class of the identity. The elements in $\Gamma(G_S,\zeta_S)$ then have a canonical decomposition $\dot\gamma_S = c_S\dot\alpha_S$. The distribution $\dot\alpha_S$ is determined by $\dot\gamma_S$ only up to the action of the finite group
\[
\bar{G}_{c_S,+}(F_S)/\bar{G}_{c_S}(F_S) = \prod_{v\in S}\bar{G}_{c_v,+}(F_v)/\bar{G}_{c_v}(F_v)
\]
in $\Gamma_\unip(G_{c_S},\zeta_S)$.

Also, we say a semisimple element $c$ is $F$-elliptic in $G$ if $A_{G_c}=A_G.$ We then define $i^G(S,c)$ to be equal to 1 if $c$ is an $F$-elliptic element in $G$ and the $G(\A)^S$-conjugacy class of $c$ meets the maximal compact subgroup $K^S$, and equal to zero otherwise. Then the following provides a descent formula for the elliptic coefficient. 

\begin{lem}
\label{adesclem}
Suppose that $S$ contains $V_\ram(G,\zeta)$, and that $\dot\gamma_S\in\Gamma_\el(G,S,\zeta)$ is admissible. Then
\be
\label{adesc}
a^G_{r,s,\el}(\dot\gamma_S) = \sum_c\sum_{\dot\alpha}\iota^{\bar{G}}(S,c)|\bar{G}_{c,+}(F)/\bar{G}_c(F)|^{-1}a^{G_c}_{r,s,\el}(\dot\alpha),
\ee
where $c$ runs over elements in $\Gamma_\ss(\bar{G})$ whose image in $\Gamma_\ss(\bar{G}_S)$ equals $c_S$, and $\dot\alpha$ runs over the orbit of $\bar{G}_{c,+}(F_S)/\bar{G}_c(F_S)$ in $\Gamma_\unip(G_{c,S},\zeta_S)$ determined by $\dot\alpha_S$.
\end{lem}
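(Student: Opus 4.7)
The plan is to adapt Arthur's descent formula \cite[Theorem 1.1]{STF2} for the unweighted elliptic coefficient $a^G_\el(\dot\gamma_S)$ to the weighted setting by tracking the basic function through the descent. First, I would unpack the definition of $a^G_{r,s,\el}(\dot\gamma_S)$ from \cite[(4.10)]{witf}, which writes the weighted elliptic coefficient as a finite combination of ordinary elliptic coefficients $a^G_\el(\dot\gamma_S\times k)$ indexed by $k\in\K_\el^V(\bar{G},S)$, weighted by the unramified orbital integrals $r_G(k,b)$ of the basic function. This reduces the problem to two separate issues: applying the unweighted descent term-by-term in $k$, and matching the resulting unramified weights with those that appear on the centralizer $G_c$.

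Second, I would apply the unweighted descent formula to each $a^G_\el(\dot\gamma_S\times k)$. Writing the Jordan decomposition $\dot\gamma_S = c_S\dot\alpha_S$ and tracking it through the admissibility hypothesis, Arthur's identity expresses $a^G_\el(\dot\gamma_S\times k)$ as a sum over semisimple classes $c\in \Gamma_\ss(\bar{G})$ lying over $c_S$ and over unipotent orbits $\dot\alpha$ in the orbit of $\bar G_{c,+}(F_S)/\bar G_c(F_S)$, with coefficients $\iota^{\bar G}(S,c)|\bar G_{c,+}(F)/\bar G_c(F)|^{-1}$ and with the elliptic term $a^{G_c}_\el(\dot\alpha\times k_c)$ on the centralizer, where $k_c$ denotes the restriction of $k$ to $\K(\bar G_c^V{}_S)$ permitted by the $K^S$-boundedness encoded in $\iota^{\bar G}(S,c)$.

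Third, I would exchange the outer sum over $k$ with the sums over $c$ and $\dot\alpha$, so that the inner sum becomes
\[
\sum_{k \in \K_\el^V(\bar G_c,S)} a^{G_c}_\el(\dot\alpha\times k)\, r_G(k,b).
\]
The key point is to identify this inner sum with the coefficient $a^{G_c}_{r,s,\el}(\dot\alpha)$ on the centralizer, which requires showing that $r_G(k,b)$ restricted to bounded classes in $\bar G_c^V{}_S$ coincides with the corresponding weighted orbital integral $r_{G_c}(k, b_{G_c})$ of a basic function on $G_c$. This follows from the fact that the basic function $b$ lies in $\H_\ac(G_v,K_v)$ with Satake transform given by the character of $r$, together with the compatibility of Satake transforms with Harish-Chandra descent on the centralizer, so that restriction of $b$ to $G_c$ yields the basic function attached to the pullback of $r$ along $^LG_c \hookrightarrow {^LG}$.

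The main obstacle I foresee is precisely this last compatibility: showing that restriction of the basic function to the centralizer produces the right basic function with the right $L$-factor, in a manner consistent with the unramified weighted orbital integrals $r^G_M$ on both sides. In the ordinary descent this step is trivial because the unit of the spherical Hecke algebra restricts to a unit; here one must track the full $L$-datum $(r,s)$ and verify it descends compatibly. Once this is handled, assembling the sums in the order $(c,\dot\alpha, k)$ and invoking Arthur's identity termwise gives exactly \eqref{adesc}, and the factor $\iota^{\bar G}(S,c)|\bar G_{c,+}(F)/\bar G_c(F)|^{-1}$ emerges unchanged from the unweighted formula.
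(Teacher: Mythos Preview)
Your proposal rests on a misreading of the definition. The coefficient $a^G_{r,s,\el}(\dot\gamma_S)$ in the lemma is the \emph{basic} elliptic coefficient at the full set of places $S$; it is not defined as a sum over $k\in\K_\el^V(\bar G,S)$ of unweighted coefficients against $r_G(k,b)$. What \cite[(4.10)]{witf} actually gives is
\[
a^G_{r,s,\el}(\dot\gamma_S) = \sum_{\{\dot\gamma\}}|Z(F,\dot\gamma)|^{-1}a^G_{r,s}(S,\dot\gamma)(\dot\gamma/\dot\gamma_S),
\]
a sum over $Z_{S,\o}$-orbits of $(G,S)$-equivalence classes in $G(F)$, with the basic function already absorbed into the global coefficient $a^G_{r,s}(S,\dot\gamma)$. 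You have conflated this with the auxiliary object $a^G_{r,s,\el}(\gamma,S)$ of \eqref{agrsell}, which lives on $\Gamma_\el(G,V,\zeta)$ and is built \emph{from} $a^G_{r,s,\el}(\dot\gamma_S)$ by summing over $k$; your first step inverts this relationship and so does not get off the ground.

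The paper's argument is accordingly much shorter than your plan and avoids your ``main obstacle'' entirely. It invokes the descent formula \cite[(4.9)]{witf},
\[
a^G_{r,s}(S,\dot\gamma) = i^G(S,c)\,|G_{c,+}(F)/G_c(F)|^{-1}\,a^{G_c}_{r,s}(S,\dot\alpha),
\]
which was already established for the \emph{weighted} coefficients $a^G_{r,s}(S,\dot\gamma)$; the basic function has been carried through that descent in \cite{witf}, so there is no need here to match $b$ with any basic function on $G_c$. The lemma then follows by reorganising the sum over $\{\dot\gamma\}$ as a double sum over $c$ and $\dot\alpha$, together with the elementary identity $|\bar G_{c,+}(F)/\bar G_c(F)|\cdot|G_{c,+}(F)/G_c(F)|^{-1}=|Z(F,\tilde c)|$. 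Your proposed compatibility of the basic function with Harish-Chandra descent to an arbitrary connected centraliser $G_c$ (which is not in general a Levi, so there is no evident $L$-embedding ${}^LG_c\hookrightarrow{}^LG$) is neither needed nor, as stated, justified.
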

\begin{proof}
Let $\o^S = \prod_{v\not\in S}\o_v$ and $Z_{S,\o}=Z(F)\cap Z_SZ(\o^S)$. We recall that two elements $\dot\gamma$ and $\dot\gamma_1$ in $G(F_S)$ with standard Jordan decompositions $\dot\gamma = c\dot\alpha$ and $\dot\gamma_1=c_1\dot\alpha_1$ are said to be $(G,S)$-equivalent if there is an element $\dot\delta\in G(F)$ such that $\dot\delta^{-1}c_1\delta = c$ and $\dot\delta^{-1}\alpha_1\delta$ is conjugate to $\alpha$ in $G_c(F_S)$. The elliptic coefficients were then defined in \cite[(4.10)]{witf} as
\[
%\label{elldesc}
a^G_{r,s,\text{ell}}(\dot\gamma_S) = \sum_{\{\dot\gamma\}}|Z(F,\dot\gamma)|^{-1}a^G_{r,s}(S,\dot\gamma)(\dot\gamma/\dot\gamma_S)
\]
where the sum over $\{\dot\gamma\}$ runs over a set of representatives of $Z_{S,\o}$-orbits in the set of $(G,S)$-equivalence classes $(G(F))_{G,S}$ of $G(F)$, and $Z(F,\dot\gamma)$ is the subset of $z\in Z_{S,\o}$ such that $z\dot\gamma = \dot\gamma$. Also, $(\dot\gamma/\dot\gamma_S)$ is the ratio of the signed measure on $\dot\gamma_S$ that comes with $\dot\gamma$ and the invariant measure on $\dot\gamma_S$.

Moreover, for a general element $\dot\gamma = c\dot\alpha$, we have the descent formula in \cite[(4.9)]{witf} for the general coefficient
\[
a^G_{r,s}(S,\dot\gamma) = i^G(S,c)|{G}_{c,+}(F)/{G}_c(F)|^{-1}a^{G_c}_{r,s}(S,\dot\alpha).
\]
The function $i^{\bar{G}}(S,c)$ in this case will be nonzero if $c$ is an $F$-elliptic element in $\bar{G}$ whose $G(\A^S)$-conjugacy class meets the maximal compact subgroup $\bar{K}^S= K^SZ(\A^S)/Z(\A^S)$.% in which case we may choose an element $g^S\in G(\A^S)$ that conjugates $c$ to $\bar{K}^S$ and define the subgroup\[K^S_c = (g^S)^{-1}K^Sg^S\cap G_c(\A^S).\]of $G_c(\A^S)$. Since $c_S$ is admissible, then for each $v\not\in S$ we have that $K_{c,v}$ is a hyperspecial maximal compact subgroup of $G_c(F_v)$ whose conjugacy class is independent of choice of $g^S$, and  
Then using the fact that 
\[
|\bar{G}_{c,+}(F)/\bar{G}_c(F)||{G}_{c,+}(F)/{G}_c(F)|^{-1} = |Z(F,\tilde c)|
\]
for any conjugacy class $\tilde c\in\Gamma_\ss({G})$ in the preimage of $c$, the required formula follows by expressing the sum over $\{\dot\gamma\}$ as a double sum over $c$ and $\dot\alpha$ as above, and comparing the expressions for $a^G_{r,s,\text{ell}}(\dot\gamma_S)$ and $a^{G_c}_{r,s,\text{ell}}(\dot\alpha_S)$.
\end{proof}

We shall also require Jordan decompositions of elements in $\dot\delta_S\in\Delta(G_S,\zeta_S)$, where the semisimple part is given by a semisimple stable conjugacy class $d_S\in\Delta_\ss(\bar{G}^*_S)$ in $\bar{G}^*_S=G^*_S/Z_S$, such that the connected centralizer $G^*_{d_S}$ is quasisplit, and the unipotent part is given by an element $\dot\beta_S$ in the subset $\Delta_\unip(G^*_{d_S},\zeta_S)$ in $\Delta(G^*_{d_S},\zeta_S)$ with trivial semisimple part. The distribution $\dot\beta_S$ is again determined by $\dot\delta_S$ only up to the action of the finite group
\[
\bar{G}^*_{d_S,+}(F_S)/\bar{G}^*_{d_S}(F_S) = \prod_{v\in S}\bar{G}^*_{d_v,+}(F_v)/\bar{G}^*_{d_v}(F_v)
\]
in $\Delta_\unip(G_{d_S},\zeta_S)$.

Suppose there is an element $d$ in the set $\Delta_\ss({G}^*)$ of semisimple stable conjugacy classes in ${G}^*(F)$ whose image in $\Delta_\ss({G}^*_S)$ is $d_S$. We then define $i^{{G}}(S,d)$ to be equal to 1 if $d$ is an $F$-elliptic element in $G$ and bounded at each place $v\not\in S$, and zero otherwise. We also define
\be
\label{jG}
j^{G^*}(S,d) = i^{G^*}(S,d)\tau(G^*)\tau(G^*_d)^{-1},
\ee
where $G^*_d$ is a quasisplit connected centralizer of an appropriate representative of the class $d$, and $\tau(G^*)$ is the absolute Tamagawa number of $G^*$.

\subsection{Descent}

We shall first prove a descent formula for the endoscopic and stable geometric coefficients in the following special case where the derived group $G_\der$ of $G$ is simply connected and $Z$ is trivial, from which we will deduce the general result. We note that the first condition implies that $G_{c,+}=G_c$ for any $c\in \Gamma_\ss(G)$, and the second implies that $\bar{G}=G$. The formula \eqref{adesc} then reduces to
\[
a^G_{r,s,\el}(\dot\gamma_S) = \sum_c\iota^{\bar{G}}(S,c)a^{G_c}_{r,s,\el}(\dot\alpha).
\]
Then we have the following descent formula.

\begin{prop}
\label{propdesc}
Assume that $G_\mathrm{der}$ is simply connected and that $Z=1$.
\ben
\item[(a)] Suppose that $\dot\gamma_S$ is an admissible element in $\Gamma^\E_\mathrm{ell}(G,S)$ with Jordan decomposition $\dot\gamma_S=c_S\dot\alpha_S$. Then
\be
\label{propadesc}
a^{G,\E}_{r,s,\el}(\dot\gamma_S) = \sum_{c}i^G(S,c)a^{G_c,\E}_{r,s,\el}(\dot\alpha)
\ee
where $c$ runs over elements in $\Gamma_\mathrm{ss}(G)$ that map to $c_S$, and $\dot\alpha$ is the image of $\dot\alpha_S$ in $\Gamma_\mathrm{unip}(G_{c,S})$.
\item[(b)]  Suppose that $G$ is quasisplit, and that $\dot\delta_S$ is an admissible element in $\Delta^\E_\mathrm{ell}(G,S)$ with Jordan decomposition $\dot\delta_S=d_S\dot\beta_S$. Then
\be
\label{propbdesc}
b^{G}_{r,s,\el}(\dot\delta_S) = \sum_{d}j^{G^*}(S,d)b^{G_d^*}_{r,s,\el}(\dot\beta)
\ee
where $d$ runs over elements in $\Delta_\mathrm{ss}(G^*)$ that map to $d_S$ in $\Delta_\mathrm{ss}(G^*_S)$, and $\dot\beta$ is the image of $\dot\beta_S$ in $\Delta_\mathrm{unip}(G_{d,S}^*)$. Moreover, $b^G_{r,s,\el}$ vanishes on the complement of $\Delta_\mathrm{ell}(G,S)$ in the set of admissible elements in $\Delta_\mathrm{ell}^\E(G,S)$ whose semisimple part is not central in $G_S$.
\een
\end{prop}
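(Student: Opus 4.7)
The plan is to prove (a) and (b) simultaneously by induction on $\dim G$, deducing (b) from (a) via the defining relation \eqref{ageell}, in the spirit of Arthur's descent argument in \cite[\S1]{STF2}. The hypotheses that $G_\der$ is simply connected and $Z=1$ ensure that the centralizers $G_c$ are connected reductive and eliminate the finite-group corrections that appeared in Lemma \ref{adesclem}, so the descent formula there reduces to $a^G_{r,s,\el}(\dot\gamma_S)=\sum_c i^G(S,c)\,a^{G_c}_{r,s,\el}(\dot\alpha)$.

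First I would unwind the definition \eqref{ageell}, writing $a^{G,\E}_{r,s,\el}(\dot\gamma_S)$ as a sum over pairs $(G',\dot\delta'_S)$ weighted by $\iota(G,G')$, $b^{\tilde G'}_{r,s,\el}(\dot\delta'_S)$, and $\Delta_G(\dot\delta'_S,\dot\gamma_S)$, together with the stable term when $G$ is quasisplit. Since $\Delta_G(\dot\delta'_S,\dot\gamma_S)$ vanishes unless the semisimple part $d'_S$ of $\dot\delta'_S$ is the endoscopic image of $c_S$, the sum restricts to such pairs. I would then invoke Langlands-Shelstad descent of transfer factors \cite{LS}, which factors $\Delta_G(\dot\delta'_S,\dot\gamma_S)$ as $\Delta_{G_c}(\dot\beta',\dot\alpha)$, with $G'_{d'}$ becoming an elliptic endoscopic datum of $G_c$.

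Next I would substitute the inductive descent formula (part (b)) for each $b^{\tilde G'}_{r,s,\el}(\dot\delta'_S)$ with $G'\neq G^*$, valid since $\dim \tilde G'_{d'} < \dim G$. The crucial combinatorial step is reindexing the double sum: following \cite[\S1]{STF2}, summing over pairs $(G',d')$ with $d'$ mapping to $c_S$ is equivalent to summing over pairs $(c,H')$ where $c\in\Gamma_\ss(G)$ maps to $c_S$ and $H'$ is an elliptic endoscopic datum of $G_c$. Under this bijection, $\iota(G,G')$ combines with the Tamagawa ratio $\tau(\tilde G')\tau(\tilde G'_{d'})^{-1}$ produced by the inductive application of $j^{\tilde G'}(S,d')$, yielding exactly $i^G(S,c)\,\iota(G_c,H')$ by the Kottwitz formula \cite[Theorem 8.3.1]{Kcusp}. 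Reassembling then gives \eqref{propadesc}, after matching the diagonal stable term using the simplified Lemma \ref{adesclem}.

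For part (b), I solve the defining relation \eqref{ageell} for $b^G_{r,s,\el}(\dot\delta_S)$ using (a): the endoscopic contributions reorganized by $c$ recombine so that the proper terms telescope, leaving $j^{G^*}(S,d)\,b^{G_d^*}_{r,s,\el}(\dot\beta)$ after collecting the Tamagawa factors as in \eqref{jG}. For the vanishing claim, if $d_S$ is not central in $G_S$ then $G^*_{d_S}$ is a proper reductive subgroup of $G^*$; inductive (b) applied to $G^*_{d_S}$ ensures $b^{G_d^*}_{r,s,\el}(\dot\beta)$ is supported on stable classes from $G_d^*$, and the descent formula transports this vanishing to $\dot\delta_S$. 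The main obstacle is the bookkeeping in the second step: carefully tracking the bijection $(G',d')\leftrightarrow(G_c,H')$ and verifying that the coefficients assemble correctly under the Kottwitz formula, which depends essentially on the simply-connected hypothesis on $G_\der$. Once this matches, both parts of the proposition follow formally, with the general case to be recovered in a subsequent reduction.
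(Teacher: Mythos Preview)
Your approach is essentially the paper's: unwind \eqref{ageell}, apply the inductive descent formula to each $b^{\tilde G'}_{r,s,\el}(\dot\delta'_S)$, use Langlands--Shelstad descent of transfer factors, and reindex the sum over $(G',d')$ as a sum over $(c,G'_d)$ via the Kottwitz formula, exactly as in \cite[\S1]{STF2}. Two points deserve sharpening.

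First, your induction hypothesis is too weak. Inducting only on Proposition~\ref{propdesc} itself does not suffice in the quasisplit case: after you have shown that \eqref{60} equals $\sum_c i^G(S,c)\,a^{G_c,\E}_{r,s,\el}(\dot\alpha)$, you still need to match this against Lemma~\ref{adesclem}, which requires $a^{G_c,\E}_{r,s,\el}(\dot\alpha)=a^{G_c}_{r,s,\el}(\dot\alpha)$. This is the content of Corollary~\ref{glob1'}(a) for $G_c$, not of the descent formula. The paper accordingly carries the hypothesis that Corollary~\ref{glob1'} holds for any $H$ with $\dim H_\der<\dim G_\der$ (or $H=G^*$ when $\varepsilon(G)=0$), and this is what is invoked once $d$ is assumed not central. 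Relatedly, since an endoscopic $G'$ need not have $G'_\der$ simply connected, the inductive input for $b^{G'}_{r,s,\el}$ is Corollary~\ref{cordesc} (via a $z$-extension of $G'$), not part~(b) of the proposition directly.

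Second, your derivation of (b) by ``telescoping'' is vague and your vanishing argument does not quite work as stated. The paper instead equates the two expressions
\[
\sum_{\dot\delta_S}b^G_{r,s,\el}(\dot\delta_S)\Delta_G(\dot\delta_S,\dot\gamma_S)
\quad\text{and}\quad
\sum_{\dot\beta}j^{G^*}(S,d)\,b^{G^*_d}_{r,s,\el}(\dot\beta)\Delta_G(d_S\dot\beta_S,\dot\gamma_S),
\]
obtained as the $\varepsilon(G)$-coefficients once (a) is known, and then inverts using the adjoint transfer factor relation \cite[(5.5)]{STF1}. Since the right-hand side is indexed by genuine stable unipotent classes $\dot\beta_S\in\Delta_\unip(G^*_{d_S})$, the inversion immediately gives both the formula \eqref{propbdesc} and the vanishing of $b^G_{r,s,\el}$ off $\Delta_\el(G,S)$ when $d_S$ is not central; no separate appeal to inductive (b) for $G^*_{d_S}$ is needed.
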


\noindent  Before we prove the proposition, we state a corollary that we shall require from our induction assumption. It is the main result of this section.

\begin{cor}
\label{cordesc}
Suppose that Proposition \ref{propdesc} holds for some $z$-extension $\tilde G$ of $G$.
\ben
\item[(a)] 
If $G$ is arbitrary, and $\dot\gamma_S\in\Gamma^\E_\el(G,S,\zeta)$ is an admissible element with Jordan decomposition $\dot\gamma_S=c_S\dot\alpha_S$. Then
\[
a^{G,\E}_{r,s,\el}(\dot\gamma_S) = \sum_c\sum_{\dot\alpha}i^{\bar{G}}(S,c)|\bar{G}_{c,+}(F)/\bar{G}_c(F)|^{-1}a^{G_c,\E}_{r,s,\el}(\dot\alpha),
\]
where the sums are taken as in \eqref{adesc}.
\item[(b)] 
If $G$ is quasisplit, and $\dot\delta_S\in\Delta^\E_\el(G,S,\zeta)$ is an admissible element with Jordan decomposition $\dot\delta_S=d_S\dot\beta_S$. Then
\be
\label{corexp}
b^G_{r,s,\el}(\dot\delta_S)=\sum_d\sum_{\dot\beta}j^{\bar{G}^*}(S,d)|(\bar{G}^*_{d,+}/\bar{G}^*_d)(F)|^{-1}b^{G_d^*}_{r,s,\el}(\dot\beta),
\ee
where $d$ runs over elements in $\Delta_\ss(\bar{G}^*)$ that map to $d_S$, and $\dot\beta$ runs over the orbit of $(\bar{G}^*_{d,+}/\bar{G}^*_{d})(F)$ in $\Delta_\unip(G_{d,S},\zeta_S)$ determined by $\dot\beta_S$.
\een
\end{cor}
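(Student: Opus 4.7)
The plan is to deduce Corollary \ref{cordesc} by reducing both statements to Proposition \ref{propdesc} applied to the chosen $z$-extension $\tilde G \to G$, whose kernel $\tilde C$ is an induced torus and for which $\tilde G_\mathrm{der}$ is simply connected. Passing to such a $z$-extension replaces the central induced torus $Z$ by a central induced torus $\tilde Z \subset \tilde G$ containing $\tilde C$, and the character $\zeta$ by a character $\tilde\zeta$ of $\tilde Z$; it provides compatible maps on the relevant spaces of distributions and on semisimple and unipotent classes that respect Jordan decomposition. The hypothesis of the corollary is precisely that \eqref{propadesc} and \eqref{propbdesc} are available for such a $\tilde G$.

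For part (a), I would lift $\dot\gamma_S$ to an admissible element $\dot{\tilde\gamma}_S \in \Gamma^\E_\el(\tilde G,S,\tilde\zeta)$ with Jordan decomposition $\tilde c_S \dot{\tilde\alpha}_S$ projecting to $c_S \dot\alpha_S$. Since the transfer factors, the coefficient $\iota(G,G')$, and the inductive defining relation \eqref{ageell} are all insensitive to the $z$-extension, one has $a^{G,\E}_{r,s,\el}(\dot\gamma_S) = a^{\tilde G,\E}_{r,s,\el}(\dot{\tilde\gamma}_S)$. Applying Proposition \ref{propdesc}(a) to $\tilde G$ then expresses this as a sum of $a^{\tilde G_{\tilde c},\E}_{r,s,\el}(\dot{\tilde\alpha})$ over preimages $\tilde c$ of $\tilde c_S$. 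I would then regroup this sum, first over classes $c \in \Gamma_\ss(\bar G)$, then over $\tilde c \in \Gamma_\ss(\tilde G)$ above $c$, and finally over the orbits of $\bar G_{c,+}(F_S)/\bar G_c(F_S)$ on $\Gamma_\unip(G_{c,S},\zeta_S)$ determined by $\dot\alpha_S$. The factor $|\bar G_{c,+}(F)/\bar G_c(F)|^{-1}$ appears from the fact that $\tilde G_{\tilde c} = \tilde G_{\tilde c,+}$ in the simply connected situation while $G_c$ may have a nontrivial component group, and $i^{\tilde G}(S,\tilde c) = i^{\bar G}(S,c)$ because $F$-ellipticity and $K^S$-boundedness transfer through the central extension.

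For part (b) the argument is parallel, using the analogous identification $b^G_{r,s,\el}(\dot\delta_S) = b^{\tilde G}_{r,s,\el}(\dot{\tilde\delta}_S)$ coming from the inductive definition of the stable coefficients in \eqref{ageell}, and then Proposition \ref{propdesc}(b) for $\tilde G$. Regrouping the resulting sum over preimages $\tilde d$ of $\tilde d_S$ in $\Delta_\ss(\tilde G^*)$ and then over the orbit of $(\bar G^*_{d,+}/\bar G^*_d)(F)$ on $\Delta_\unip(G^*_{d,S},\zeta_S)$ yields the right-hand side of \eqref{corexp}. The Tamagawa weighting $j^{\bar G^*}(S,d) = i^{\bar G^*}(S,d)\tau(G^*)\tau(G^*_d)^{-1}$ emerges naturally from \eqref{jG}, since the ratio of Tamagawa numbers is exactly what compensates for the change of measure on the centralizers under the $z$-extension. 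The vanishing statement on elements whose semisimple part is not central in $G_S$ descends from the corresponding vanishing in Proposition \ref{propdesc}(b), by the same pullback argument.

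The main obstacle is bookkeeping: one must verify that the fiber of $\tilde c \mapsto c$ (respectively $\tilde d \mapsto d$) combined with the orbit structure of $\bar G_{c,+}/\bar G_c$ (respectively $\bar G^*_{d,+}/\bar G^*_d$) on unipotent distributions reproduces exactly the combinatorial factors stated, and that the various Tamagawa and elliptic indicator factors match on the nose. This is essentially a careful transcription of the argument Arthur uses in the stabilization of the trace formula, adapted to the weighted setting; the basic function $b$ plays no role in the reduction itself, since it enters only through the unramified weighted orbital integrals $r^G_M(k,b)$ and $s^G_M(\ell,b)$ that have already been stabilized in Proposition \ref{sgmell}.
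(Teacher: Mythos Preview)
Your strategy—lift to the $z$-extension $\tilde G$ and appeal to Proposition \ref{propdesc} there—is the same as the paper's. But as written there are two places where the argument does not close.

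The more serious one is that Proposition \ref{propdesc} carries the hypothesis $Z=1$, not merely $G_\der$ simply connected. Lifting from $(G,Z,\zeta)$ to $(\tilde G,\tilde Z,\tilde\zeta)$ arranges $\tilde G_\der$ simply connected, but $\tilde Z\supset\tilde C$ is certainly nontrivial, so you cannot invoke Proposition \ref{propdesc} on $\tilde G$ directly. A second reduction, from $(\tilde G,\tilde Z,\tilde\zeta)$ to $(\tilde G,1,1)$, is required. The paper does this (after reducing to $G=\tilde G$) by expressing $S^G_{r,s,\el}(\dot f_S)$ as an integral of $S^G_{r,s,\el}(\dot f^1_{S,z})$ over $Z_{S,\o}\backslash Z^1_S$ against $\zeta$ (the identity \eqref{27} established inside Proposition \ref{prop21}), substituting the expansion \eqref{propbdesc} for the trivial-center coefficients, and then matching term by term with \eqref{corexp}. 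It is exactly in this comparison that the factor $|(\bar G^*_{d,+}/\bar G^*_d)(F)|^{-1}$ is produced: one uses the surjection $d^1\mapsto d$ from $\Delta_\ss(G^*)$ to $\Delta_\ss(\bar G^*)$, on whose fibers $Z_{S,\o}$ acts transitively with stabilizer isomorphic to $(\bar G^*_{d,+}/\bar G^*_d)(F)$ via $g\mapsto g^{-1}d^1g(d^1)^{-1}$. This is more than bookkeeping, and your sketch does not supply it.

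The second point is that the compatibilities $a^{G,\E}_{r,s,\el}(\dot\gamma_S)=a^{\tilde G,\E}_{r,s,\el}(\dot{\tilde\gamma}_S)$ and $b^G_{r,s,\el}(\dot\delta_S)=b^{\tilde G}_{r,s,\el}(\dot{\tilde\delta}_S)$ are not immediate from insensitivity of transfer factors and $\iota(G,G')$. The underlying identity \eqref{21} for the invariant coefficient $a^G_{r,s,\el}$ is proved in the paper by comparing the full geometric expansions of $J^r_s(\dot f_S)$ and $J^r_s(\tilde{\dot f}_S)$, using that weighted orbital integrals match under the $z$-extension and inducting on Levi subgroups; the endoscopic and stable versions then follow from this together with the transfer compatibility $\dot f_S'=(\tilde{\dot f}_S)'$. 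These facts constitute Proposition \ref{prop21}, which the paper's proof of Corollary \ref{cordesc} cites through \eqref{24} and \eqref{27}. Your outline should acknowledge that this is a separate input.
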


\noindent We begin with the proof of Proposition \ref{propdesc}.

\begin{proof}
We shall take on the induction hypothesis that Corollary \ref{glob1'} holds if $G$ is replaced by any group $H$ over $F$ such that either $\dim(H_\der)<\dim(G_\der)$, or $\varepsilon(G)=0$ and $H=G^*$. Our argument will follow the proof of \cite[Theorem 1.1]{STF2}, where we observe that the major combinatorial problems have already been solved. Since $Z$ is trivial, we can take $\tilde{G}'=G'$ for any $G'\in\E(G)$. From the definition \eqref{ageell}, it follows that the difference
\be
\label{60}
a^{G,\E}_{r,s,\el}(\dot\gamma_S)-\varepsilon(G)\sum_{\dot\delta_S\in \Delta^\E_\el(G,S)}b^G_{r,s,\el}(\dot\delta_S)\Delta_G(\dot\delta_S,\dot\gamma_S)
\ee
equals
\be
\label{61}
\sum_{G'\in\E^0_\el(G,S)}\iota(G,G')\sum_{\dot\delta_S'}b^{\tilde{G}'}_{r,s,\el}(\dot\delta_S')\Delta_G(\dot\delta_S',\dot\gamma_S).
\ee
We also assume inductively that any $G'\in\E^0_\el(G,S)$ has a $z$-extension for which Proposition \ref{propdesc} and hence Corollary \ref{cordesc} holds for $G'$. Suppose $\dot\delta_S'=d'_S\dot\beta_S'\in \Delta_\el(G',S)$ such that $\Delta(\dot\delta_S',\dot\gamma)\neq0$, so that $\dot\delta_S'$ is admissible. Then applying \ref{corexp}, we have
\[
b^{G'}_{r,s,\el}(\dot\delta_S')=\sum_{d'}\sum_{\dot\beta'}j^{\bar{G}'}(S,d')|(\bar{G}'_{d',+}/\bar{G}'_{d'})(F)|^{-1}b^{G_{d'}'}_{r,s,\el}(\dot\beta').
\]
Substituting this into \eqref{61}, and using the identities $\Delta_G(\dot\delta'_S,\dot\gamma_S) =\Delta_G(d_S'\dot\beta'_S,c_S\dot\alpha_S)$ and
\[
\iota(G,G')j^{G'}(S,d') = i^{G'}(S,d)\tau(G)\tau(G'_{d'})^{-1}|\text{Out}_G(G')|^{-1}
\]
by \eqref{jG} and \cite[Theorem 8.3.1]{Kcusp}, we can write \eqref{61} as 
\be
\label{62}
\sum_{G'}\sum_{d'}\sum_{\dot\beta'}|\text{Out}_G(G')|^{-1}{\tau(G)\tau(G'_{d'})^{-1}}{|(\bar{G}'_{d',+}/\bar{G}'_{d'})(F)|^{-1}} b^{G'_{d'}}_{r,s,\el}(\dot\beta')\Delta_G(d_S'\dot\beta'_S,c_S\dot\alpha_S)
\ee
where $G'\in \E^0_\el(G,S)$, $d'$ runs over classes in $\Delta_\ss(G')$ that are $F$-elliptic and bounded at each $v\not\in S$, and $\dot\beta'\in\Delta_\unip(G'_{d',S})$.

Let us assume that $d\in\Delta_\ss(G^*)$ is elliptic, bounded at each $v\not\in S$, and a local image of each component $c_v$ of $c_S$. If such a $d$ does not exist, it follows from the argument at the end of \cite[p.212]{STF2} that the required result is trivial. We can also assume that $d$ does not lie in the center of $G^*$. We add to \eqref{62} the sum
\be
\label{63}
\varepsilon(G)\sum_{\dot\beta\in\Delta_\unip(G^*_{d,S})}\tau(G^*)\tau(G^*_d)^{-1}b^{G^*_d}_{r,s,\el}(\dot\beta)\Delta_G(d_S\dot\beta_S,c_S\dot\alpha_S),
\ee
which in the case $\varepsilon(G)=1$, combines to make the outer sum of \eqref{62} into a full sum over $\E_\el(G,S)$.

Let us write $c_\A$ for the product of $c_S$ with $c_v\in K_v$ for each $v\not\in S$ for which $d_v$ is an image.  Also recall that if $G'_d$ belongs to $\E_\el(G^*_d,S)$, it has a canonical extension $\tilde G'_d$ by $\tilde C'_d$ determined by $\tilde G_d$. We shall fix $\tilde\eta'_d$ to be an automorphic character of $\tilde C'_d$. Examining the arguments of \cite[pp.214--218]{STF2}, we can hence conclude that the sum of \eqref{61} and \eqref{63} is equal to
\be
\label{67}
\sum_{c}\sum_{G'_d\in\E_\el(G^*_d,S)}\iota(G_c,G_d')\sum_{\dot\delta'_{d,S}}b^{\tilde{G}'_d}_{r,s,\el}(\dot\delta'_{d,S})\Delta_{G,c}(\dot\delta'_{d,S},\dot\alpha),
\ee
where $c$ runs over classes in $\Gamma_\ss(G)$ that map to the $G(\A)$-conjugacy class of $c_\A$, and $\dot\delta'_{d,S}\in\Delta_\el(\tilde{G}'_d,S,\tilde\eta'_d)$, and $\Delta_{G,c}(\dot\delta'_{d,S},\dot\alpha)$ is the canonical global transfer factor for $G_c$ and $G_d'$ defined in p.214 and Lemma 4.2 of \cite{STF2}. Then using the definition \eqref{ageell}, the induction hypothesis, and the assumption that $d$ is not central, it follows that the sum is equal to
\[
\sum_{c}a^{G_c,\E}_\el(\dot\alpha).
\]
Moreover, any $c$ that occurs in the sum is $F$-elliptic in $G$ and is $G(\A^S)$-conjugate to an element in $K^S$, so we conclude that \eqref{60} is equal to
\be
\label{69}
\sum_{c}i^G(S,c)a^{G_c,\E}_\el(\dot\alpha).
\ee
where now $c$ runs over the set in \eqref{propadesc}. Hence if $\varepsilon(G)=0$, the first required identity \eqref{propadesc} follows. On the other hand, suppose $\varepsilon(G)=1$. Then $a^{G,\E}_\el(\dot\gamma_S)=a^{G}_\el(\dot\gamma_S)$ by definition. Any $c$ occurring in the sum in \eqref{69} is not central, so $\dim(G_{c,\der})<\dim(G_\der)$, so it follows from the induction hypothesis that $a^{G_c,\E}_\el(\dot\alpha) = a^{G_c}_\el(\dot\alpha)$, and the required identity follows from \eqref{adesc}, proving part (a).

For part (b), we continue to assume that $\varepsilon(G)=1$. Then since the sum of \eqref{61} and \eqref{63} equals \eqref{69}, it follows from part (a) that the coefficient of $\varepsilon(G)$ in the latter sum is equal to zero. Moreover, since the contribution to \eqref{63} is nonzero only if $d$ is an $F$-elliptic element in $G$ and bounded at each place $v\not\in S$, we can replace $\tau(G^*)\tau(G^*_d)^{-1}$ on the lefthand side with $j^{G^*}(S,d)$. It follows then that 
\[
\sum_{\dot\delta_S\in \Delta^\E_\el(G,S)}b^G_{r,s,\el}(\dot\delta_S)\Delta_G(\dot\delta_S,\dot\gamma_S)
\]
is equal to
\[
\sum_{\dot\beta\in\Delta_\unip(G^*_{d,S})}j^{G^*}(S,d)b^{G^*_d}_{r,s,\el}(\dot\beta)\Delta_G(d_S\dot\beta_S,\dot\gamma_S).
\]
Then using the adjoint relation in \cite[(5.5)]{STF1} we may multiply by the adjoint transfer factor and sum over $\Gamma^\E_\el(G,S,\zeta)$ to invert these expressions. It follows that $b^G_{r,s,\el}(\dot\delta_S)$ vanishes unless it has a Jordan decomposition with $\dot\beta_S\in\Delta_\unip(G^*_{d_S})$, in which case
\[
b^G_{r,s,\el}(\dot\delta_S) = j^{G^*}(S,d)b^{G^*_d}_{r,s,\el}(\dot\beta),
\]
where $\dot\beta$ is the image of $\dot\beta_S$ in $\Delta_\unip(G^*_{d,S})$, which proves part (b).
\end{proof}

Consider the commutator quotient $G^\text{ab}_S = G_S/G_{\der,S}$. It acts by conjugation on $\D_\unip(G_{\der,S})$. We define a linear map from $\D_\unip(G_{\der,S})$ to $\D_\unip(G_S,\zeta_S)$ by sending any $D\in \D_\unip(G_{\der,S})$ to the distribution 
\[
f \to \sum_{a\in G^\text{ab}_S}(aD)(f|_{G_{\der,S}}), \quad f\in \C(G_S,\zeta_S),
\]
where $f|_{G_\der}$ denotes the restriction of $f$ to $G_{\der,S}$. We shall write $\D_{\unip,\der}(H_S,\zeta_S)$ for the image of this map, and assume that the basis $\Gamma_\unip(G_{S},\zeta_S)$ has been chosen such that the intersection
\[
\Gamma_{\unip,\der}(G_{S},\zeta_S) = \Gamma_\unip(G_{S},\zeta_S)\cap \D_{\unip,\der}(G_{S},\zeta_S)
\]
is a basis of $\Gamma_{\unip,\der}(G_{S},\zeta_S)$. Similarly, we construct stable subsets $S\D_{\unip,\der}(G_{S},\zeta_S)$ with the basis $\Delta_{\unip,\der}(G_{S},\zeta_S)$. The unipotent global coeffcients $a^G_{r,s,\el}(\dot\alpha)$ and $a^{G,\E}_{r,s,\el}(\dot\alpha)$ will be supported on the subset $\Gamma_{\unip,\der}(G_{S},\zeta_S)$, and $b^G_{r,s,\el}(\dot\beta)$ on the subset $\Delta_{\unip,\der}(G_{S},\zeta_S)$ respectively.\footnote{The term $a^H_\disc$ on \cite[p.169]{STF2} should be $a^H_\el$.} We note that there is a canonical isomorphism between $\D_{\unip,\der}(G_{S},\zeta_S)$ and the space $\D_{\unip,\der}(G_{S})$ with trivial central data given by sending any $D\in \D_{\unip,\der}(G_{S},\zeta_S)$ to the linear form
\be
\label{dfc}
f\to D(f_c), \quad f\in \C(G_S,\zeta_S),
\ee
where $f_c$ is any function in $C^\infty_c(G_S)$ that equals $f$ on an invariant neighbourhood of 1. We now proceed to extend the preceding proposition to the general case. 

\begin{prop}
\label{prop21}
Suppose that Proposition \ref{propdesc} holds for some $z$-extension $\tilde G$ of $G$.
\ben
\item[(a)] 
If $G$ is arbitrary, and $\dot\gamma_S\in\Gamma^\E_\el(G,S,\zeta)$ is an admissible element whose semisimple part is not central in $\bar{G}_S$. Then
\[
a^{G,\E}_{r,s,\el}(\dot\gamma_S) = a^G_{r,s,\el}(\dot\gamma_S).
\]
\item[(b)] 
If $G$ is quasisplit, and $\dot\delta_S\in\Delta^\E_\el(G,S,\zeta)$ is an admissible element whose semisimple part is not central in $\bar{G}^*_S$. Then
\[
b^G_{r,s,\el}(\dot\delta_S)
\]
is supported on the subset $\Delta_\el(G,S,\zeta)$ of $\Delta_\el^\E(G,S,\zeta)$. 
\een
\end{prop}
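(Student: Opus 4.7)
The plan is to reduce to Proposition \ref{propdesc} via a $z$-extension and then run an induction on $\dim G_\der$ (equivalently, on $\dim G^*_\der$). Choose a $z$-extension $\tilde G$ of $G$ whose derived group is simply connected and whose center has no $Z$-datum; Proposition \ref{propdesc} then applies to $\tilde G$, so Corollary \ref{cordesc} is available for $G$ itself. We may also assume inductively that both parts of Corollary \ref{glob1'} hold for any reductive $K$-group $H$ with $\dim H_\der < \dim G_\der$.

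For part (a), write the Jordan decomposition $\dot\gamma_S = c_S\dot\alpha_S$. By Corollary \ref{cordesc}(a),
\[
a^{G,\E}_{r,s,\el}(\dot\gamma_S) = \sum_c \sum_{\dot\alpha} i^{\bar G}(S,c)\,|\bar G_{c,+}(F)/\bar G_c(F)|^{-1}\,a^{G_c,\E}_{r,s,\el}(\dot\alpha),
\]
where $c$ runs over classes in $\Gamma_\ss(\bar G)$ mapping to $c_S$ and $\dot\alpha$ over the orbit determined by $\dot\alpha_S$. The hypothesis that $c_S$ is not central in $\bar G_S$ forces $\dim G_{c,\der} < \dim G_\der$ for every $c$ appearing in the sum, so the induction hypothesis yields $a^{G_c,\E}_{r,s,\el}(\dot\alpha) = a^{G_c}_{r,s,\el}(\dot\alpha)$. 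Substituting and comparing with the unconditional descent formula in Lemma \ref{adesclem}, one recognises the expansion of $a^G_{r,s,\el}(\dot\gamma_S)$, giving the required identity.

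For part (b), write the stable Jordan decomposition $\dot\delta_S = d_S\dot\beta_S$. By Corollary \ref{cordesc}(b),
\[
b^G_{r,s,\el}(\dot\delta_S) = \sum_d \sum_{\dot\beta} j^{\bar G^*}(S,d)\,|(\bar G^*_{d,+}/\bar G^*_d)(F)|^{-1}\,b^{G^*_d}_{r,s,\el}(\dot\beta),
\]
with $d$ running over lifts of $d_S$ and $\dot\beta$ over the corresponding orbit. Non-centrality of $d_S$ in $\bar G^*_S$ again gives $\dim G^*_{d,\der} < \dim G^*_\der$, so inductively each $b^{G^*_d}_{r,s,\el}(\dot\beta)$ is supported on $\Delta_\el(G^*_d,S,\zeta) \subset \Delta^\E_\el(G^*_d,S,\zeta)$. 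A standard compatibility of Jordan decomposition with the embedding $\Delta \hookrightarrow \Delta^\E$---parallel to the one used in the proof of Proposition \ref{propdesc}(b)---then shows that $b^G_{r,s,\el}(\dot\delta_S)$ vanishes unless $\dot\delta_S$ itself lies in $\Delta_\el(G,S,\zeta)$.

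The main obstacle is the bookkeeping in part (b): one must check that if the stable Jordan components $d$ and $\dot\beta$ of an element of $\Delta^\E_\el(G,S,\zeta)$ lie in the non-endoscopic subsets $\Delta_\ss(\bar G^*)$ and $\Delta_\unip(G^*_{d,S},\zeta_S)$ respectively, then the product $d_S\dot\beta_S$ lies in $\Delta_\el(G,S,\zeta)$. This follows from the way the basis $\Delta^\E$ was constructed from its centralizer bases, and is essentially the descent step for the bases that is already implicit in Proposition \ref{propdesc}. Once this is in place, everything else is routine substitution and induction.
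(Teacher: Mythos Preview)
Your argument has a circularity with Corollary \ref{cordesc}. In the paper's organization, Corollary \ref{cordesc} for $G$ is proved \emph{after} Proposition \ref{prop21} for $G$: its proof invokes the $z$-extension compatibility \eqref{24}, namely $b^G_{r,s,\el}(\dot\delta_S)=b^{\tilde G}_{r,s,\el}(\tilde{\dot\delta}_S)$, and the analogue for $a^{G,\E}_{r,s,\el}$, both of which are established inside the proof of Proposition \ref{prop21}. So you cannot simply quote Corollary \ref{cordesc} for $G$ here; at this stage it is only available inductively for proper endoscopic groups $G'$ (as it is used in the proof of Proposition \ref{propdesc}). The paper therefore takes a different route: it first proves the compatibilities \eqref{21}, \eqref{24}, \eqref{27} directly---by comparing the expansions $J^r_s(\dot f_S)=J^r_s(\tilde{\dot f}_S)$, $I^\E_{r,s,\el}(\dot f_S)=I^\E_{r,s,\el}(\tilde{\dot f}_S)$, $S^G_{r,s,\el}(\dot f_S)=S^{\tilde G}_{r,s,\el}(\tilde{\dot f}_S)$ and their central-datum analogues, varying $\dot f_S$---and uses these to reduce Proposition \ref{prop21} for $(G,\zeta)$ to the pair $(\tilde G,1)$ with $\tilde G_\der$ simply connected. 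In that special case part (b) is literally the ``Moreover'' clause of Proposition \ref{propdesc}(b), and part (a) follows from Proposition \ref{propdesc}(a), Lemma \ref{adesclem}, and the induction hypothesis on $G_c$.

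There is also a gap in your treatment of (b). The inductive input you invoke for $b^{G^*_d}_{r,s,\el}(\dot\beta)$ is vacuous: by construction $\dot\beta$ lies in $\Delta_\unip(G^*_{d,S},\zeta_S)\subset\Delta(G^*_{d,S},\zeta_S)$, so it is already in the non-endoscopic basis and the support condition from Corollary \ref{glob1'} tells you nothing. What you actually need is the assertion that the right-hand side of the descent formula, which is indexed by $d\in\Delta_\ss(\bar G^*)$ and $\dot\beta\in\Delta_\unip$, can only be nonzero when $\dot\delta_S\in\Delta_\el(G,S,\zeta)$; this is precisely the content you are trying to prove, not a ``standard compatibility'' you may cite. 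In the paper this is obtained in the special case by the inversion argument at the end of the proof of Proposition \ref{propdesc}(b), and then transported to general $(G,\zeta)$ via \eqref{24} and \eqref{27}. Your sketch could be repaired by first isolating and proving \eqref{24} and the related compatibilities, then deducing Corollary \ref{cordesc} for $G$, and only then running your descent-plus-induction---but at that point you have essentially reproduced the paper's argument.
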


\begin{proof}
We recall that as a $z$-extension, $\tilde G$ is a central extension of $G$ by a central induced torus $\tilde C$ over $F$ such that $\tilde G_\der$ is simply connected. We assume that $S$ is chosen such that $\tilde G$ and $\tilde\zeta$ are unramified away from $S$. Since $G_S\simeq \tilde G_S/\tilde C_S$, we therefore have a canonical isomorphism from $\C(G,S,\zeta)$ to $\C(\tilde G,S,\tilde\zeta)$. We shall also assume that the bases $\Gamma(\tilde G,\tilde\zeta),$ $\Delta^\E(\tilde G,\tilde\zeta),$ and $\Delta(\tilde G,\tilde\zeta)$ for $\tilde G$ are the images of the corresponding bases for $G$ under the canonical maps $\dot\gamma_S\to \tilde{\dot{\gamma}}_S$ and $\dot\delta_S\to \tilde{\dot{\delta}}_S$. 

Let us first show that the result is valid for the pair $(G,\zeta)$ if it is valid for the pair $(\tilde G,\tilde\zeta)$. We have to show that
\be
\label{21}
a^G_{r,s,\el}(\dot\gamma_S) =a^G_{r,s,\el}(\tilde{\dot{\gamma}}_S)
\ee
for any admissible element $\dot\gamma_S\in\Gamma(G_S,\zeta_S)$. To do so, we consider the expanson
\[
J^r_s(\dot f_S) = \sum_{M\in\L}|W^M_0||W^G_0|^{-1}\sum_{\dot\gamma_S\in\Gamma_\el(M,S,\zeta)}a^M_{r,s,\el}(\dot\gamma_S)J_M(\dot\gamma_S,\dot{f}_S)
\]
in \cite[(4.10)]{witf}, that is valid for any $\dot{f}_S\in \C_\adm(G,S,\zeta)$. From the map of functions $\dot{f}_S\to\tilde{\dot{f}}_S$ induced by the isomorphism $G_S \simeq \tilde{G}_S/\tilde{C}_S$, it follows that $J^r_s(\dot{f}_S)$ is equal to the distribution $J^r_s(\tilde{\dot{f}}_S)$ on $\tilde{G}$, and moreover the weighted orbital integrals satisfy
\[
J^G_M(\dot\gamma,\dot f) = J^{\tilde G}_{\tilde M}(\tilde{\dot\gamma},\tilde{\dot f}),\quad \dot\gamma_S\in\Gamma(G_S,\zeta_S).
\]
Assuming inductively that \eqref{21} holds if $G$ is replaced by any proper Levi subgroup $M$ of $G$, it follows that the terms with $M\neq G$ in the expansions for $J^r_s({\dot{f}}_S)$ and $J^r_s(\tilde{\dot{f}}_S)$ are equal, and by varying $\dot f_S$ we conclude that \eqref{21} holds. 

Since $\tilde G_\der$ is simply connected, there is an $L$-embedding $^L\tilde{G}'\to {^L\tilde{G}}$ which we can assume is unramified outside of $S$ by \cite[Lemma 7.1]{STF1}. The composition of $L$-embeddings $\G'\to {^LG}\to {^L\tilde G}$ maps $\G'$ into the image of $^L\tilde{G}'$ in $^L\tilde{G}$, giving an $L$-emebedding $\tilde\xi':\G'\to{^L\tilde{G}'}$ which makes up an auxiliary datum $(\tilde G',\tilde\xi')$ for $G'$. We thus have a bijection from $\E_\el(G,S)$ to $\E_\el(\tilde{G},S)$, together with an equality of transfer maps 
\[
{\dot f}_S' = (\tilde{\dot f}_S)',\quad \dot f_S\in\C_\adm(G,S,\zeta)
\]
by \cite[\S4.2]{LS}. Then if we consider the elliptic parts \eqref{ieell}, it follows that the difference
\[
I^\E_{r,s,\el}(\dot f_S)  - \varepsilon(G)S^G_{r,s,\el}(\dot f_S) = \sum_{G'\in \E^0_\el(G,S)}\iota(G,G')\hat{S}^{\tilde{G}'}_{r,s,\el}(\dot{f}'_S)
\]
equals
\[
I^\E_{r,s,\el}(\tilde{\dot{f}}_S)  - \varepsilon(\tilde{G})S^{\tilde G}_{r,s,\el}(\tilde{\dot{f}}_S) = \sum_{\tilde G'\in \E^0_\el(\tilde G,S)}\iota(\tilde G,\tilde G')\hat{S}^{\tilde{G}'}_{r,s,\el}(\tilde{\dot{f}}'_S),
\]
where we have used the fact that $\iota(G,G') = \iota(\tilde{G},\tilde{G}')$ \cite[(2.2)]{STF2}. On the other hand, it follows from \eqref{21} that the elliptic part \eqref{Idotell} satisfies \[
I_{r,s,\el}(\dot{f}_S) = I_{r,s,\el}(\tilde{\dot{f}}_S).
\]
Hence if $\varepsilon(G)=1$, we conclude from \eqref{ieelllem} and \eqref{selllem} that 
\[
I_{r,s,\el}^\E(\dot{f}_S) = I_{r,s,\el}^\E(\tilde{\dot{f}}_S),\quad S^{G}_{r,s,\el}({\dot{f}}_S) = S^{\tilde G}_{r,s,\el}(\tilde{\dot{f}}_S).
\]
The general induction hypothesis concerning Corollary \ref{glob1'} then allows us to compare the expansions of each one to deduce that
\[
a^{G.\E}_{r,s,\el}(\dot\gamma_S) =a^{G,\E}_{r,s,\el}(\tilde{\dot{\gamma}}_S)
\]
in general for any admissible element $\dot\gamma_S\in\Gamma^\E_\el(G,S,\zeta)$, and
\be
\label{24}
b^G_{r,s,\el}(\dot\delta_S) =b^G_{r,s,\el}(\tilde{\dot{\delta}}_S)
\ee
if $G$ is quasisplit for any admissible element $\dot\delta_S\in\Delta^\E_\el(G,S,\zeta)$. It follows then that the proposition is valid for the pair $(G,\zeta)$ if it is valid for the pair $(\tilde G,\tilde\zeta)$. This reduces the result to the case where $G=\tilde G$. 

We next want to show that the result is valid for $G$ with central datum $(Z,\zeta)$ if it holds for $(Z,\zeta)$ trivial. The map \eqref{zetav} provides a projection from the subspace of admissible functions $\C_\adm(G,S)$ onto $\C(G,S,\zeta)$. We then have
\[
I_{r,s,\el}(\dot f_S)=\int_{Z_{S,\o}\bs Z^1_S}I_{r,s,\el}(\dot f_{S,z}^1)\zeta(z)dz, \quad \dot f_S\in \C(G,S,\zeta)
\]
where $\dot f^1_S$ is any function in $\C_\adm(G,S)$ that maps to $\dot f_S$, and $\dot f^1_{S,z}$ is its translate by $z$. Moreover, the map \eqref{zetav} commutes with endoscopic transfer by \cite[Lemma 6.1]{witf}. To see this, observe that $(\dot f_S^1)'$ lies in $S\C(\tilde G',S,\tilde\eta')$, where $\tilde\eta'$ is the automorphic character of the central induced torus $\tilde C'$ defining the $z$-extension $\tilde G'$, and \eqref{zetav} induces a map from $(\dot f_S^1)'$ to a function in $S\C(\tilde G',S,\tilde\zeta')$ where $\tilde\zeta'=\tilde\zeta\tilde\eta'$. It follows then from \eqref{transfer} and \cite[Lemma 4.4.A]{LS} that
\[
(\dot f^1_{S,z})' = (\dot f^1_{S})'_z\tilde\eta'(z),\quad z\in Z^1_S,
\]
and integrating against $\zeta(z)$, the claim follows. Then as before, we have that
\[
I^\E_{r,s,\el}(\dot{f}^1_{S,z})  - \varepsilon(G)S^G_{r,s,\el}(\dot{f}^1_{S,z}) = \sum_{G'\in \E^0_\el(G,S)}\iota(G,G')\hat{S}^{\tilde{G}'}_{r,s,\el}((\dot{f}^1_{S})'_z)\tilde\eta'(z)
\]
for any $z\in Z^1_S$. Assume inductively that for any $G'\in\E^0_\el(G,S)$, the integrand in
\[
\int_{Z_{S,\o}\bs Z^1_S}\hat{S}^{\tilde{G}'}_{r,s,\el}((\dot f_{S}^1)'_z)\tilde\zeta'(z)dz
\]
is invariant under translation by $Z_{S,\o}$, and that the integral is equal to $\hat{S}_{r,s,\el}^{\tilde{G}'}(\dot f_S')$. It follows then that the integral
\[
\int_{Z_{S,\o}\bs Z^1_S}(I^\E_{r,s,\el}(\dot{f}^1_{S,z})  - \varepsilon(G)S^G_{r,s,\el}(\dot{f}^1_{S,z}))\zeta(z)dz
\]
equals
\[
\sum_{G'\in \E^0_\el(G,S)}\iota(G,G')\hat{S}^{\tilde{G}'}_{r,s,\el}(\dot{f}_{S}'),
\]
which in turn is equal to $I^\E_{r,s,\el}(\dot{f}_{S})  - \varepsilon(G)S^G_{r,s,\el}(\dot{f}_{S}).$ Then arguing as before, we conclude that 
\[
I^\E_{r,s,\el}(\dot f_S)=\int_{Z_{S,\o}\bs Z^1_S}I^\E_{r,s,\el}(\dot f_{S,z}^1)\zeta(z)dz, 
\]
in general, and
\be
\label{27}
S^G_{r,s,\el}(\dot f_S)=\int_{Z_{S,\o}\bs Z^1_S}S^G_{r,s,\el}(\dot f_{S,z}^1)\zeta(z)dz, 
\ee
if $G$ is quasisplit. Finally, choose $\dot f_S\in\C_\adm(G,S,\zeta)$ such that $\dot{f}_{S,G}(\dot\gamma_S)$ vanishes for any $\dot\gamma_S\in\Gamma(G_S,\zeta_S)$ whose semisimple part is not central, and moreover such that $\dot{f}^{G^*}_S=0$ if $G$ is quasisplit. We choose the associated function $\dot f_S^1$ similarly. Then assuming that the proposition holds for $G$ with $(Z,\zeta)$ trivial, it follows that $I^\E_{r,s,\el}(\dot f^1_{S,z}) = I_{r,s,\el}(\dot f_{S,z}^1)$  and $S^G_{r,s,\el}(\dot f_S)=0$. The expansions \eqref{ieelllem} and \eqref{selllem} then imply that the desired results holds for admissible elements $\dot\gamma_S\in\Gamma^\E_\el(G,S,\zeta)$ and $\dot\delta_S\in\Delta^\E_\el(G,S,\zeta)$ whose semisimple parts are not central, and hence for arbitrary $(G,\zeta)$.
\end{proof}

We can now complete the extension to the general case, which is Corollary \ref{cordesc}.

\begin{proof}[Proof of Corollary \ref{cordesc}]
We shall prove (b), and the proof of (a) will be similar to it. We assume therefore that $G$ is quasisplit. We claim that \eqref{corexp} holds for $(G,\zeta)$ if it holds for $(\tilde G,\tilde\zeta)$. By \eqref{24} it follows that the right hand side of \eqref{corexp} for $(G,\zeta)$ is equal to the corresponding expression for $(\tilde G,\tilde\zeta)$. Then since $\bar{\tilde{G}}=\bar{G}$, the sums over $d$ of the two expansions run over the same set, and there is a canonical bijection from $\dot\beta$ to $\tilde{\dot\beta}$ where by \eqref{24} again we have 
\[
b^{G_d}_{r,s,\el}(\dot\beta_S) =b^{\tilde G_d}_{r,s,\el}(\tilde{\dot{\beta}}_S),
\]
whence the claim follows. It suffices then to treat the case $\tilde G =G$.

We next show that if $G_\der$ is simply connected, and if \eqref{corexp} holds for $(Z,\zeta)$ trivial, then it holds for arbitrary $(Z,\zeta)$ as well. Let $\dot{f}_S\in\C_\adm(G,S,\zeta)$ such that $\dot{f}_{S,G}^\E$ is supported on $\Delta(G_S,\zeta_S)$. Then \eqref{selllem} can be written as
\[
S^G_{r,s,\el}(\dot{f}_S) = \sum_{\dot\delta_S\in\Delta_\el(G,S,\zeta)}b^{G}_{r,s,\el}(\dot\delta_S)\dot{f}_{S}^{G}(\dot\delta_S)
\]
and from \eqref{27} we have also
\[
S^G_{r,s,\el}(\dot f_S)=\int_{Z_{S,\o}\bs Z^1_S}\sum_{\dot\delta^1_S\in\Delta_\el(G,S)}b^{G}_{r,s,\el}(\dot\delta_S^1)\dot{f}_{S}^{1,G}(z\dot\delta_S^1)\zeta(z)dz.
\]
We are assuming the coefficients $b^{G}_{r,s,\el}(\dot\delta_S^1)$ satisfy \eqref{corexp}, which reduce to the \eqref{propbdesc}. Substituting, we obtain 
\be
\label{211}
\int_{Z_{S,\o}\bs Z^1_S}\sum_{\dot\beta}\sum_{d}j^{G^*}(S,d^1)b^{G_{d^1}^*}_{r,s,\el}(\dot\beta^1)\dot{f}_{S}^{1,G^*}(zd^1\dot\beta^1)\zeta(z)dz
\ee
where the sum over $d^1$ runs over classes in $\Delta_\ss(G^*)$ that are bounded at each $v\not\in S$, and $\dot\beta^1$ runs over $\Delta_\unip(G^*_{d^1,S})$.

Our goal then is to compare the expansions \eqref{corexp} and \eqref{211}. First, recall that $b^{G_{d^1}^*}_{r,s,\el}(\dot\beta^1)$ is supported on the subset $\Delta_{\unip,\der}(G^*_{d^1,S})$ of $\Delta_{\unip}(G^*_{d^1,S})$, which we can choose to be in bijection with $\Delta_{\unip,\der}(G^*_{d^1,S},\zeta_S)$ by the map \eqref{dfc}. Second, observe that there is a surjection $d^1\to d$ from $\Delta_\ss(G^*)$ to $\Delta_\ss(\bar{G}^*)$ of classes that are bounded away from $S$, and $Z_{S,\o}$ acts transitively on the fibers of this map. The stabilizer of $d^1$ in $Z_{S,\o}$ is isomorphic to $(\bar{G}^*_{d,+}/\bar{G}^*_{d})(F)$ by the map 
\[
g\to z= g^{-1}d^1g(d^1)^{-1}.
\]
Third, by \cite[p.177]{STF1}, the coefficients $j^{G^*}(S,d^1)$ and $j^{\bar{G}^*}(S,d)$ are equal. Fourth, assume inductively that if $d^1$ is not central in $G$, then
\[
b^{G_{d^1}^*}_{r,s,\el}(\dot\beta^1) =b^{ G_d^*}_{r,s,\el}({\dot{\beta}}).
\]
Then by \eqref{24} it is enough to assume that the analogue holds for some $z$-extension of the group $G^*_{d^1}=G^*_d$. Finally, we note that
\[
\int_{Z^1_S}\dot{f}^1_S(zx)\zeta(z)dz = \dot{f}_S(x),\quad x\in G^1_S,
\]
and assume first that $\dot f_S$ vanishes on an invariant neighborhood of the center of $G_S$, then $\dot{f}_{S}^{1,G^*}(zd^1\dot\beta^1)$ vanishes if $d^1$ is central in $G$, then by varying $\dot{f}_S$ appropriately it follows that \eqref{corexp} holds for any $\dot\delta_S$ whose semisimple part is not central. Next, removing the vanishing condition on $\dot f_S$, by a comparison of terms we conclude that \eqref{corexp} also holds for $\dot\delta$ whose semisimple part is central, and moreover, that
\[
b^{G_{d}^*}_{r,s,\el}(\dot\beta^1) =b^{ G^*}_{r,s,\el}({\dot{\beta}})
\]
holds for any $\dot\beta$ in the subset $\Delta_{\unip,\der}(G^*_{d^1,S},\zeta_S)$ of $\Delta_{\unip}(G^*_{d^1,S},\zeta_S)$ on which $b^{ G^*}_{r,s,\el}$ is supported. This completes the induction argument.
\end{proof}

%\[b^{G^*}_\el(\dot\beta) = b^{G^*}_\el(\dot\beta^1)=b^{\bar G^*}_\el(\dot\beta) ,\quad \dot\beta \in\Delta_\textnormal{unip,der}(G^*_S,\zeta_S)\]where $\dot\beta^1$ is the preimage of $\dot\beta$ in $\Delta_\textnormal{unip,der}(G^*_S)$.

\subsection{The unipotent part}

We now consider the unipotent analogues of the elliptic part of the trace formula. We shall introduce objects that will be parallel to those considered in Section 3. Let $\Gamma_\unip(G,V,\zeta)$ denote the subset of classes in $\Gamma_\el(G,V,\zeta)$ whose semisimple parts are trivial, and similarly we let $\K_\unip^V(\bar{G},S)$ be the corresponding unipotent subset of $\K_\el^V(\bar{G},S)$. More generally, we shall let the subscript `unip' denote the unipotent variant of the elliptic objects analogously. We then define the unipotent part of the invariant trace formula
\be
\label{iunplem}
I_{r,s,\unip}(f,S)= \sum_{\alpha\in\Gamma_\unip(G,V,\zeta)}a^G_{r,s,\unip}(\alpha,S)f_G(\alpha)
\ee
with unipotent coefficients 
\be
\label{arsunp}
a^G_{r,s,\unip}(\alpha,S) = \sum_{k\in \K_\unip^V(\bar{G},S)}a^G_{r,s,\unip}(\alpha\times k)r_G(k,b),\quad \alpha\in\Gamma_\unip(G,V,\zeta).
\ee
We shall also define endoscopic and stable analogues inductively by the formula
\[
I^{\E}_{r,s,\unip}(f,S) = \sum_{G'\in\E^0_\el(G)}\iota(G,G')\hat{S}^{\tilde{G}'}_{r,s,\unip}(f',S) +\varepsilon(G)S^{G}_{r,s,\unip}(f,S)
\]
with the requirement that $I^{\E}_{r,s,\unip}(f,S) = I_{r,s,\unip}(f,S)$ in the case that $G$ is quasisplit. Using the natural variation of the argument \cite[Lemma 7.2]{STF1} we obtain expansions
\be
\label{ieunplem}
I^\E_{r,s,\unip}(f) = \sum_{\alpha\in\Gamma^\E_\unip(G,V,\zeta)}a^{G,\E}_{r,s,\unip}(\alpha)f_{G}(\alpha),
\ee
if $G$ is arbitrary, and
\be
\label{seunplem}
S^G_{r,s,\unip}(f) = \sum_{\beta\in\Delta^\E_\unip(G,V,\zeta)}b^{G}_{r,s,\unip}(\beta)f_{G}^\E(\beta),
\ee
if $G$ is quasisplit, where
\be
\label{aersunp}
a^{G,\E}_{r,s,\unip}(\alpha,S) = \sum_{k\in\K_\unip^{V,\E}(\bar{G},S)}a^{G,\E}_{r,s,\unip}(\alpha\times k)r_G(k,b), \quad \alpha\in\Gamma^\E_\unip(G,V,\zeta)
\ee
and
\be
\label{brsunp}
b^G_{r,s,\unip}(\beta,S) = \sum_{\ell\in\L_\unip^V(\bar{G},S)}b^G_{r,s,\beta}(\beta\times \ell)r_G(k,b),\quad \beta\in\Delta^\E_\unip(G,V,\zeta)
\ee
respectively.

Let $Z(G)_{V,\o}$ be the subgroup of elements $z$ in $Z(G)(F)$ such that for every $v\not\in V$, $z_v$ is bounded in $Z(G)(F_v)$. It acts discontinuously on $G_V$, and so does the quotient 
$
Z(\bar{G})_{V,\o}=Z(G)_{V,\o}Z_V/Z_V
$
on $\bar{G}_V$. For any $z\in Z(\bar{G})_{V,\o}$, we set
\[
I_{z,r,s,\unip}(f,S) = I_{r,s,\unip}(f_z,S), 
\]
and similarly 
\[
I^\E_{z,r,s,\unip}(f,S) = I^\E_{r,s,\unip}(f_z,S),
\]
and 
\[
S^G_{z,r,s,\unip}(f,S) = S^G_{r,s,\unip}(f_z,S).
\]
Also, following \S\ref{A4} we define $\C^\uns(G_V,\zeta_V)$ be the subset of functions $f\in \C(G_V,\zeta_V)$ whose stable orbital integrals vanish, that is, $f^G=0$. We call such functions unstable.%Equivalently, it is the set of functions $f=\prod_{v\in V} f$ such that for some $v\in V$, $f_v$ belong to  $\C^\uns(G_v,\zeta_v)$.

\begin{cor}\label{parunip}\
\ben
\item[(a)] If $G$ is arbitrary and $f\in \C(G_V,\zeta_V)$, we have  
\[
I^\E_{r,s,\el}(f,S)-I_{r,s,\el}(f,S) = \sum_{z\in Z(\bar{G})_{V,\o}}(I^\E_{z,r,s,\unip}(f,S)-I_{z,r,s,\unip}(f,S)).
\]
\item[(b)] If $G$ is quasisplit and $f\in \C^\uns(G_V,\zeta_V)$, we have
\[
S^G_{r,s,\el}(f,S)= \sum_{z\in Z(\bar{G})_{V,\o}}S^G_{z,r,s,\unip}(f,S).
\]
\een
\end{cor}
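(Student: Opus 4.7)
The plan is to expand the elliptic parts via \eqref{iellS} and \eqref{sellS} and index each resulting sum by the Jordan decomposition $\dot\gamma_S=c_S\dot\alpha_S$ of the admissible $S$-level parameters that appear in the definitions \eqref{agersell} and \eqref{bgrsell}. The descent formulas of Corollary \ref{cordesc} then rewrite the coefficients $a^{G,\E}_{r,s,\el}(\dot\gamma_S)$, $a^G_{r,s,\el}(\dot\gamma_S)$, and $b^G_{r,s,\el}(\dot\delta_S)$ as weighted sums over semisimple classes $c\in\Gamma_\ss(\bar G)$ (resp.\ $d\in\Delta_\ss(\bar G^*)$) lifting $c_S$, with unipotent coefficients at the connected centralizers $G_c$ (resp.\ $G^*_d$).

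For (a), the key observation is Proposition \ref{prop21}(a): whenever the semisimple part of an admissible $\dot\gamma_S$ is not central in $\bar G_S$, the endoscopic and invariant elliptic coefficients coincide, so these summands cancel pairwise in $I^\E_{r,s,\el}(f,S)-I_{r,s,\el}(f,S)$. Only terms with central semisimple part in $\bar G_S$ survive, and the admissibility factor $i^{\bar G}(S,c)$ forces the associated global class $c$ to be an element $z\in Z(\bar G)_{V,\o}$ (ellipticity at a central $c$ is automatic, and boundedness at each $v\notin S\supset V$ is exactly the condition defining $Z(\bar G)_{V,\o}$). Using $f_G(z\dot\alpha_S)=(f_z)_G(\dot\alpha_S)$ and matching the remaining sum against \eqref{iunplem}, \eqref{ieunplem}, \eqref{arsunp}, and \eqref{aersunp} identifies what is left as $\sum_{z\in Z(\bar G)_{V,\o}}\bigl(I^\E_{z,r,s,\unip}(f,S)-I_{z,r,s,\unip}(f,S)\bigr)$.

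For (b), the same architecture applies with Proposition \ref{prop21}(b) in place of (a): it implies that $b^G_{r,s,\el}(\dot\delta_S)$ with non-central semisimple part is supported on $\Delta_\el(G,S,\zeta)\subset\Delta^\E_\el(G,S,\zeta)$, and the hypothesis $f\in\C^\uns(G_V,\zeta_V)$ gives $f^G=0$, hence $f^\E_G(\delta)=f'(\delta')=0$ on that stable subset. The non-central summands are therefore killed by the test function, and the surviving terms have central semisimple part $d=z\in Z(\bar G^*)_{V,\o}$. Via \eqref{seunplem} and \eqref{brsunp} they reassemble into $\sum_z S^G_{z,r,s,\unip}(f,S)$.

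The main obstacle I anticipate is the combinatorial bookkeeping of how the Jordan decomposition interacts with the basic-function $k$-sum in \eqref{agersell}: since $k\in\K(\bar G^V_S)$ can carry a nontrivial semisimple component, the semisimple part of $\gamma\times k\in\Gamma^\E_\el(G,S,\zeta)$ is not determined by $\gamma$ alone, so one must decompose the inner $k$-sum according to whether the full semisimple part of $\gamma\times k$ is central in $\bar G_S$ and verify that the restriction to the central case matches the $r_G(k,b)$-factors entering the unipotent coefficients \eqref{aersunp} and \eqref{brsunp}. Once this compatibility is checked, the argument is essentially formal and parallels the corresponding reduction step in the stabilization of the unweighted trace formula.
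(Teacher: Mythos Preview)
Your proposal is correct and follows essentially the same route as the paper: expand via \eqref{iellS}, \eqref{ielS}, \eqref{sellS} and the $k$-sums \eqref{agersell}, \eqref{agrsell}, \eqref{bgrsell}; invoke Proposition~\ref{prop21} to eliminate the non-central contributions (by cancellation in (a) and by the unstable hypothesis together with support in $\Delta_\el(G,S,\zeta)$ in (b)); then use the Jordan decomposition $\gamma=z\alpha$, $\delta=z\beta$ with $z\in Z(\bar G)_{V,\o}$ and Corollary~\ref{cordesc} to reduce the surviving central terms to the unipotent coefficients \eqref{arsunp}, \eqref{aersunp}, \eqref{brsunp}. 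The bookkeeping concern you raise about the $k$-sum is exactly what the paper handles by applying Proposition~\ref{prop21} at the level of $\gamma\times k\in\Gamma^\E_\el(G,S,\zeta)$ rather than to $\gamma$ alone, so that the semisimple part in question is that of the full $S$-level element.
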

\begin{proof}
We begin with (a). Using the expansions \eqref{iellS} and \eqref{ielS}, it follows that
\[
I^\E_{r,s,\el}(f,S)-I_{r,s,\el}(f,S)  = \sum_{\gamma\in\Gamma^\E_\el(G,S,\zeta)} (a^{G,\E}_{r,s,\el}(\gamma,S) - a^G_{r,s,\el}(\gamma,S))f_G(\gamma),
\]
and by \eqref{agersell} and \eqref{agrsell}, we have that
\[
a^{G,\E}_{r,s,\el}(\gamma,S) - a^G_{r,s,\el}(\gamma,S) = \sum_{k\in\K_\el^{V,\E}(\bar{G},S)}(a^{G,\E}_{r,s,\el}(\gamma\times k) - a^{G}_{r,s,\el}(\gamma\times k))r_G(k,b). 
\]
By Proposition \ref{prop21}(a), the coefficients $a^{G,\E}_{r,s,\el}(\gamma\times k)$ and $a^{G}_{r,s,\el}(\gamma\times k)$ are equal if the semisimple part of $\gamma\times k$ is not central in $G$, hence $a^{G,\E}_{r,s,\el}(\gamma,S) = a^G_{r,s,\el}(\gamma,S)$ for such $\gamma$. On the other hand, if the semisimple part of $\gamma$ is central, it has the Jordan decomposition
\[
\gamma = z\alpha, \qquad z\in Z(\bar{G})_{V,\o},\alpha\in\Gamma^\E_\unip(G,V,\zeta),
\]
we can therefore break the sum over $\gamma$ into a double sum over $z$ and $\alpha$. In this case, Corollary \eqref{cordesc}(a) implies that 
\[
a^{G,\E}_{r,s,\el}(\gamma,S) - a^G_{r,s,\el}(\gamma,S) = a^{G,\E}_{r,s,\unip}(\alpha,S) - a^G_{r,s,\unip}(\alpha,S),
\]
and using the definition
\[
I^\E_{z,r,s,\unip}(f,S)-I_{z,r,s,\unip}(f,S) = \sum_{\alpha\in\Gamma^\E_\unip(G,V,\zeta)}(a^{G,\E}_{r,s,\unip}(\alpha)-a^{G,\E}_{r,s,\unip}(\alpha))f_{G}(z\alpha),
\]
the required formula (a) follows.

The case of (b) is similar. Recall the expansions
\[
S^{G}_{r,s,\el}(f,S) = \sum_{\delta\in\Delta^\E_\el(G,S,\zeta)} b^G_{r,s,\el}(\delta,S)f_G^\E(\delta)
\]
and 
\[
b^G_{r,s,\el}(\delta,S) = \sum_{\ell\in\L_\el^V(\bar{G},S)}b^G_{r,s,\el}(\delta\times \ell)r_G(k,b)
\]
from \eqref{sellS} and \eqref{bgrsell} respectively. Since $f$ is unstable, it follows that $f^\E_G(\delta)$ vanishes on the subset $\Delta_\el(G,S,\zeta)$ of $\Delta^\E_\el(G,S,\zeta)$. On the other hand, suppose $\delta$ belongs to the complement of $\Delta_\el(G,S,\zeta)$ in $\Delta^\E_\el(G,S,\zeta)$. If its semisimple part is not central in $G$, Proposition \ref{21}(b) implies that $b^G_{r,s,\el}(\delta,S)$ vanishes. If the semisimple part of $\delta$ is central, it again has the Jordan decomposition
\[
\delta = z\beta, \qquad z\in Z(\bar{G})_{V,\o},\beta\in\Delta^\E_\unip(G,V,\zeta),
\]
we can therefore break the sum over $\delta$ into a double sum over $z$ and $\beta$. In this case, Corollary \eqref{cordesc}(b) implies that 
\[
b^G_{r,s,\el}(\gamma,S)= b^G_{r,s,\unip}(\beta,S),
\]
and by the definition of $S^G_{z,r,s,\unip}(f,S)$ the required formula (b) follows.
\end{proof}

This completes the primary application of the descent formula for the global geometric coefficients, which has allowed us to reduce the study of the elliptic part of the trace formula to the unipotent terms. 

\section{Stabilization}

\label{sectionstab}

\subsection{The parabolic parts} 

We recall that our goal is to prove Theorems \eqref{globge} and \eqref{globsp}. The remainder of the argument, following \cite{STF3}, is now a largely formal process, having the preceding results in place, hence we allow ourselves to be brief. Our induction will be based on an integer $d_\der$. We assume that the theorems hold if $\dim(G_\der)<d_\der$, and if $G$ is not quasisplit we shall assume that $\dim(G_\der)=d_\der$ and the relevant theorems hold for the quasisplit inner $K$-form of $G$. This, together with the assumption of Conjecture \ref{conj} on the geometric side are the hypotheses that we will carry. 

We shall first analyze the parabolic parts of the geometric expansions, which are defined to be the terms associated to $M\neq G$, %and $r_\der = \dim(A_M\cap G_\der)$ for a fixed Levi subgroup $M$ of $G$.
\[
I_{r,s,\pa}(f)= \sum_{M\in\L^0}|W^M_0||W^G_0|^{-1}\sum_{\gamma\in\Gamma(M,V,\zeta)}a^M_{r,s}(\gamma)I_M(\gamma,f),
\]
and similarly
\[
I_{r,s,\pa}^\E(f)= \sum_{M\in\L^0}|W^M_0||W^G_0|^{-1}\sum_{\gamma\in\Gamma^\E(M,V,\zeta)}a^{M,\E}_{r,s}(\gamma)I^\E_M(\gamma,f),
\]
and $S^G_{r,s,\pa}(f)$ defined as
\[
\sum_{M\in\L^0}|W^M_0||W^G_0|^{-1}\sum_{M'\in\E_\el(M,V)}\iota(M,M')\sum_{\delta'\in\Delta(\tilde{M}',V,\tilde\zeta')}b^{\tilde M'}_{r,s}(\delta')S^G_M(M',\delta',f).
\]
We shall use this to remove the terms in the spectral expansion that are not discrete, relating the parabolic parts to the unipotent and discrete parts of the trace formula only. Our goal will then be to show that each these terms vanish in an appropriate way.

\begin{lem} \label{pardiscu} \ 
\ben
\item[(a)] If $G$ is arbitrary and $f\in \C(G_V,\zeta_V)$, 
\begin{align*}
I_{r,s,\pa}^\E(f) &- I_{r,s,\pa}(f) \\
&= (I_{r,s,\disc}^\E(f) - I_{r,s,\disc}(f)) - \sum_{z\in Z(\bar{G})_{V,\o}}(I^\E_{r,s,z,\unip}(f,S)-I_{r,s,z,\unip}(f,S)).
\end{align*}
\item[(b)] If $G$ is quasisplit and $f\in \C^\uns(G_V,\zeta_V)$,
\[
S^G_{r,s,\pa}(f) = S^G_\disc(f) = \sum_{z\in Z(\bar{G})_{V,\o}} S^G_{r,s,z,\unip}(f,S).
\]
\een
\end{lem}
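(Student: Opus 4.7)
The plan is to derive both parts by combining the geometric and spectral expansions of $I^\E_{r,s}(f)$ and $I_{r,s}(f)$ (respectively $S^G_{r,s}(f)$) and invoking the induction hypothesis on $d_\der$ to force cancellation of all proper-Levi contributions. For part (a), Proposition~\ref{geomprop}(a) applied to both the endoscopic and classical trace formulas gives
\[
I^\E_{r,s,\pa}(f) - I_{r,s,\pa}(f) = \bigl(I^\E_{r,s}(f) - I_{r,s}(f)\bigr) - \bigl(I^\E_{r,s,\orb}(f) - I_{r,s,\orb}(f)\bigr).
\]
The first difference on the right can be rewritten using Proposition~\ref{specprop}(a), whose parabolic spectral terms pair $a^{M,\E}_{r,s}(\pi) I^\E_M(\pi,f)$ with $a^M_{r,s}(\pi) I_M(\pi,f)$ for $M \in \L^0$. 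The inductive Theorem~\ref{globsp} at $M$ supplies the equality of coefficients, while the extension of the main local theorems to $\C(G,V,\zeta)$ in \cite[\S6]{witf} supplies $I^\E_M(\pi,f) = I_M(\pi,f)$. This identifies $I^\E_{r,s}(f) - I_{r,s}(f)$ with $I^\E_{r,s,\unit}(f) - I_{r,s,\unit}(f)$, and a parallel application of Theorem~\ref{specthm}(a), whose right-hand side is again supported on $M \in \L^0$, reduces the latter to $I^\E_{r,s,\disc}(f) - I_{r,s,\disc}(f)$.

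The second difference is treated symmetrically on the geometric side: Theorem~\ref{propgeom}(a) together with the inductive form of Corollary~\ref{glob1'} at proper Levi subgroups gives $I^\E_{r,s,\orb}(f) - I_{r,s,\orb}(f) = I^\E_{r,s,\el}(f,S) - I_{r,s,\el}(f,S)$, and Corollary~\ref{parunip}(a) rewrites this elliptic difference as $\sum_{z \in Z(\bar{G})_{V,\o}} \bigl(I^\E_{r,s,z,\unip}(f,S) - I_{r,s,z,\unip}(f,S)\bigr)$. Assembling the two pieces yields (a). For (b), one runs the same argument with the stable analogues, using Propositions~\ref{geomprop}(b) and \ref{specprop}(b), Theorem~\ref{specthm}(b), and Corollary~\ref{parunip}(b), together with the induction hypothesis at $M \in \L^0$. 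The crucial additional input is that for $f \in \C^\uns(G_V,\zeta_V)$ the distributions $S^G_M(M',\phi',f)$ with $M \in \L^0$ vanish by stability, so the spectral parabolic contribution to $S^G_{r,s}(f) - S^G_{r,s,\unit}(f)$ drops out entirely; the remainder parallels (a) and produces the stated identity.

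The principal technical point is verifying that the supplementary requirements in the inductive definitions of $I^\E_{r,s}$, $S^G_{r,s}$ and their orbital, unit, elliptic, and discrete variants interlock so that the cancellations above are valid with the correct $\iota(G,G')$ and $\varepsilon(G)$ factors, especially in the quasisplit case where one must be careful not to double-count the $G^*$ contribution on either side. This is a bookkeeping exercise paralleling \cite[\S5]{STF3}, requiring no essentially new idea beyond the reductions already assembled.
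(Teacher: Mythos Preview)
Your proposal is correct and follows essentially the same route as the paper: decompose the parabolic difference via Propositions~\ref{geomprop} and~\ref{specprop}, use the global induction hypothesis at proper Levi subgroups together with Theorems~\ref{propgeom} and~\ref{specthm} and the local results of \cite[\S6]{witf} to collapse both the orbital/elliptic and unit/discrete comparisons, and then invoke Corollary~\ref{parunip} for the unipotent sum. The only omission is that in part (b) you should also explicitly cite Theorem~\ref{propgeom}(b) (not just its spectral analogue) for the step $S^G_{r,s,\orb}(f)=S^G_{r,s,\el}(f,S)$ when $f$ is unstable, but this is covered by your ``parallels (a)'' remark.
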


\begin{proof}
By \eqref{Irsg}, \eqref{iorb} and Proposition \ref{geomprop}(a), we have by definition
\[
I^\E_{r,s,\pa}(f) - I_{r,s,\pa}(f) = (I^\E_{r,s}(f) - I^r_s(f)) - (I^\E_{r,s,\orb}(f) - I_{r,s,\orb}(f))
\]
and by \eqref{iorb} and \eqref {ieorblem}, we have
\[
I^\E_{r,s,\orb}(f) - I_{r,s,\orb}(f) = \sum_{\gamma\in\Gamma^\E(G,V,\zeta)}(a^{G,\E}_{r,s}(\gamma)-a^{G}_{r,s}(\gamma))f_G(\gamma).
\]
Then applying the global induction hypothesis to \eqref{arsgam} and Proposition \ref{propgeom}(a), it follows that
\[
a^{G,\E}_{r,s}(\gamma) - a^{G}_{r,s}(\gamma)=  a^{G,\E}_{r,s,\el}(\gamma,S) - a^{G}_{r,s,\el}(\gamma,S),
\]
whence from \eqref{iellS} and \eqref{ielS} we have
\[
I^\E_{r,s,\orb}(f) - I_{r,s,\orb}(f)  = I^\E_{r,s,\el}(f,S) - I_{r,s,\el}(f,S).
\]
Then by Corollary \ref{parunip}(a) we see that $I^\E_{r,s,\pa}(f) - I_{r,s,\pa}(f)$ equals
\[
(I_{r,s}^\E(f) - I_{r,s}(f)) -\sum_{z\in Z(\bar{G})_{V,\o}}(I^\E_{r,s,z,\unip}(f,S)-I_{r,s,z,\unip}(f,S)).
\]
Moreover, by \eqref{Irss} and Proposition \ref{specprop}(a) we can express the spectral expansion of $I_{r,s}^\E(f) - I_{r,s}(f)$ as the sum of $I_{r,s,\unit}^\E(f) - I_{r,s,\unit}(f)$ and
\[
\sum_{M\in\L^0}|W^M_0||W^G_0|^{-1}\int_{\Pi^\E(M,V,\zeta)}(a^{M,\E}_{r,s}(\pi)I^\E_M(\pi,f)- a^{M}_{r,s}(\pi)I_M(\pi,f))d\pi.
\]
By the global induction hypothesis again the coefficients $a^{M,\E}_{r,s}(\pi)$ and $a^{M}_{r,s}(\pi)$ are equal, and by Corollary \ref{corb}(a), so too are the local distributions $I^\E_M(\pi,f)$ and $I_M(\pi,f)$. On the other hand, by \ref{iunit} and \ref{ieunitlem} we have 
\[
I_{r,s,\unit}^\E(f) - I_{r,s,\unit}(f) = \int_{\Pi^\E_\disc(G,V,\zeta)}(a^{G,\E}_{r,s}(\pi)-a^{G}_{r,s}(\pi))f_{G}(\pi)d\pi.
\]
Then by applying the global induction hypothesis to \eqref{arspi} and Theorem \ref{specthm}(a), it follows that 
\[
a^{G,\E}_{r,s}(\pi) - a^{G}_{r,s}(\pi)=  a^{G,\E}_{r,s,\disc}(\pi) - a^{G}_{r,s,\disc}(\pi),
\]
whence from \eqref{idiscsum} and \eqref{idiscS} we have
\[
I^\E_{r,s}(f) - I^r_s(f) = I^\E_{r,s,\disc}(f) - I_{r,s,\disc}(f).
\]
This gives the required formula (a). 

The proof of (b) follows in a similar fashion. In this case, it follows from Proposition \ref{geomprop} that $S^G_{r,s,\pa}(f)$ equals $S^G_{r,s}(f) - S^G_{r,s,\orb}(f)$, then using the expansion \eqref{seorblem} of the orbital part and the fact that $f$ is unstable, it follows from \eqref{sellS} and Proposition \eqref{geomprop}(b) that 
\[
S^G_{r,s,\orb}(f) = \sum_{\delta\in\Delta^\E(G,V,\zeta)}b^G_\el(\delta,S)f^\E_G(\delta) = S^G_\el(f,S).
\]
Then by Corollary \ref{parunip}(b), we see that $S^G_{r,s,\pa}(f)$ equals
\[
S^G_{r,s}(f) -  \sum_{z\in Z(\bar{G})_{V,\o}}S^G_{z,r,s,\unip}(f,S).
\]
Moreover by \eqref{sedisclem} and Proposition \ref{specprop}(b) we can express the spectral expansion of $S^G_{r,s}(f)$ as the sum of 
\[
S^G_{r,s,\unit}(f) = \int_{\Phi^\E(G,V,\zeta)}b^{G}_{r,s}(\phi)f_{G}^\E(\phi)d\phi.
\]
and
\[
\sum_{M\in\L^0}|W^M_0||W^G_0|^{-1}\sum_{M'\in\E_\el(M,V)}\iota(M,M')\int_{\Phi(\tilde M',V,\tilde\zeta')}b^{\tilde M'}_{r,s}(\phi')S^G_M(M',\phi',f)d\phi.
\]
The second term again vanishes by Corollary \ref{corb}(b) and the fact that $f$ is unstable. Finally, from Theorem \ref{specthm}(b) and \eqref{sdiscS} that 
\[
S^G_{r,s,\unit}(f) = \sum_{\phi\in\Phi^\E_\unit(G,V,\zeta)}b^G_{r,s,\disc}(\phi)f^\E_G(\phi)= S^G_{r,s,\disc}(f).
\]
This gives the required formula (b). 
\end{proof}

It follows from Corollary \ref{cora} that $I^\E_{r,s,\pa}(f)- I_{r,s,\pa}(f)$ equals 
\be
\label{ipar0}
\sum_{M\in\L} |W_0^M||W_0^G|^{-1}\sum_{\gamma\in\Gamma(M,V,\zeta)}a^{M}_{r,s}(\gamma)(I^\E_M(\gamma,f)-I_M(\gamma,f)) =0 
\ee
for any $f\in\C(G_V,\zeta_V)$ and $G$ arbitrary, and $S^G_{r,s,\pa}(f)$ equals
\begin{align}
&\sum_{M\in\L^0} |W_0^M||W_0^G|^{-1}\sum_{M'\in\E_{\el}(M,V)}\iota(M,M')\sum_{\delta'\in\Delta(\tilde M',V,\tilde\zeta')}b^{\tilde M'}_{r,s}(\delta')S_M(\tilde M',\delta',f)\notag\\
&= \sum_{M\in\L^0} |W_0^M||W_0^G|^{-1}\sum_{\delta^*\in\Delta(M^*,V,\zeta^*)}b^{M^*}_{r,s}(\delta^*)S_M(M^*,\delta^*,f)=0\label{spar0}
\end{align}
for any $f\in\C^\uns(G_V,\zeta_V)$ and $G$ quasisplit.

\subsection{Proofs of the global theorems}

We have shown that the left hand side of Lemma \ref{pardiscu} vanishes. We shall next establish that the discrete parts on the right hand side also vanish.

\begin{prop}
\label{disc0}\ 
\ben
\item[(a)] If $G$ is arbitrary and $f\in \C(G_V,\zeta_V)$, we have
\[
I^\E_{r,s,\disc}(f) - I_{r,s,\disc}(f)=0.
\]
\item[(b)] If $G$ is quasisplit and $f\in \C^\uns(G_V,\zeta_V)$, we have
\[
S^G_{r,s,\disc}(f)=0.
\]
\een
\end{prop}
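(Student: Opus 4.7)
The starting point is to combine Lemma \ref{pardiscu} with the vanishing assertions \eqref{ipar0} and \eqref{spar0}. Since $I^\E_{r,s,\pa}(f)-I_{r,s,\pa}(f)=0$ for any $f\in\C(G_V,\zeta_V)$, part (a) of Lemma \ref{pardiscu} collapses to the identity
\[
I^\E_{r,s,\disc}(f) - I_{r,s,\disc}(f) = \sum_{z\in Z(\bar{G})_{V,\o}}\bigl(I^\E_{r,s,z,\unip}(f,S)-I_{r,s,z,\unip}(f,S)\bigr),
\]
and similarly, using \eqref{spar0}, part (b) of Lemma \ref{pardiscu} becomes
\[
S^G_{r,s,\disc}(f) = \sum_{z\in Z(\bar{G})_{V,\o}} S^G_{r,s,z,\unip}(f,S)
\]
for $G$ quasisplit and $f\in\C^\uns(G_V,\zeta_V)$. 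The plan is to prove that both sides of each identity vanish, by separating the spectral and geometric contributions, following the strategy of \cite{STF3}.

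The two sides are of fundamentally different analytic natures. By \eqref{idiscS} and \eqref{sdiscS}, the left-hand sides are linear combinations of the character values $f_G(\pi)$ (resp.\ $f_G^\E(\phi)$) over the discrete sets $\Pi^\E_\disc(G,V,\zeta)$ (resp.\ $\Phi^\E_\disc(G,V,\zeta)$). By \eqref{iunplem} and \eqref{seunplem}, each $z$-summand on the right-hand side depends on $f$ only through the values of the translate $f_z$ on an invariant neighbourhood of the identity. The first step is therefore to test against functions $f$ whose support in $G_V$ is disjoint from the discrete set $Z(\bar{G})_{V,\o}$; for such $f$, every translate $f_z$ vanishes on a neighbourhood of $1$, so each unipotent distribution on the right vanishes, and both identities reduce to the statement that their left-hand sides vanish on this subspace of test functions.

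To upgrade this to vanishing for arbitrary $f$, one invokes Arthur's separation-of-characters mechanism: because the left-hand sides are linear combinations of irreducible tempered characters at discrete spectral parameters, the vanishing on a sufficiently rich subspace of $\C(G_V,\zeta_V)$ (resp.\ $\C^\uns(G_V,\zeta_V)$) forces identical vanishing, via an archimedean multiplier argument combined with the Paley--Wiener theorem extended to the Schwartz-function setting in \cite[\S6]{witf}. This yields (a) and (b). A parallel application of support localization around individual central elements $z$ then shows that each $z$-summand on the right-hand side also vanishes, though this stronger statement is not required by the proposition itself.

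The main obstacle is the separation step just described. Because the discrete spectral coefficients $a^G_{r,s,\disc}(\pi)$ and their endoscopic analogues carry the basic-function weights $r_G(c,b) = L(s,c,r)$, the relevant spectral sums are \emph{a priori} infinite in each archimedean infinitesimal-character component, and some care is required to apply the multiplier theorem in this generality. The needed convergence is furnished by Corollary \ref{rgcor}, which is precisely what makes Arthur's argument extend to the weighted setting and permits the conclusion.
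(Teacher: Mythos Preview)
Your reduction via Lemma \ref{pardiscu} together with \eqref{ipar0} and \eqref{spar0} is correct and is indeed the starting point. The gap is in the next step. You claim that each summand $I^\E_{r,s,z,\unip}(f,S)-I_{r,s,z,\unip}(f,S)$ (and likewise $S^G_{r,s,z,\unip}(f,S)$) depends on $f$ only through the values of $f_z$ on an invariant neighbourhood of the identity, and hence vanishes whenever the support of $f$ avoids the discrete set $Z(\bar{G})_{V,\o}$. This is false: a unipotent orbital integral $f_G(\alpha)$ is determined by the restriction of $f$ to the full conjugacy class of $\alpha$, and for nontrivial unipotents that class is unbounded and certainly not contained in any small neighbourhood of $1$. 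For instance, in $\SL_2$ over a $p$-adic field the regular unipotent orbit meets points $\left(\begin{smallmatrix}1&t\\0&1\end{smallmatrix}\right)$ with $|t|$ arbitrarily large, so a test function supported far from $1$ can still have nonzero unipotent orbital integral. Thus your proposed support condition does not kill the right-hand side, and the subsequent multiplier argument has nothing to work with.

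The paper's route is genuinely different and bypasses this difficulty. Rather than trying to separate the two sides of the identity by a direct support-versus-character dichotomy, it follows Arthur's local machinery from \cite{STF3}: for $M$-cuspidal test functions one rewrites $I^\E_{r,s,\pa}(f)-I_{r,s,\pa}(f)$ and $S^G_{r,s,\pa}(f)$ (already known to vanish) via the local ``defect'' maps $\varepsilon_M$, $\varepsilon^{M'}$ of \cite[Proposition 3.1]{STF3}, extended here to Schwartz functions. These identities express the vanishing quantities as $|W(M)|^{-1}\hat I^M_{r,s}(\varepsilon_M(f))$ and the stable analogue, i.e.\ as full trace formulas on the proper Levi $M$, where the global induction hypothesis applies. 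Only then does one invoke the multiplier/separation argument of \cite[\S5--7]{STF3}, now in a setting where it is known to go through. The construction of $\varepsilon_M$ and its compactly-supported and rapidly-decreasing variants (the unpublished references [A11], [A12] of \cite{STF3}, treated also in \cite{MW2}) is precisely the missing ingredient in your sketch.
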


\begin{proof}
The proof relies on a difficult local argument of Arthur that culminates in \cite[Corollary 5.2]{STF3}. As in \S\ref{A4}, for any $v\in V$, we recall that $f_v\in \C(G_v,\zeta_v)$ is called $M$-cuspidal if $f_{v,L_v}=0$ for any $L_v\in \L_v$ that does not contain a $G_v$-conjugate of $M_v$, and we define $\C_M(G_V,\zeta_V)$ to be the subspace of $\C(G_V,\zeta_V)$ spanned by functions 
\[
f= f_v f^v  = f_v \prod_{w\neq v}f_w
\]
such that $f_v$ is $M$-cuspidal at two places $v\in V$. If $G$ is quasisplit, we set
\[
\C^\uns_M(G_V,\zeta_V) = \C_M(G_V,\zeta_V)\cap \C^\uns(G_V,\zeta_V).
\]
If $v$ is a nonarchimedean place, we define
$
\C(G_v,\zeta_v)^{0}
$
to be the subspace of functions $f\in \C(G_v,\zeta_v)$ such that $f_{v,G}(z_v\alpha_v)=0$ for any $z_v$ in the center of $\bar{G_v}=G_v/Z_v$ and $\alpha_v$ in the parabolic subset $R_\text{unip,par}(G_v,\zeta_v)$ of the basis $R_\text{unip}(G_v,\zeta_v)$ of unipotent orbital integrals in \cite[\S3]{STF3}. We also write $\C(G,V,\zeta)^{0}$ for the product of functions $f_v \in \C(G_v,\zeta_v)^{0}$ for $v\in V$, and similarly for $\C(G,V,\zeta)^{00}$. We shall denote by the intersections of these various spaces by using overlapping notation, for example, we write  $\C_M(G_v,\zeta_v)^0 = \C_M(G_v,\zeta_v)\cap \C(G_v,\zeta)^0$. We also write $W(M)$ for the Weyl group of $(G,M)$. 

Following the argument of \cite[Lemma 2.3]{STF3}, it follows that if $G$ is arbitrary, then $I^\E_{r,s,\pa}(f) - I_{r,s,\pa}(f)$ is equal to
\[
|W(M)|^{-1}\sum_{v\in V_\mathrm{fin}(G,M)}\sum_{\gamma\in\Gamma(M,V,\zeta)}a^{M}_{r,s}(\gamma)I^\E_M(\gamma_v,f_v) - I_M(\gamma_v,f_v))f^v_M(\gamma^v)
\]
for any $f\in \C_M(G_V,\zeta_V)$, and if $G$ is quasisplit, then $S^G_{r,s,\pa}(f)$ is equal to
\begin{align*}
|W(M)|^{-1}\sum_{M'\in\E_\el(M,V)}&\iota(M,M')\\
&\sum_{v\in V_\mathrm{fin}(G,M)}\sum_{\delta'\in\Delta(\tilde{M}',V,\tilde\zeta')}b^{\tilde M'}_{r,s}(\delta')S^G_M(M'_v,\delta'_v,f_v)(f^v)^{M'}((\delta')^v)
\end{align*}
for any $f\in \C_M^\uns(G_V,\zeta_V)$. A careful examination of the proof of \cite[Corollary 5.2]{STF3} shows that the argument carries over almost in its entirety, except that we must be careful about the construction of two families of mappings $\varepsilon_M$ and $\varepsilon^M$ in \cite[Proposition 3.1]{STF3} whose domains need to be extended to noncompactly supported functions, namely
\[
\varepsilon_M:\C(G_v,\zeta_v)^0\to I\C_\ac(M_v,\zeta_v)
\]
such that 
\[
\varepsilon_M(f_v,\gamma_v)=I^\E_M(\gamma_v,f_v) - I_M(\gamma_v,f_v), \quad \delta_v\in\Gamma(M_v,\zeta_v)
\]
if $G$ is arbitrary, and
\[
\varepsilon^{M}:\C^\uns(G_v,\zeta_v)^0\to S\C_\ac(M_v,\zeta_v), 
\]
such that $\varepsilon^{M}(f_v,\delta_v) = S^G_M(f_v,\delta_v)$ for $\delta_v\in\Delta(M_v,\zeta_v)$, and 
\[
\varepsilon^{M'}:\C^\uns(G_v,\zeta_v)^0\to S\C_\ac(\tilde M'_v,\tilde \zeta_v), \quad M'\in\E_\el^0(M)
\]
such that $\varepsilon^{M'}(f_v,\delta_v') = S^G_M(M'_v,\delta'_v,f_v)$ for $\delta'_v\in\Delta(\tilde M'_v,\tilde\zeta'_v)$ if $G$ is quasisplit. Here $I\C_\ac$ and $S\C_\ac$ are defined as natural analogues of $\I_\ac$ and $\SS_\ac$. These maps extend naturally to the larger spaces, as they are defined in terms of local tempered distributions. While the construction of these functions require endoscopic and stable variants of the functions $\theta$ and $^c\theta$ in the unpublished reference [A12] of \cite{STF3}, they have now also been constructed in the more general twisted setting for smooth compactly supported test functions in \cite[VIII,IX]{MW2}. The constructions also rely on germ expansions of endoscopic and stable orbital integrals in the unpublished reference [A11] of \cite{STF3}, which are also described in \cite[\S4]{stablegerms} and also in \cite{MW2} in a slightly different formulation. More importantly, the variants $^c\varepsilon_M$ and $^c\varepsilon^M$ are no longer compactly supported, but can be shown to be rapidly decreasing, which is sufficient for applications. We also note that the finite partitions of unity required in \cite[pp.793,796]{STF3} need only be replaced by smooth partitions of unity obtained from an open covering of the support of $f$. Then arguing as in \cite[Corollary 3.3]{STF3} it follows that if $G$ is arbitrary,
\[
I^\E_{r,s,\pa}(f) - I_{r,s,\pa}(f) = |W(M)|^{-1}\hat{I}^M_{r,s}(\varepsilon_M(f)),
\]
and if $G$ is quasisplit, 
\[
S^G_{r,s,\pa}(f) = |W(M)|^{-1}\sum_{M'\in\E_\el(M,V)}\iota(M,M')\hat{S}^{\tilde M'}_{r,s}(\varepsilon^{M'}(f)).
\]
The remainder of the proof thus follows in the same way, and is moreover simplified by the absolute convergence of the spectral side \cite{FLM}.	
\end{proof}

We then combine the preceding result with Lemma \ref{pardiscu} and equations \eqref{ipar0} and \eqref{spar0} to deduce the following vanishing result.

\begin{cor}\label{unip0}\
\ben
\item[(a)]  If $G$ is arbitrary and $f\in \C(G_V,\zeta_V)$, we have
\[
\sum_{z\in Z(\bar{G})_{V,\o}}(I^\E_{r,s,z,\unip}(f,S)-I_{r,s,z,\unip}(f,S))=0.
\]
\item[(b)] If $G$ is quasisplit and $f\in \C^\uns(G_V,\zeta_V)$, we have
\[
\sum_{z\in Z(\bar{G})_{V,\o}} S^G_{r,s,z,\unip}(f,S).
\]
\een
\end{cor}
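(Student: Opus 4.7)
The plan is to deduce the corollary as a direct combination of the three preceding results: Lemma \ref{pardiscu}, the vanishing identities \eqref{ipar0} and \eqref{spar0}, and Proposition \ref{disc0}. The argument is essentially a formal rearrangement, with the deep content sitting entirely in those three inputs. No further induction or new local analysis is required here.

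For part (a), I would fix $f \in \C(G_V,\zeta_V)$ with $G$ arbitrary and invoke Lemma \ref{pardiscu}(a), which writes
\[
I^\E_{r,s,\pa}(f) - I_{r,s,\pa}(f) = (I^\E_{r,s,\disc}(f) - I_{r,s,\disc}(f)) - \sum_{z \in Z(\bar{G})_{V,\o}} (I^\E_{r,s,z,\unip}(f,S) - I_{r,s,z,\unip}(f,S)).
\]
The left-hand side vanishes by \eqref{ipar0}, which rests on the identity $I^\E_M(\gamma,f) = I_M(\gamma,f)$ from \cite[Corollary 6.3]{witf}. The first summand on the right vanishes by Proposition \ref{disc0}(a). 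Solving for the remaining term gives the required identity.

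For part (b), I would take $f \in \C^\uns(G_V,\zeta_V)$ with $G$ quasisplit and apply Lemma \ref{pardiscu}(b) in its intended parallel form, expressing $S^G_{r,s,\pa}(f)$ as $S^G_{r,s,\disc}(f)$ minus the unipotent sum $\sum_{z} S^G_{r,s,z,\unip}(f,S)$. By \eqref{spar0}, the parabolic term vanishes on unstable test functions (using that $S_M(M^*,\delta^*,f)$ vanishes for $f \in \C^\uns(G_V,\zeta_V)$, via \cite[Corollary 6.4(b)]{witf}), while by Proposition \ref{disc0}(b) the stable discrete part vanishes, yielding the assertion.

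I expect no real obstacle at this step: the entire content is already absorbed into Proposition \ref{disc0} (whose proof invokes the local cancellation argument of \cite[Corollary 5.2]{STF3} through the mappings $\varepsilon_M$ and $\varepsilon^{M'}$, extended to $\C$ and $\C^\uns$) and into Lemma \ref{pardiscu}, whose derivation uses the global induction hypotheses together with Theorems \ref{propgeom} and \ref{specthm}. The only delicate point in the present deduction is bookkeeping the sign in Lemma \ref{pardiscu}(b), for which the parallel with (a) fixes the correct form.
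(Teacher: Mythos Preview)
Your proposal is correct and matches the paper's own argument exactly: the corollary is obtained by combining Lemma \ref{pardiscu} with the vanishing identities \eqref{ipar0}, \eqref{spar0} and Proposition \ref{disc0}. Your reading of Lemma \ref{pardiscu}(b) in its intended form (parabolic $=$ discrete minus unipotent sum) is also correct, despite the typographical slip in the displayed statement.
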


We are now ready to complete the main global Theorems \ref{globge} and \ref{globsp}. Recall that we have reduced them to Corollaries \ref{glob1'} and \ref{glob2'} respectively.

\begin{proof}[Proof of Theorem \ref{globge}]
We recall that by the descent formula Proposition \ref{cordesc}, we can assume that $\dot\gamma_S$ and $\dot\delta_S$ are admissible elements in $\Gamma^\E_\unip(G,S,\zeta)$ and $\Delta^\E_\unip(G,S,\zeta)$ respectively. We shall take $S=V\supset V_\ram(G,\zeta)$ and $f=\dot f_S$ to be an admissible function in $\C(G_S,\zeta_S)$. Using \eqref{ieunplem} and \eqref{iunplem} we express Corollary \ref{unip0}(a) as
\[
\sum_{z\in Z(\bar{G})_{S,\o}} \sum_{\dot\alpha_S\in\Gamma^\E_\unip(G,S,\zeta)}(a^{G,\E}_{r,s,\el}(\dot\alpha_S)-a^{G}_{r,s,\el}(\dot\alpha_S))\dot{f}_{S,G}(z\dot\alpha_S) =0
\]
where we have used the fact that $a^{G,\E}_\el(\dot\alpha_S) = a^{G,\E}_\unip(\dot\alpha_S,S)$ by Corollary \ref{cordesc}(a) and \eqref{aersunp}, and $a^{G}_\el(\dot\alpha_S) = a^{G}_\unip(\dot\alpha_S,S)$ by Lemma \ref{adesclem} and \eqref{arsunp}. On the other hand since, the linear forms 
\[
\dot{f}_S\to \dot f_{S,G}(z\dot\alpha_S),\quad z\in Z(\bar{G})_{S,\o},\dot\alpha_S\in\Gamma_\unip^\E(G,S,\zeta)
\]
are linearly independent on $\C_\text{adm}(G_S,\zeta_S)$, and it follows then that 
\[
a^{G,\E}_{r,s,\el}(\dot\alpha_S)-a^{G}_{r,s,\el}(\dot\alpha_S)=0
\]
for any $\dot\alpha_S\in\Gamma^\E_\unip(G,S,\zeta)$. This proves Theorem \ref{globge}(a) for unipotent $\dot\gamma_S$, hence in general.

Next suppose that $G$ is quasisplit, and write $\Delta^{\E,0}_\unip(G,S,\zeta)$ for the complement of $\Delta_\unip(G,S,\zeta)$ in $\Delta^{\E}_\unip(G,S,\zeta).$ Using \eqref{seunplem} we express Corollary \ref{unip0}(b) as
\[
\sum_{z\in Z(\bar{G})_{S,\o}} \sum_{\dot\beta_S\in\Delta^{\E,0}_\unip(G,S,\zeta)}b^{G}_{r,s,\el}(\dot\beta_S) = 0
\]
for unstable functions $\dot f_S$, and where we have used the fact that $b^{G}_\el(\dot\beta_S) = b^{G}_\unip(\dot\beta_S,S)$ by Corollary \ref{cordesc}(b) and \eqref{brsunp}. On the other hand since, the linear forms 
\[
\dot{f}_S\to \dot f_{S,G}^\E(z\dot\beta_S),\quad z\in Z(\bar{G})_{S,\o},\dot\beta_S\in\Delta_\unip^{\E,0}(G,S,\zeta)
\]
are linearly independent on the intersection of $\C_\text{adm}(G_S,\zeta_S)$ and $\C^\uns(G_S,\zeta_S)$, and it follows then that 
\[
b^{G}_{r,s,\el}(\dot\beta_S)=0
\]
for any $\dot\beta_S\in\Gamma^{\E,0}_\unip(G,S,\zeta)$. This proves Theorem \ref{globge}(a) for unipotent $\dot\beta_S$, hence in general.
\end{proof}

\begin{proof}[Proof of Theorem \ref{globsp}]
We shall take $V\supset V_\ram(G,\zeta)$ and $\dot f = f\times b^V$ for $f\in \C(G_V,\zeta_V)$. Using \eqref{idisc}, \eqref{iedisclem}, and \eqref{idiscdot} we can express Proposition \ref{disc0}(a) as 
\[
\sum_{\dot\pi\in\Pi^\E_\disc(G,\zeta)}(a^{G,\E}_{r,s,\disc}(\dot\pi)-a^G_{r,s,\disc}(\dot\pi))\dot{f}_G(\dot\pi) = 0. 
\]
On the other hand since, the linear forms 
\[
\dot f\to\dot f_G(\dot\pi),\quad \dot\pi\in\Pi_\disc^\E(G,\zeta)
\]
are linearly independent, and it follows then that 
\[
a^{G,\E}_{r,s,\disc}(\dot\pi)-a^{G}_{r,s,\disc}(\dot\pi)=0
\]
for any $\dot\pi\in\Pi_\disc(G,\zeta)$. This proves Theorem \ref{globsp}(a).

Next suppose that $G$ is quasisplit, and write $\Phi^{\E,0}_\disc(G,\zeta)$ for the complement of $\Phi_\disc(G,\zeta)$ in $\Phi^{\E}_\disc(G,\zeta).$ Using \eqref{sdisclem} and \eqref{sdiscdot} we express Proposition \ref{disc0}(b) as 
\[
\sum_{\dot\phi\in\Phi^{\E,0}_\disc(G,\zeta)}b^G_{r,s,\disc}(\dot\phi)\dot{f}^\E_G(\dot\phi)=0
\]
for unstable functions $\dot f$, whereby $\dot f^\E_G$ vanishes on $\Phi_\disc(G,\zeta)$. On the other hand since, the linear forms 
\[
\dot f\to\dot f^\E_G(\dot\phi),\quad \dot\phi\in\Phi_\disc^{\E,0}(G,\zeta)
\]
are linearly independent, and it follows then that 
\[
b^{G}_{r,s,\disc}(\dot\phi)=0
\]
for any $\dot\phi\in\Phi^{\E,0}_\disc(G,\zeta)$. This proves Theorem \ref{globsp}(b).
\end{proof}

Having now established the main global theorems regarding the geometric and spectral coefficients, the derivation of the endoscopic and stable trace formulas in Theorem 1 in the introduction then follows by defining inductively
\[
I^\E_{r,s}(f) = \sum_{G'\in\E^0_\el(G,V)}\iota(G,G')\hat{S}'_{r,s}(f) + \varepsilon(G)S^r_s(f)
\]
with the requirement that $I^\E_{r,s}(f) = I^r_s(f)$ in the case that $G$ is quasisplit. We see that the form $S^r_s(f)$ is stable, since the terms defining it in the spectral and geometric expansions are stable, and $I^\E_{r,s}(f) = I(f)$ in general. We therefore have the decomposition
\[
I_{r,s}(f) = \sum_{G'\in\E(G,V)}\iota(G,G')\hat{S}'_{r,s}(f) 
\]
of the weighted invariant trace formula into weighted stable trace formulas for endoscopic groups.

%\subsubsection*{Acknowledgments} We thank J

%%%%%%%%%%%%%%%%%%%%%%%%%%%%%%%%%%%%%%%%%%%%%%%%%%%%%%%%%%%%%%%%%%%%%%%%%%%%%%%%%%%%%%%%%%%%%%%%%%%%%%%%%%%%%%%%%%%%%%%%%%%%%%%%%%%%%%%%%%%%%%%%%%%%%%%%%%%%%%%%%%%%%%%%%%%%%%%%%%%%%%%%%%%%%%%%%%%%%%%%%%%%%%%%%%%%%%%%%%%%%%%%%%%%%%%%%%%%%%%%%%%%%%%%%%%%%%%%%%%%%%%%%%%%%%%%%%%%%%%%%%%%%%%%%%%%%%%%%%%%

\appendix

\section{Continuity of the stable trace formula}
\label{conts}

The goal of this appendix is twofold: the first is to prove a mild extension of the Langlands-Shelstad transfer mapping to Schwartz functions in the nonarchimedean case. In the archimedean case, the transfer is known by work of Shelstad. The second goal is to prove the continuity of the stable trace formula for the test function $\dot f =f\times u^V$. This builds upon the continuity of the invariant trace formula $I$ proved in \cite[Theorem 2]{witf}. Our strategy is to stabilise the linear form $I$ that we have shown to be valid on $\C^\circ(G,V,\zeta)$.

\subsection{Transfer} We refer to \cite{STF1} and the references therein for precise definitions of the objects that we consider here. In particular, we will introduce noncompactly supported variants of the many objects involved in the stabilization of the trace formula. Let $\E(G_v)$ be the set of endoscopic data $(G'_v,\mathcal G'_v, s'_v,\xi'_v)$ for $G$ over $F_v$, represented by $G'_v$. We shall aslo assume that the auxiliary data $\tilde{G}'\to G'$ and $\tilde\xi':\mathcal G'\to {^L\tilde{G}'}$ to be chosen according to \cite[Lemma 7.1]{STF1}. %Let $\Gamma_\text{reg}(G_v,\zeta_v)$ be the subset of strongly regular elements of $\Gamma(G_v,\zeta_v)$, and $\Delta_\text{reg}(G_v,\zeta_v)$. 
The Langlands-Shelstad transfer conjecture states that for any $G_v'\in\E(G_v)$, the map that sends $f\in \H(G_v,\zeta_v)$ to the function 
\[
f'(\delta') = f^{G'}(\delta') = \sum_{\gamma\in\Gamma_\text{reg}(G_v,\zeta_v)}\Delta(\delta',\gamma)f_G(\gamma)
\]
on $\Delta_\text{reg}(\tilde{G}'_v,\tilde\zeta'_v)$ exists, and maps $\H(G_v,\zeta_v)$ continuously to the space $\SI(\tilde{G}'_v,\tilde\zeta'_v)$. In the nonarchimedean case, the Langlands-Shelstad transfer was proved for smooth functions $f\in C_c^\infty(G_v)$ as a consequence of \cite{lemme} and the solution of the Fundamental Lemma \cite{Ngo}, and thus holds also for $\zeta^{-1}$-equivariant space $C_c^\infty(G_v,\zeta_v)$. Moreover, since the orbital integrals are tempered distributions, it makes sense to formulate the smooth transfer for the larger Schwartz space $\C(G_v,\zeta_v)$, in which case the transfer would lie in the corresponding space of stable orbital integrals $S\C(\tilde{G}'_v,\tilde{\zeta}'_v)$ of functions in $\C(\tilde{G}'_v,\tilde{\zeta}'_v)$ \cite[\S3]{ArtTW}. Recall that we are taking $G$ to be a $K$-group, so if $f$ equals $\oplus_\alpha f_\alpha$, then
\[
f' = \sum_{\alpha\in\pi_0(G)} f'_\alpha. 
\]
The Langlands-Shelstad transfer for Schwartz functions is then a simple consequence of the smooth transfer. We note that in the archimedean case, the result for Schwartz functions follows from work of Shelstad (c.f. \cite{shels}). The proof relies on the fact that $C_c^\infty(G_v,\zeta_v)$ is a dense subspace of $\C(G_v,\zeta_v)$, topologized by the family of seminorms used to define the Harish-Chandra Schwartz space. Moreover, the spaces $I\C(G_v,\zeta_v)$ and $S\C(G_v,\zeta_v)$ are topologized in a manner such that the maps $f\to f_G$ and $f\to f^G$ respectively are continuous. That is, in  \cite[p.187]{STF1} Arthur defines the spaces
\[
I\mathcal H(G^H_V,\zeta_V) = \{f_G\in \mathcal H(G^H_V,\zeta_V)\}
\]
and
\[
I\mathcal C(G^H_V,\zeta_V) = \{f_G\in \mathcal C(G^H_V,\zeta_V)\},
\]
both topologized such that the surjective map $f\to f_G$ is open and continuous. This describes the topologies of the source and targets, and in particular it follows that $I\mathcal H(G^H_V,\zeta_V)$ is dense in $I\mathcal C(G^H_V,\zeta_V)$. Similarly we follow p.188 of [STF1] to define the stably invariant subspaces
\[
SI\mathcal H(G^H_V,\zeta_V) = \{f^G\in \mathcal H(G^H_V,\zeta_V)\}
\]
and
\[
S\mathcal C(G^H_V,\zeta_V)=SI\mathcal C(G^H_V,\zeta_V) = \{f^G\in \mathcal C(G^H_V,\zeta_V)\},
\]
spanned by strongly regular, stable orbital integrals.

\begin{lem}
\label{FL}
Let $F_v$ be a nonarchimedean local field. Then for $f\in \C(G_v,\zeta_v)$, the map from $f$ to the function 
\[
f'(\delta') = f^{G'}(\delta') = \sum_{\gamma\in\Gamma_\mathrm{reg}(G_v,\zeta_v)}\Delta(\delta',\gamma)f_G(\gamma)
\]
on $\Delta_\text{reg}(\tilde{G}'_v,\tilde\zeta'_v)$ exists, and maps $\C(G_v,\zeta_v)$ continuously to $S\C(G_v,\zeta_v)$. 
\end{lem}

\begin{proof}
Given $f$ in $\C(G_v,\zeta_v)$, we may choose a sequence $(f_n)$ in $C_c^\infty(G_v,\zeta_v)$ converging to $f$ as $n$ tends to infinity, namely,
\[
\nu(f - f_n) \to 0,
\]
where $\nu$ is any seminorm used to define the Schwartz space. Applying the Langlands-Shelstad transfer, it follows then that there is a family of transfers $(f'_n)$ in $I\C(\tilde{G}'_v,\tilde\zeta'_v)$ such that for any $\delta'\in\Delta_\text{reg}(\tilde{G}'_v,\tilde\zeta'_v)$, we have
\[
f'_n(\delta') = \sum_{\gamma\in\Gamma_\mathrm{reg}(G_v,\zeta_v)}\Delta(\delta',\gamma)f_{n,G}(\gamma)
\]
in $\SI(\tilde{G}'_v,\tilde\zeta'_v)$. 

Estimating then the difference 
\[
| f'_n(\delta') - f_{n+1}'(\delta')| \le  \sum_{\gamma\in\Gamma_\mathrm{reg}(G_v,\zeta_v)}|\Delta(\delta',\gamma)||f_{n,G}(\gamma) - f_{n+1,G}(\gamma)|
\]
for any fixed $\delta'$, where we note that the sums are finite since the orbital integral of $f_n$ is compactly supported on the regular set for any $n$, we see that the difference 
\[
|f_{G}(\gamma) - f_{n,G}(\gamma)| \ll_\gamma \nu(f- f_{n})
\]
vanishes for $f_G$ in  $I\mathcal C(G_v,\zeta_v)$ and $f_{n,G}$ in  $I\H(G_v,\zeta_v)$. Here we have used the standard estimate on orbital integrals, 
\[
|f_G(\gamma)|\le \nu(f)(1+|\log|D(\gamma)||)^p(1+||H_G(\gamma)||)^{-n}
\]
for any integer $n>0$, $p\in\R$, and $f\in \C(G_v,\zeta_v)$ (c.f. \cite[(5.7)]{fourier}). It follows that $f_n(\delta')$ converges in $S\I(\tilde{G}'_v,\tilde\zeta'_v)$, and by continuity in $S\C(\tilde{G}'_v,\tilde\zeta'_v)$. By completeness, we denote by $f'$ the function in $\C(\tilde{G}'_v,\tilde\zeta'_v)$ such that $f'_n$ converges to $f'$. We note that the choice of $f'$ is unique only up to stable conjugacy, and satisfies the identity 
\[
f'(\delta')= \sum_{\gamma\in\Gamma_\mathrm{reg}(G_v,\zeta_v)}\Delta(\delta',\gamma)f_G(\gamma).
\] 
as required.
\end{proof}

The stabilization of the trace formula relies on the local results of Arthur on orbital integrals such as in \cite{LCR,ArtTW,asymp,parabolic, realgerms}. In order to stabilize the invariant linear form $I(f)$ for $f\times u^V$ with $f\in\C^\circ(G,V,\zeta)$, we note that the local results of Arthur above hold for general Schwartz functions $f\in \C(G,V,\zeta)$ either as explicitly stated, or otherwise can be shown using the fact that the linear forms $I_M(\gamma,f)$ extend to tempered distributions on $G$. These local transfer mappings are required to construct the stable basis $\Delta(G_V^Z,\zeta_V)$ that is used to index the geometric side of the stable trace formula. 

We have to show that this construction holds in our case also. While Arthur's stabilization is carried out for functions $f$ belonging to $\H(G,V,\zeta)$, his construction of these spaces holds generally for functions in $\C^\circ(G,V,\zeta)$, as long as the transfer mappings exist. We shall summarize Athur's construction here, extended to the slightly more general setting of $\C^\circ(G,V,\zeta)$. In the local setting, we can often work with the full Schwartz space $\C(G,V,\zeta)$.

\subsection{Geometric transfer factors} 
\label{A2}

As usual, if $S'$ is a stable, tempered $\tilde{\zeta}'$-equivariant distribution on $\tilde{G}'(F_v)$, then we write $\hat{S}'$ for the corresponding continuous linear form on $S\C(\tilde{G}'_v,\tilde\zeta'_v)$. Applying the transfer to each of the components $G_{\alpha_v}$ of $G_v$, we have a mapping 
\[
f_v \to f_v' = f^{\tilde{G}'}_v
\]
from $\C(G_v,\zeta_v)$ to $S\C(\tilde{G}'_v,\tilde\zeta'_v)$, which can be identified with a mapping
\[
a_v\to a'_v
\]
from $I\C(G_v,\zeta_v)$ to $S\C(\tilde{G}'_v,\tilde\zeta'_v)$. It follows that the product mapping from $\prod_v a_v$ to $\prod_v a'_v$ gives a linear mapping from $I\C(G_V,\zeta_V)$ to $S\C(\tilde{G}'_V,\tilde\zeta'_V)$. This mapping is attached to the product $G'_V$ of the data $G'_v$, which we can think of as the endoscopic data of $G$ over $F_V$. Letting $G'_V$ vary, we obtain a mapping
\[
I\C(G_V,\zeta_V) \to \prod_{G'_V}S\C(\tilde{G}'_V,\tilde{\zeta}'_V) 
\]
by putting together the individual images of $a'$. The image $I\C^\E(G_V,\zeta_V)$ of $I\C(G_V,\zeta_V)$ fits into a sequence of inclusions
\be
\label{inclusions}
I\C^\E(G_V,\zeta_V) \subset \bigoplus_{\{G'_V\}} I\C^\E(G'_V,G_V,\zeta_V) \subset \prod_{\Delta_V} S\C(\tilde{G}'_V,\tilde{\zeta}'_V) 
\ee
in which the summand $ I\C^\E(G'_V,G_V,\zeta_V)$ is a vector space of families of functions on $\tilde{G}'$ parametrized by transfer factors for $G$ and $\tilde{G}'$, depending only on the $F_V$-isomorphism class of $G'_V$. 

The mappings of functions have dual analogues for distributions. Given $G'_V$ with auxiliary data $\tilde{G}'_V$ and $\tilde{\xi}'_V$, assume that $\delta'$ belongs to the space of stable distributions $S\D((\tilde{G}'_V)^{\tilde{Z}'_V},\tilde{\zeta}_V')$. By Lemma \ref{FL}, we may evaluate the transfer $f'$ of any function $f$ in $\C^\circ(G,V,\zeta)$ at $\delta'$. Since the distribution $f\to f'(\delta')$ belongs to $\D(G^Z_V,\zeta_V)$, we can construct the extended geometric transfer factors at each local place
\[
\Delta(\delta',\gamma), \qquad G'\in\E(G), \delta'\in \Delta(\tilde{G}',\tilde\zeta'), \gamma\in\Gamma(G,\zeta).
\]
defined for fixed bases $\Delta(\tilde{G}',\tilde\zeta')$ of the spaces $S\D(\tilde{G}',\tilde\zeta')$ such that 
\be
\label{fdelta}
f'(\delta')  = \sum_{\gamma\in\Gamma(G,\zeta)}\Delta(\delta',\gamma)f_G(\gamma)
\ee
holds for $ \delta'\in \Delta(\tilde{G}',\tilde\zeta')$ and $f\in \C(G,\zeta)$. We can see that the extended local transfer factor, as a function on $\Delta(\tilde{G}',\tilde\zeta')\times \Gamma(G,\zeta)$ is defined in the exact same manner as \cite[\S4]{STF1}, and depends linearly on $\delta'$. We can then define the global transfer factor as the corresponding product
\[
\Delta(\delta,\gamma) = \prod_{v\in V} \Delta(\delta_v,\gamma_v)
\]
for $\delta\in\Delta^\E(G_V,\zeta_V)$ and $\gamma\in\Gamma(G_V,\zeta_V)$. The sequence of inclusions \eqref{inclusions} is dual to a sequence of surjective linear mappings
\be
\label{Dmaps}
\prod_{G'_V}S\D((\tilde{G}'_V)^{\tilde{Z}'},\tilde{\zeta}_V') \to \bigoplus_{\{G'_V\}}\D^\E(G'_V,G^Z_V,\zeta_V)\to \D^\E(G^Z_V,\zeta_V)
\ee
between spaces of distributions. Since $f'$ is the image of the function $f_G$ in $I\C(G,V,\zeta)$, it follows that $f'(\delta')$ depends only on the image $\delta$ of $\delta'$ in $\D^\E(G^Z_V,\zeta_V)$. In other words,
\[
f'(\delta') = f^\E_G(\delta),
\]
where $f^\E_G$ is the image of $f_G$ in $I\C^\E(G,V,\zeta)$, so that by the adjoint relations satisfied by the geometric transfer factor \cite[\S5]{STF1} the map $f_G\to f_G^\E$ is an isomorphism. The same is true therefore of the coefficients $\Delta_G(\delta',\gamma)$, so we may write
\[
\Delta_G(\delta,\gamma)=\Delta_G(\delta',\gamma)
\]
for $\gamma\in \Gamma(G^Z_V,\zeta_V)$ and complex numbers $\Delta_G(\delta,\gamma)$ that depend linearly on $\delta\in \D^\E(G^Z_V,\zeta_V)$. The image in $\Delta^\E(G^Z_V,\zeta_V)$ of the subspace 
\[
S\D((G^*_V)^{Z^*_V},\zeta^*_V)\simeq S\D(G^*_V,G^Z_V,\zeta_V)
\]
can be identified with the subspace $S\D(G^Z_V,\zeta_V)$ of stable distributions in $\D(G^Z_V,\zeta_V)$.

The coefficients in the geometric expansion should really be regarded as elements in the appropriate completion of $\D(M^Z_V,\zeta_V)$ and $S\D(M^Z_V,\zeta_V)$, which we shall identify with the dual space of $\D(M^Z_V,\zeta_V)$ by fixing suitable bases $\Gamma(M^Z_V,\zeta_V)$ and $\Delta(M^Z_V,\zeta_V)$ of the relevant spaces of distributions. In particular, we shall fix a basis $\Delta((\tilde{G}'_V)^{\tilde{Z}'_V},\tilde{\zeta}_V')$ of $S\D((\tilde{G}'_V)^{\tilde{Z}'_V},\tilde{\zeta}_V')$ for any $F_V$-endoscopic datum $G'_V$ with auxiliary data $\tilde{G}'_V$ and $\tilde{\xi}_V'$. We also fix a basis $\Delta^\E(G_V^Z,\zeta_V)$ of $\D^\E(G_V^Z,\zeta_V)$ such that 
\[
\Delta(G_V^Z,\zeta_V) = \Delta^\E(G_V^Z,\zeta_V)\cap S\D(G_V^Z,\zeta_V)
\]
forms a basis of $S\D(G_V^Z,\zeta_V)$, and in the case that $G$ is quasisplit, that $\Delta(G_V^Z,\zeta_V)$ is isomorphic to the image of the basis $\Delta((G_V^*)^{Z^*},\zeta_V)$. 

\subsection{Spectral transfer factors} 

The construction on the spectral side is parallel. In place of the spaces of distributions described by \eqref{Dmaps}, we have the spectral analogue $\F(G^Z_V,\zeta_V)$ of $\D(G^Z_V,\zeta_V)$, and the sequence of maps
\[
\prod_{G'_V}S\F((\tilde{G}'_V)^{\tilde{Z}'},\tilde{\zeta}_V') \to \bigoplus_{\{G'_V\}}\F^\E(G'_V,G^Z_V,\zeta_V)\to \F^\E(G^Z_V,\zeta_V).
\]
In place of the basis $\Gamma(G^Z_V,\zeta_V)$ of $\D(G^Z_V,\zeta_V)$, we have the basis 
\[
\Pi(G^Z_V,\zeta_V) = \coprod_{t\ge 0 }\Pi_t (G^Z_V,\zeta_V)
\]
of $\F(G^Z_V,\zeta_V)$ consisting of irreducible characters. If $\phi'$ belongs to $S\F((\tilde{G}'_V)^{\tilde{Z}'},\tilde{\zeta}_V')$, then the distribution $f \to f'(\phi')$ belongs to $\F(G_V^Z,\zeta_V)$,  we can construct the spectral transfer factors at each local place
\[
\Delta(\phi',\pi), \qquad G'\in\E(G), \phi'\in \Phi(\tilde{G}',\tilde\zeta'), \pi\in\Pi(G,\zeta)
\]
defined for fixed bases $\Phi(\tilde{G}',\tilde\zeta')$ of the spaces $S\F(\tilde{G}',\tilde\zeta')$ such that 
\[
f'(\phi')  = \sum_{\pi\in\Pi(G_v,\zeta_v)}\Delta(\phi',\pi)f_G(\pi)
\]
holds for $ \phi'\in \Phi(\tilde{G}',\tilde\zeta')$ and $f\in \C(G,\zeta)$, parallel to \eqref{fdelta}. We also define the corresponding product
\[
\Delta(\phi,\pi) = \prod_{v\in V} \Delta(\phi_v,\pi_v)
\]
for $\phi\in\Phi^\E(G_V,\zeta_V)$ and $\pi\in\Pi(G_V,\zeta_V)$. Given an element $\phi'$ in $S\F((\tilde{G}'_V)^{\tilde{Z}'},\tilde{\zeta}_V')$, We have that $f'(\phi')$ depends only on the image $\phi$ of $\phi'$ in $\F^\E(G_V^Z,\zeta_V)$, that is,
\[
f'(\phi') = f^\E_G(\phi),
\]
and the spectral coefficients satisfy the relation
\[
\Delta_G(\phi,\pi) = \Delta_G(\phi',\pi)
\]
for $\pi \in \Pi(G^Z_V,\zeta_V)$ and  complex numbers $\Delta_G(\phi,\pi)$ that depend linearly on $\phi \in \F^\E(G_V^Z,\zeta_V)$. They satisfy adjoint relations parallel to the geometric transfer factors. Here as in \cite[\S5]{STF1} we shall fix an endoscopic basis $\Phi^\E(G_V^Z,\zeta_V)$ of $\F(G_V^Z,\zeta_V)$, and a subset
\[
\Phi(G_V^Z,\zeta_V) = \Phi^\E(G_V^Z,\zeta_V)  \cap S\F(G_V^Z,\zeta_V)
\]
that forms a basis of $S\F(G_V^Z,\zeta_V)$, and in the case that $G$ is quasisplit, such that $\Phi(G_V^Z,\zeta_V)$ is isomorphic to the image of the basis $\Phi((G_V^*)^{Z^*},\zeta_V)$. If $v$ is archimedean, we can identify $\Phi(\tilde{G}_v',\tilde\zeta'_v)$ with the relevant set of Langlands parameters. If $v$ is nonarchimedean, we construct $\Phi(\tilde{G}_v',\tilde\zeta'_v)$ in terms of abstract bases $\Phi_\text{ell}(G_v,\zeta_v)$ of the cuspidal subspaces $\SI_\text{cusp}(M_v,\zeta_v)$, and similar objects for endoscopic groups $M'$ of $M$, where we observe that the relevant constructions of \cite{LCR} extend readily to $\C(G_v,\zeta_v)$ (see \cite[p.825]{STF3}).

\subsection{The stable and endoscopic expansions}
\label{A4}

Having defined the relevant objects, we now turn to the continuity of the stable trace formula. As before, our attention will be on extending the arguments in \cite{STF1,STF2,STF3}, which will essentially follow from properly constructing the natural generalizations of the required objects. As the stabilization of the trace formula involves a much more intricate argument than that needed for the invariant trace formula, we are forced to follow the same path here. We note that a similar argument is provided in \cite{MW1,MW2} for the stabilization of the twisted trace formula.  

\begin{thm}
\label{main1}
The linear forms $I^\E$ and $S$ extend continuously from $\H(G,V,\zeta)$ to $\C(G,V,\zeta)$.
\end{thm}
\begin{proof}
%Let $I(f)$ be the invariant linear form on $\C(G,V,\zeta)$.
We first observe that Global Theorem 1$'$ in \cite[\S7]{STF1} states that the global geometric coefficients satisfy 
\[
a^{G,\E}(\gamma)= a^G(\gamma), \qquad \gamma\in \Gamma^\E(G,V,\zeta)
\]
for any $G$, and that
\[
b^G(\delta), \qquad \delta\in \Delta^\E(G,V,\zeta)
\]
vanishes on the complement of $\Delta(G,V,\zeta)$ if $G$ is quasisplit.  Notice that $\Gamma^\E(G,V,\zeta)$ and $\Delta(G,V,\zeta)$ are constructed as subsets of bases $\Gamma(G_V^Z,\zeta_V)$ and $\Delta(G^Z_V,\zeta_V)$ of the spaces $\D(G_V^Z,\zeta_V)$ and $S\D(G_V^Z,\zeta_V)$ respectively. In particular, we see that this space contains the orbital integrals, and also derivatives of orbital integrals in the archimedean cases, of functions $f$ in $\C(G,V,\zeta)$. Similarly, Global Theorem 2$'$ states that the global geometric coefficients satisfy 
\[
a^{G,\E}(\pi)= a^G(\pi), \qquad \pi\in \Pi_t^\E(G,V,\zeta)
\]
for any $G$, and that 
\[
b^G(\phi), \qquad \delta\in \Phi^\E_t(G,V,\zeta)
\]
vanishes on the complement of $\Phi_t(G,V,\zeta)$ if $G$ is quasisplit. Here the spaces 
\[
\Pi_t^\E(G,V,\zeta),\quad \Phi_t^\E(G,V,\zeta),\quad \Phi_t(G,V,\zeta)
\]
are the subset of elements in 
\[
\Pi^\E(G,V,\zeta),\quad \Phi^\E(G,V,\zeta),\quad \Phi(G,V,\zeta)
\]
respectively whose archimedean infinitesimal characters $\nu$ have norms $t = ||\text{Im}(\nu)||$. Notice that $\Phi_t(G,V,\zeta)$ and $\Pi_t^\E(G,V,\zeta)$ are constructed as discrete subsets of the bases $\Pi^\E_t(G_V^Z,\zeta_V)$ and $\Phi_t(G^Z_V,\zeta_V)$ of the spaces $\F(G_V^Z,\zeta_V)$ and $S\F(G_V^Z,\zeta_V)$ respectively. As we have indicated above, in both the geometric and spectral cases, the construction of the endoscopic spaces implicitly rely on the Langlands-Shelstad transfer, hence by Lemma \ref{FL} these spaces exist unconditionally.

Let $S$ be a finite set of valuations containing $V$. There is a natural map
\[
f\mapsto \dot{f}_S=f\times u^V_S
\]
from $\C(G,V,\zeta)$ to $\C(G,S,\zeta)$. We shall define an admissible subspace $\C_\text{adm}(G,S,\zeta)$ of $\C(G,S,\zeta)$, using the same notion of admissibility in \cite[\S1]{STF1}. The polynomial $\det(1+t-\text{Ad}(x)) = \sum_{k}D_k(x)t^k$ for $x\in G$ defines a morphism
\[
\mathcal D = (D_0,\dots,D_d): G \to {\bf G}_a^{d+1}
\]
where $d=\dim G$. If $X$ is a nonzero point in ${\bf G}_a^{d+1}$, we shall denote $X_\text{min} = X_k$ where $k$ is the smallest integer such that $X_k$ is nonzero. Let $\O^S$ be product of all $v$ not in $S$ of $\O_v$, the ring of integers of $F_v$. We call a subset $C_S$ of $F^{d+1}_S\backslash\{0\}$ admissible if any point $X$ in the intersection
\[
F^{d+1}\cap (C_S\times (\O^S)^{d+1})
\]
satisfies $|X_\text{min}|_v=1$ for all $v\not\in S$. Assume moreover that $S$ contains the places over which $G$ and $\zeta$ are ramified, and that $Z(A) = Z(F) Z_SZ(\O^S)$. Then we call a subset $\Delta_S$ of $G_S$ admissible if $\mathcal D(\Delta_S)$ is admissible in $F^{d+1}_S$. This implies that
\[
|D(\dot\gamma)|_v = 1
\]
for all $\gamma\in G(F)\cap (\Delta_S\times K^S)$ and $v\not\in S$. Also, $\Delta_S$ is admissible if and only if its projection onto $\bar{G}_S= G_S/Z_S$ is admissible. Finally, we define  $\C_\text{adm}(G,S,\zeta)$ to be the subspace of functions in $\C(G,S,\zeta)$ whose support is admissible. Also, we shall say a subset $\Delta$ of $G(\A)$ is $S$-admissible if for some finite set $S$ there is an admissible subset $C_S$ of $F^{d+1}_S$ such that $\mathcal D(\Delta)$ is contained in $C_S\times (\O^S)^{d+1}$. We note that it is this condition of admissibility and $S$-admissibility that the reductions of \cite{STF1,STF2,STF3} are based upon, rather than the compact support of the test functions $f$.

Having made these preliminary remarks, we now proceed as follows. Let $I$ be the invariant linear form on $\C^\circ(G,V,\zeta)$ obtained in \cite[Theorem 2]{witf}. If $G$ is arbitrary, we define an endoscopic linear form inductively by setting
\[
I^\E(f) = \sum_{G'\in\E_\text{ell}(G,V)}\iota(G,G')\hat{S}'(f')
\]
for stable linear forms $\hat{S}'=\hat{S}^{\tilde{G}'}$ on $S\C(G,V,\zeta)$. In the case that $G$ is quasisplit, we define a linear form 
\[
S^G(f) = I(f) - \sum_{G'\in\E_\text{ell}^0(G,V)}\iota(G,G')\hat{S}'(f')
\]
and also the endoscopic linear form by the trivial relation 
$$
I^\E(f) = I(f).
$$
We assume inductively that if $G$ is replaced by a quasisplit inner $K$-form of $\tilde{G}'$, the corresponding analogue of $S^{G}$ is defined and stable. At this stage, the reductions of \cite{STF1,STF2} can now be applied without difficulty. In particular, if on the geometric side, we define $I^\E_\text{orb}(f)$ and $S_\text{orb}^G(f)$ to be the summands corresponding to $M=G$ in $I^\E(f)$ and $S^G(f)$ respectively, we see from the proof of \cite[Theorem 10.1]{STF1} that if $G$ is arbitrary,
\[
I^\E(f) - I^\E_\text{orb}(f) = \sum_{M\in\L^0}|W^M_0||W^G_0|^{-1}\sum_{\gamma\in\Gamma(M,V,\zeta)} a^{M,\E}(\gamma)I^\E_M(\gamma,f)
\]
and if $G$ is quasisplit, we have that $S^G(f) - S^G_\text{orb}(f)$ is equal to
\[
\sum_{M\in\L^0}|W^M_0||W^G_0|^{-1}\sum_{M'\in\E'_\text{ell}(M,V)}\iota(M,M')\sum_{\delta'\in\Delta(\tilde{M}',V,\tilde\zeta')} b^{\tilde{M}'}(\delta')S^G_M(M',\delta',f).
\]
Here $I^\E_M(\gamma,f)$ and $S^G_M(M',\delta',f)$ are the local geometric distributions defined in \cite[\S6]{STF1}. While on the spectral side, we define $I^\E_{t,\text{unit}}(f)$ and $S^G_{t,\text{unit}}$ using the decomposition according to the norm of the archimedean infinitesimal character,
\[
I^\E(f) = \sum_{t\ge0} I_t^\E(f) 
\]
and 
\[
S^G(f) = \sum_{t\ge0} S_t^G(f).
\]
It follows then from the proof of \cite[Theorem 10.6]{STF1} that if $G$ is arbitrary,
\[
I^\E_t(f) - I^\E_{t,\text{unit}}(f) = \sum_{M\in\L^0}|W^M_0||W^G_0|^{-1}\int_{\Pi^\E_t(M,V,\zeta)} a^{M,\E}(\pi)I^\E_M(\pi,f)d\pi
\]
and if $G$ is quasisplit, we have that $S^G_t(f) - S^G_{t,\text{unit}}(f)$ is equal to
\[
\sum_{M\in\L^0}|W^M_0||W^G_0|^{-1}\sum_{M'\in\E'_\text{ell}(M,V)}\iota(M,M')\int_{\Phi_{t'}(\tilde{M}',V,\tilde\zeta')} b^{\tilde{M}'}(\phi')S^G_M(M',\phi',f)d\phi'.
\]
Here again $I^\E_M(\pi,f)$ and $S^G_M(M',\phi',f)$ are the local spectral distributions defined in \cite[\S6]{STF1}.  These identities reduce the study of the global geometric coefficients $a^{G,\E}(\gamma), b^G(\delta)$ and global spectral coefficients $a^{G,\E}(\pi),b^G(\phi)$ to the terms $M=G$ in their expansion, namely $a^{G,\E}_\text{ell}(\gamma), b^G_\text{ell}(\delta)$ and $a^{G,\E}_\text{disc}(\pi),b^G_\text{disc}(\phi)$ respectively by the arguments of Propositions 10.3 and 10.7 of \cite{STF1}. Moreover, the global descent formula of \cite[Corollary 2.2]{STF2} further reduces the study of the global geometric coefficients to unipotent elements. (Our extension of the notion of admissibility is crucial here for the extension of this result, which is a long but straightforward verification.) More precisely, given an admissible element $\dot\gamma_S$ in $\Gamma^\E_\text{ell}(G,S,\zeta)$ with Jordan decomposition $\dot\gamma_S=c_S\dot\alpha_S$, we have
\[
a^{G,\E}_\text{ell}(\dot\gamma_S) = \sum_c\sum_{\dot\alpha}i^{\bar{G}}(S,c)|\bar{G}_{c,+}(F)/\bar{G}_c(F)|^{-1} a^{G_c,\E}_\text{ell}(\dot\alpha)
\]
and if $G$ is quasisplit, given an  admissible element $\dot\delta_S$ in $\Delta_\text{ell}(G,S,\zeta)$ with Jordan decomposition $\dot\delta_S=d_S\dot\beta_S$, we have
\[
b^{G}_\text{ell}(\dot\delta_S) = \sum_d\sum_{\dot\beta}j^{\bar{G}^*}(S,d)|\bar{G}^*_{d,+}(F)/\bar{G}^*_d(F)|^{-1} b^{G_d^*}_\text{ell}(\dot\beta),
\]
where $G_{c,+}$ denotes the centralizer of $c$ in $G$, and $G_c$ is the identity component of $G_{c,+}$. We refer the reader to \cite{STF2} for complete definitions of these expressions.

Turning to the local setting, the analogues of Local Theorems 1 and 2 of \cite[\S6]{STF1} for $f\in\C(G,V,\zeta)$ follow from the analogues of Local Theorems 1$'$ and 2$'$, which concern the compound linear forms $I^\E_M(\gamma,f),$ $S^G_M(M',\delta',f)$ and $I^\E_M(\pi,f),$ $S^G_M(M',\phi',f)$ as a consequence of the geometric and spectral splitting and descent formulae respectively \cite[Propositions 6.1 and 6.3]{STF1}. We recall that the required  geometric formulae are given in \cite[\S3]{stablegerms}, whereas the spectral formulae can be deduced from \cite[X.4]{MW2}.

To apply the arguments of \cite{STF3}, we require analogous constructions of various subspaces of the Hecke space $\H(G,V,\zeta)$ used therein. If $G$ is quasisplit, we define the unstable subspace 
\[
\C^{\circ, \text{uns}}(G,V,\zeta)
\]
of functions $f\in\C(G,V,\zeta)$ such that $f^G=0$. It is spanned by functions $f = \prod_{v\in V} f_v$ such that for some $v\in V$, $f_v$ satisfies the property that $f^G_v=0$. We shall also define the subspace 
\[
\C_M(G,V,\zeta)
\]
of functions $f \in \C(G,V,\zeta)$ such that $f_v$ is $M$-cuspidal at two places $v\in V$. Recall that $f_v\in \C(G_v,\zeta_v)$ is said to be $M$-cuspidal if $f_{v,L_v}=0$ for any element $L_v\in \L_v$ that does not contain a $G_v$-conjugate of $M_v$. If $v$ is a nonarchimedean place, we define
\[
\C(G_v,\zeta_v)^{00}
\]
to be the subspace of functions $f\in \C(G_v,\zeta_v)$ such that $f_{v,G}(z_v\alpha_v)=0$ for any $z_v$ in the center of $\bar{G_v}=G_v/Z_v$ and $\alpha_v$ in the basis $R_\text{unip}(G_v,\zeta_v)$ of unipotent orbital integrals in \cite[\S3]{STF3}. We lastly define 
\[
\C(G_v,\zeta_v)^{0}
\]
analogously, with $\alpha_v$ ranging over the parabolic subset $R_\text{unip,par}(G_v,\zeta_v).$ We also write $\C(G,V,\zeta)^{0}$ for the product of functions $f_v \in \C(G_v,\zeta_v)^{0}$ for $v\in V$, and similarly for $\C(G,V,\zeta)^{00}$. We shall denote by the intersections of these various spaces by using overlapping notation, for example, we write  $\C^{\circ}_M(G_v,\zeta_v)^0 = \C^{\circ}_M(G_v,\zeta_v)\cap \C(G_v,\zeta)^0$. 

The remainder of the proof proceeds by a double induction on integers $r_\mathrm{der}$ and $d_\mathrm{der}$ such that 
\[
0 < r_\mathrm{der} < d_\mathrm{der}.
\]
Namely, we assume inductively that Local Theorem 1 holds if $\dim(G_\text{der}) < d_\mathrm{der}$ and if 
\[
\dim(G_\mathrm{der}) = d_\mathrm{der}, \quad \dim(A_M\cap G_\text{der}) < d_\mathrm{der}
\]
for a local non-archimedean field; the archimedean transfer for $f\in \C(G,\zeta)$ follows from \cite[Theorem 1.1]{parabolic}. We assume inductively that Global Theorems 1 and 2 hold if $\dim(G_\text{der}) < r_\text{der}$. In both local and global cases, we assume that if $G$ is not quasisplit and $\dim(G_\mathrm{der}) = d_\mathrm{der}$, the relevant theorems hold for the quasisplit inner $K$-form of $G$. 

We will be content with recapitulating the broad strokes of the arguments in \cite{STF3}, indicating the points in which we use the new spaces that we have defined above instead of the compactly supported ones. We shall use the subscripts `unip' to denote the unipotent variant of objects with subscript `ell,' and `par' with the objects corresponding to terms $M\neq G$. For example, we write
\[
I_\text{unip}(f,S) = \sum_{\alpha\in\Gamma_\text{unip}(G,V,\zeta)}a^G_\text{unip}(\alpha,S)f_G(\alpha)
\]
where
\[
a^G_\text{unip}(\alpha,S) = \sum_{k\in \K^V_\text{unip}((\bar{G},S)}a^G_\text{ell}(\alpha\times k) r_G(k)
\]
for $\alpha\in \Gamma_\text{unip}(G,V,\zeta)$. By the inductive definitions we obtain $I^\E_{\text{unip}}$ and $S^G_\text{unip}$ analogously. The global induction hypothesis then implies that 
\[
I^\E_\text{par}(f) -I_\text{par}(f)  = \sum_{t}\left(I^\E_{t,\text{disc}}(f) - I_{t,\text{disc}}(f)\right) - \sum_{z}\left(I^\E_{t,\text{unip}}(f,S) - I_{t,\text{unip}}(f,S)\right)
\]
and if $G$ is quasisplit and $f$ belongs to $\C^{\circ, \text{uns}}(G,V,\zeta)$, then
\[
S^G_\text{par}(f) = \sum_{t}S^G_{t,\text{disc}}(f) - \sum_{z}S^G_{z,\text{unip}}(f,S),
\]
where $z$ belongs to the quotient $Z(G)_{V,\o}Z_V/Z_V$, and $Z(G)_{V,\o}$ is the subgroup of elements in $(Z(G))(F)$ such that for every $v\not\in V$, the element $z_v$ is bounded in $(Z(G))(F_v)$. The induction hypotheses further lead to a cancellation of $p$-adic singularities, allowing us to express
\[
I^\E_\text{par}(f) -I_\text{par}(f)  = |W(M)|^{-1}\hat{I}^M(\varepsilon_M(f))
\]
for $f$ in the intersection $\C_M(G,V,\zeta)^{0}$. and if $G$ is quasisplit,
\[
S^G_\text{par}(f) = |W(M)|^{-1}\sum_{M'\in\E_\text{ell}(M,v)} \iota(M,M')\hat{S}^{\tilde{M}'}(\varepsilon^{M'}(f))
\]
for $f$ in the intersection $\C^{\circ,\text{uns}}_M(G,V,\zeta)^0$ as in \cite[Corollary 3.3]{STF3}. Here $\varepsilon_M$ is a map from $\C(G_v,\zeta_v)^0$ to the subspace of cuspidal functions in $\I_\text{ac}(M_v,\zeta_v)$ such that
\[
\varepsilon_M(f_v,\gamma_v) = I^\E_M(\gamma_v,f_v) - I_M(\gamma_v,f_v) 
\]
for any $\gamma_v \in \Gamma(M_v,\zeta_v)$, and in the case that $G_v$ is quasisplit, $\varepsilon^M$ is a map from $\C^{\circ,\text{uns}}(G_v,\zeta_v)^0$ to $\SI_\text{ac}(M_v,\zeta_v)$ such that 
\[
\varepsilon^M(f_v,\delta_v) = S^G_M(\delta_v,f_v)
\]
for any $\delta_v\in \Delta(M_v,\zeta_v)$. These maps are given in \cite[Proposition 3.1]{STF3}, also studied in Chapters VIII and IX of \cite{MW2}, and can be seen as generalizing the mapping $\phi_M$ in a direction different from the maps $\iota_M, \iota^\E_M,$ and $\tau_M$ that we have constructed earlier. 

The separation of the spectral sides according to infinitesimal character follows from \cite[\S4--5]{STF3} and the properties of the function spaces we have defined, but is not strictly necessary given the absolute convergence of the spectral side. On the other hand, the stabilization of the invariant local trace formula in \cite[\S10]{STF1} and \cite[\S6]{STF3} extends to our setting following Lemma \ref{FL}, and together with the global results above lead to the proof of Local Theorem 1 in the nonarchimedean case, again using the local and global induction hypotheses. We note that this implies Local Theorem 2 according to an unpublished work of Arthur, and we may also refer to sections X.5 and X.7 of \cite{MW2} for a variant argument that can be used instead. 

To complete the global theorems, we apply the local theorems to conclude that 
\[
I^\E_\text{par}(f) -I_\text{par}(f)= \sum_{M\in\L^0}|W^M_0||W_0^G|^{-1}\sum_{\gamma\in\Gamma(M,V,\zeta)}a^M(\gamma)(I^\E_M(\gamma,f) - I_M(\gamma,f))
\]
vanishes for $\C(G,V,\zeta)$, and if $G$ is quasisplit, that 
 \[
 S^G_\text{par}(f) = \sum_{M\in\L^0}|W^M_0||W_0^G|^{-1}\sum_{\delta^*\in\Delta(M^*,V,\zeta^*)}b^{M^*}(\delta^*)S^G_M(M^*,\delta^*,f)
 \]
 vanishes for $f\in \C^{\circ,\text{uns}}(G,V,\zeta)$. The induction argument on $r_\text{der}$ implies that the terms $I^\E_{t,\text{disc}}(f) - I_{t,\text{disc}}(f)$ and $S^G_{t,\text{disc}}(f)$ vanish for $f$ in $\C(G,V,\zeta)$ and $\C^{\circ,\text{uns}}(G,V,\zeta)$ respectively, so that
\[
\sum_{z}\left(I^\E_{t,\text{unip}}(f,S) - I_{t,\text{unip}}(f,S)\right)=0, \qquad f\in\C^{\circ}(G,V,\zeta)
\]
and
\[
\sum_{z}S^G_{z,\text{unip}}(f,S)=0,\qquad f\in\C^{\circ,\text{uns}}(G,V,\zeta)
\]
in the case that $G$ is quasisplit. Choosing $V=S$, and using the property that the linear forms 
\[
\dot{f}_S\to \dot{f}_{S,G}(z\dot\alpha_S), \qquad z\in Z(\bar{G})_{S,\o}, \ \dot\alpha_S\in\Gamma^\E_\text{unip}(G,S,\zeta)
\]
on the subspace of admissible functions in $\C(G,S,\zeta)$ are linearly independent, we conclude from the definitions of $I_{t,\text{unip}}$ and $I_{t,\text{unip}}^\E$ that 
\[
a^{G,\E}_\text{ell}(\dot\alpha_S)-  a^{G}_\text{ell}(\dot\alpha_S) = 0
\]
for $\dot\alpha_S\in\Gamma^\E_\text{unip}(G,S,\zeta)$, and similarly 
\[
\dot{f}_S\to \dot{f}_{S,G}^\E(z\dot\beta_S), \qquad z\in Z(\bar{G})_{S,\o}, \ \dot\beta_S\in\Delta^\E_\text{unip}(G,S,\zeta)\backslash \Delta_\text{unip}(G,S,\zeta)
\]
on the subspace of admissible functions in $\C^{\circ,\text{uns}}(G,S,\zeta)$ are linearly independent, whence we conclude that 
\[
b^{G}_\text{ell}(\dot\beta_S) = 0
\]
for $\dot\alpha_S$ in the complement of $\Delta_\text{unip}(G,S,\zeta)$ in $\Delta^\E_\text{unip}(G,S,\zeta)$. Applying the global descent formula to the coefficients then yields the geometric Global Theorem 1. The spectral Global Theorem 2 follows similarly, using the vanishing of
\[
\sum_{t}\left(I^\E_{t,\text{disc}}(f) - I_{t,\text{disc}}(f)\right) =0, \qquad \dot{f}\in\C(G,\zeta)
\]
and
\[
\sum_{t}S^G_{t,\text{disc}}(f)=0, \qquad \dot{f}\in\C^{\circ,\text{uns}}(G,\zeta).
\]
Arguing as in the geometric case we conclude that 
\[
a^{G,\E}_\text{disc}(\dot\pi)  - a^{G,\E}_\text{disc}(\dot\pi) =0
\]
for any $\dot\pi\in \Pi_{t,\text{disc}}(G,\zeta)$, and in the case that $G$ is quasisplit,
\[
b^G_\text{disc}(\dot\phi) = 0 
\]
for any $\dot\phi$ in the complement of $\Phi_{t,\text{disc}}(G,\zeta)$ in $\Phi_{t,\text{disc}}^\E(G,\zeta)$, and the desired result follows.

Finally, we can conclude from these general remarks the extension of the endoscopic and stable trace formulae, with the required expansions
\begin{align*}
I^\E(f) &= \sum_{M\in\L} |W_0^M||W_0^G|^{-1}\sum_{\gamma\in\Gamma^\E(M,V,\zeta)}a^{M,\E}(\gamma)I^\E_M(\gamma,f)\\
& =\sum_t \sum_{M\in\L} |W_0^M||W_0^G|^{-1}\int_{\Pi^\E_t(M,V,\zeta)} a^{M,\E}(\pi)I_M^\E(\pi,f)d\pi 
\end{align*}
and
\begin{align*}
S(f) &= \sum_{M\in\L} |W_0^M||W_0^G|^{-1}\sum_{\delta\in\Delta(M,V,\zeta)}b^{M}(\delta)S_M(\delta,f)\\
& =\sum_t \sum_{M\in\L} |W_0^M||W_0^G|^{-1}\int_{\Phi_t(M,V,\zeta)} b^{M}(\phi)S_M(\phi,f)d\phi 
\end{align*}
for $f\in \C(G,V,\zeta)$.
\end{proof}

%We have given a broad impression of the stabilization of the trace formula, 
We state the local consequences explicitly, which we shall require in the stabilization of the form $I^r_s(f)$. We record them separately according to the geometric and spectral sides, respectively. 
\begin{cor}
\label{cora}
Let $F$ be a number field, and let $V$ be a finite set of valuations $V_\text{ram}(G,\zeta)$.
\begin{enumerate}
\item[(a)]
If $G$ is arbitrary, 
\[
I^\E_M(\gamma,f) = I_M(\gamma,f), \qquad \gamma\in\Gamma(M^Z_V,\zeta_V), f\in \C(G,V,\zeta)
\]
\item[(b)]
Suppose that $G$ is quasisplit, and that $\delta'$ belong to $\Delta((\tilde{M}'_V)^{\tilde{Z}'},\tilde{\zeta}'_V)$ for some $M'\in \E_\mathrm{ell}(M,V)$. Then the linear form
\[
f \to S^G_M(M',\delta',f), \qquad f\in \C(G,V,\zeta)
\]
vanishes unless $M=M^*$, in which case it is stable.
\end{enumerate}
\end{cor}
\begin{cor}
\label{corb}
Let $F$ be a number field, and let $V$ be a finite set of valuations $V_\text{ram}(G,\zeta)$.
\begin{enumerate}
\item[(a)]
If $G$ is arbitrary, 
\[
I^\E_M(\pi,f) = I_M(\pi,f), \qquad \gamma\in\Pi(M^Z_V,\zeta_V), f\in \C(G,V,\zeta)
\]
\item[(b)]
Suppose that $G$ is quasisplit, and that $\phi'$ belong to $\Phi((\tilde{M}'_V)^{\tilde{Z}'},\tilde{\zeta}'_V)$ for some $M'\in \E_\mathrm{ell}(M,V)$. Then the linear form
\[
f \to S^G_M(M',\phi',f), \qquad f\in \C(G,V,\zeta)
\]
vanishes unless $M=M^*$, in which case it is stable.
\end{enumerate}
\end{cor}

\begin{proof}
The statements concern the compound linear forms that arise on either side of the endoscopic and stable trace formulas. In the proof of Theorem \ref{main1}, we established the analogues of the local theorems in \cite{STF1} which can be stated for a local field $F$ and test functions $f\in \C(G,\zeta)$. The result will follow from the necessary splitting and descent formulas in the same manner as for functions $f\in\H(G,V,\zeta)$ described in \cite[\S6]{STF1}. On the geometric side, we conclude that if $G$ is arbitrary, then
\[
I^\E_M(\gamma,f) = I_M(\gamma,f), \qquad \gamma\in \Gamma_{G\text{-reg,ell}}(M,\zeta), f\in \C(G,\zeta)
\]
whereas if $G$ is quasisplit and $\delta'$ belongs to $\Delta_{G\text{-reg}}(\tilde{M}',\tilde{\zeta}')$ for some $M'\in\E_\text{ell}(M)$, then the linear form
\[
f \to S^G_M(M',\delta',f), \qquad f\in \C(G,\zeta)
\]
vanishes unless $M'=M^*$, in which case it is stable. The desired result follows from the splitting and descent formulas for these distributions, which are proved in this setting in Propositions 3.3 and 3.4 of \cite{stablegerms}, and are also stated in a different form in \cite{MW2}. 

Similarly, on the spectral side, we conclude from the proof of Theorem \ref{main1} that if $G$ is arbitrary, then
\[
I^\E_M(\pi,f) = I_M(\pi,f), \qquad \pi\in \Pi_{G\text{-reg,ell}}(M,\zeta), f\in \C(G,\zeta)
\]
whereas if $G$ is quasisplit and $\phi'$ belongs to $\Phi_{G\text{-reg}}(\tilde{M}',\tilde{\zeta}')$ for some $M'\in\E_\text{ell}(M)$, then the linear form
\[
f \to S^G_M(M',\phi',f),\qquad f\in \C(G,\zeta)
\]
vanishes unless $M'=M^*$, in which case it is stable. The desired result follows from the splitting and descent formulas for these distributions, which second of which is stated in this setting in the proof of Propositions 4.1 of \cite{maps2}, and are also stated in a different form in \cite{MW2}.

\end{proof}

The modifications of the endoscopic and stable linear forms in \cite{maps2} then applies to this setting also. We recall that in \cite{maps2} we constructed modified distributions $\tilde{I}^\E$ and $\tilde{S}$ which lead to purely geometric expansions for the unitary part of the spectral expansion in terms of modified linear forms $\tilde{I}^\E_M(\gamma)$ and $\tilde{S}_M(\delta)$ defined in \cite[\S3]{maps2}. The modifications apply to our extension of the endoscopic and stable linear forms on $\C(G,V,\zeta)$ also. 

\begin{cor}
The modified distributions $\tilde{I}^\E$ and $\tilde{S}$ extend to continuous linear forms on $\C(G,V,\zeta)$. 
\begin{enumerate} 
\item[(a)]If $G$ is arbitrary, 
\[
I^\E_\mathrm{unit}(f) = \sum_{M\in\L}|W^M_0||W^G_0|^{-1}\sum_{\gamma\in \Gamma(M,V,\zeta)}a^{M,\E}(\gamma)\tilde{I}^\E_M(\gamma,f).
\]
\item[(b)]If $G$ is quasisplit,
\[
S_\mathrm{unit}(f) = \sum_{M\in\L}|W^M_0||W^G_0|^{-1}\sum_{\delta\in \Delta(M,V,\zeta)}b^M(\delta)\tilde{S}_M(\delta,f).
\]
\end{enumerate}
\end{cor}

\begin{proof}
This follows  from the fact that the maps $\tau_M$ and $\iota^\E_M$ used to construct the modified distributions are defined for functions in $\C(G,V,\zeta)$, whence we may apply Theorem \ref{main1} and argue as in the first part of the proof of \cite[Theorem 2]{witf}.
\end{proof}

\bibliography{BESL2}
\bibliographystyle{alpha}
\end{document}